	\definecolor {odarkblue} {rgb} {.12,.3,.5}
	\definecolor {odarkred} {rgb} {.6,.22,.18}
	\definecolor {odarkgreen} {rgb} {.45,.6,.25}
\DeclareSymbolFontAlphabet{\mathcal}{symbols}
\DeclareSymbolFontAlphabet{\mathscr}{symbols}
\DeclareSymbolFontAlphabet{\mathbb}{symbols}
\DeclareSymbolFontAlphabet{\mathfrak}{symbols}
\g@addto@macro \normalsize {
	\setlength\abovedisplayskip {4pt plus 4pt minus 2pt}
	\setlength\belowdisplayskip {8pt plus 4pt minus 2pt}
	\setlength\belowdisplayshortskip {3pt plus 3pt minus 3pt}}
\newcommand\abs[2][0]{
	\ifthenelse{\equal{#1}{0}}{\vert #2 \vert}{
	\ifthenelse{\equal{#1}{1}}{\bigl\vert #2 \bigr\vert}{
	\ifthenelse{\equal{#1}{2}}{\Bigl\vert #2 \Bigr\vert}{
	\ifthenelse{\equal{#1}{3}}{\biggl\vert #2 \biggr\vert}{
	\ifthenelse{\equal{#1}{4}}{\Biggl\vert #2 \Biggr\vert}{}}}}}}
\newcommand\abs*[1]
\newcommand\norm[2][0]{
	\ifthenelse{\equal{#1}{0}}{\Vert #2 \Vert}{
	\ifthenelse{\equal{#1}{1}}{\bigl\Vert #2 \bigr\Vert}{
	\ifthenelse{\equal{#1}{2}}{\Bigl\Vert #2 \Bigr\Vert}{
	\ifthenelse{\equal{#1}{3}}{\biggl\Vert #2 \biggr\Vert}{
	\ifthenelse{\equal{#1}{4}}{\Biggl\Vert #2 \Biggr\Vert}{}}}}}}
\newcommand\norm*[1]
\newcommand\Norm[2][0]{
	\ifthenelse{\equal{#1}{0}}{\Vvert #2 \Vvert}{
	\ifthenelse{\equal{#1}{1}}{\bigl\Vvert #2 \bigr\Vvert}{
	\ifthenelse{\equal{#1}{2}}{\Bigl\Vvert #2 \Bigr\Vvert}{
	\ifthenelse{\equal{#1}{3}}{\biggl\Vvert #2 \biggr\Vvert}{
	\ifthenelse{\equal{#1}{4}}{\Biggl\Vvert #2 \Biggr\Vvert}{}}}}}}
\newcommand\Norm*[1]
\newcommand\esp[2][0]{
	\mathbb E
	\ifthenelse{\equal{#1}{0}}{[ #2 ]}{
	\ifthenelse{\equal{#1}{1}}{\bigl[ #2 \bigr]}{
	\ifthenelse{\equal{#1}{2}}{\Bigl[ #2 \Bigr]}{
	\ifthenelse{\equal{#1}{3}}{\biggl[ #2 \biggr]}{
	\ifthenelse{\equal{#1}{4}}{\Biggl[ #2 \Biggr]}{}}}}}}
\newcommand\esp*[1]
\newcommand\prob[2][0]{
	\mathbb P
	\ifthenelse{\equal{#1}{0}}{[ #2 ]}{
	\ifthenelse{\equal{#1}{1}}{\bigl[ #2 \bigr]}{
	\ifthenelse{\equal{#1}{2}}{\Bigl[ #2 \Bigr]}{
	\ifthenelse{\equal{#1}{3}}{\biggl[ #2 \biggr]}{
	\ifthenelse{\equal{#1}{4}}{\Biggl[ #2 \Biggr]}{}}}}}}
\newcommand\prob*[1]
	\newcommand{\ie}{i.e.\xspace}
	\newcommand{\iid}{i.i.d.\xspace}
	\newcommand{\as}{a.s.\xspace}
	\newcommand{\tiff}{i.f.f.\xspace}
	\newcommand{\periodafter}[1]{#1.}
	\titleformat*{\section}{\large\bfseries\scshape\filcenter}
	\titleformat*{\subsection}{\large\bfseries}
	\titleformat{\subsubsection}[runin]{\normalsize\bfseries}{\thesubsubsection}{.5em}{\periodafter}[\hspace*{.5em}]
	\titleformat{\paragraph}[runin]{\normalsize\itshape}{}{.25em}{\periodafter}
	\titleformat*{\subparagraph}{\itshape}
	\setlist{nolistsep}
\newcommand {\qedsym} {\ensuremath\square}
\theoremstyle {plain}
\newtheorem {theorem} {Theorem} [section]
\newtheorem {lemma} [theorem] {Lemma}
\newtheorem {proposition} [theorem] {Proposition}
\newtheorem {corollary} [theorem] {Corollary}
\newtheorem {remark} {Remark} [section]
\newtheoremstyle{nonumnoparplain}
	{\item[\theorem@headerfont\hskip\labelsep ##1\theorem@separator]}
	{\item[\theorem@headerfont\hskip\labelsep ##1\ ##3\theorem@separator]}
\theoremstyle {nonumnoparplain}
\newtheorem {proof} {Proof}
\providecommand\mathup\mathrm
\providecommand\Beta{\operatorname{B}}
\providecommand\Vbar{\mathrlap\perp\,\perp}
\providecommand\lBrack\llbracket
\providecommand\rBrack\rrbracket
\providecommand\calP{\mathcal P}
\providecommand\calN{\mathcal N}
\providecommand\calU{\mathcal U}
\providecommand\calL{\mathcal L}
\providecommand\calI{\mathcal I}
\providecommand\calT{\mathcal T}
\providecommand\calM{\mathcal M}
\providecommand\scrF{\mathscr F}
\providecommand\scrR{\mathscr R}
\providecommand\scrT{\mathscr T}
\providecommand\scrI{\mathscr I}
\providecommand\ttb{\mathtt b}
\providecommand\tts{\mathtt s}
\providecommand\ttt{\mathtt t}
\providecommand\ttT{\mathtt T}
\providecommand\ttv{\mathtt v}
\providecommand\bbR{\mathbb R}
\providecommand\bbN{\mathbb N}
\providecommand\bbZ{\mathbb Z}
\providecommand\bbK{\mathbb K}
\providecommand\bbT{\mathbb T}
\providecommand\upd{\mathup d}
\providecommand\upD{\mathup D}
\providecommand\upC{\mathup C}
\providecommand\upB{\mathup B}
\providecommand\upF{\mathup F}
\providecommand\bfr{\mathbf r}
\providecommand\bfs{\mathbf s}
\providecommand\bfx{\mathbf x}
\providecommand\bfy{\mathbf y}
\providecommand\bfT{\mathbf T}
\providecommand\bfX{\mathbf X}
\providecommand\bfY{\mathbf Y}
\providecommand\bfG{\mathbf G}
\DeclareMathOperator\pr{pr}
\DeclareMathOperator\dis{dis}
\newcommand\GW{\operatorname{GW}}
\newcommand\MB{\operatorname{MB}}
\newcommand\loc{{\operatorname{loc}}}
\newcommand\GHP{{\operatorname{GHP}}}
\newcommand\TV{{\operatorname{TV}}}
\newcommand\ord{{\operatorname{ord}}}
\newcommand\gr{{\operatorname{gr}}}
\newcommand\AG{{\operatorname{AG}}}
\newcommand\GT{{\operatorname{GT}}}
\providecommand{\ind}{\mathbb 1}
\providecommand{\D}{\mathup d}
\providecommand{\E}{\mathup e}
\newcommand\Sdec{\mathcal S^{\smash[t]\downarrow}}
\renewenvironment{abstract}
	{\begin{small}
	\begin{center}
		\bfseries\scshape\abstractname\vspace{-.5em}\vspace{0pt}
	\end{center}
	\begin{adjustwidth}{.125\textwidth}{.125\textwidth}
	\hspace{\parindent}}
	{\end{adjustwidth}
	\end{small}}
\title{Local limits of Markov Branching trees and their volume growth}
\author{Camille Pagnard\\
	\footnotesize
	Universit\'e Paris-Dauphine, Ceremade\\
	\scriptsize
	\texttt{pagnard@ceremade.dauphine.fr}}
\date{\today}
\begin{document}

\maketitle

\begin{abstract}
	We are interested in the local limits of families of random trees
	that satisfy the Markov branching property, which is fulfilled by a wide range of models.
	Loosely, this property entails that given the sizes of the sub-trees above the root,
	these sub-trees are independent and their distributions only depend upon their respective sizes.
	The laws of the elements of a Markov branching family are characterised by a sequence of probability distributions
	on the sets of integer partitions which describes how
	the sizes of the sub-trees above the root are distributed.
	
	We prove that under some natural assumption on this sequence of probabilities,
	when their sizes go to infinity, the trees converge in distribution to an infinite tree
	which also satisfies the Markov branching property.
	Furthermore, when this infinite tree has a single path from the root to infinity,
	we give conditions to ensure its convergence in distribution
	under appropriate rescaling of its distance and counting measure
	to a self-similar fragmentation tree with immigration.
	In particular, this allows us to determine
	how, in this infinite tree, the ``volume'' of the ball of radius $R$
	centred at the root asymptotically grows with $R$.
	
	Our unified approach will allow us to develop various new applications,
	in particular to different models of growing trees and cut-trees,
	and to recover known results.
	An illustrative example lies in the study of Galton-Watson trees:
	the distribution of a critical Galton-Watson tree conditioned on its size
	converges to that of Kesten's tree when the size grows to infinity.
	If furthermore, the offspring distribution has finite variance,
	under adequate rescaling, Kesten's tree converges to Aldous' self-similar CRT
	and the total size of the $R$ first generations asymptotically behaves like $R^2$.
\end{abstract}

\section{Introduction}\label{sec:intro}

The focus of this work is to study the asymptotic behaviour of sequences of random trees
which satisfy the Markov branching property first introduced by Aldous in \cite[Section~4]{aldous1996cladograms}
and later extended for example in~\cite{bdms2008heightofincreasingtrees,haas2012scaling,hmpw2008continuumtree}.
See Haas~\cite{haas2016markovbranchingsurvey} for an overview of this general model
and Lambert~\cite{lambert2016saopaulo} for applications to models used in evolutionary biology. 
Our study will therefore encompass various models,
like Galton-Watson trees conditioned on their total progeny or their number of leaves,
certain models of cut-trees (see Bertoin~\cite{bertoin2012fires,bertoin2014percolationrecursivetrees,bertoin2015cutrecursivetrees})
or recursively built trees (see R\'emy~\cite{remy1985binarygw}, Chen-Ford-Winkel~\cite{ford2009alphagamma}
and Haas-Stephenson~\cite{haas2014kary})
as well as models of phylogenetic trees (Ford's $\alpha$-model~\cite{ford2005alpha}
and Aldous' $\beta$-splitting model~\cite{aldous1996cladograms}).

Informally, a sequence $(T_n)_n$ of random trees satisfies the Markov branching property
if for all $n$, $T_n$ has ``size'' $n$, and conditionally on the event ``$T_n$ has $p$ sub-trees above its root
with respective sizes $n_1\geq\dots\geq n_p$'', these sub-trees are independent and
for each $i=1,\dots, p$, the $i^{\text{th}}$ largest sub-tree is distributed like $T_{n_{\smash i}}$.
The sequence of distributions of $(T_n)_n$ is characterised by a family $q = (q_n)_n$ of probability distributions,
referred to as ``first-split distributions'' (see next paragraph),
where $q_n$ is supported by the set of partitions of the integer $n$.
We will detail two different constructions of Markov branching trees corresponding to a given sequence $q$
for two different notions of size: the number of leaves or the number of vertices.

\smallskip

Let $(q_n)_n$ be a sequence of first-split distributions.
A tree with $n$ leaves with distribution in the associated Markov branching family is built with the following process.
Consider a cluster of $n$ identical particles and with probability $q_n(\lambda_1,\dots,\lambda_p)$,
split it into $p$ smaller clusters containing $\lambda_1, \dots, \lambda_p$ particles respectively.
For each $i=1,\dots,p$, independently of the other sub-clusters,
split the $i^{\text{th}}$ cluster according to $q_{\lambda_{\smash i}}$.
Repeat this procedure until all the sub-clusters are empty.
The genealogy of these splits may be encoded as a tree with $n$ leaves,
the distribution of which we'll denote by $\MB^{\smash\calL,q}_n$.
\begin{figure}[ht]
	\centering
		\begin{tikzpicture}[every node/.style={font=\scriptsize}, thick, baseline={(0,0)}, scale=.75]

		\newcommand\ballspattern[3]{
			\ifthenelse{\equal{#1}{7}}{
				\node [ball] at ($#2$) {};
				\foreach \i in {1,...,6}
					\node [ball] at ($#2+(60*\i:1.1*#3)$) {};
			}{}
			\ifthenelse{\equal{#1}{5}}{
				\foreach \i in {1,...,5}
					\node [ball] at ($#2+0.85*(18+72*\i:#3)$) {};
			}{}
			\ifthenelse{\equal{#1}{4}}{
				\foreach \i in {1,...,4}
					\node [ball] at ($#2+0.7*(45+90*\i:#3)$) {};
			}{}
			\ifthenelse{\equal{#1}{3}}{
				\foreach \i in {1,...,3}
					\node [ball] at ($#2+0.58*(-30+120*\i:#3)$) {};
			}{}
			\ifthenelse{\equal{#1}{2}}{
				\node [ball] at ($#2+0.5*(0:#3)$) {};
				\node [ball] at ($#2-0.5*(0:#3)$) {};
			}{}
			\ifthenelse{\equal{#1}{1}}{
				\node [ball] at ($#2$) {};
			}{}
			\ifthenelse{\equal{#1}{0}}{
				\node [ball, odarkred] at ($#2$) {};
			}{}}
		\newcommand\edgelength{1.25}
		
		\tikzstyle{phantomvertex} = [inner sep=0pt, outer sep=0pt, minimum width=0pt]
		\tikzstyle{vertex} = [circle, inner sep=0pt, outer sep=0pt, minimum width=4pt, fill=odarkblue]
		\tikzstyle{ball} = [circle, inner sep=0pt, outer sep=0pt, minimum width=2.5pt, fill=odarkgreen]
		\tikzstyle{container} = [circle, inner sep=.5pt, outer sep=0pt, minimum width=6pt, fill=white, draw=odarkblue]
		\tikzstyle{edge} = [draw, -, odarkblue, shorten >=-.5pt, shorten <=-.5pt]
		
		\def\vertices{
				{(0,0)/0/7/16},
					{($(0)+(130:\edgelength)$)/1/5/14},
						{($(1)+(160:\edgelength)$)/11/4/12},
							{($(11)+(190:\edgelength)$)/111/1/7},
								{($(111)+(150:\edgelength)$)/1111/0/7},
							{($(11)+(130:\edgelength)$)/112/3/11},
								{($(112)+(140:\edgelength)$)/1121/0/7},
								{($(112)+(90:\edgelength)$)/1122/0/7},
								{($(112)+(40:\edgelength)$)/1123/0/7},
						{($(1)+(60:\edgelength)$)/12/1/7},
							{($(12)+(100:\edgelength)$)/121/0/7},
					{($(0)+(50:\edgelength)$)/2/2/10.5},
						{($(2)+(35:\edgelength)$)/21/2/10.5},
							{($(21)+(85:\edgelength)$)/211/0/7},
							{($(21)+(5:\edgelength)$)/212/0/7}}
		\def\edges{
			0/1, 1/11, 11/111, 111/1111, 11/112, 112/1121, 112/1122, 112/1123, 1/12, 12/121,
			0/2, 2/21, 21/211, 21/212}
		\foreach \pos/\name/\mass/\radius in \vertices
			\node[phantomvertex] (\name) at \pos {};
		\foreach \source/\dest in \edges {
			\path[edge] (\source) -- (\dest);}
		\foreach \pos/\name/\mass/\radius/\frame in \vertices {
			\node [container, minimum width=\radius pt] at (\name) {};
			\ballspattern{\mass}{(\name)}{5pt};}
	\end{tikzpicture}
	\caption{Example of a tree with $7$ leaves (in red) and first-split equal to $(5,2)$}
\end{figure}

A Markov branching tree with a given number of vertices, say $n$,
is built with a slightly different procedure
and we will note $\MB^q_n$ its distribution.
Section~\ref{sec:finite-mb-trees} will rigorously detail the constructions 
of both $\MB^q_n$ and $\MB^{\smash\calL,q}_n$.
Rizzolo~\cite{rizzolo2015scaling} considered a more general notion of size
and described the construction of corresponding Markov branching trees.

\bigskip

One way of looking at the behaviour of large trees is through the local limit topology.
For a given tree $\ttt$ and $R\geq 0$, we denote by $\ttt\vert_R$
the subset of vertices of $\ttt$ at graph distance less than $R$ from its root.
We will say that a sequence $\ttt_n$ converges locally to a limit tree $\ttt_\infty$
if for any radius $R$, $\ttt_n\vert_R = \ttt_\infty\vert_R$ for sufficiently large $n$.
There is considerable literature on the study of the local limits
of certain classes of random trees or, more generally, of graphs.
For instance, see Abraham and Delmas~\cite{abraham2014condensation,abraham2014gwkesten},
Stephenson~\cite{stephenson2014localmultitypegw},
Stef\'ansson~\cite{stefansson2009ford,stefansson2012markov}
or a recent paper by Broutin and Mailler~\cite{broutin2015andortrees},
as well as references therein, for studies related to our work.

\smallskip

Let us present in this Introduction the simplest, and most common, case in which Markov branching trees have local limits.
Let $(T_n)_n$ be a sequence of Markov branching trees indexed by their size
with corresponding family of first-split distributions $(q_n)_n$.
Let $p$ be a non-negative integer and $\lambda_1\geq\dots\geq\lambda_p > 0$
be a non increasing family of integers with sum $L$.
For $n$ large enough, consider $q_n(n-L,\lambda_1,\dots,\lambda_p)$,
that is the probability that $T_n$ gives birth to $p+1$ sub-trees
among which the $p$ smallest have respective sizes $\lambda_1,\dots,\lambda_p$.
Assume that for any such $p$ and $\lambda$,
$q_n(n-L,\lambda_1,\dots,\lambda_p)$ converges to $q_*(\lambda_1,\dots,\lambda_p)$
for some probability measure $q_*$ on the set of non-increasing finite sequences of positive integers.
Under this natural assumption,
we will prove in a rather straightforward way that
$T_n$ locally converges to some ``infinite Markov branching tree'' $T_\infty$
with a single path from the root to infinity, called its \emph{infinite spine}.
The distribution of $T_\infty$ is characterised by the family $(q_n)_n$
and the measure $q_*$ which describes the distribution of the sizes
of the finite sub-trees grafted on the spine of $T_\infty$.
See Theorem~\ref{thm:local-limits-markov-branching} for a more precise and general statement.

\bigskip

A drastically different approach to understand the behaviour of large random trees is that of scaling limits.
Aldous was the first to study scaling limits of random trees as a whole, see~\cite{aldous1991crt1},
and notably introduced the celebrated Brownian tree as the limit of rescaled
critical Galton-Watson trees conditioned on their size
with any offspring law that has finite variance.
See also Le-Gall~\cite{legall2006randomrealtrees} for a survey on random ``continuous'' trees.

In this context, we will consider $T_n$ as a metric space rescaled by some factor $a_n$,
\ie the edges of $T_n$ will be viewed as real segments of length $a_n$,
and denote by $a_n T_n$ this rescaled metric space.
Scaling limits for Markov branching trees were studied
in~\cite{haas2012scaling,hmpw2008continuumtree} by Haas-Miermont \emph{et al}.
Their main result is that under simple conditions on the sequence
$(q_n)_n$ of first-split distributions, $T_n$ converges in distribution, under appropriate rescaling,
to a self-similar fragmentation tree.
These objects were introduced by Haas and Miermont~\cite{haas2004genealogy}
and notably encompass Aldous' Brownian tree
as well as Duquesne and Le-Gall's stable trees~\cite{legall2002stabletrees}.

\smallskip

Haas and Miermont's result in particular gives an asymptotic relation
between the size and height of a finite Markov branching tree.
When considering an infinite Markov branching tree $T$,
we may wonder if a similar relation exists,
namely how many vertices or leaves are typically found at height less than some large integer $R$.
This seemingly simple question, the study of the integer sequence $(\#T\vert_R)_R$,
leads us to consider the scaling limits of the weighted tree $(T,\mu_T)$,
where $\mu_T$ is the counting measure on either the vertices of $T$ or on its leaves.

In Theorem~\ref{thm:scaling-limits-infinite-mb-trees},
we consider the case in which $T$ is an infinite Markov branching tree with a unique infinite spine
with distribution characterised by a family $(q_n)_n$ of first-split distributions
and a probability measure $q_*$ associated to the sizes of the finite sub-trees grafted on the spine.
We prove that under the assumptions of Haas and Miermont's theorem on the family $(q_n)_n$
and an additional condition on the measure $q_*$,
when $R$ goes to infinity, the tree $T/R$ endowed with the adequately rescaled measure $\mu_T$
converges in distribution to a self-similar fragmentation tree with immigration.
These infinite continuous trees were introduced by Haas~\cite{haas2007fragmentationinitialmass}.
They include Aldous' self-similar CRT~\cite{aldous1991crt1}
(which will appear as the limit in many of our applications)
and Duquesne's immigration L\'evy trees~\cite{duquesne2009immigrationlevytrees}.

As a result, under appropriate rescaling,
the ``volume'' of the ball of radius $R$ centred at the root of $T$
converges in distribution to the measure of the ball with radius $1$ centred at the root
of a self-similar fragmentation tree with immigration.
Proposition~\ref{prop:volume-growth-cv} actually gives the stronger convergence of the whole ``volume growth'' process.

\bigskip

The unified framework used here will yield multiple applications.
As a first example, Theorem~\ref{thm:local-limits-markov-branching} will allow us to recover
known results on the local limits of conditioned Galton-Watson trees towards Kesten's tree
(see Abraham Delmas~\cite{abraham2014gwkesten} for instance)
and Theorem~\ref{thm:scaling-limits-infinite-mb-trees} will give an alternative proof to Duquesne's results
(see~\cite{duquesne2009immigrationlevytrees})
on the convergence of rescaled infinite critical Galton-Watson trees to immigration L\'evy trees.
We will give similar results for some models of cut-trees,
which encodes the genealogy of the random dismantling of trees,
studied by Bertoin~\cite{bertoin2012fires,bertoin2014percolationrecursivetrees,bertoin2015cutrecursivetrees}.
We will also study some models of sequentially growing trees
described in~\cite{ford2009alphagamma,haas2014kary,marchal2008stabletree,remy1985binarygw}
and models of phylogenetic trees \cite{aldous1996cladograms,ford2005alpha}.

\bigskip

This paper will be organised as follows.
In Section~\ref{sec:def-mb-trees}, we will define finite and infinite Markov branching trees
and give a natural criterion for their convergence under the local limit topology
in Theorem~\ref{thm:local-limits-markov-branching}.
In Section~\ref{sec:ghp-topology-ssf-trees} we will detail the background needed
for our main result, Theorem~\ref{thm:scaling-limits-infinite-mb-trees},
\ie the study of the scaling limits of infinite Markov branching trees.
Section~\ref{sec:infinite-mb-scaling-limits} will focus on the proof of this result.
Finally, Section~\ref{sec:applications} will give applications of our unified approach
to various Markov branching models.

\section{Markov branching trees and their local limits}\label{sec:def-mb-trees}

\subsection{Trees and partitions}

\subsubsection{Background on trees}\label{sec:def-trees}
First of all, let us recall Neveu's formalism for trees,
first introduced in~\cite{neveu1986arbres}.
Let $\calU := \bigcup_{n\geq 0} \bbN^n$ be the set of finite words on $\bbN$
with the conventions $\bbN = \{1,2,3,\dots\}$ and $\bbN^0 = \{\varnothing\}$.
We then call a \emph{plane tree} or \emph{ordered rooted tree}
any non-empty subset $\ttt\subset\calU$ such that:
\begin{itemize}
	\item The empty word $\varnothing$ belongs to $\ttt$,
	it will be thought of as its ``root'',
	\item If $u=(u_1,\dots,u_n)$ is in $\ttt$,
	then its parent $\pr(u) := (u_1,\dots,u_{n-1})$
	is also in $\ttt$,
	\item For all $u$ in $\ttt$,
	there exists a finite integer $c_u(\ttt)\geq 0$
	such that $u \, i := (u_1, \dots, u_n, i)$ is in $\ttt$
	for every $1\leq i\leq c_u(\ttt)$.
	We will say that $c_u(\ttt)$ is the number of children of $u$ in~$\ttt$.
\end{itemize}
Let $\ttT^\ord$ be the set of plane trees.
Observe that if $\ttt$ is an infinite plane tree,
this definition requires the number of children
of each of its vertices to be finite.

\smallskip

Plane trees are endowed with a total order which is of limited interest to us.
Because of this, we define an equivalence relation
on $\ttT^\ord$ to allow us to consider as identical
two trees which have the same shape.

Say that two plane trees $\ttt$ and $\ttt'$ are equivalent (noted $\ttt\sim\ttt'$)
\tiff there exists a bijection $\sigma:\ttt\to\ttt'$
such that $\sigma(\varnothing) = \varnothing$
and for all $u\in\ttt\setminus\{\varnothing\}$,
$\pr[\sigma(u)] = \sigma[\pr(u)]$.
Finally, set $\ttT := \ttT^\ord/\sim$.
From now on, unless otherwise stated, we will only consider unordered trees,
\ie by ``tree'' we will mean an element of $\ttT$.

\smallskip

Let $\ttt$ be a tree.
We say that a vertex $u$ on $\ttt$ is a leaf if it has no children,
\ie if $c_u(\ttt) = 0$.
Define $\#\ttt$ as the total number of vertices of $\ttt$
and $\#_\calL\ttt$ as its number of leaves.
For any positive integer $n$, let $\ttT_n$ and
$\ttT_n^{\smash\calL}$ be the sets of finite trees
with $n$ vertices and $n$ leaves respectively.
Moreover, note $\ttT_\infty$ the set of infinite trees.

\smallskip

We will use the following operations on trees:
\begin{itemize}
	\item Let $\ttt_1, \dots, \ttt_d$ be trees;
	their \emph{concatenation} is the tree $\lBrack \ttt_1, \dots, \ttt_d \rBrack$
	obtained by attaching each of their respective roots to a new common root,
	see Figure~\ref{fig:concatenation},
	\item Let $\ttt$ and $\tts$ be two trees and $u$ be a vertex of $\ttt$;
	set $\ttt\otimes(u,\tts)$ the grafting of $\tts$ on $\ttt$ at $u$,
	\ie the tree obtained by glueing the root of $\tts$ on $u$,
	see Figure~\ref{fig:grafting},
	\item Fix $\ttt$ a tree, a non-repeating family $(u_i)_{i\in\calI}$ of vertices of $\ttt$,
	and a family of trees 
	$(\tts_i)_{i\in\calI}$;
	let $\ttt \bigotimes_{i\in\calI} (u_i,\tts_i)$ be the tree obtained by grafting
	$\tts_i$ on $\ttt$ at $u_i$ for each $i$ in $\calI$.
\end{itemize}
\begin{figure}[ht]
	\centering
	\begin{minipage}[b]{.4\textwidth}
		\centering
		\begin{tikzpicture}[scale=1, every node/.style={font=\small}, thick]
			\tikzstyle{null} = [inner sep=0pt, minimum width=0pt]
			\tikzstyle{tv} = [circle, fill=odarkblue, inner sep=0pt, minimum width=4pt]
			\tikzstyle{te} = [draw, -, odarkblue]
			\tikzstyle{root} = [circle, draw=odarkgreen, fill=white, inner sep=0pt, minimum width=5pt]
			\tikzstyle{newe} = [draw, -, odarkgreen]
			\tikzstyle{oldroot} = [circle, draw=odarkred, fill=white, inner sep=0pt, minimum width=5pt]
			
			\def\nodes{
				{(0,0)/0/root},
				{($(0)+(150:1)$)/1/oldroot},
					{($(1)+(160:1)$)/11/tv}, {($(1)+(100:1)$)/12/tv},
				{($(0)+(90:1)$)/2/oldroot},
					{($(2)+(120:1)$)/21/tv},
						{($(21)+(135:1)$)/211/tv},
						{($(21)+(75:1)$)/212/tv},
					{($(2)+(60:1)$)/22/tv},
				{($(0)+(30:1)$)/3/oldroot},
					{($(3)+(50:1)$)/31/tv}}
			\def\edges{
				0/1/newe, 1/11/te, 1/12/te,
				0/2/newe, 2/21/te, 21/211/te, 21/212/te, 2/22/te,
				0/3/newe, 3/31/te}
			
			\foreach \pos/\name/\style in \nodes
				\node[null] (\name) at \pos {};
			\foreach \source/\dest/\style in \edges
				\path[\style] (\source) -- (\dest);
			\foreach \pos/\name/\style in \nodes
				\node[\style] at \pos {};
			
			\node [circle, fill=odarkred, inner sep=0pt, minimum width=2pt] at (1) {};
			\node [circle, fill=odarkred, inner sep=0pt, minimum width=2pt] at (2) {};
			\node [circle, fill=odarkred, inner sep=0pt, minimum width=2pt] at (3) {};
			
			\node[null] at ($(1)+1*(130:1)$) {$\ttt_1$};
			\node[null] at ($(2)+1*(90:1)$) {$\ttt_2$};
			\node[null] at ($(3)+1.4*(50:1)$) {$\ttt_3$};
			
			\node[null] (oldrootslabel) at ($(1)+(-150:.75)$) {};
			\node[align=center] at ($(oldrootslabel)-1/3*(90:1)$) {old\\[-.35\baselineskip]roots};
			\path[->, >=stealth', gray, thin]
				(oldrootslabel.north) edge[bend left] ($(1)+(180:0.15)$);
			
			\node[null, label=right:\!new root, outer sep=0pt] (rootlabel) at ($(0)+(-5:1.2)$) {};
			\path[->, >=stealth', shorten >=3pt, gray, thin] (rootlabel.west) edge[bend left] (0);

			\path[draw, gray, dashed, thin]
				($(2)+(90:0.15)$) to[out=0, in=120] ($(3)+(30:0.15)$)
					to[out=-60, in=30] ($(3)+(-60:0.15)$)
					to[out=-150, in=-60] ($(3)+(-150:0.15)$)
					to[out=120, in=0] ($(2)+(-90:0.15)$)
					to[out=180, in=60] ($(1)+(-30:0.15)$)
					to[out=-120, in=-30] ($(1)+(-120:0.15)$)
					to[out=150, in=-120] ($(1)+(150:0.15)$)
					to[out=60, in=180] ($(2)+(90:0.15)$);
		\end{tikzpicture}
		\captionof{figure}{The tree $\lBrack\ttt_1,\ttt_2,\ttt_3\rBrack$}
		\label{fig:concatenation}
	\end{minipage}
	\begin{minipage}[b]{.4\textwidth}
		\centering
		\begin{tikzpicture}[scale=1, every node/.style={font=\small}, thick]
			\tikzstyle{null} = [inner sep=0pt, minimum width=0pt]
			\tikzstyle{tv} = [circle, fill=odarkblue, inner sep=0pt, minimum width=4pt]
			\tikzstyle{tr} = [circle, draw=odarkblue, fill=white, inner sep=0pt, minimum width=5pt]
			\tikzstyle{te} = [draw, -, odarkblue]
			\tikzstyle{sv} = [circle, fill=odarkgreen, inner sep=0pt, minimum width=4pt]
			\tikzstyle{se} = [draw, -, odarkgreen]
			\tikzstyle{uv} = [circle, draw=odarkred, fill=white, inner sep=0pt, minimum width=5pt]
			
			\def\nodes{
				{(0,0)/0/tr},
				{($(0)+(150:1)$)/1/tv},
					{($(1)+(170:1)$)/11/tv},
					{($(1)+(110:1)$)/12/tv},
				{($(0)+(90:1)$)/2/uv},
					{($(2)+(135:1)$)/21/tv},
					{($(2)+(75:1)$)/s1/sv},
						{($(s1)+(105:1)$)/s11/sv},
						{($(s1)+(45:1)$)/s12/sv},
					{($(2)+(15:1)$)/s2/sv},
				{($(0)+(30:1)$)/3/tv}}
			\def\edges{
				0/1/te, 1/11/te, 1/12/te,
				0/2/te, 2/21/te, 2/s1/se, s1/s11/se, s1/s12/se, 2/s2/se,
				0/3/te}
			\foreach \pos/\name/\style in \nodes
				\node[null] (\name) at \pos {};
			\foreach \source/\dest/\style in \edges
				\path[\style] (\source) -- (\dest);
			
			\path[draw, gray, dashed, thin]
				($(2)+(165:0.2)$) to[out=75, in=-90] ($(s1)+(180:0.25)$)
					to[out=90, in=-75] ($(s11)+(195:0.2)$)
					to[out=105, in=195] ($(s11)+(105:0.2)$)
					to[out=15, in=135] ($(s12)+(45:0.25)$)
					to[out=-45, in=75] ($(s2)+(15:0.4)$)
					to[out=-105, in=20] ($(s2)+(-70:0.2)$)
					to[out=200, in=10] ($(2)+(-80:0.2)$)
					to[out=190, in=-45] ($(2)+(-135:0.2)$)
					to[out=135, in=-105] ($(2)+(165:0.2)$);
			
			\foreach \pos/\name/\style in \nodes
				\node[\style] at \pos {};
			
			\node [circle, fill=odarkred, inner sep=0pt, minimum width=2pt] at (2) {};
			
			\node[inner sep=2pt] (u) at ($(2)+(-10:1.2)$) {$u$};
			\path[->, >=stealth', shorten >=2pt, thin, gray] (u.west) edge[bend left] (2.south);
			
			\node at ($1/2*(s12)+1/2*(s2)$) {$\tts$};
			\node at ($(1)+(250:.4)$) {$\ttt$};
		\end{tikzpicture}
		\captionof{figure}{The tree $\ttt\otimes(u,\tts)$}
		\label{fig:grafting}
	\end{minipage}
	
\end{figure}

For all $n\geq 0$, let $\ttb_n$ be the \emph{branch} of length $n$,
\ie the tree with $n+1$ vertices among which a single leaf.
Similarly, define the infinite branch $\ttb_\infty$
and note $(\ttv_n)_{n\geq 0}$ its vertices
where $\ttv_0$ is its root and for all $n\geq 0$,
$\ttv_n = \pr(\ttv_{n+1})$.

\paragraph{The local limit topology}
If $\ttt$ is a tree, we may endow it with the \emph{graph distance} $\upd_\gr$
where for all $u$ and $v$ in $\ttt$, $\upd_\gr(u,v)$ is defined as the number of edges
in the shortest path between $u$ and $v$.
For any non-negative integer $R$, we will note $\ttt\vert_R$ the closed ball
of radius $R$ centred at the root of $\ttt$,
that is the tree $\ttt\vert_R := \{u\in\ttt : \upd_\gr(\varnothing,u) \leq R\}$.

The local distance between two given trees $\ttt$ and $\tts$
is defined as
\[
	\upd_\loc(\ttt,\tts)
	:= \exp\big[-\inf \{ R\geq 0 : \ttt\vert_R \neq \tts\vert_R \} \big].
\]
The application $\upd_\loc$ is an ultra-metric on $\ttT$
and the resulting metric space $(\ttT,\upd_\loc)$ is Polish.
The following  well-known criterion for convergence in distribution with respect
to the local limit topology will be useful.
See for instance~\cite[Section~2.2]{abraham2014gwkesten}
for a proof (which relies on \cite[Theorem~2.3]{billingsley2013convergence}
and the fact that $\upd_\loc$ is an ultra-metric).

\begin{lemma}\label{lem:d-loc-criterion}
	Let $T_n$, $n\geq 1$ and $T$ be $\ttT$-valued random variables.
	Then, $T_n\to T$ in distribution with respect to $\upd_\loc$ \tiff
	for all $\ttt\in\ttT$ and $R\geq 0$,
	$\prob {T_n\vert_R = \ttt\vert_R} \to \prob {T\vert_R = \ttt\vert_R}$
	as $n$ tends to infinity.
\end{lemma}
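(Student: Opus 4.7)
The plan is to combine the portmanteau theorem with the specific structure of the ultra-metric $\upd_\loc$. For every tree $\ttt$ and every integer $R\geq 0$, set
\[
	A_{\ttt,R} := \{\tts\in\ttT : \tts\vert_R = \ttt\vert_R\}.
\]
The very definition of $\upd_\loc$ shows that $A_{\ttt,R}$ is exactly the closed ball of radius $\E^{-R}$ around any of its points. Since $\upd_\loc$ is an ultra-metric, every closed ball is also open, so $A_{\ttt,R}$ is clopen and in particular has empty topological boundary in $(\ttT,\upd_\loc)$.

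For the direct implication, I would assume $T_n\to T$ in distribution. As $A_{\ttt,R}$ is a continuity set for the law of $T$ (its boundary being empty), the portmanteau theorem applied to the indicator of $A_{\ttt,R}$ immediately yields $\prob{T_n\vert_R = \ttt\vert_R} \to \prob{T\vert_R = \ttt\vert_R}$.

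For the converse, I would argue that the countable family $\mathcal{A} := \{A_{\ttt,R} : \ttt\in\ttT,\, R\geq 0\}$ is a convergence-determining class. First, $\mathcal{A}$ is a $\pi$-system: by the ultra-metric inequality, for $R\leq R'$ the intersection $A_{\ttt,R}\cap A_{\ttt',R'}$ is either empty (when $\ttt'\vert_R \neq \ttt\vert_R$) or equal to $A_{\ttt',R'}$. Second, for each fixed $R$, the family $\{A_{\ttt,R} : \ttt\in\ttT\}$ is a countable partition of $\ttT$, and any open subset of $(\ttT,\upd_\loc)$ is a countable union of such balls; hence $\mathcal{A}$ generates the Borel $\sigma$-algebra of $(\ttT,\upd_\loc)$. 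Combined with the assumed pointwise convergence $\prob{T_n\in A_{\ttt,R}} \to \prob{T\in A_{\ttt,R}}$, these two properties allow one to invoke \cite[Theorem~2.3]{billingsley2013convergence} to conclude that $T_n\to T$ in distribution.

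The only genuine point to verify is that pointwise convergence on the $\pi$-system $\mathcal{A}$ suffices to imply weak convergence on $(\ttT,\upd_\loc)$; this is precisely where the ultra-metric structure of $\upd_\loc$ is essential, since in a general Polish space pointwise convergence of probabilities on a $\pi$-system would not be enough without further assumptions such as tightness.
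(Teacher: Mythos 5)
Your proof follows exactly the route the paper indicates (citing Billingsley's Theorem~2.3 together with the ultra-metric structure of $\upd_\loc$), and the argument is sound: the sets $A_{\ttt,R}$ are clopen balls forming a countable $\pi$-system of continuity sets that generates the topology, which is precisely what Billingsley's theorem requires. One small slip: $A_{\ttt,R}$ is the open ball of radius $\E^{-R}$ (equivalently the closed ball of radius $\E^{-(R+1)}$), not the closed ball of radius $\E^{-R}$, but this does not affect the argument since only its clopenness is used.
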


\subsubsection{Partitions of integers}
As discussed in the introduction, Markov branching trees are
closely related to ``partitions of integers''.
This section thus aims to introduce a few notions on these objects
which will be useful for our forthcoming purposes.

\smallskip

Set $\calP_0 := \{\varnothing\}$, $\calP_1 := \{\varnothing,(1)\}$
and for $n\geq 2$,
let $\calP_n$ be the set of \emph{partitions} of $n$,
\ie of finite non-increasing integer sequences with sum $n$.
More precisely, set
\[
	\calP_n :=
	\Big\{\lambda = (\lambda_1,\dots,\lambda_p) \in \bbN^p
		\, : \, p\geq 1,
		\, \lambda_1 \geq \dots \geq \lambda_p > 0
		\;\text {and}\;
		\lambda_1 + \dots + \lambda_p = n\Big\}.
\]
Similarly, let $\calP_\infty$ be the set of finite non-increasing $\bbN\cup\{\infty\}$-valued
sequences with infinite sum (and therefore at least one infinite part).
In other words, define
\[
	\calP_\infty :=
	\Big\{\lambda = (\lambda_1,\dots,\lambda_p) \in \big(\bbN\cup\{\infty\}\big)^p
		\, : \, p\geq 1
		\;\text {and}\; \infty = \lambda_1 \geq \dots \geq \lambda_p > 0 \Big\}.
\]
Set $\calP_{<\infty} := \bigcup_{n\geq 0} \calP_n$ and $\calP := \calP_{<\infty}\cup\calP_\infty$.

\smallskip

Let $\lambda = (\lambda_1,\dots,\lambda_p)$ be in $\calP$.
We will use the following notations:
\begin{itemize}
	\item Let $p(\lambda) := p$ be its \emph{length}
	and $\norm\lambda = \lambda_1 + \dots + \lambda_p$ its sum
	(with the conventions $p(\varnothing) = \norm\varnothing = 0$).
	\item For $k\in\bbN\cup\{\infty\}$, let $m_k(\lambda) := \sum_i \ind_{\smash {\lambda_i} = k}$
	be the number of occurrences of $k$ in the partition $\lambda$.
	\item For a non-negative integer $K$,
	set $\lambda\wedge K := (\lambda_1\wedge K, \dots, \lambda_p\wedge K)$.
	This finite partition will be called the \emph{truncation} of $\lambda$ at level $K$.
\end{itemize}

\smallskip

We endow $\calP$ with an ultra-metric distance defined similarly to $\upd_\loc$.
For all $\lambda$ and $\mu$ in $\calP$, let
\[
	\upd_\calP(\lambda,\mu)
	:= \exp \big[-\inf \, \{ K\geq 0 \,:\, \lambda\wedge K \neq \mu\wedge K \}\big].
\]

\begin{lemma}\label{lem:topology-p}
\begin{enumerate}
	\renewcommand{\labelenumi}{$(\roman{enumi})$}
	\item The application $\upd_\calP$ is an ultra-metric distance,
	\item The metric space $(\calP,\upd_\calP)$ is Polish.
\end{enumerate}
\end{lemma}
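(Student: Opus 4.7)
The plan is to follow the same strategy as the classical argument (recalled in the text just before Lemma~\ref{lem:d-loc-criterion}) showing that $(\ttT, \upd_\loc)$ is a Polish ultra-metric space. Both arguments rest on the same idempotence property of truncation: for any $\lambda \in \calP$ and any integers $0 \leq K' \leq K$, one has $(\lambda \wedge K) \wedge K' = \lambda \wedge K'$.

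For (i), the key consequence of idempotence is that the set of $K$ at which $\lambda \wedge K = \mu \wedge K$ is downward closed, so the infimum in the definition of $\upd_\calP(\lambda, \mu)$ is an attained non-negative integer (or $+\infty$ when $\lambda = \mu$). Symmetry and positivity are immediate. To prove that $\upd_\calP(\lambda, \mu) = 0$ forces $\lambda = \mu$, one notes that then $\lambda \wedge K = \mu \wedge K$ for every $K$: in particular $p(\lambda) = p(\mu)$ (from $K = 0$), and letting $K \to \infty$ in the componentwise equality $\min(\lambda_i, K) = \min(\mu_i, K)$ yields $\lambda_i = \mu_i$ for every $i$ (the cases of finite and infinite parts being treated uniformly). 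The ultra-metric inequality $\upd_\calP(\lambda, \nu) \leq \max(\upd_\calP(\lambda,\mu), \upd_\calP(\mu,\nu))$ is then the one-line observation that $\lambda \wedge K = \mu \wedge K$ and $\mu \wedge K = \nu \wedge K$ together imply $\lambda \wedge K = \nu \wedge K$.

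For (ii), separability follows from the density of the countable subset $\calP_{<\infty} = \bigcup_{n \geq 0} \calP_n$: for any $\lambda \in \calP$ and $K \geq 1$, the truncation $\lambda \wedge K$ lies in $\calP_{<\infty}$, and idempotence yields $\upd_\calP(\lambda, \lambda \wedge K) \leq e^{-(K+1)} \to 0$. For completeness, given a Cauchy sequence $(\lambda^{(n)})_n$, the ultra-metric Cauchy condition implies that for every $K \geq 0$ the sequence $(\lambda^{(n)} \wedge K)_n$ is eventually constant, equal to some $\mu_K$; idempotence then yields the consistency relation $\mu_{K'} \wedge K = \mu_K$ for $K \leq K'$. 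In particular, all $\mu_K$ share a common length $p := p(\mu_0)$, and for each $i \in \{1, \dots, p\}$ the bounded non-decreasing sequence $(\mu_K^{(i)})_K$ admits a limit $\lambda_i \in \bbN \cup \{\infty\}$. I would then check that $\lambda := (\lambda_1, \dots, \lambda_p)$ lies in $\calP$ (its parts are positive, non-increasing, and infinite as soon as any $\lambda_i$ is) and that $\lambda \wedge K = \mu_K$ for every $K$, using $\min(\lambda_i, K) = \lim_{K' \to \infty} \min(\mu_{K'}^{(i)}, K) = \mu_K^{(i)}$; it then follows immediately that $\lambda^{(n)} \to \lambda$.

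The argument is largely a bookkeeping exercise. The only point requiring mild care is the completeness step, specifically verifying that the coordinate-wise limit $\lambda$ really defines an element of $\calP$ and that its truncations recover the family $(\mu_K)_K$. Both issues are handled cleanly by the consistency relation between the $\mu_K$'s, after distinguishing the coordinates where $(\mu_K^{(i)})_K$ stabilises at a finite value from those where $\mu_K^{(i)} = K$ for all $K$ sufficiently large.
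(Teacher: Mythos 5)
Your proof is correct and follows essentially the same route as the paper's: the ultra-metric inequality is verified by the same one-line observation that agreement of truncations at level $K$ is transitive (the paper phrases it contrapositively), and the completeness argument constructs the limit partition coordinate-by-coordinate as a supremum of truncations, just as the paper does via a diagonal subsequence $(n_K)_K$. The only cosmetic divergence is separability: the paper simply notes that $\calP$ embeds in $\bigcup_{n\geq 0}(\bbN\cup\{\infty\})^n$ and is therefore countable (hence trivially separable), whereas you exhibit $\calP_{<\infty}$ as a countable dense subset via the bound $\upd_\calP(\lambda,\lambda\wedge K)\leq \E^{-(K+1)}$ — both valid, yours a touch more work than needed.
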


\begin{remark}
	For all $\lambda$ and $\mu$ in $\calP$ and $K\geq 0$,
	$\lambda\wedge K = \mu\wedge K$ \tiff $\upd_\calP(\lambda,\mu) < \E^{-K}$.
	In particular, $\upd_\calP(\lambda,\mu) = 1$
	\tiff $\lambda\wedge 0 \neq \mu\wedge 0$
	in which case $p(\lambda) \neq p(\mu)$.
\end{remark}

\begin{proof}
	\noindent$(i)$\quad
	Clearly, $\upd_\calP$ is symmetrical and $\upd_\calP(\lambda,\mu)=0$ \tiff $\lambda=\mu$.
	Hence, we only need to prove that $\upd_\calP$ satisfies the ultra-metric
	triangular inequality.
	Let $\lambda$, $\mu$ and $\nu$ be in $\calP$ and assume
	that $\upd_\calP(\lambda,\nu) > \upd_\calP(\lambda,\mu) \vee \upd_\calP(\mu,\nu)$.
	Then, there exists $K\geq 0$ such that $\lambda\wedge K = \mu\wedge K = \nu\wedge K$
	and $\lambda\wedge K \neq \nu\wedge K$, which is absurd.
	Consequently, $\upd_\calP(\lambda,\nu) \leq \upd_\calP(\lambda,\mu) \vee \upd_\calP(\mu,\nu)$.
	
	\smallskip
	
	\noindent$(ii)$\quad
	Observe that $\calP\subset\bigcup_{n\geq 0} (\bbN\cup\{\infty\})^n$
	and is as a result both countable and separable.
	Therefore, it only remains to show that it is complete.
	
	Let $(\lambda_n)_n$ be a Cauchy sequence with respect to $\upd_\calP$.
	By assumption, there exists an increasing sequence $(n_K)_K$
	such that for all $K\geq 0$, $\lambda_n\wedge K = \lambda_m\wedge K$ when $n,m\geq n_K$.
	In particular, there exists a constant $p\geq 0$ such that $p(\lambda_{n_{\smash K}}) = p$ for all $K$.
	Furthermore, notice that for all $i=1,\dots,p$, the sequence $[\lambda_{n_{\smash K}}(i)\wedge K]_K$
	is non-decreasing.
	For each $i = 1, \dots, p$,
	set $\lambda(i) := \sup_K \lambda_{n_{\smash K}}(i)\wedge K \leq\infty$.
	Clearly, $\lambda := [\lambda(1),\dots,\lambda(p)]$ is in $\calP$
	and is such that $\upd_\calP(\lambda_n,\lambda) \to 0$ when $n\to\infty$.
	This proves that $(\calP,\upd_\calP)$ is indeed complete.
\end{proof}

\begin{lemma}\label{lem:d-p-criterion}
	Let $(\Lambda_n)_{n\geq 1}$ and $\Lambda$ be $\calP$-valued random variables.
	Then, $\Lambda_n$ converges to $\Lambda$ in distribution with respect to $\upd_\calP$ \tiff
	for all $\lambda$ in $\calP_{<\infty}$ and all $K\geq 0$,
	we have $\prob {\Lambda_n\wedge K = \lambda\wedge K} \to \prob {\Lambda\wedge K = \lambda\wedge K}$
	as~$n\to\infty$.
\end{lemma}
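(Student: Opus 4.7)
This is the partition analogue of the tree criterion Lemma~\ref{lem:d-loc-criterion}, and my proof would follow the same scheme. The key object is the set
\[
	A_{\lambda,K} \,:=\, \{\mu \in \calP : \mu \wedge K = \lambda \wedge K\},
\]
which by the remark preceding the statement coincides with the open ball $\{\mu \in \calP : \upd_\calP(\mu,\lambda) < \E^{-K}\}$. Because $\upd_\calP$ is an ultra-metric (Lemma~\ref{lem:topology-p}\,$(i)$), this open ball is also closed, hence it is clopen with empty topological boundary. Note moreover that $A_{\lambda,K} = A_{\lambda \wedge K, K}$ and $\lambda \wedge K \in \calP_{<\infty}$, so restricting to $\lambda \in \calP_{<\infty}$ in the statement is harmless.

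For the direct implication, since $\partial A_{\lambda,K} = \varnothing$ we have $\prob{\Lambda \in \partial A_{\lambda,K}} = 0$, and the Portmanteau theorem immediately gives $\prob{\Lambda_n \in A_{\lambda,K}} \to \prob{\Lambda \in A_{\lambda,K}}$.

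For the converse, I would again use Portmanteau, checking that $\liminf_n \prob{\Lambda_n \in U} \geq \prob{\Lambda \in U}$ for every open $U \subset \calP$. The crucial point, exploiting the ultra-metric nature of $\upd_\calP$, is that any two open balls in $\calP$ are either disjoint or nested. Consequently, for each $\mu \in U$ one may set $K^*(\mu) := \min\{K \geq 0 : A_{\mu,K} \subset U\}$, which is finite because the balls $(A_{\mu,K})_{K\geq 0}$ form a neighbourhood basis at $\mu$; and a short verification shows that the distinct balls $A_{\mu, K^*(\mu)}$, as $\mu$ varies over $U$, form a countable partition of $U$ by elements of the countable family $\{A_{\lambda,K} : \lambda \in \calP_{<\infty}, K \geq 0\}$. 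Writing
\[
	\prob{\Lambda_n \in U} \,=\, \sum_{[\mu]} \prob{\Lambda_n \in A_{\mu, K^*(\mu)}},
	\qquad
	\prob{\Lambda \in U} \,=\, \sum_{[\mu]} \prob{\Lambda \in A_{\mu, K^*(\mu)}},
\]
where $[\mu]$ runs over the classes of the partition, and applying Fatou's lemma termwise together with the hypothesis then yields the required inequality.

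The only subtlety is in the converse direction, where one must justify the disjoint decomposition of an arbitrary open set into balls from the chosen countable family and exchange $\liminf$ with a countable sum; both steps are clean once the ultra-metric structure, already used in Lemma~\ref{lem:topology-p}, is in hand.
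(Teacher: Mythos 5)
Your proof is correct and is essentially the paper's intended argument: the paper defers to the proof of Lemma~\ref{lem:d-loc-criterion}, which invokes \cite[Theorem~2.3]{billingsley2013convergence} for a countable $\pi$-system of clopen sets, and your converse step (decomposing any open set into a countable disjoint union of clopen balls $A_{\mu,K^*(\mu)}$ and applying Fatou) is precisely how that theorem's conclusion is established in this ultra-metric setting.
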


\begin{proof}
	Uses the same arguments as the proof of~Lemma~\ref{lem:d-loc-criterion}
	(recall that $\upd_\calP$ is an ultra-metric
	and use \cite[Theorem~2.3]{billingsley2013convergence}).
\end{proof}

\begin{remark}
	Elements of $\calP_{<\infty}$ are closely related to elements of $\ttT$.
	Indeed, if $\ttt$ is a finite tree which can be written as the concatenation of $p$ trees $\ttt_1,\dots,\ttt_p$,
	\ie $\ttt = \lBrack\ttt_1,\dots,\ttt_p\rBrack$,
	then the \emph{partition at the root} or \emph{first split} of $\ttt$
	defined by $\Lambda(\ttt) := (\#\ttt_1, \dots, \#\ttt_p)^{\smash\downarrow}$
	is a partition of $n$ when $\ttt$ has $n+1$ vertices (the root plus $n$ descendants).
	
	Similarly, if we consider leaves instead of vertices, then
	$\Lambda^{\smash\calL}(\ttt) := (\#_\calL\ttt_1, \dots, \#_\calL\ttt_p)^{\smash\downarrow}$
	is a partition of $n$ when $\ttt$ has $n$ leaves.
\end{remark}

In this article, we will often have to consider sequences of random partitions $\Lambda_n\in\calP_n$
that will weakly converge to a limit partition $\Lambda_\infty\in\calP_\infty$
such that, $m_\infty(\Lambda_\infty) = 1$ \as.
In this particular setting, the weak convergence can be defined as follows. 

\begin{lemma}\label{lem:p-cv-one-spine-criterion}
	For all $1\leq n\leq\infty$, let $q_n$ be a probability measure on $\calP_n$
	and assume that $q_\infty (m_\infty=1) = 1$.
	Then, $q_n\Rightarrow q_\infty$ with respect to $\upd_\calP$ \tiff
	for all $\lambda$ in $\calP_{<\infty}$ we have $q_n(n-\norm\lambda,\lambda) \to q_\infty(\infty,\lambda)$
	as $n\to\infty$.
\end{lemma}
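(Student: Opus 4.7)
\emph{Plan.} The natural strategy is to invoke Lemma~\ref{lem:d-p-criterion}, which reduces the weak convergence $q_n \Rightarrow q_\infty$ to the pointwise convergence of the probabilities $\prob{\Lambda \wedge K = \tau \wedge K}$ over all $\tau \in \calP_{<\infty}$ and $K \geq 0$. The key structural fact I would exploit throughout is that, since $m_\infty(\Lambda) = 1$ almost surely under $q_\infty$, every partition in the support of $q_\infty$ is uniquely of the form $(\infty, \lambda)$ for some $\lambda \in \calP_{<\infty}$.

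For the direct implication, I would fix $\lambda \in \calP_{<\infty}$ and choose any integer $K$ strictly greater than all parts of $\lambda$ (with the convention $\max \varnothing = 0$). Setting $\tau := (K, \lambda)$, one checks by direct inspection that the only element of $\calP_n$ whose truncation at $K$ equals $\tau \wedge K = (K, \lambda)$ is the partition $(n - \norm\lambda, \lambda)$, provided $n$ is large enough for $n - \norm\lambda \geq K$; similarly, the only partition with $m_\infty = 1$ sharing this truncation is $(\infty, \lambda)$. Applying Lemma~\ref{lem:d-p-criterion} to this pair $(\tau, K)$ then yields $q_n(n - \norm\lambda, \lambda) \to q_\infty(\infty, \lambda)$.

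For the converse, I would introduce the ``infinitisation'' map $\phi \colon \calP_n \to \calP_\infty \cap \{m_\infty = 1\}$ defined by $\phi(\Lambda) := (\infty, \Lambda_2, \dots, \Lambda_{p(\Lambda)})$, and set $\widetilde q_n := \phi_* q_n$. The hypothesis rewrites as the pointwise convergence $\widetilde q_n(\infty, \lambda) \to q_\infty(\infty, \lambda)$ for every $\lambda \in \calP_{<\infty}$; since both are probability measures on the countable set $\{(\infty, \lambda) : \lambda \in \calP_{<\infty}\}$ (the total mass of the limit being $q_\infty(m_\infty = 1) = 1$), Scheffé's lemma gives $\widetilde q_n \to q_\infty$ in total variation. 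Coupling $\Lambda \sim q_n$ with $\phi(\Lambda) \sim \widetilde q_n$, a direct check shows that $\Lambda \wedge K = \phi(\Lambda) \wedge K$ whenever $\Lambda_1 \geq K$, so for every $\sigma \in \calP_{<\infty}$
\[
	\abs*{q_n(\Lambda \wedge K = \sigma) - \widetilde q_n(\phi(\Lambda) \wedge K = \sigma)} \leq q_n(\Lambda_1 < K).
\]

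The delicate step --- the one I expect to be the main obstacle --- is proving that $q_n(\Lambda_1 < K) \to 0$ for every fixed $K$. For this I would apply Fatou's lemma to the identity $1 = \sum_{\lambda \in \calP_{<\infty}} q_n(n - \norm\lambda, \lambda)$, restricted to the subsum $q_n(\Lambda_1 \geq K)$ obtained by imposing $n - \norm\lambda \geq K$: for each fixed $\lambda$ the relevant indicator stabilises at $1$ and the summand converges to $q_\infty(\infty, \lambda)$ by hypothesis, so $\liminf_n q_n(\Lambda_1 \geq K) \geq \sum_\lambda q_\infty(\infty, \lambda) = q_\infty(m_\infty = 1) = 1$. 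Combining this estimate with the total-variation convergence of $\widetilde q_n$ to $q_\infty$ and Lemma~\ref{lem:d-p-criterion} then completes the proof.
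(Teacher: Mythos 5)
The forward direction is essentially identical to the paper's: set $K > \lambda_1$ and observe that $(n-\norm\lambda,\lambda)$ is the unique partition of $n$ (for $n$ large) with truncation $(K,\lambda)$, then apply Lemma~\ref{lem:d-p-criterion}.

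For the converse, you take a genuinely different route. The paper simply applies Fatou's lemma twice, once to $\{\mu\wedge K = \lambda\wedge K\}$ and once to its complement: writing each $\mu\in\calP_n$ uniquely as $(n-\norm\nu,\nu)$ with $\nu$ the partition obtained by deleting the largest part, one gets $\liminf_n q_n(\mu\wedge K = \lambda\wedge K) \geq q_\infty(\mu\wedge K=\lambda\wedge K)$ and the symmetric bound for the complement, which together with Lemma~\ref{lem:d-p-criterion} finish the argument in a few lines. You instead build in an intermediate ``infinitisation'' map $\phi$, upgrade the pointwise hypothesis to total-variation convergence $\widetilde q_n \to q_\infty$ via Scheff\'e (valid since both are probability measures on the countable set $\{(\infty,\lambda)\}$), and isolate the only possible obstruction, namely that mass could remain on partitions with $\Lambda_1 < K$; you rule this out by a Fatou argument showing $q_n(\Lambda_1\geq K)\to 1$. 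Both proofs are correct and both ultimately lean on Fatou to control the escaping first part, but your decomposition is more explicit about the mechanism at work (bulk convergence in total variation plus vanishing boundary term), at the cost of more machinery. The paper's version is shorter but a little opaque --- its first displayed ``equality'' is really only an equality after inserting indicators that stabilise for large $n$, which your version avoids by making the truncation step explicit through $\phi$.
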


\begin{proof}
	\noindent$\Rightarrow$\quad
	Let $\lambda=(\lambda_1,\dots,\lambda_p)$ be in $\calP_{<\infty}$
	and $K > \lambda_1$.
	In light of Lemma~\ref{lem:d-p-criterion},
	\begin{align*}
		& q_n(n-\norm\lambda,\lambda)
		= q_n\big(\mu\in\calP_n : \mu\wedge K = (K,\lambda)\wedge K\big)\\
		& \qquad\qquad \xrightarrow [n\to\infty] {}
		q_\infty\big(\mu\in\calP_\infty : \mu\wedge K = (K,\lambda)\wedge K\big)
		= q_\infty(\infty,\lambda).
	\end{align*}

	\smallskip
	
	\noindent$\Leftarrow$\quad
	For fixed $K\geq 0$ and $\lambda$ in $\calP_{<\infty}$,
	Fatou's lemma ensures that
	\begin{align*}
		& \liminf_{n\to\infty} \; q_n\big(\mu\in\calP_n : \mu\wedge K = \lambda\wedge K\big)
		= \liminf_{n\to\infty} \; {\textstyle\sum_{\nu\in\calP_{<\infty}}}
			\ind_{(\infty,\nu)\wedge K = \lambda\wedge K} \: q_n(n-\norm\nu,\nu)\\
		& \qquad\qquad\qquad\qquad
		\geq {\textstyle\sum_{\nu\in\calP_{<\infty}}}
			\ind_{(\infty,\nu)\wedge K = \lambda\wedge K} \: q_\infty(\infty,\nu)
		= q_\infty\big(\mu\in\calP_\infty : \mu\wedge K = \lambda\wedge K\big).
	\end{align*}
	Similarly,
	\[
		\liminf_{n\to\infty} \; q_n\big(\mu\in\calP_n : \mu\wedge K \neq \lambda\wedge K\big)
		\geq q_\infty\big(\mu\in\calP_\infty : \mu\wedge K \neq \lambda\wedge K\big).
	\]
	As a result and thanks to Lemma~\ref{lem:d-p-criterion}, we get that $q_n\Rightarrow q_\infty$.
\end{proof}

\subsection{The Markov-branching property}

\subsubsection{Finite Markov branching trees}\label{sec:finite-mb-trees}

We will now follow~\cite[Section 1.2]{haas2012scaling}
and define two types of family of probability measures on the set of finite unordered rooted trees,
satisfying the Markov branching property discussed in the \hyperref[sec:intro]{Introduction}.

Fix $q = (q_n)$ a sequence of probability measures respectively supported by $\calP_{n-1}$
(referred to as ``first-split distributions'' in the \hyperref[sec:intro]{Introduction}).
We will define a sequence $\MB^q = (\MB^q_n)_n^{\vphantom q}$ of probability measures
on the set of finite trees where
\begin{itemize}
	\item For all $n$, $\MB^q_n$ is supported by the set of trees with $n$ vertices,
	\item A tree $T$ with distribution $\MB^q_n$ is such that
	\begin{itemize}
		\item The decreasing rearrangement $\Lambda(T)$ of the sizes of the sub-trees
		above its root is distributed according to $q_{n-1}$,
		\item Conditionally on $\Lambda(T) = (\lambda_1,\dots,\lambda_p)$,
		the $p$ sub-trees of $T$ above its root
		are independent with respective distributions $\MB^q_{\lambda_{\smash i}}$.
	\end{itemize}
\end{itemize}
We will also define a sequence $\MB^{\smash\calL,q}$
satisfying the same Markov branching property where we count
leaves instead of vertices to measure the size of a tree.

\paragraph{Markov branching tree with $n$ vertices}\label{cond:mb-trees-vertices}
First of all, set $\calN$ an infinite subset of $\bbN$ with $1\in\calN$.
This set will index the possible number of vertices of the trees we want to generate,
which is why we need $1$ to belong to~$\calN$.
Let $q = (q_{n-1})_{n\in\calN}$ be a sequence of probability measures
such that $q_0(\varnothing) = 1$,
$q_1[(1)] = 1$ (if $2\in\calN$),
and for all $n$ in $\calN$, $n\geq 2$, $q_{n-1}$ is supported by the set
$\{\lambda\in\calP_{n-1} \,:\, \lambda_i\in\calN, \, i=1,\dots,p(\lambda)\}$.

\begin{remark}
	This last condition comes from the fact that if $T$ is distributed according to $\MB^q_n$,
	the blocks of $\Lambda(T)$ need to be in $\calN$
	because the distribution of the corresponding sub-trees 
	belong to the family~$(\MB^q_k)^{\vphantom q}_{k\in\calN}$.
\end{remark}

We now detail a recursive construction for $\MB^q$.
Let $\MB^q_1(\{\varnothing\}) = 1$
and for $n\geq 2$, proceed by a decreasing induction as follows:
\begin{itemize}
	\item Let $\Lambda$ have distribution $q_{n-1}$,
	\item Conditionally on $\Lambda = (\lambda_1,\dots,\lambda_p)\in\calP_{n-1}$,
	let $(T_1, \dots, T_p)$ be independent random trees such that $T_i$
	is distributed according to $\MB^q_{\lambda_{\smash i}}$ for each $1\leq i\leq p$,
	\item Define $\MB^q_n$ as the law of the concatenation of these trees,
	\ie that of $\lBrack T_1, \dots, T_{\smash {p(\Lambda)}} \rBrack$.
\end{itemize}

\begin{figure}[ht]
	\centering
		\begin{tikzpicture}[scale=.60, every node/.style={font=\small}, thick]
		\tikzstyle{root} = [circle, inner sep=0pt, minimum width=6pt, fill=white, draw=odarkblue]
		\tikzstyle{vertex} = [circle, inner sep=2pt, minimum width=6pt, fill=white, draw=odarkgreen]
		\tikzstyle{edge} = [draw, -, odarkblue]
		
		\def\nodes{{(0,0)/0}, {(-3.5,1.5)/1}, {(-1,1.5)/2}, {(3.5,1.5)/p}}
		\foreach \pos/\name in \nodes
			\node[inner sep=0pt] (\name) at \pos {};
		
		\node (etc) at ($0.5*(2)+0.5*(p)$) {$...$};
		\path[edge] (0) edge (1)
				edge (2)
				edge (p);
		\path[edge, densely dotted] (0)
				edge ($3/7*(0,1.8)$)
				edge ($3/7*(1,1.5)$);
		
		\node[inner sep=2pt] (lambda) at ($(0)+(-15:1.8)$) {\rlap {$\lambda$}};
		\path[->, >=stealth', gray, thin]
			(0) edge[bend right] node[align=center, black] {$q_{\mathrlap {n-1}}$\\} (lambda.west);
		
		\node[root] at (0,0) {};
		
		\foreach \name/\scale in {1/1.1, 2/.9, p/.75} {
			\node[circle, inner sep=0pt, minimum width=6pt, draw=odarkgreen, fill=odarkgreen, align=center] at (\name) {};
			\path[scale=\scale, draw=odarkgreen, fill=white] (\name) to[out=135,in=-90] ++(-1,1.5)
				to[out=90,in=180] ++(1,1)
				to[out=0,in=90] ++(1,-1)
				to[out=-90,in=45] ++(-1,-1.5);
			\node[circle, inner sep=0pt, minimum width=6pt-\the\pgflinewidth, fill=white,] at (\name) {};
			\node at ($(\name)+\scale*(0,1.5)$) {\footnotesize{$\;\MB^q_{\lambda_{\mathrlap{\smash\name}}}$}};
			};
		
		\node[inner sep=2pt] (ind) at ($0.3*(0)+0.7*(etc)$) {$\Vbar$};
		\path[->, >=stealth', dashed, thin, gray]
			(ind.west) edge[bend left] ($(1)+0.8*(45:1)$)
			(ind.north) edge[bend right] ($(2)+0.8*(45:0.8)$)
			(ind.east) edge[bend right] ($(p)+0.8*(135:0.65)$);
		
		\node at (-5,0) {};
		\node at (5,0) {};
	\end{tikzpicture}
	\caption{The construction of a tree with distribution $\MB^q_n$}
\end{figure}
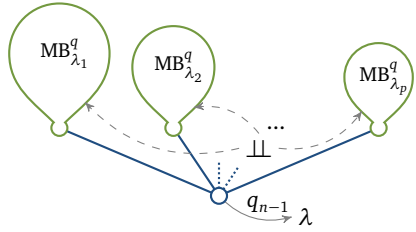

\paragraph{Markov branching tree with $n$ leaves}\label{cond:mb-trees-leaves}
Similarly, fix an infinite subset $\calN$ of $\bbN$ such that $1\in\calN$
(corresponding to the possible number of leaves of the trees we will generate)
and let $q = (q_n)_{n\in\calN}$ be such that for all~$n$ in~$\calN$,
$q_n$ is a probability measure supported by the set
$\{\lambda\in\calP_n\,:\,\lambda_i\in\calN,\, i=1,\dots,p(\lambda)\}$.

To define $\MB^{\calL,q}$, we will proceed by the same recursive method used for $\MB^q$:
first choose how the mass is shared between the children sub-trees of the root,
ans then generate the said sub-trees adequately.
However, if for some $n$ in $\calN$ we have $q_n(n) = 1$,
the recursion will be endless.
For this reason, we also require that for all $n$ in $\calN$,
$q_n(n) < 1$ (\ie with positive probability, a tree ``splits'' into smaller trees).

Let $\MB^{\smash\calL,q}_1$ be the distribution of a branch of geometric length with parameter $1-q_1(1)$,
\ie $\MB^{\smash\calL,q}_1 (\ttb_k) = q_1(1)^k [1-q_1(1)]$ for all $k\geq 0$.
For $n>1$, we do as follows:
\begin{itemize}
	\item Let $T_0$ be a branch with geometric length with parameter $1-q_n(n)$
	and call $U$ its leaf,
	\item Let $\Lambda$ have distribution $q_n$ conditioned on the event $\{m_n = 0\}$,
	\item Conditionally on $\Lambda = (\lambda_1,\dots,\lambda_p)$,
	let $(T_1, \dots, T_p)$ be independent random trees respectively distributed according to
	$\MB^q_{\lambda_{\smash i}}$ for $1\leq i\leq p$,
	\item Graft the concatenation of these trees on the leaf $U$ of $T_0$,
	\ie set $T := T_0 \otimes \big(U, \lBrack T_1, \dots, T_{\smash {p(\Lambda)}} \rBrack\big)$
	and let $\MB^{\smash\calL,q}_n$ be the distribution of $T$.
\end{itemize}

\subsubsection{Infinite Markov branching trees}\label{sec:infinite-mb-trees}
Using the same principle as before (split the mass above the root and generate independent sub-trees with corresponding sizes)
we will define a probability measure supported by the set of infinite trees
which satisfies a version of the Markov branching property.
Let $\calN$ and $q = (q_{n-1})_{n\in\calN}$
satisfy the conditions exposed in the construction of the sequence $\MB^q$.

In order to lighten notations,
for any finite decreasing sequence of integers $\lambda = (\lambda_1,\dots,\lambda_p)$,
we define $\smash {\MB^q_\lambda}$ as the distribution of the concatenation
of independent $\smash {\MB^q_{\lambda_i}}$-distributed trees.
More precisely:
\begin{itemize}
	\item Let $\smash {\MB^q_\varnothing}$ be the Dirac measure on the tree with a single vertex (its root),
	namely $\smash {\MB^q_\varnothing} = \delta_{\{\varnothing\}}$,
	\item For any $\lambda\in\calP_{<\infty}$
	with $p = p(\lambda) > 0$ and $\lambda_i\in\calN$ for $i=1,\dots,p$,
	let $(T_1,\dots,T_p)$ be independent trees with respective distributions $\smash {\MB^q_{\lambda_i}}$
	for all $i=1,\dots, p$.
	Set $\MB^q_\lambda$ as the distribution of the concatenation of these trees.
\end{itemize}
Observe that when $p(\lambda) = 1$,
a tree with distribution $\smash {\MB^q_\lambda}$
is obtained by attaching an edge ``under'' the root of a $\smash {\MB^q_{\lambda_1}}$-distributed tree.

\smallskip

Consider $q_\infty$, a probability measure on $\calP_\infty$ supported by the set
\[
	\big\{\lambda\in\calP_\infty : \lambda_i\in\calN\cup\{\infty\}, i=1,\dots,p(\lambda) \big\}
\]
and let $\Lambda$ follow $q_\infty$.
Let $T^\circ$ be a Galton-Watson tree with offspring distribution the law of $m_\infty(\Lambda)$.
Conditionally on $T^\circ$, let $(\Lambda_u,T_u)_{u\in T^{\smash\circ}}$ be independent pairs and such that:
\begin{itemize}
	\item $\Lambda_u$ has the same distribution as $\Lambda$
	conditioned on the event $m_\infty(\Lambda) = c_u (T^\circ)$,
	\item Conditionally on $\Lambda_u = (\infty,\dots,\infty,\lambda)$ with $\lambda$ in $\calP_{<\infty}$,
	$T_u$ follows $\MB^q_\lambda$.
\end{itemize}
Then, for every vertex $u$ in $T^\circ$,
graft the corresponding tree $T_u$ on $T^\circ$ at $u$.
Let $T$ be the tree hence obtained,
\ie set $T := T^\circ \bigotimes_{u\in T^{\smash\circ}} (u, T_u)$.
Finally, call $\MB^{q,q_{\smash\infty}}_\infty$ the distribution of $T$.

\begin{remark}\label{nb:infinite-spine-construction}
\begin{itemize}
	\item Suppose that $q_\infty(m_\infty=1) = 1$.
	In this case, the construction of $\MB^{q,q_{\smash\infty}}_\infty$ is much simpler:
	the tree $T^\circ$ is simply the infinite branch
	and the family $(\Lambda_{\ttv_{\smash n}}, T_{\ttv_{\smash n}})_{n\geq 0}$ is \iid.
	In particular, $T$ \as has a unique \emph{infinite spine},
	\ie a unique infinite non-backtracking path originating from the root.
	
	\item A tree $T$ with distribution $\MB^{q,q_{\smash\infty}}_\infty$
	satisfies the Markov branching property:
	conditionally on $\Lambda(T)$, the sub-trees of $T$ above its root
	are independent and their respective distributions 
	are either $\MB^{q,q_{\smash\infty}}_\infty$ or in the family $(\MB^q_n)_{n\in\calN}^{\vphantom q}$,
	depending on their sizes.
	
	\item The same exact construction can be used to define a measure
	$\MB^{\smash\calL,q,q_{\smash\infty}}_\infty$.
\end{itemize}
\end{remark}

\subsection{Local limits of Markov-branching trees}\label{sec:mb-trees-local-limits}

Let $q$ be the sequence of first-split distributions
associated to a Markov-branching family $\MB^q$ (respectively $\MB^{\calL,q}$).
Suppose $q_\infty$ is a probability measure on $\calP_\infty$ supported by the set
of sequences $\lambda$ such that for all $i=1,\dots,p(\lambda)$,
$\lambda_i$ is either infinite or in $\calN$.
The aim of this section is to expose suitable conditions on $q$ and $q_\infty$
such that $\MB^q_n$ converges weakly to $\MB^{q,q_{\smash\infty}}_\infty$
(or $\MB^{\smash\calL,q}_n \Rightarrow \MB^{\smash\calL,q,q_{\smash\infty}}_\infty$)
for the local limit topology.

\begin{theorem}\label{thm:local-limits-markov-branching}
	Suppose that when $n$ goes to infinity, $q_n$ converges weakly to $q_\infty$
	with respect to the topology induced by $\upd_\calP$.
	Then, with respect to $\upd_\loc$,
	$\MB^q_n \Rightarrow \MB^{q,q_{\smash\infty}}_\infty$
	(respectively $\MB^{\smash\calL,q}_n \Rightarrow \MB^{\smash\calL,q,q_{\smash\infty}}_\infty$).
\end{theorem}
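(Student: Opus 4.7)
The plan is to invoke Lemma~\ref{lem:d-loc-criterion} and prove by induction on $R$ that for every tree $\ttt \in \ttT$ of height at most $R$,
\[
\prob{T_n\vert_R = \ttt} \xrightarrow[n\to\infty]{} \prob{T\vert_R = \ttt},
\]
where $T_n \sim \MB^q_n$ and $T \sim \MB^{q,q_\infty}_\infty$. The base case $R = 0$ is trivial: both probabilities equal $1$. For the induction step I condition on the first split at the root, using the Markov-branching property recursively.

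Let $\ttt^{(1)}, \ldots, \ttt^{(p)}$ denote the multiset of root subtrees of $\ttt$, each of height at most $R-1$. Then $\{T_n\vert_R = \ttt\}$ is exactly the event that $\Lambda(T_n)$ has $p$ parts and the multiset of balls of radius $R-1$ of the root subtrees of $T_n$ equals $\{\ttt^{(j)}\}_{j=1}^p$. By construction, conditional on $\Lambda(T_n) = \lambda \in \calP_{<\infty}$ with $p(\lambda) = p$, the root subtrees of $T_n$ are independent with the $i$-th one distributed as $\MB^q_{\lambda_i}$; and by Remark~\ref{nb:infinite-spine-construction}, conditional on $\Lambda(T) = (\infty^m, \mu) \in \calP_\infty$ with $m + p(\mu) = p$, the $m$ infinite root subtrees of $T$ are \iid copies of $\MB^{q,q_\infty}_\infty$ and the $p(\mu)$ finite ones are independent $\MB^q_{\mu_j}$, globally independent of each other. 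I therefore define $F_\ttt : \calP \to [0,1]$ to be the conditional probability $F_\ttt(\lambda) := \prob{T_n\vert_R = \ttt \mid \Lambda(T_n) = \lambda}$ for $\lambda \in \calP_{<\infty}$ (a quantity depending only on $\lambda$, not on $n$), and $F_\ttt(\eta) := \prob{T\vert_R = \ttt \mid \Lambda(T) = \eta}$ for $\eta \in \calP_\infty$. Explicitly, when $p(\lambda) = p$ (resp.\ $p(\eta) = p$), $F_\ttt(\lambda)$ is a finite symmetric sum over matchings of the parts of $\lambda$ with the $\ttt^{(j)}$'s, in which each finite part $\lambda_i$ contributes the factor $\prob{\MB^q_{\lambda_i}\vert_{R-1} = \ttt^{(j)}}$ and each infinite part contributes $\prob{T\vert_{R-1} = \ttt^{(j)}}$; otherwise $F_\ttt(\lambda) = 0$. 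By construction,
\[
\prob{T_n\vert_R = \ttt} = \int_\calP F_\ttt \, dq_{n-1}, \qquad \prob{T\vert_R = \ttt} = \int_\calP F_\ttt \, dq_\infty.
\]

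Checking the $\upd_\calP$-continuity of $F_\ttt$ is the crux of the proof. Every finite $\lambda \in \calP_{<\infty}$ is an isolated point of $\calP$: for any integer $K > \max_i \lambda_i$, the open ball $\{\mu : \mu \wedge K = \lambda \wedge K\}$ reduces to $\{\lambda\}$, so continuity at such points is automatic. For $\eta = (\infty^m, \mu) \in \calP_\infty$ and a sequence $\lambda^{(n)} \to \eta$ in $\upd_\calP$, eventually $p(\lambda^{(n)}) = m + p(\mu)$, the last $p(\mu)$ parts of $\lambda^{(n)}$ equal $\mu$, and the first $m$ parts tend to $+\infty$. The induction hypothesis, applied to each $\ttt^{(j)}$ (a finite tree of height at most $R-1$), then gives $\prob{\MB^q_{\lambda^{(n)}_i}\vert_{R-1} = \ttt^{(j)}} \to \prob{T\vert_{R-1} = \ttt^{(j)}}$ for the growing parts, so that $F_\ttt(\lambda^{(n)}) \to F_\ttt(\eta)$ by continuity of the finite symmetric sum. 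Since $F_\ttt$ is bounded by $1$ and continuous on $\calP$, the weak convergence $q_{n-1} \Rightarrow q_\infty$ yields $\int F_\ttt \, dq_{n-1} \to \int F_\ttt \, dq_\infty$, which closes the induction.

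The argument for $\MB^{\calL,q}_n$ is structurally identical modulo the geometric branch of parameter $1 - q_n(n)$ prepended to the root's first non-trivial split. The hypothesis $q_n \Rightarrow q_\infty$ implies in particular $q_n((n)) \to q_\infty((\infty))$, so this branch length converges in distribution to its limiting analogue; one conditions on this length and runs exactly the same induction. The main obstacle throughout is the continuity of $F_\ttt$, which is precisely what forces us to proceed inductively: the induction hypothesis is what lets us control the behaviour of subtrees grafted onto the ``large'' parts of the first split of $T_n$, which become the infinite subtrees of $T$ in the limit.
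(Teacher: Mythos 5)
The proposal is correct and takes essentially the same approach as the paper: induction on $R$ via Lemma~\ref{lem:d-loc-criterion}, expressing the ball probability as an integral against $q_{n-1}$ of a bounded functional on $\calP$ whose $\upd_\calP$-continuity is supplied by the induction hypothesis, then passing to the limit via $q_{n-1}\Rightarrow q_\infty$. Your explicit verification of continuity (finite partitions are isolated in $\calP$; sequences converging to a point of $\calP_\infty$ eventually stabilise in length and in their finite parts while the remaining parts grow to $\infty$) simply fills in a step the paper states more tersely.
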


In many cases, the infinite trees we will consider will have a unique infinite spine,
which corresponds to $q_\infty(m_\infty=1)=1$
and the particular construction mentioned in Remark~\ref{nb:infinite-spine-construction}.
In this situation, we may use Theorem~\ref{thm:local-limits-markov-branching} alongside Lemma~\ref{lem:p-cv-one-spine-criterion}
to get the following corollary.

\begin{corollary}\label{cor:local-limits-mb-one-spine}
	Assume that $q_\infty$ is such that $q_\infty(m_\infty=1)=1$
	and suppose that for any finite partition $\lambda$ in $\calP_\infty$
	we have $q_n(n-\norm\lambda,\lambda) \to q_\infty(\infty,\lambda)$.
	Then, $\MB^q_n \Rightarrow \MB^{q,q_{\smash\infty}}_\infty$
	(or $\MB^{\smash\calL,q}_n \Rightarrow \MB^{\smash\calL,q,,q_{\smash\infty}}_\infty$)
	with respect to the local limit topology.
\end{corollary}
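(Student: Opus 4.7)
The plan is to observe that this corollary is essentially a direct combination of the two results that immediately precede it, so the work reduces to verifying that their hypotheses are met under the stated assumption, then chaining the two implications.

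First, I would invoke Lemma~\ref{lem:p-cv-one-spine-criterion}. Since by assumption $q_\infty$ is supported on partitions containing exactly one infinite part, and since the hypothesis gives us $q_n(n-\norm\lambda,\lambda) \to q_\infty(\infty,\lambda)$ for every finite partition $\lambda$ in $\calP_{<\infty}$, the lemma yields that $q_n \Rightarrow q_\infty$ with respect to the topology induced by $\upd_\calP$. (The only subtle point here is the minor abuse of notation in the statement: the partitions $\lambda$ to be tested range over $\calP_{<\infty}$, with $(\infty,\lambda)$ then being the corresponding element of $\calP_\infty$; this is exactly the content of Lemma~\ref{lem:p-cv-one-spine-criterion}.)

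Having established the weak convergence $q_n \Rightarrow q_\infty$ on $(\calP,\upd_\calP)$, I would then simply apply Theorem~\ref{thm:local-limits-markov-branching} to obtain the two desired local convergences $\MB^q_n \Rightarrow \MB^{q,q_\infty}_\infty$ and $\MB^{\calL,q}_n \Rightarrow \MB^{\calL,q,q_\infty}_\infty$. No further argument is required: the corollary is an immediate specialisation of the theorem to the single-infinite-spine regime, where the convergence criterion on $q_n$ takes a particularly concrete and easily checkable form (only finitely many ``small'' parts need to be tracked, the remaining mass $n - \norm\lambda$ being implicitly sent to infinity). There is no genuine obstacle in the proof itself; the real content has already been absorbed into Theorem~\ref{thm:local-limits-markov-branching} and Lemma~\ref{lem:p-cv-one-spine-criterion}. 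The value of stating the corollary separately is practical: in essentially all forthcoming applications (conditioned Galton--Watson trees, cut-trees, recursively built trees, etc.), the candidate limit will have a unique infinite spine, and it is this explicit pointwise criterion on $q_n$, rather than the abstract weak convergence on $\calP$, that one actually verifies.
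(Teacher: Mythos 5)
Your proof is correct and matches the paper's intended argument exactly: the text immediately preceding the corollary states that it follows from Theorem~\ref{thm:local-limits-markov-branching} alongside Lemma~\ref{lem:p-cv-one-spine-criterion}, which is precisely the chain you present. You also correctly flag that the hypothesis ranges over $\lambda\in\calP_{<\infty}$ rather than $\calP_\infty$ as misprinted in the statement.
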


\begin{proof}[of~Theorem~\ref{thm:local-limits-markov-branching}]
	For all $n$ in $\calN\cup\{\infty\}$, let $T_n$ follow $\MB^q_n$.
	We will use Lemma~\ref{lem:d-loc-criterion} and proceed by induction on $R$. 
	First, it clearly holds that for every tree $\ttt$,
	$\ttt\vert_0 = \{\varnothing\} = T_n\vert_0 = T_\infty\vert_0$ \as.
	
	\smallskip
	
	Let $R$ be a non-negative integer and suppose that for any $\tts\in\ttT$,
	$\prob {T_n\vert_R = \tts\vert_R} \to \prob {T_\infty\vert_R = \tts\vert_R}$ as $R\to\infty$.
	Fix $\ttt\in\ttT$ and set $d := c_\varnothing(\ttt)$, the number of children of its root.
	We may write $\ttt\vert_{R+1} = \lBrack\ttt_1,\dots,\ttt_d\rBrack$
	for some $\ttt_1,\dots,\ttt_d$ in $\ttT$ with height $R$ or less.
	
	The labelling of $\ttt_1,\dots,\ttt_d$ such that
	$\ttt\vert_{R+1} = \lBrack\ttt_1,\dots,\ttt_d\rBrack$ is arbitrary
	and creates a kind of order between the children vertices of the root of $\ttt$.
	As a result, we need to consider a subset $S$ of the set of permutations on $\{1,\dots, d\}$
	such that for every permutation $\sigma$ there is a unique $\tau\in S$
	such that for all $i=1,\dots,d$, $\ttt_{\sigma\cdot i} = \ttt_{\tau\cdot i}$
	as elements of $\ttT$.
	Then for all $n$ in $\calN\cup\{\infty\}$,
	it ensues from the Markov-branching nature of $T_n$ that
	\[
		\prob [1] {T_n\vert_{R+1} = \ttt\vert_{R+1}}
		= \int_\calP \sum_{\sigma\in S} \bigg({\textstyle\prod_{i=1}^d}
			\prob [1] {T_{\lambda_i}\vert_R = \ttt_{\sigma\cdot i}} \bigg) \, \ind_{p(\lambda)=d}
			\, q_{n-1}(\D\lambda).
	\]
	Our induction assumption ensures that for all $i=1,\dots,d$
	and $\tts$ in $\ttT$ with height $R$ or less,
	the application $\calP\to[0,1]$, $\lambda\mapsto \prob {T_{\lambda_{\smash i}}\vert_R = \tts} \,\ind_{p(\lambda)=d}$
	is continuous.
	As a result, $\prob {T_n\vert_{R+1} = \ttt\vert_{R+1}}$ may be expressed
	as the integral against $q_{n-1}$ of a finite sum of continuous functions.
	Therefore, since $q_n\Rightarrow q_\infty$,
	\[
		\prob [1] {T_n\vert_{R+1} = \ttt\vert_{R+1}}
		\xrightarrow [n\to\infty] {}
		\prob [1] {T_\infty\vert_{R+1} = \ttt\vert_{R+1}}.
	\]
	We proceed in the same way to prove the claim on $\MB^{\smash\calL,q}$ trees.
\end{proof}

In the next proposition, we prove that the condition ``$q_n\Rightarrow q_\infty$''
in Theorem~\ref{thm:local-limits-markov-branching} is optimal for $\MB^q$~trees.

\begin{proposition}
	Let $q = (q_{n-1})_{n\in\calN}$ be the sequence of first split distributions associated
	to a family $\MB^q$ of Markov branching trees with given number of vertices.
	If there exists a probability measure $q_\infty$ on $\calP_\infty$
	such that $\MB^q_n$ converges weakly to $\MB^{q,q_{\smash\infty}}_\infty$ for the local limit topology,
	then $q_{n-1}\Rightarrow q_\infty$ in the sense of the $\upd_\calP$ topology.
\end{proposition}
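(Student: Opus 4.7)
The plan is to invoke Lemma~\ref{lem:d-p-criterion}: it suffices to check that for each $\lambda \in \calP_{<\infty}$ and each $K \geq 0$,
\[
    q_{n-1}\bigl(\{\mu \in \calP : \mu\wedge K = \lambda\wedge K\}\bigr)
    \xrightarrow[n\to\infty]{}
    q_\infty\bigl(\{\mu \in \calP : \mu\wedge K = \lambda\wedge K\}\bigr).
\]
Letting $T_n \sim \MB^q_n$ and $T_\infty \sim \MB^{q,q_{\smash\infty}}_\infty$, the random first splits satisfy $\Lambda(T_n) \sim q_{n-1}$ and $\Lambda(T_\infty) \sim q_\infty$ by construction, so this is equivalent to the convergence in distribution of the truncated random partition $\Lambda(T_n)\wedge K$ towards $\Lambda(T_\infty)\wedge K$.

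The heart of the argument is to prove that the map $\phi_K : \ttT \to \calP$, $\ttt \mapsto \Lambda(\ttt)\wedge K$, depends only on $\ttt\vert_{K+1}$, and is therefore continuous for $\upd_\loc$. Given $\ttt\vert_{K+1}$, one reads off, for each sub-tree $\ttt_i$ above the root of $\ttt$, its truncation $\ttt_i\vert_K$. Two cases arise. If $\ttt_i\vert_K$ contains a vertex at depth $K$, then $\ttt_i$ contains a path of length $K$, hence $\#\ttt_i \geq K+1$ and $\#\ttt_i\wedge K = K$. Otherwise $\ttt_i$ has height at most $K-1$, so $\ttt_i = \ttt_i\vert_K$ is fully exposed and $\#\ttt_i$ is directly read off, whence $\#\ttt_i \wedge K = \min(\#\ttt_i, K)$. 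Taking the non-increasing rearrangement of the family $(\#\ttt_i \wedge K)_i$ yields $\phi_K(\ttt)$.

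Once this continuity is established, the hypothesis that $T_n \Rightarrow T_\infty$ for $\upd_\loc$ and the continuous mapping theorem give $\phi_K(T_n) \Rightarrow \phi_K(T_\infty)$ for $\upd_\calP$. Since $\phi_K$ takes values in the countable discrete set of partitions whose parts are all at most $K$, this weak convergence is equivalent to pointwise convergence of probabilities, which is precisely the displayed limit above for every $\lambda \in \calP_{<\infty}$. Lemma~\ref{lem:d-p-criterion} then concludes.

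The only real obstacle is conceptual rather than computational: one must verify that each sub-tree size, truncated at level $K$, can be extracted from a bounded-depth window of the tree even though the sub-tree sizes themselves are far from local quantities. The dichotomy above is what resolves this — as soon as a sub-tree reaches depth $K$ in the truncation, its total size already exceeds $K$ and is thereby ``saturated'' by the $\wedge K$ operation, while a sub-tree that stops short of depth $K$ is entirely visible and its exact size is known.
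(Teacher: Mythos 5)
Your proposal is correct and follows essentially the same route as the paper: both hinge on the observation that the truncated first split $\Lambda(\cdot)\wedge K$ is determined by a bounded-radius ball around the root, which makes $\Lambda$ continuous for $(\upd_\loc,\upd_\calP)$, and then conclude by the continuous mapping theorem. Your indexing (reading $\Lambda(\ttt)\wedge K$ from $\ttt\vert_{K+1}$) is in fact a bit more careful than the paper's stated ``$\ttt\vert_K=\tts\vert_K \Rightarrow \Lambda(\ttt)\wedge K=\Lambda(\tts)\wedge K$,'' which suffers a harmless off-by-one at $K=0$; either way $\Lambda$ is continuous and the argument goes through.
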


\begin{proof}
	Observe that for all $K\geq 0$ and $\ttt,\tts\in\ttT$,
	if $\ttt\vert_K = \tts\vert_K$ then $\Lambda (\ttt) \wedge K = \Lambda (\tts) \wedge K$.
	As a result, $\upd_\calP[\Lambda(\ttt), \Lambda(\tts)] \leq \upd_\loc(\ttt,\tts)$
	which proves in particular that the application $\Lambda:\ttT\to\calP$ is continuous.
	Consequently, since for all possibly infinite $n$, $\Lambda(T_n)$ has distribution $q_{n-1}$,
	in the sense of the $\upd_\calP$ topology we have $q_{n-1}\Rightarrow q_\infty$ when $n\to\infty$.
\end{proof}

\section{Background on scaling limits}\label{sec:ghp-topology-ssf-trees}

In this section, we will introduce the framework needed to consider the scaling limits
of both finite and infinite Markov branching trees as well as the corresponding limiting objects:
self-similar fragmentation trees with or without immigration.
Afterwards, we will also give a few useful results on point processes related
to our models of trees.

\subsection{\texorpdfstring{$\bbR$}{R}-trees and the GHP topology}\label{sec:ghp-topology}

To talk about scaling limits of discrete trees,
we need to introduce a continuous analogue.
We use the framework of $\bbR$-trees.
An \emph{$\bbR$-tree} (or \emph{real tree}) is a metric space $(T,d)$
such that for all $x$ and $y$ in $T$:
\begin{itemize}
	\item There exists a unique isometry $\varphi:[0,d(x,y)]\to T$
	such that $\varphi(0)=x$ and $\varphi[(d(x,y)]=y$,
	\item If $\gamma:[0,1]\to T$ is a continuous injection
	with $\gamma(0)=x$ and $\gamma(1)=y$,
	then $\Im\gamma = \Im\varphi =: \lBrack x,y \rBrack$.
\end{itemize}
This roughly means that any two points in an $\bbR$-tree
can be continuously joined by a single path, up to its reparametrisation,
which is akin to the acyclic nature of discrete trees.

To compare two such objects, we will use the Gromov-Hausdorff-Prokhorov distance.
More precisely, we will follow the definition from~\cite{abbgm2013minspantree}
and extend it in a way similar to that of~\cite{abraham2013ghp}.

\medskip

For any metric space $(X,\upd)$ let $\calM_f(X)$ be the set of all finite non-negative Borel measures on $X$
and $\calM(X)$ be the set of all non-negative and \emph{boundedly finite} Borel measures on $X$,
\ie non-negative Borel measures $\mu$ on $X$ such that $\mu(A) < \infty$ for all measurable bounded $A\subset X$.

A \emph{pointed} metric space is a $3$-tuple $(X,\upd,\rho)$ where $(X,\upd)$ is a metric space
and $\rho\in X$ is a fixed point, which we will call its \emph{root}.
For any $x\in X$, set $\abs x := \upd(\rho,x)$ the \emph{height} of $x$ in $(X,\upd,\rho)$,
and let $\abs X := \sup_{x\in X} \abs x$ be the height of $X$.

We will call \emph{pointed weighted metric space}
any $4$-tuple $\bfX = (X,\upd,\rho,\mu)$ where $(X,\upd)$ is a metric space,
$\rho\in X$ is its root and $\mu$ is a boundedly finite Borel measure on $X$.

\begin{remark}
	If $\bfX$ is a pointed weighted metric space,
	we will implicitly note $\bfX = (X,\upd_X,\rho_X,\mu_X)$
	unless otherwise stated.
\end{remark}

Two pointed weighted metric spaces $\bfX$ and $\bfY$
will be called \emph{GHP-isometric} if there exists a bijective isometry $\Phi:X\to Y$
such that $\Phi(\rho_X) = \rho_Y$ and $\mu_X\circ\Phi^{-1} = \mu_Y$.
Let $\bbK$ be the set of GHP-isometry classes of \emph{compact} pointed weighted metric spaces.

\subsubsection{Comparing compact metric spaces}\label{sec:compact-ghp-topology}
Let $\bfX$ and $\bfY$ be two pointed weighted compact metric spaces.
A \emph{correspondence} between $\bfX$ and $\bfY$
is a measurable subset $C$ of $X\times Y$ which contains $(\rho_X,\rho_Y)$
such that for any $x\in X$ there exists $y\in Y$ with $(x,y)\in C$
and conversely, for any $y\in Y$ there is $x\in X$ such that $(x,y)\in C$.
We will denote by $\upC(\bfX,\bfY)$ (or $\upC(X,Y)$ with a slight abuse of notation)
the set of all pointed correspondences between $\bfX$ and $\bfY$.
For any $C\in \upC(\bfX,\bfY)$, let its \emph{distortion} be defined as follows:
\[
	\dis_{\bfX,\bfY} C
	:= \sup \big\{ \abs {\upd_X(x,x') - \upd_Y(y,y')} \,:\, (x,y),(x',y')\in C \big\}.
\]
When the setting is clear, we will simply note $\dis C := \dis_{\bfX,\bfY} C$.
Observe that $\dis C \leq 2 \, \big( \abs X \vee \abs Y \big) < \infty$
and that $\dis C \geq \abs [1] {\abs X - \abs Y}$.

\smallskip

For any finite Borel measure $\pi$ on $X\times Y$,
we define its \emph{discrepancy} with respect to $\mu_X$ and $\mu_Y$ as:
\[
	\upD(\pi;\mu_X,\mu_Y)
	:= \norm {\mu_X - \pi\circ p_X^{-1}}_\TV
	+ \norm {\mu_Y - \pi\circ p_Y^{-1}}_\TV
\]
where $\norm {\,\cdot\,}_\TV$ is the total variation norm,
and $p_X:(x,y)\in X\times Y \mapsto x$, $p_Y:(x,y)\in X\times Y \mapsto y$
are the canonical projections from $X\times Y$ to $X$ and $Y$ respectively.
The definition of the total variation norm and the triangular inequality
give $\upD(\pi;\mu_X,\mu_Y) \geq \abs {\mu_X(X) - \mu_Y(Y)}$.

\smallskip

Following~\cite[Section 2.1]{abbgm2013minspantree},
we define the \emph{Gromov-Hausdorff-Prokhorov} distance (or GHP distance for short)
between two pointed weighted compact metric spaces $\bfX$ and $\bfY$ as:
\[
	\upd_\GHP(\bfX,\bfY)
	:= \inf \bigg\{ \frac 1 2 \dis C \vee \upD(\pi;\mu_X,\mu_Y) \vee \pi(C^c)
	\: : \: C\in\upC(X,Y), \pi\in\calM(X\times Y) \bigg\}
\]
where $C^c = X\times Y\setminus C$.

\begin{remark}\label{nb:height-mass-ghp-continuity}
	Observe that $\upd_\GHP(\bfX,\bfY) \leq \big(\abs X \vee \abs Y\big) \vee \big(\mu_X(X) + \mu_Y(Y)\big)$
	and is consequently finite.
	Moreover, $\upd_\GHP(\bfX,\bfY) \geq \big(1/2\cdot \abs [1] {\abs X - \abs Y}\big) \vee \abs [1] {\mu_X(X) - \mu_Y(Y)}$.
	Therefore, the applications $\bbK\to\bbR_+$,
	$\bfX\mapsto\abs\bfX$ and $\bfX\mapsto\mu_X(X)$ are both continuous
	with respect to $\upd_\GHP$.
\end{remark}

As was mentioned in~\cite[Section 2.1]{abbgm2013minspantree},
$\upd_\GHP$ is a well-defined distance on $\bbK$ which gives rise to the same topology
as the GHP distance defined in~\cite{abraham2013ghp}.
As a result and thanks to~\cite[Theorem~2.5]{abraham2013ghp},
$(\bbK,\upd_\GHP)$ is completely metrisable and separable.
It is therefore Polish.

\paragraph{Rescaling compact metric spaces}
For all $m\geq 0$, let $\mathbf 0^{(m)} := \big(\{\varnothing\},d,\varnothing,m\delta_\varnothing\big) \in \bbK$
be the degenerate metric space only made out of its root on which a mass $m$ is put.
For a pointed weighted metric space $\bfX$
and any non-negative real numbers $a$ and $b$,
we will note $(aX,b\mu_X) := (X,a\upd_X,\rho_X,b\mu_X)$.
When $\bfX$ is in $\bbK$ and $\mu_X(X)=m$,
we will use the convention $(0X,\mu_X) = \mathbf 0^{(m)}$
(which makes sense since $(\varepsilon X,\mu_X)$ converges to $\mathbf 0^{(m)}$
as $\varepsilon$ goes to $0$ with respect to $\upd_\GHP$).

\begin{lemma}\label{lem:ghp-rescaling-bounds}
	Let $\bfX$ and $\bfY$ be two elements of $\bbK$.
	For any non-negative real numbers $a$, $b$, $c$ and $d$:
	\begin{align*}
		(i) \quad & \upd_\GHP\big( (a X, b \mu_X), (c X, d \mu_X)\big)
		\leq \big( \abs {a-c} \, \abs {X} \big) \vee \big(\abs {b-d} \, \mu_X(X)\big),\\
		\mathllap{\text{and}\qquad\qquad} (ii) \quad &
		\upd_\GHP\big((a X, b \mu_X), (a Y, b \mu_Y)\big) \leq (a\vee b) \, \upd_\GHP(\bfX,\bfY).
	\end{align*}
\end{lemma}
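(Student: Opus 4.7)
The plan is to handle each part by constructing explicit correspondences and transport measures that realise the claimed bounds, rather than invoking any optimisation argument directly.

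For part $(i)$, since both spaces share the same underlying set $X$, the natural choice is the diagonal correspondence $C := \{(x,x) : x \in X\} \subset X \times X$, which clearly contains $(\rho_X,\rho_X)$. Its distortion in the rescaled spaces is
\[
    \dis_{(aX,b\mu_X),(cX,d\mu_X)} C
    = \sup_{x,x' \in X} \abs [1] {a \, \upd_X(x,x') - c \, \upd_X(x,x')}
    = \abs{a-c} \cdot \mathrm{diam}(X,\upd_X)
    \leq 2 \abs{a-c} \cdot \abs X,
\]
using the triangle inequality through the root. For the transport measure, I will take $\pi$ to be the pushforward of $\min(b,d) \cdot \mu_X$ along the diagonal map $x \mapsto (x,x)$. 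Then $\pi$ is supported on $C$ (so $\pi(C^c) = 0$) and a direct computation gives $\upD(\pi; b\mu_X, d\mu_X) = \abs{b-d} \, \mu_X(X)$. Plugging into the definition of $\upd_\GHP$ yields exactly the desired bound.

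For part $(ii)$, I will take any correspondence $C \in \upC(X,Y)$ and any measure $\pi \in \calM(X \times Y)$ and argue that they also serve as a correspondence and transport measure between the rescaled spaces, with controlled cost. Specifically, the same set $C$ is still a correspondence between $(aX, b\mu_X)$ and $(aY, b\mu_Y)$, and its distortion scales exactly by the factor $a$:
\[
    \dis_{(aX,b\mu_X),(aY,b\mu_Y)} C = a \cdot \dis_{\bfX,\bfY} C.
\]
Using instead the rescaled measure $b\pi$ on $X \times Y$, the pushforwards under $p_X, p_Y$ are $b \cdot (\pi \circ p_X^{-1})$ and $b \cdot (\pi \circ p_Y^{-1})$, so by homogeneity of the total variation norm, $\upD(b\pi; b\mu_X, b\mu_Y) = b \cdot \upD(\pi; \mu_X, \mu_Y)$, and obviously $b\pi(C^c) = b \cdot \pi(C^c)$. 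Combining these three pieces gives
\[
    \tfrac 1 2 \dis_{(aX,\ldots),(aY,\ldots)} C \vee \upD(b\pi; b\mu_X, b\mu_Y) \vee b\pi(C^c)
    \leq (a \vee b) \, \Bigl[ \tfrac 1 2 \dis C \vee \upD(\pi;\mu_X,\mu_Y) \vee \pi(C^c) \Bigr].
\]
Taking the infimum over all admissible $(C,\pi)$ on both sides delivers the claimed inequality.

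Neither step presents a real obstacle; the arguments are essentially homogeneity checks. The only point that requires a moment's care is to make sure the construction in $(i)$ remains valid at the boundary $a=0$ or $b=0$, where one must invoke the convention $(0X,\mu_X) = \mathbf 0^{(m)}$ and verify by continuity (as noted just before the statement) that the bound passes to the limit; this is essentially automatic from the above since the diagonal correspondence and the pushforward of $\min(b,d) \mu_X$ continue to make sense in the degenerate case.
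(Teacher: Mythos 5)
Your proof is correct and follows essentially the same strategy as the paper: the diagonal correspondence in part $(i)$, and the homogeneity of the distortion and discrepancy under rescaling in part $(ii)$, followed by taking the infimum. The only cosmetic difference is in $(i)$: you push forward $\min(b,d)\,\mu_X$ along the diagonal, whereas the paper pushes forward $b\mu_X$; both yield $\upD(\pi;b\mu_X,d\mu_X) = \abs{b-d}\,\mu_X(X)$ with $\pi(C^c)=0$, so this is an equivalent choice rather than a different argument.
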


\begin{proof}
	\noindent$(i)$\quad
	Let $C = \{(x,x) : x\in X\} \in \upC(X,X)$.
	We have
	\[
		\dis_{(aX,b\mu_X), \, (cX,d\mu_X)} C
		= \sup \big\{ \abs {a \, \upd_X(x,y) - c \, \upd_X(x,y)} : x,y\in X \big\}
		\leq 2 \abs {a-c} \, \abs X.
	\]
	Let $\pi\in\calM(X\times X)$ be defined for all measurable $A\subset X\times X$
	by $\pi(A) = \int_X \ind_A\big((x,x)\big) \, b\mu_X(\D x)$.
	Then $\upD(\pi; b\mu_X, d\mu_X) = \abs {b-d} \mu_X(X)$
	and $\pi(C^c) = 0$.
	
	\smallskip
	
	\noindent$(ii)$\quad
	For every correspondence $C\in\upC(X,Y)$, we clearly have
	$\dis_{(aX,b\mu_X), \, (aY,b\mu_Y)} C = a \dis_{\bfX,\bfY} C$.
	No less clearly, for any finite measure $\pi$ on $X\times Y$,
	$\upD(b\pi;b\mu_X,b\mu_Y) = b\upD(\pi;\mu_X,\mu_Y)$.
\end{proof}

\begin{corollary}
	The application $\bbK \times \bbR_+ \times \bbR_+ \longrightarrow \bbK$
	defined by $(\bfX,a,b) \longmapsto (aX,b\mu_X)$
	is continuous for the product topology.
\end{corollary}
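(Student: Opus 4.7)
The plan is to prove sequential continuity: suppose $(\bfX_n, a_n, b_n) \to (\bfX, a, b)$ in $\bbK \times \bbR_+ \times \bbR_+$, and show that $(a_n X_n, b_n \mu_{X_n}) \to (a X, b \mu_X)$ in $\bbK$. The idea is a standard ``split and conquer'' argument using the triangle inequality together with the two separate bounds from Lemma~\ref{lem:ghp-rescaling-bounds}: one bound controls variations in the scalars with the space held fixed, the other controls variations in the space with the scalars held fixed.

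Concretely, I would insert the intermediate point $(a X_n, b \mu_{X_n})$ and write
\[
	\upd_\GHP\big((a_n X_n, b_n \mu_{X_n}), (a X, b \mu_X)\big)
	\leq \upd_\GHP\big((a_n X_n, b_n \mu_{X_n}), (a X_n, b \mu_{X_n})\big)
	+ \upd_\GHP\big((a X_n, b \mu_{X_n}), (a X, b \mu_X)\big).
\]
By part $(i)$ of Lemma~\ref{lem:ghp-rescaling-bounds}, the first term on the right is bounded by $\big(\abs{a_n - a} \cdot \abs{X_n}\big) \vee \big(\abs{b_n - b} \cdot \mu_{X_n}(X_n)\big)$, and by part $(ii)$, the second term is bounded by $(a \vee b) \cdot \upd_\GHP(\bfX_n, \bfX)$.

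The second term goes to $0$ directly because $\bfX_n \to \bfX$ in $(\bbK, \upd_\GHP)$ and $a \vee b$ is a fixed constant. For the first term, I invoke Remark~\ref{nb:height-mass-ghp-continuity}: the maps $\bfX \mapsto \abs \bfX$ and $\bfX \mapsto \mu_X(X)$ are $\upd_\GHP$-continuous on $\bbK$, so the sequences $\abs{X_n}$ and $\mu_{X_n}(X_n)$ converge to $\abs X$ and $\mu_X(X)$ respectively, and are in particular bounded. Combined with $\abs{a_n - a} \to 0$ and $\abs{b_n - b} \to 0$, this forces the first term to vanish as $n \to \infty$.

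There is essentially no obstacle: the two bounds in Lemma~\ref{lem:ghp-rescaling-bounds} were tailored precisely to yield this corollary, and the only non-trivial ingredient beyond the triangle inequality is the boundedness of $\abs{X_n}$ and $\mu_{X_n}(X_n)$, which is immediate from the continuity statement in Remark~\ref{nb:height-mass-ghp-continuity}. Since $\bbK \times \bbR_+ \times \bbR_+$ is metrisable, sequential continuity suffices to conclude.
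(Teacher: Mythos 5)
Your proof is correct and is exactly the argument the paper leaves implicit: the corollary is stated without proof because the triangle inequality through the intermediate point $(aX_n, b\mu_{X_n})$, together with parts $(i)$ and $(ii)$ of Lemma~\ref{lem:ghp-rescaling-bounds} and the boundedness of $\abs{X_n}$, $\mu_{X_n}(X_n)$ from Remark~\ref{nb:height-mass-ghp-continuity}, is the intended derivation.
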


\paragraph{Concatenated compact metric spaces}
Let $(\bfX_i)_{i\in\calI}$ be a countable family of pointed weighted metric spaces
with $\bfX_i = (X_i, \upd_i, \rho_i, \mu_i)$.
Let $(X,\upd,\rho,\mu)$ where:
\begin{itemize}
	\item $X = \{\rho\} \sqcup \bigsqcup_{i\in\calI} X_i$,
	\item $\upd$ is defined by:
	\begin{itemize}
		\item For all $i,j\in\calI$, $\upd(\rho, \rho_i) := \upd(\rho_i, \rho_j) = 0$,
		\item For all $i\in\calI$, and $x,y\in X_i$, $\upd(x,y) := \upd_i(x,y)$,
		\item For all $i\neq j$ and $x\in X_i$, $y\in X_j$,
		$\upd(x,y) := \upd_i(x,\rho_i) + \upd_j(y,\rho_j)$,
	\end{itemize}
	\item For any Borel subset $A$ of $X$,
	$\mu (A) = \sum_{i\in\calI} \mu_i(A\cap X_i)$.
\end{itemize}
With a slight abuse of notation, we will consider $(X,\upd)$ to be the quotient metric space
$X/\sim_\upd$ where $x\sim_\upd y$ \tiff $\upd(x,y) = 0$.
For each $i$ in $\calI$, we will also identify $X_i$ with its image in $X$ by the quotient map.
Note $\bfX =: \langle \bfX_i \,;\, i\in\calI\rangle$.

\begin{remark}
	If $(\bfT_i)_{i\in\calI}$ is a countable family of weighted  $\bbR$-trees,
	then $\langle\bfT_i \,;\, i\in\calI\rangle$ is clearly an $\bbR$-tree itself.
\end{remark}

\begin{lemma}\label{lem:concatenation-compactness}
	For all $i\geq 1$, let $\bfX_i = (X_i,\upd_i,\rho_i,\mu_i)$ be in $\bbK$.
	Their concatenation $\langle\bfX_i \,;\, i\geq 1\rangle$
	is an element of~$\bbK$ \tiff
	the height $\abs {X_i}$ of $X_i$ goes to $0$ as $i$ goes to infinity
	and $\sum_{i\geq 1} \mu_i(X_i)$ is finite.
\end{lemma}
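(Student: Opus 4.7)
The plan is to handle the two directions separately, and within each, to verify the conditions on the distance function and on the measure independently.

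For the $(\Leftarrow)$ implication, assume $\abs{X_i}\to 0$ and $\sum_{i\geq 1}\mu_i(X_i)<\infty$. The measure side is immediate from the definition $\mu(\,\cdot\,)=\sum_i \mu_i(\,\cdot\,\cap X_i)$: taking $A=X$ yields $\mu(X)=\sum_i\mu_i(X_i)<\infty$, so $\mu$ is finite and a fortiori boundedly finite. For the metric side, I would establish sequential compactness of $X$. Let $(x_n)$ be a sequence in $X$, and distinguish two cases. If some index $j$ has $x_n\in X_j$ for infinitely many $n$, then the compactness of $X_j$ yields a subsequence converging in $X_j$, hence in $X$. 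Otherwise every $X_j$ meets $\{x_n\,:\,n\geq 1\}$ only finitely often, so after thinning either $x_n=\rho$ eventually, or $x_n\in X_{i_n}$ with $i_n\to\infty$; in the latter case the concatenation distance gives $\upd(x_n,\rho)\leq\abs{X_{i_n}}$, which tends to $0$ by hypothesis, so $x_n\to\rho$.

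For the $(\Rightarrow)$ implication, assume $\bfX:=\langle\bfX_i\,;\,i\geq 1\rangle$ lies in $\bbK$. Compactness of $X$ forces $\mu$ to be finite, and applying it to $X$ gives $\sum_i\mu_i(X_i)=\mu(X)<\infty$. For the height condition, I would argue by contraposition: suppose there exist $\varepsilon>0$ and indices $i_1<i_2<\dots$ with $\abs{X_{i_k}}\geq\varepsilon$. For each $k$, pick $x_k\in X_{i_k}$ with $\upd(x_k,\rho)=\upd_{i_k}(x_k,\rho_{i_k})\geq\varepsilon/2$. The explicit formula for the concatenation distance gives $\upd(x_k,x_l)=\upd(x_k,\rho)+\upd(x_l,\rho)\geq\varepsilon$ for $k\neq l$, so $(x_k)$ admits no Cauchy subsequence, contradicting compactness of $X$.

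The only minor bookkeeping concern is the quotient by $\sim_\upd$ used to turn the disjoint union into a bona fide metric space: one must check that identifying all $\rho_i$ with $\rho$ does not affect the intrinsic distances within each $X_i$ (it does not, as $\upd$ restricted to $X_i$ equals $\upd_i$) nor the values $\mu_i(X_i)$ (the measure is defined on $X_i$ first and then pushed forward). I do not foresee any serious obstacle: both implications reduce to standard arguments about sequential compactness of disjoint unions rooted at a common point and countable additivity of the concatenated measure.
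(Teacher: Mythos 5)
Your proof is correct. The measure side is identical to the paper's, but for the metric side you argue via sequential compactness (extract a convergent subsequence in two cases, depending on whether a single $X_j$ captures infinitely many terms) whereas the paper argues via total boundedness (build a finite $\varepsilon$-net by covering $X_1,\dots,X_n$ and noting that the tail $\bigcup_{i>n}X_i$ lies in a single $\varepsilon$-ball around $\rho$). Both are standard characterisations of compactness for metric spaces and the arguments are of comparable length; the paper's $\varepsilon$-net version is slightly more in the spirit of the GHP machinery used throughout (where one repeatedly constructs correspondences and covers), while your subsequence version has the small bonus that the converse direction falls out symmetrically — the non-Cauchy sequence you build is exactly the witness that $X$ fails to be sequentially compact. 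One tiny point to make explicit in the forward direction: when no $X_j$ captures infinitely many $x_n$ and not all but finitely many $x_n$ equal $\rho$, the set of indices $j$ that are hit must be infinite (pigeonhole), which is what lets you thin to a strictly increasing subsequence $i_n\to\infty$; you gesture at this but it deserves a half-sentence.
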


\begin{proof}
	Set $\bfX := \langle\bfX_i \,;\, i\geq 1\rangle$
	and for all $x$ in $X$ and positive $r$, note $\upB_X(x,r) := \{y\in X : \upd_X(x,y)<r\}$
	the open ball of $X$ centred at $x$ with radius $r$.
	Similarly,  for all $i\geq 1$ and $x\in X_i$, note $\upB_i(x,r) := \{y\in X_i : \upd_i(x,y)<r\}$.
	Clearly, the measure $\mu_X$ is finite \tiff
	the sum $\sum_{i\geq 1} \mu_i(X_i)$ is.
	
	If $\abs {X_i}\to 0$, then in particular, for all positive $\varepsilon$,
	there exists a integer $n$ such that $\bigcup_{i>n} X_i \subset \upB_X(\rho_X,\varepsilon)$.
	Moreover, since $X_i$ is compact for all $i=1,\dots,n$,
	we can find a finite $\varepsilon$-cover of $X_i$,
	\ie a finite subset $A_i$ of $X_i$ such that
	$X_i \subset \bigcup_{x\in A_{\smash i}} \upB_i(x,\varepsilon)$.
	Set $A := \{\rho_X\} \cup A_1 \cup \dots \cup A_n$.
	Observe that it is finite and that $X \subset \bigcup_{x\in A} \upB_X(x,\varepsilon)$.
	Since this holds for all positive $\varepsilon$, it follows that $X$ is compact.
	
	If $\limsup \abs {X_i} > 0$, then there exists a positive $\varepsilon$
	such that $\abs {X_i} > \varepsilon$ for infinitely many indices $i$.
	As a result, $X$ cannot have a finite $\varepsilon$-cover,
	which implies that it is not compact.
\end{proof}

\begin{lemma}\label{lem:continuous-concatenation-ghp-bounds}
	Let $\bfX_i$, $\bfY_i$, $i\geq 1$ be in $\bbK$ and
	such that $\bfX := \langle\bfX_i \,;\, i\geq 1\rangle$ and $\bfY := \langle\bfY_i \,;\, i\geq 1\rangle$
	both belong to $\bbK$.
	We have
	\[
		\textstyle
		\upd_\GHP\Big(
			\langle\bfX_i \,;\, i\geq 1\rangle,
			\langle\bfY_i \,;\, i\geq 1\rangle\Big)
		\leq \sum_{i\geq 1} \upd_\GHP(\bfX_i,\bfY_i).
	\]
\end{lemma}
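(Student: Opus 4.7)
\begin{proof}[of Lemma~\ref{lem:continuous-concatenation-ghp-bounds}]
Proof plan (to be filled in below).
\end{proof}

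\smallskip

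The plan is to build a correspondence and a coupling measure on the whole of $X \times Y$ by patching together near-optimal ones on each piece $X_i \times Y_i$, and then verify that each of the three quantities appearing in the definition of $\upd_\GHP$ is subadditive under this construction. Fix $\varepsilon > 0$. For each $i \geq 1$, choose a correspondence $C_i \in \upC(\bfX_i,\bfY_i)$ and a measure $\pi_i \in \calM_f(X_i \times Y_i)$ such that
\[
	\tfrac 1 2 \dis_{\bfX_i,\bfY_i} C_i \vee \upD(\pi_i;\mu_{X_i},\mu_{Y_i}) \vee \pi_i(C_i^c)
	\leq \upd_\GHP(\bfX_i,\bfY_i) + \varepsilon \, 2^{-i}.
\]
Since $(\rho_{X_i},\rho_{Y_i}) \in C_i$ for each $i$, and all these root pairs are identified with the single pair $(\rho_X,\rho_Y)$ under the quotient defining $\bfX$ and $\bfY$, the subset $C := \bigcup_{i \geq 1} C_i \subset X \times Y$ is a well-defined measurable pointed correspondence between $\bfX$ and $\bfY$. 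Similarly, $\pi := \sum_{i \geq 1} \pi_i$, viewed as a Borel measure on $X \times Y$ supported by $\bigcup_i X_i \times Y_i$, is finite because $\pi_i(X_i \times Y_i) \leq \mu_{X_i}(X_i) + \mu_{Y_i}(Y_i) + \upd_\GHP(\bfX_i,\bfY_i) + \varepsilon 2^{-i}$, and both $\sum_i \mu_{X_i}(X_i)$ and $\sum_i \mu_{Y_i}(Y_i)$ are finite by Lemma~\ref{lem:concatenation-compactness}.

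Next I would bound each of the three quantities. For the discrepancy, since $\mu_X = \sum_i \mu_{X_i}$ (with the roots identified) and $\pi \circ p_X^{-1} = \sum_i \pi_i \circ p_{X_i}^{-1}$, the subadditivity of the total variation norm yields
\[
	\upD(\pi;\mu_X,\mu_Y)
	\leq \sum_{i \geq 1} \upD(\pi_i;\mu_{X_i},\mu_{Y_i})
	\leq \sum_{i \geq 1} \upd_\GHP(\bfX_i,\bfY_i) + \varepsilon,
\]
and analogously $\pi(C^c) = \sum_i \pi_i(C_i^c) \leq \sum_i \upd_\GHP(\bfX_i,\bfY_i) + \varepsilon$. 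The distortion bound is the only delicate step: for $(x,y),(x',y') \in C$ with $(x,y) \in C_i$, $(x',y') \in C_j$, if $i = j$ then $|\upd_X(x,x') - \upd_Y(y,y')| = |\upd_i(x,x') - \upd_i(y,y')| \leq \dis C_i$, while if $i \neq j$, the definition of $\upd_X$ and $\upd_Y$ on the concatenation gives
\[
	\abs [1] {\upd_X(x,x') - \upd_Y(y,y')}
	\leq \abs [1] {\upd_i(x,\rho_{X_i}) - \upd_i(y,\rho_{Y_i})}
	+ \abs [1] {\upd_j(x',\rho_{X_j}) - \upd_j(y',\rho_{Y_j})}
	\leq \dis C_i + \dis C_j,
\]
where I used $(\rho_{X_i},\rho_{Y_i}) \in C_i$ and $(\rho_{X_j},\rho_{Y_j}) \in C_j$. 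In either case the right-hand side is at most $\sum_{k} \dis C_k$, hence $\tfrac 1 2 \dis C \leq \sum_k \tfrac 1 2 \dis C_k \leq \sum_k \upd_\GHP(\bfX_k,\bfY_k) + \varepsilon$.

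Combining the three bounds gives $\upd_\GHP(\bfX,\bfY) \leq \sum_{i \geq 1} \upd_\GHP(\bfX_i,\bfY_i) + \varepsilon$, and letting $\varepsilon \downarrow 0$ finishes the proof. The main subtlety I anticipate is the cross-term distortion estimate in the case $i \neq j$: it is crucial there that the root pairs $(\rho_{X_i},\rho_{Y_i})$ belong to $C_i$ (which is built into the definition of a pointed correspondence) so that each of the two summands is genuinely controlled by $\dis C_i$ and $\dis C_j$ and not by some larger quantity; everything else is a matter of carefully bookkeeping the identifications of the various roots with $\rho_X$ and $\rho_Y$ in the concatenation.
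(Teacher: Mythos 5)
Your proof is correct and follows essentially the same approach as the paper: you patch together near-optimal correspondences $C_i$ and couplings $\pi_i$, bound the distortion by splitting into the same-piece and cross-piece cases (the latter via the root pairs $(\rho_{X_i},\rho_{Y_i})\in C_i$), and use subadditivity of the total variation norm for the discrepancy. The only cosmetic difference is that the paper works with the finite truncation $\pi^{(n)} := \sum_{i=1}^n \pi_i$ and absorbs the tail $\sum_{j>n}\big(\mu_{X_j}(X_j)+\mu_{Y_j}(Y_j)\big)$ into an extra $\varepsilon$ (thus not needing to verify that the infinite sum is a finite measure), whereas you define $\pi=\sum_{i\geq 1}\pi_i$ directly and check finiteness — which implicitly requires $\sum_i\upd_\GHP(\bfX_i,\bfY_i)<\infty$, a harmless assumption since otherwise the claimed inequality is vacuous.
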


\begin{proof}
	Set $\bfX := \langle\bfX_i \,;\, i\geq 1\rangle$ and $\bfY := \langle\bfY_i \,;\, i\geq 1\rangle$.
	For all positive $\varepsilon$ and $i\geq 1$,
	there exists a correspondence $C_i$ in $\upC(\bfX_i,\bfY_i)$
	and a finite Borel measure $\pi_i$ on $X_i\times Y_i$
	such that
	\[
		\frac 1 2 \dis C_i \vee \upD(\pi_i;\mu_{X_i},\mu_{Y_i}) \vee \pi_i(C_i^c)
		< \upd_\GHP(\bfX_i,\bfY_i) + 2^{-i}\varepsilon.
	\]
	
	Set $C := \bigcup_{i\geq 1} C_i$,
	which is a correspondence between $\bfX$ and $\bfY$.
	Let $(x,y)$ and $(x',y')$ be in $C$.
	If both $(x,y)$ and $(x',y')$ are in $C_i$ for some $i$,
	then clearly, $\abs {\upd_X(x,x') - \upd_Y(y,y')} \leq \dis C_i$.
	Otherwise, if $(x,y)\in C_i$ and $(x',y')\in C_j$ with $i\neq j$,
	then using the definition of $\upd_X$ and $\upd_Y$ as well as the triangular inequality,
	we get $\abs {\upd_X(x,x') - \upd_Y(y,y')} \leq \dis C_i + \dis C_j$.
	Therefore, $1/2 \cdot \dis C \leq \sum_{i\geq 1} \upd_\GHP(\bfX_i,\bfY_i) + \varepsilon$.
	
	For all $n\geq 0$, define the finite Borel measure $\pi^{(n)}$ on $X\times Y$
	by $\pi^{(n)}(A) := \sum_{i=1}^n \pi_i\big[A\cap(X_i\times Y_i)\big]$
	for any Borel set $A$.
	By definition,
	\[
		\textstyle
		\pi^{(n)}(C^c)
		= \sum_{i=1}^n \pi_i\big[C^c\cap(X_i\times Y_i)\big]
		= \sum_{i=1}^n \pi_i [C_i^c]
		\leq \sum_{i\geq 1} \upd_\GHP(\bfX_i,\bfY_i) + \varepsilon.
	\]
	Moreover, the discrepancy of $\pi^{(n)}$ with respect to $\mu_X$ and $\mu_Y$ satisfies
	\begin{align*}
		\upD(\pi^{(n)};\mu_X,\mu_Y)
		& \leq \textstyle\sum_{i=1}^n
			\norm {\mu_{X_i} - \pi_i\circ p_{X_i}^{-1}}_\TV
			+ \norm {\mu_{Y_i} - \pi_i\circ p_{Y_i}^{-1}}_\TV
		+ \sum_{j>n} \big(\norm {\mu_{X_j}}_\TV + \norm {\mu_{Y_j}}_\TV\big)\\
		& \leq \textstyle\sum_{i=1}^n \upD(\pi_i;\mu_{X_i},\mu_{Y_i})
		+ \sum_{j>n} \big(\mu_{X_j}(X_j) + \mu_{Y_j}(Y_j)\big)\\
		& \leq \textstyle\sum_{i\geq 1}^n \upd_\GHP(\bfX_i,\bfY_i) + \varepsilon
		+ \sum_{j>n} \big(\mu_{X_j}(X_j) + \mu_{Y_j}(Y_j)\big).
	\end{align*}
	In light of Lemma~\ref{lem:concatenation-compactness}, there exists $n$ such that
	$\sum_{i>n} \mu_{X_{\smash i}}(X_i) + \mu_{Y_{\smash i}}(Y_i) < \varepsilon$.
	As a result, $\upd_\GHP (\bfX,\bfY) \leq \sum_{i\geq 1} \upd_\GHP(\bfX_i,\bfY_i) + 2 \varepsilon$
	which holds for all positive $\varepsilon$.
\end{proof}

\subsubsection{Extension to locally compact \texorpdfstring{$\bbR$}{R}-trees}\label{sec:ghp-extension}
Let $\bfX = (X,\upd_X,\rho_X,\mu_X)$ be a locally compact pointed weighted metric space
such that $\mu_X$ is a boundedly finite measure.
For all $r>0$, let $\bfX\vert_r := \big( X\vert_r , \upd_X, \rho_X, \mu_X\vert_r \big)$
where $X\vert_r := \{ x\in X : \abs x \leq r\}$ is the closed ball with radius $r$ centred at $\rho_X$
and $\mu_X\vert_r := \ind_{X\vert_{\smash r}} \mu_X$ is the restriction of $\mu_X$ to $X\vert_r$.
Observe that if $r\leq R$, clearly $(\bfX\vert_R)\vert_r = (\bfX\vert_r)\vert_R = \bfX\vert_r$.
We also define $\partial_r X := \{x\in X : \abs x = r\}$.

For any two locally compact pointed weighted metric spaces $\bfX$ and $\bfY$,
we define the extended Gromov-Hausdorff-Prokhorov distance between them~as:
\[
	\upD_\GHP(\bfX,\bfY) :=
	\int_0^\infty \E^{-r} \Big[ 1\wedge \upd_\GHP \big(\bfX\vert_r, \bfY\vert_r \big) \Big] \D r.
\]
This definition closely resembles that of the GHP distance on locally compact metric spaces
defined and studied in~\cite{abraham2013ghp}.

\begin{remark}\label{nb:extended-ghp-truncation}
	Let  $\bfX$ and $\bfY$ be two weighted locally compact pointed metric spaces.
	For all $R\geq 0$,
	\begin{align*}
		& \textstyle
		\abs [1] {\upD_\GHP(\bfX,\bfY) - \upD_\GHP(\bfX\vert_R,\bfY\vert_R)}
		\leq \int_R^\infty \E^{-r} \,
			\underbrace {\abs [1] {
			1\wedge \upd_\GHP (\bfX\vert_r, \bfY\vert_r )
				- 1\wedge \upd_\GHP (\bfX\vert_R, \bfY\vert_R )}}_{\leq\mathrlap 1} \, \D r
		\leq \E^{-R}.
	\end{align*}
\end{remark}

Let $\bbT$ be the set of GHP-isometry classes of locally compact rooted $\bbR$-trees
endowed with a boundedly finite Borel measure
and $\bbT_c$, be that of compact weighted and rooted $\bbR$-trees
(\ie $\bbT_c = \bbK\cap\bbT$).

\begin{proposition}\label{prop:extended-ghp-properties}
	\begin{enumerate}
		\renewcommand{\labelenumi}{$(\roman{enumi})$}
		\item $\upD_\GHP$ is a metric on $\bbT$,
		\item If $\bfT_n$, $n\geq 1$ and $\bfT$ belong to $\bbT$,
		then $\upD_\GHP(\bfT_n,\bfT) \to 0$ \tiff
		$\upd_\GHP(\bfT_n\vert_r,\bfT\vert_r) \to 0$
		for all $r\geq 0$ with $\mu_T(\partial_r T) = 0$,
		\item $(\bbT, \upD_\GHP)$ is a Polish metric space,
		\item $\upd_\GHP$ and $\upD_\GHP$ induce the same topology on $\bbT_c$.
	\end{enumerate}
\end{proposition}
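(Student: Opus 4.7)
The plan is to handle the four claims in order, with (ii) providing the technical core and furnishing the tools for (iii) and (iv). For (i), symmetry and the triangle inequality descend from the corresponding properties of $\upd_\GHP$ on $\bbK$ together with the subadditivity $1\wedge(x+y)\leq(1\wedge x)+(1\wedge y)$; for separation, $\upD_\GHP(\bfT,\bfT')=0$ forces $\upd_\GHP(\bfT\vert_r,\bfT'\vert_r)=0$ along a sequence $r_n\uparrow\infty$, and since the truncations exhaust the locally compact trees one concludes $\bfT=\bfT'$ in $\bbT$.

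The heart of the proposition is (ii). The backward direction reduces to dominated convergence: $\mu_T$ is boundedly finite, so the set $\{r\geq 0:\mu_T(\partial_r T)>0\}$ is at most countable, hence Lebesgue-negligible, and pointwise convergence at every continuity level forces the integrand to vanish Lebesgue-almost everywhere. The forward direction is the main obstacle. Given $\upD_\GHP(\bfT_n,\bfT)\to 0$, any subsequence admits a further subsequence along which $\upd_\GHP(\bfT_{n_k}\vert_r,\bfT\vert_r)\to 0$ for Lebesgue-almost every $r$. To upgrade this to convergence at a prescribed continuity level $r_0$ with $\mu_T(\partial_{r_0}T)=0$, I would first establish a local continuity statement for $s\mapsto\bfT\vert_s$ at $r_0$: the distance $\upd_\GHP(\bfT\vert_s,\bfT\vert_{r_0})$ is controlled by $\abs{s-r_0}$ (from the Hausdorff part of $\upd_\GHP$) plus the mass $\mu_T(\{r_0\wedge s<\abs\cdot\leq r_0\vee s\})$ (from the Prokhorov part), both of which vanish as $s\to r_0$ under the continuity hypothesis. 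A sandwich argument based on the triangle inequality for $\upd_\GHP$, applied with $s$ chosen close to $r_0$ inside the set of Lebesgue-a.e.~convergence, together with the analogous near-continuity of $s\mapsto\bfT_{n_k}\vert_s$ (uniform in $k$, the mass of the relevant annulus being controlled by the $\upD_\GHP$-convergence), then yields convergence at $r_0$.

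For (iii), separability follows by combining the separability of $(\bbK,\upd_\GHP)$ with the fact that every $\bfT\in\bbT$ is the $\upD_\GHP$-limit of its compact truncations $\bfT\vert_n$ by Remark~\ref{nb:extended-ghp-truncation}; once it is verified that $\upd_\GHP$-convergence on $\bbK$ implies $\upD_\GHP$-convergence (which follows from~(ii) applied at continuity levels), a countable $\upd_\GHP$-dense subfamily of $\bbK$ serves as a countable $\upD_\GHP$-dense subset of $\bbT$. For completeness, given a $\upD_\GHP$-Cauchy sequence $(\bfT_n)_n$, extract a subsequence $(\bfT_{n_k})_k$ with $\upD_\GHP(\bfT_{n_k},\bfT_{n_{k+1}})\leq 2^{-k}$; a Fubini argument then yields a Lebesgue-full set of levels $r$ for which $(\bfT_{n_k}\vert_r)_k$ is $\upd_\GHP$-Cauchy in $\bbK$ and therefore converges to some $\mathbf{S}_r\in\bbK$. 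Using (ii) one checks the consistency relation $\mathbf{S}_R\vert_r=\mathbf{S}_r$ whenever $r\leq R$ are both good levels, which permits gluing the $\mathbf{S}_r$ along an increasing sequence $r_j\uparrow\infty$ of good levels into a locally compact rooted weighted $\bbR$-tree $\bfT\in\bbT$, the limit of the original Cauchy sequence via~(ii) once more.

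Finally, for (iv): if $\bfT\in\bbT_c$ has height $H<\infty$ then $\bfT\vert_r=\bfT$ for every $r\geq H$. If $\upd_\GHP(\bfT_n,\bfT)\to 0$, heights and total masses converge by Remark~\ref{nb:height-mass-ghp-continuity}, so for any $r>H$ with $\mu_T(\partial_r T)=0$ one has $\bfT_n\vert_r=\bfT_n$ eventually and $\upd_\GHP(\bfT_n\vert_r,\bfT\vert_r)\to 0$, whence (ii) gives $\upD_\GHP(\bfT_n,\bfT)\to 0$. Conversely, if $\upD_\GHP(\bfT_n,\bfT)\to 0$, pick $r>H$ with $\mu_T(\partial_r T)=0$: (ii) gives $\upd_\GHP(\bfT_n\vert_r,\bfT)\to 0$, and convergence of heights forces $\bfT_n\vert_r=\bfT_n$ for $n$ large, so $\upd_\GHP(\bfT_n,\bfT)\to 0$. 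The chief technical obstacle throughout is the forward direction of (ii): the integral definition of $\upD_\GHP$ naturally provides only almost-everywhere convergence of truncations, and transferring this to pointwise convergence at every prescribed continuity level of $\mu_T$ requires the local continuity of the truncation map described above, which must be argued by hand.
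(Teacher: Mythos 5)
Your proposal is correct, and it reaches the statement by a genuinely more self-contained route than the paper. The paper handles the hard points by delegation: the separation property in $(i)$ is attributed to Proposition~5.3 of~\cite{abraham2013ghp}; the descent from convergence at a higher truncation level to a prescribed mass-continuity level in the forward direction of $(ii)$ is taken from Proposition~2.10 of~\cite{abraham2013ghp}; and $(iii)$ and $(iv)$ are obtained by observing that $\upD_\GHP$ is topologically equivalent to the metric of~\cite{abraham2013ghp} and then citing that reference's Theorem~2.9, Corollary~3.2 and Proposition~2.10. You instead prove the key lemma by hand: local continuity of $s\mapsto\bfT\vert_s$ in $\upd_\GHP$ at a level $r_0$ with $\mu_T(\partial_{r_0}T)=0$, controlled by $\abs{s-r_0}$ (distortion) and the mass of the annulus (discrepancy), with the annulus masses of the $\bfT_{n_k}$ made small uniformly via mass convergence at two sandwiching good levels $r_0'<r_0<s$. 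That hand-proved continuity is exactly the content of the cited Proposition~2.10, so the logical core is the same; what your route buys is independence from the reference at the cost of carrying out the sandwich estimate yourself. Your argument for $(iii)$ (separability via the truncations of Remark~\ref{nb:extended-ghp-truncation} and density of $\bbK$; completeness via a rapidly-Cauchy subsequence, Fubini to get an a.e.\ set of levels at which the truncations are $\upd_\GHP$-Cauchy, consistency of the limits $\mathbf S_r$, and gluing into a locally compact limit) and your argument for $(iv)$ (reduction to a finite truncation level $r>\abs{\bfT}$, using the continuity of height from Remark~\ref{nb:height-mass-ghp-continuity} to force $\bfT_n\vert_r=\bfT_n$ eventually) are likewise direct replacements for the paper's citations and are sound. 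One point you flag as the chief obstacle — the uniform-in-$k$ control of the annulus mass — is indeed where the work is, but it closes as you indicate once one uses mass-continuity of $\upd_\GHP$ at the two good levels and the hypothesis $\mu_T(\partial_{r_0}T)=0$, so there is no gap.
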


\begin{proof}
	\noindent$(i)$\quad
	Since $\upd_\GHP$ is a metric, $\upD_\GHP$ is symmetric
	and clearly satisfies the triangular inequality.
	Moreover, if $\bfT$ and $\bfT'$ are two elements of $\bbT$
	such that $\upD_\GHP(\bfT,\bfT') = 0$, 
	then for almost every $r\geq 0$, $\bfT\vert_r = \bfT'\vert_r$.
	In this case, $\bfT$ and $\bfT'$ are GHP-isometric
	(see \cite[Proposition~5.3]{abraham2013ghp} for a similar proof).
	
	\smallskip
	
	\noindent$(ii)$\quad
	Suppose $\upd_\GHP(\bfT_n\vert_r,\bfT\vert_r) \to 0$
	for all $r\geq 0$ with $\mu_T(\partial_r T) = 0$.
	Since $\mu_T$ is a locally finite measure,
	the set $\{r>0 : \mu_T(\partial_r T)>0\}$ is at most countable.
	As a result, the sequence $\big( r \mapsto 1\wedge\upd_\GHP(\bfT_n\vert_r,\bfT\vert_r) \big)_{n\geq 1}$
	converges to $r\mapsto 0$ almost everywhere in $[0,\infty)$.
	Lebesgue's dominated convergence theorem then ensures
	that $\upD_\GHP(\bfT_n,\bfT) \to 0$.
	
	\smallskip
	
	Assume $\upD_\GHP (\bfT_n,\bfT) \to 0$
	and let $r>0$ be such that $\mu_T(\partial_r T) = 0$.
	For every subsequence $(n_k)_k$, there exists a sub-subsequence $(k_\ell)_\ell$
	such that $1\wedge \upd_\GHP (\bfT_{n_{\smash {k_\ell}}}\vert_t, \bfT\vert_t)\to 0$
	for almost every $t\geq 0$ as $\ell\to\infty$.
	In particular, there exists $R>r$ such that $\upd_\GHP (\bfT_{n_{\smash {k_\ell}}}\vert_R, \bfT\vert_R) \to 0$.
	
	Recall that $\upd_\GHP$ is topologically equivalent to 	the metric on $\bbK$ studied in \cite{abraham2013ghp}.
	Therefore, in light of the proof of \cite[Proposition~2.10]{abraham2013ghp},
	if $\tau_n$, $n\geq 1$ and $\tau$ are compact $\bbR$-trees 	such that $\upd_\GHP (\tau_n, \tau) \to 0$,
	then for all $r>0$ such that $\mu_\tau(\partial_r\tau) = 0$, $\upd_\GHP (\tau_n\vert_r,\tau\vert_r) \to 0$.
	
	As a result, $\upd_\GHP (\bfT_{n_{\smash {k_\ell}}}\vert_r,\bfT\vert_r) \to 0$.
	From every subsequence $(n_k)_k$ we can thus extract a sub-subsequence $(k_\ell)_\ell$
	such that $\upd_\GHP (\bfT_{n_{\smash {k_\ell}}}\vert_r,\bfT\vert_r) \to 0$,
	which is equivalent to saying that $\upd_\GHP (\bfT_n\vert_r,\bfT\vert_r) \to 0$ as $n\to\infty$.
	
	\smallskip
	
	\noindent$(iii)$\quad
	Since a criterion similar to $(ii)$ holds for the metric studied in \cite{abraham2013ghp},
	this metric is topologically equivalent to $\upD_\GHP$.
	As a result and thanks to Theorem~2.9 and Corollary~3.2 in~\cite{abraham2013ghp},
	it follows that $(\bbT,\upD_\GHP)$ is completely metrisable and separable,
	\ie it is Polish.
	
	\smallskip
	
	\noindent$(iv)$\quad
	See Proposition~2.10 in~\cite{abraham2013ghp}.
\end{proof}

\paragraph{Continuous grafting}\label{def:continuous-grafting-application}
Let $\{(u_i,\tau_i) : i\in\calI\}$ be a family of elements of $\bbR_+\times\bbT_c$
such that $\calI$ is at most countable.
We define the $\bbR$-tree $\bfG\big(\{(u_i,\tau_i) : i\in\calI\}\big)$ as
\[
	\bfG\Big(
		\big\{(u_i,\tau_i) : i\in\calI\big\}\Big)
	:= \Big( \bbR_+\sqcup
		{\textstyle\bigsqcup}_{i\in\calI} \tau_i,
		\, \upd, \, 0, \, \mu \Big)
\]
where the metric $\upd$ is defined by:
\begin{itemize}
	\item $\upd[u,v] = \abs {u-v}$ for all $u$ and $v$ in $\bbR_+$,
	\item $\upd[x,y] = \upd_{\tau_{\smash i}}(x,y)$
	for all $i\in\calI$, $x$ and $y$ in $\tau_i$,
	\item $\upd[x,v] = \upd_{\tau_{\smash i}}(x,\rho_{\tau_{\smash i}}) + \abs {u_i-v}$
	for all $i\in\calI$, $x\in\tau_i$ and $v$ in $\bbR_+$,
	\item $\upd[x,y] = \upd_{\tau_{\smash i}}(x,\rho_{\tau_{\smash i}})
	+ \upd_{\tau_{\smash j}}(y,\rho_{\tau_{\smash j}}) + \abs {u_i-u_j}$
	for all $i\neq j\in\calI$, $x\in\tau_i$ and $y\in\tau_j$,
\end{itemize}
and $\mu$ is the measure defined for all Borel set $A$ by
$\mu(A) := \sum_{i\in\calI} \mu_{\tau_i}(A\cap\tau_i)$.
The application $\bfG$ grafts the trees $\tau_i$ at height $u_i$ for each $i\in\calI$
on $\bbR_+$ which can be thought of as an infinite (continuous) branch.
It is quite obvious that the weighted pointed metric space
$\bfG\big(\{(u_i,\tau_i) : i\in\calI\}\big)$ is an $\bbR$-tree.

\begin{lemma}\label{lem:continuous-grafting-locally-compact}
	Let $(u_i)_{i\geq 1}$ be a sequence of non-negative real numbers
	and $(\tau_i,\upd_i,\rho_i,\mu_i)_{i\geq 1}$ be a sequence of compact weighted $\bbR$-trees.
	The weighted $\bbR$-tree $\bfT := \bfG\big(\{(u_i,\tau_i) : i\geq 1\}\big)$
	is an element of $\bbT$ \tiff for all $K\geq 0$ and $\varepsilon>0$ the set
	$\{i\geq 1 : u_i\leq K \text { and } \abs {\tau_i}\geq \varepsilon\}$ is finite
	and $\sum_{i\geq 1} \ind_{u_{\smash i}\leq K} \mu_{\tau_{\smash i}}(\tau_i) < \infty$.
\end{lemma}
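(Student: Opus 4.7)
My plan is to reduce $\bfT\in\bbT$ to verifying, for every $K\geq 0$, the compactness of $\bfT\vert_K$ and the finiteness of $\mu_\bfT(\bfT\vert_K)$. From the definition of $\bfG$, a point of $\bfT$ lies in $\bfT\vert_K$ if and only if it belongs to $[0,K]\subset\bbR_+$ or to some $\tau_i\vert_{K-u_i}$ with $u_i\leq K$; consequently
\[
\bfT\vert_K = [0,K]\,\cup\bigcup_{i\,:\,u_i\leq K}\tau_i\vert_{K-u_i},
\qquad
\mu_\bfT(\bfT\vert_K)=\sum_{i\,:\,u_i\leq K}\mu_{\tau_i}(\tau_i\vert_{K-u_i}).
\]

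For the sufficiency ``$\Leftarrow$'', the mass estimate is immediate from the second hypothesis, since $\mu_{\tau_i}(\tau_i\vert_{K-u_i})\leq\mu_{\tau_i}(\tau_i)$. I would prove compactness of $\bfT\vert_K$ by sequential compactness. Given $(x_n)\subset\bfT\vert_K$, three cases can arise after extraction: either $x_n\in[0,K]$ for all $n$, or $x_n\in\tau_{i^*}$ for all $n$ and a common index $i^*$, or $x_n\in\tau_{i(n)}$ with pairwise distinct $i(n)$. The first two cases are immediately handled by the compactness of $[0,K]$ and of $\tau_{i^*}$. In the third case, the first hypothesis forces $|\tau_{i(n)}|\to 0$ (only finitely many indices with $u_i\leq K$ can satisfy $|\tau_i|\geq\varepsilon$), and after a further extraction ensuring $u_{i(n)}\to u^*\in[0,K]$, the triangle inequality yields
\[
\upd_\bfT(x_n,u^*)\;\leq\; |u_{i(n)}-u^*|+|\tau_{i(n)}|\;\xrightarrow[n\to\infty]{}\;0.
\]

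For the necessity ``$\Rightarrow$'', I assume $\bfT\in\bbT$. If some set $\{i:u_i\leq K,\,|\tau_i|\geq\varepsilon\}$ were infinite, the first $\bbR$-tree axiom would let me pick, in each such $\tau_i$, a point $x_i$ at distance exactly $\varepsilon/2$ from $\rho_{\tau_i}$. All the $x_i$ then lie in the compact ball $\bfT\vert_{K+\varepsilon/2}$, yet $\upd_\bfT(x_i,x_j)=\varepsilon+|u_i-u_j|\geq\varepsilon$ for $i\neq j$, contradicting total boundedness. The first condition having been established, applying it with $\varepsilon=1$ isolates finitely many ``big'' indices ($u_i\leq K$ and $|\tau_i|\geq 1$), whose finitely many $\tau_i$ are each of finite mass; for the remaining ``small'' indices, $\tau_i\subset\bfT\vert_{K+1}$, so their masses add disjointly inside the boundedly finite measure $\mu_\bfT(\bfT\vert_{K+1})$, which controls the second condition.

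The only mildly delicate point is case three of the sufficiency argument, which relies on a double extraction: first to force pairwise distinct indices so that the heights $|\tau_{i(n)}|$ vanish, and then to make the attachment points $u_{i(n)}\in[0,K]$ converge. Both extractions follow readily from the hypothesis combined with the compactness of $[0,K]$, so no genuine difficulty arises.
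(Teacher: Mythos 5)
Your proof is correct and follows the same overall structure as the paper's: for the necessity direction, an infinite $\varepsilon$-separated set of points chosen at height roughly $\varepsilon/2$ in the big grafted trees defeats compactness, and the mass condition is then extracted from $\mu_\bfT(\bfT\vert_{K+1})$ being finite once the heights are bounded; for the sufficiency direction, mass finiteness is immediate and compactness follows because the finitely many tall trees are individually compact while the remaining ones have vanishing heights. The only stylistic difference is that you prove compactness of $\bfT\vert_K$ via sequential compactness and an explicit three-case extraction, whereas the paper constructs finite $\varepsilon$-nets (total boundedness); the two are interchangeable here, with your version having the minor advantage of not implicitly invoking completeness.
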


\begin{proof}
	For all $x$ in $T$ and positive $r$, denote by $\upB_T(x,r) := \{y\in T : \upd_T(x,y)<r\}$
	the open ball of $T$ centred at $x$ with radius $r$
	and similarly for all $i\geq 1$ and $x\in\tau_i$, note $\upB_i(x,r) := \{y\in\tau_i : \upd_i(x,y)<r\}$.
	
	\smallskip
	
	\noindent$\Leftarrow$\quad	
	Assume that for all $K\geq 0$,
	$\sum_{i\geq 1} \ind_{u_{\smash i}\leq K} \mu_{\tau_{\smash i}}(\tau_i) < \infty$
	and for all positive $\varepsilon$, that the set
	$\{i\geq 1 : u_i\leq K, \abs {\tau_i}\geq\varepsilon\}$ is finite.
	Observe that for all non-negative $K$,
	$\mu_T (T\vert_K) \leq \sum_{i\geq 1} \mu_{\tau_{\smash i}}(\tau_i) \ind_{u_{\smash i}\leq K}$.
	Therefore, the measure $\mu_T$ is boundedly finite
	and we only need to prove that $T$ is locally compact.

	Fix $K\geq 0$ and let $\varepsilon$ be positive.
	For all $i\geq 1$, because $\tau_i$ is compact, there exists a finite subset $A_i$ of $\tau_i$
	such that $\tau_i \subset \bigcup_{x\in A_{\smash i}} B_i(x,\varepsilon)$.
	To build an $\varepsilon$-cover of $T\vert_K$,
	first observe that if $i$ is such that $u_i\leq K$ and $\abs {\tau_i} < \varepsilon/2$,
	then $\tau_i$ is contained in some open ball with radius $\varepsilon$
	centred at some $n\varepsilon$ for $0\leq n\leq K/\varepsilon$.
	Moreover, by assumption, there are only finitely many indices $i$ with $u_i\leq K$
	and $\abs {\tau_i}\geq \varepsilon/2$.
	Therefore, if we let $A := \{n\varepsilon ; 0\leq n\leq K/\varepsilon\}
	\cup \{x\in A_i ; i\geq 1, u_i\leq K, \abs {\tau_i}\geq \varepsilon/2\}$,
	then $A$ is finite and $T\vert_K$ is contained in $\bigcup_{x\in A} B_T(x,\varepsilon)$.
	As a result, $T\vert_K$ has a finite $\varepsilon$-cover for all positive $\varepsilon$
	which means that it is compact.
	
	\smallskip
	
	\noindent$\Rightarrow$\quad
	Suppose the set $\{i\geq 1 : u_i\leq K, \abs {\tau_i}\geq\varepsilon\}$ is infinite
	for some $K\geq 0$ and positive $\varepsilon$.
	In particular, we can find an increasing sequence $(i_n)_n$
	with $u_{i_{\smash n}}\leq K$ and $\abs {\tau_{i_{\smash n}}} \geq \varepsilon$ for all $n$.
	For each $n\geq 1$, let $x_n$ be in $\tau_{i_{\smash n}}$ and such that
	$\varepsilon/2 < \upd_{i_{\smash n}} (\rho_{i_{\smash n}},x_n) \leq \varepsilon$.
	If $n\neq m$, the definition of the metric on $T$ gives $\upd_T(x_n,x_m) > \varepsilon$.
	Therefore, $(x_n)_n$ has no Cauchy subsequence which implies that $T\vert_{K+\varepsilon}$
	isn't compact and that $\bfT\notin\bbT$.
	
	Assume that $\{i\geq 1 : u_i\leq K, \abs {\tau_i}\geq\varepsilon\}$ is finite
	for all $K\geq 0$ and $\varepsilon>0$,
	and that $\sum_{i\geq 1} \ind_{u_{\smash i}\leq K_{\smash 0}} \, \mu_{\tau_{\smash i}}(\tau_i)$
	is infinite for some finite $K_0$.
	By assumption, $\{ \abs{\tau_i} : u_i\leq K_0\}$ is bounded by a finite constant $R$.
	Therefore, $\mu_T(T\vert_{K_{\smash 0}+R})
	\geq \sum_{i\geq 1} \ind_{u_{\smash i}\leq K_{\smash 0}} \mu_{\tau_{\smash i}}(\tau_i) = \infty$.
	Consequently, $\mu_T$ isn't boundedly finite and $\bfT\notin\bbT$.
\end{proof}

\smallskip

\begin{remark}
	In the following, when we consider discrete trees,
	we will see them as $\bbR$-trees by replacing their edges by segments of length $1$.
\end{remark}

\subsection{Fragmentation trees}

In this section, we will present a few results
on certain classes of $\bbT_c$- and $\bbT$-valued random variables:
self-similar fragmentation trees (introduced in~\cite{haas2004genealogy})
and self-similar fragmentation trees with immigration (see~\cite{haas2007fragmentationinitialmass}).

\subsubsection{Self-similar fragmentation trees}\label{sec:ssf-trees}
Let $\Sdec := \big\{ \bfs = (s_n)_{n\geq 1} \in \ell_1 \,:\, s_1 \geq s_2 \geq \dots \geq 0\big\}$
and endow it with the $\ell_1$ norm, \ie for all $\bfs$ and $\bfr$ in $\Sdec$,
say that the distance between $\bfs$ and $\bfr$ is $\norm {\bfs-\bfr} = \sum_{i\geq 1} \abs {s_i-r_i}$.
Moreover, set $\mathbf 0 := (0,0,\dots)$ and $\mathbf 1 := (1,0,0,\dots)$.
We will also note $\Sdec_{\leq 1} := \big\{ \bfs\in\Sdec \,:\, \norm\bfs \leq 1\big\}$.

A \emph{self-similar fragmentation process} is an $\Sdec_{\leq 1}$-valued Markovian process
$(\bfX(t);t\geq 0)$ which is continuous in probability, and satisfies $\bfX(0) = \mathbf 1$
as well as the following so-called \emph{fragmentation property}.
There exists $\alpha\in\bbR$ such that for all $t_0\geq 0$,
conditionally to $\bfX(t_0) = \bfs$, $\big(\bfX(t_0+t), t\geq 0\big)$ has the same distribution as
\[
	\Big( \big( s_i \, \bfX^{(i)}(s_i^\alpha t) \big)^\downarrow
	\, ; \, t \geq 0 \Big)
\]
where $(\bfX^{(i)})_i$ are \iid copies of $\bfX$.
The constant $\alpha$ is called the \emph{self-similarity index} of the process~$\bfX$.

\smallskip

These processes can be seen as the evolution of the fragmentation
of an object of mass $1$ into smaller objects which will each, in turn,
split themselves apart independently from one another, at a rate proportional to their mass
to the power $\alpha$.

It was shown in~\cite{berestycki2002rankedfrag,bertoin2002self} that the distribution
of a self-similar fragmentation process
is characterised by a $3$-tuple $(\alpha,c,\nu)$
where $\alpha$ is the aforementioned self-similarity index,
$c\geq 0$ is a so-called erosion coefficient which accounts for
a continuous decay in the mass of each particle
and $\nu$ is a \emph{dislocation measure} on $\Sdec_{\smash {\leq 1}}$,
\ie a $\sigma$-finite measure such that $\int (1-s_1) \, \nu(\D\bfs) < \infty$
and $\nu(\{\mathbf 1\}) = 0$.
At any given time, each particle with mass say $x$ will, independently from the other particles,
split into smaller fragments of respective masses $xs_1, xs_2,\dots$
at rate $x^\alpha \nu(\D\bfs)$.

\smallskip

We will be interested in fragmentation processes with negative self-similarity index $-\gamma<0$
with no erosion, \ie with $c=0$.
Furthermore, we will require the dislocation measure $\nu$ to be non-trivial, \ie $\nu(\Sdec_{\leq1})>0$,
and \emph{conservative}, that is to satisfy $\nu (\norm\bfs<1) = 0$.
Therefore, the fragmentation processes we will consider will be characterised by a \emph{fragmentation pair} $(\gamma,\nu)$
and we will refer to them as $(\gamma,\nu)$-fragmentation processes.

Under these assumptions, each particle will split into smaller ones which will in turn break down faster,
thus speeding up the global fragmentation rate.
Let $\bfX$ be a $(\gamma,\nu)$-fragmentation process and
set $\tau_{\mathbf 0} := \inf\{ t\geq 0 : \bfX(t) = \mathbf 0\}$
the first time at which all the mass has been turned to dust.
It was shown in~\cite[Proposition~2]{bertoin2003asymptotic} that $\tau_{\mathbf 0}$ is \as finite
and in~\cite[Section~5.3]{haas2003lossofmass} that it has exponential moments,
\ie that there exists $a>0$ such that $\esp [1] {\exp (a \tau_{\mathbf 0})} < \infty$.

Furthermore, a $\bbT_c$-valued random variable that encodes the genealogy
of the fragmentation of the initial object was defined in~\cite{haas2004genealogy}.
This random $\bbR$-tree $(\calT,\upd,\rho,\mu)$ is such that $\mu(\calT)=1$
and if for all $t\geq 0$, $\{\calT_i(t) : i\geq 1\}$ is the (possibly empty)
set of the closures of the connected components of $\calT\setminus(\calT\vert_t)$,
then
\[
	\Big( \big(\mu[ \calT_i(t)] \,;\, i\geq 1\big)^\downarrow \; ; \; t\geq 0\Big)
\]
is a $(\gamma,\nu)$-fragmentation process.
We will note $\scrT_{\gamma,\nu}$ the distribution of $(\calT,\upd,\rho,\mu)$.

\begin{remark}
\begin{itemize}
	\item More general self-similar fragmentation trees,
	where both the assumptions ``$c=0$'' and ``$\nu$ is conservative'' are dropped,
	were defined and studied in~\cite{stephenson2013generalfragtrees}.
	
	\item Let $\calT$ be a $(\gamma,\nu)$-self-similar fragmentation tree and $m>0$.
	The tree $(m^\gamma \calT, m\,\mu_\calT)$ encodes the genealogy
	of a $(\gamma,\nu)$-self-similar fragmentation process started from a single object with mass~$m$.
\end{itemize}
\end{remark}

\paragraph{Classical examples}
It was observed in~\cite{bertoin2002self} that the Brownian tree,
which was introduced in~\cite{aldous1991crt1},
may be described as a self-similar fragmentation tree with parameters $(1/2, \nu_B)$
where $\nu_B$ is called the \emph{Brownian dislocation measure}
and is defined for all measurable $f:\Sdec_{\leq 1}\to\bbR_+$ by
\[
	\int f \, \D\nu_B = \int_{1/2}^1
		\bigg( \frac 2 {\pi \, x^3 \, (1-x)^3} \bigg)^{1/2} f(x,1-x,0,0,\dots) \, \D x.
\]

Another important example of fragmentation trees is the family of $\alpha$-stable trees
from~\cite{legall2002stabletrees}, where $\alpha$ belongs to $(1,2)$.
Indeed, a result from~\cite{miermont2003ssfragstable1} states that the $\alpha$-stable tree
is a $(1-1/\alpha, \nu_\alpha)$-self-similar fragmentation tree
with $\nu_\alpha$ defined as follows:
let $(\Sigma_t;t\geq 0)$ be a $1/\alpha$-stable subordinator with Laplace exponent
$\lambda\mapsto-\log\esp{\exp(-\lambda\Sigma_t)}=\lambda^{1/\alpha}$
and L\'evy measure $\Pi_{1/\alpha}(\D t) := [\alpha\,\Gamma(1-1/\alpha)]^{-1}
\, t^{-1-1/\alpha} \,\ind_{t>0} \, \D t$,
denote the decreasing rearrangement of its jumps on $[0,1]$ by $\Delta$
and for all measurable $f:\Sdec\to\bbR_+$, let
\[
	\int_{\Sdec} f \, \D\nu_\alpha
	= \frac {\Gamma(1-1/\alpha)} {k_\alpha}
		\esp [2] {\Sigma_1 \, f\big( \Delta / \Sigma_1 \big)}
\]
where $k_\alpha := \Gamma(2-\alpha) / [\alpha\,(\alpha-1)]$.
Observe that the random point measure $\sum_{i\geq 1} \delta_{\Delta_{\smash i}}$ on $(0,\infty)$
with atoms $(\Delta_i, i\geq 1)$ is a Poisson Point Process
with intensity measure $\Pi_{1/\alpha}$.

\paragraph{Scaling limits of Markov branching trees}
Self-similar fragmentation trees bear a close relationship with Markov branching trees.
Let $\iota:\calP_{<\infty}\to\Sdec_1$ be such that if $\lambda=(\lambda_1,\dots,\lambda_p)$ is in $\calP_n$,
then $\iota(\lambda) := (\lambda_1/n,\dots,\lambda_p/n,0,0,\dots)$.

\begin{theorem}[\cite{haas2012scaling}, Theorems~5 and~6]\label{thm:haas-scaling-mb}
	\begin{itemize}
		\item Let $(q_n)_{n\in\calN}$ be the sequence of first-split distributions
		of a Markov branching family $\MB^{\smash\calL,q}$
		and for all adequate $n\geq 1$, set $\bar q_n := q_n\circ\iota^{-1}$.
		Suppose there exists a fragmentation pair $(\gamma,\nu)$ and a slowly varying function $\ell$ such that,
		for the weak convergence of finite measures on~$\Sdec$,
		\[
			n^\gamma \ell(n) \, (1-s_1) \, \bar q_n (\D\bfs)
			\xrightarrow [n\to\infty] {}
			(1-s_1) \, \nu(\D\bfs).
		\]
		For all $n\in\calN$, let $T_n$ have distribution $\MB^{\smash\calL,q}_n$
		and set $\mu_n := \sum_{u\in\calL(T_{\smash n})} \delta_u$
		the counting measure on the leaves of $T_n$.
		
		\item Let $(q_{n-1})_{n\in\calN}$ be the sequence associated to a Markov branching family $\MB^q$.
		Assume that there exists a fragmentation pair $(\gamma,\nu)$ and a slowly varying function $\ell$
		with either $\gamma<1$ or $\gamma=1$ and $\ell(n)\to 0$
		such that $n^\gamma \ell(n) \, (1-s_1) \, \bar q_n (\D\bfs) \Rightarrow (1-s_1) \, \nu(\D\bfs)$.
		For each $n\in\calN$, let $T_n$ be a $\MB^q_n$ tree
		and endow it with its counting measure $\mu_n$.
	\end{itemize}
	Under either set of assumptions, with respect to the GHP topology on $\bbT_c$,
	\[
		\bigg( \frac 1 {n^{\gamma}\ell(n)} T_n, \frac 1 n \mu_{T_n} \bigg)
		\xrightarrow [n\to\infty] {} \scrT_{\gamma,\nu}
		\quad\text{in distribution.}
	\]
\end{theorem}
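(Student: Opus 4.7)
The plan is to realize $T_n$ as the genealogical tree of a discrete $\Sdec_{\leq 1}$-valued fragmentation chain, prove that this chain converges after an appropriate time-rescaling to the $(\gamma,\nu)$-self-similar fragmentation process, and finally upgrade this process-level convergence to GHP convergence of the rescaled tree.

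First I would construct a discrete-time Markov chain $\bar\bfX_n(k)$, $k\geq 0$, taking values in $\Sdec_{\leq 1}$ and starting from $\mathbf 1$, in which every non-zero atom $s_i=m_i/n$ with $m_i\in\calN$ is replaced, independently of the others, by the atoms $s_i\cdot\iota(\Lambda_i)$ for $\Lambda_i\sim q_{m_i}$ in the $\MB^{\calL,q}$ setting (respectively $q_{m_i-1}$ in the $\MB^q$ setting, where the root-atom is set aside). By its recursive construction, the tree $T_n$ is precisely the genealogical tree of $\bar\bfX_n$, with graph distance equal to the fragmentation-step count and with leaves (resp.\ vertices) in bijection with atoms (resp.\ atoms together with root-atoms). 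Rescaling the graph distance by $1/(n^\gamma\ell(n))$ and the counting measure by $1/n$ amounts to working with the continuous-time process $\bfX_n(t):=\bar\bfX_n(\lfloor n^\gamma\ell(n)\,t\rfloor)$.

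Next I would show that $\bfX_n\to\bfX$ in distribution in $D([0,\infty),\Sdec_{\leq 1})$, where $\bfX$ is a $(\gamma,\nu)$-fragmentation process. The hypothesis $n^\gamma\ell(n)(1-s_1)\,\bar q_n(\D\bfs)\Rightarrow(1-s_1)\,\nu(\D\bfs)$ is exactly what is needed to identify the generators. Testing on functions of the form $\phi(\bfs)=\sum_i g(s_i)-g(\norm\bfs)$ with smooth $g$ vanishing fast enough at $0$, the discrete generator of $\bfX_n$ applied to $\phi$ converges to the generator of the $(\gamma,\nu)$-fragmentation: the $n^\gamma\ell(n)$ factor compensates for the $s_i^{\,-\gamma}$ acceleration of small atoms that characterises self-similarity, and the multiplier $1-s_1$ absorbs the near-$\mathbf 1$ integrability issue. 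Convergence of finite-dimensional distributions then follows from standard martingale-problem convergence for Markov chains.

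The third step is to transport this process convergence into GHP convergence of rescaled trees. By construction the tree encoded by $\bfX$ is $\scrT_{\gamma,\nu}$-distributed, and in the $\MB^{\calL,q}$ case the measure $\mu_n/n$ exactly records the leaf-mass of each subtree, directly identifying it with the mass measure of $\scrT_{\gamma,\nu}$ in the limit. In the $\MB^q$ case the vertex-counting measure does not equal the leaf-counting measure, but the assumption ``$\gamma<1$ or ($\gamma=1$ and $\ell(n)\to 0$)'' precisely ensures $n^\gamma\ell(n)\ll n$, so that almost all vertices lie at tiny height on the rescaled scale and $\mu_n/n$ concentrates, at the continuum level, on the same sets as the leaf measure.

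The main obstacle is passing from the finite-dimensional convergence of $\bfX_n$ to full GHP convergence of trees, which reduces to tightness of the rescaled height $\abs{T_n}/(n^\gamma\ell(n))$ and uniform control of the contribution of ``small fragments''. Tightness of the height follows from the fact, due to Bertoin and Haas, that the extinction time $\tau_{\mathbf 0}$ of a conservative $(\gamma,\nu)$-fragmentation process with $\gamma>0$ admits exponential moments, which via a coupling with $\bfX_n$ gives a uniform-in-$n$ bound on the time at which $T_n$ is entirely created. The small-fragment problem then asserts that a subtree rooted at an atom of mass $x$ has rescaled height of order $x^\gamma$ with high probability, which one obtains by self-similarity and the same exponential-moment estimate combined with a union bound against the dislocation measure. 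Once these uniform estimates are in hand, a Skorokhod-embedding argument upgrades the process convergence to the desired GHP statement, handling simultaneously the metric and the measure.
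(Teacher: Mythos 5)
This theorem is cited in the paper as \cite[Theorems~5 and~6]{haas2012scaling}; the paper gives no proof of its own, so your proposal must be judged against the original Haas--Miermont argument. Your overall heuristic is in the right spirit (view $T_n$ as the genealogy of a discrete fragmentation chain, let the chain converge, lift to trees), but there are two genuine gaps.

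The principal gap is in your third step. The chain $\bar\bfX_n$ records only the \emph{ranked} fragment sizes at each step. Once you pass to Skorokhod convergence of $\Sdec_{\leq 1}$-valued paths, all genealogical information -- which fragment split off from which -- has been forgotten, and two quite different trees can induce the same ranked-mass process. So convergence of $\bfX_n$ to $\bfX$ in $D([0,\infty),\Sdec_{\leq 1})$ does \emph{not} determine a GHP limit of the trees, and there is no ``Skorokhod-embedding'' that recovers it. Haas and Miermont avoid this by never reducing to the ranked-mass process alone: they work with the discrete tree directly, sampling finitely many exchangeable leaves and showing convergence of the spanned subtrees (together with their weights) jointly with the masses, so the genealogy is carried along through the limit. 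You would need to replace your Step 3 with something of this kind -- for instance, track a tagged fragment and the discrete version of the associated genealogical embedding, as they do.

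The second gap is the tightness estimate. You invoke exponential moments of the extinction time of the \emph{limit} process and a ``coupling with $\bfX_n$'' to extract a uniform-in-$n$ bound on $\abs{T_n}/(n^\gamma\ell(n))$. This is circular: the coupling you would want already presupposes the closeness of the discrete and continuous dynamics that you are trying to prove, and no such coupling is constructed. The uniform moment bound (what appears here as Lemma~\ref{lem:mb-trees-height}, i.e.\ \cite[Lemma~33]{haas2012scaling}) is established by a self-contained recursive estimate on the discrete Markov branching trees, using the convergence assumption on $\bar q_n$ but not any comparison with the continuous limit. As written, your argument does not supply that estimate, and your Step~2 generator argument does not give it either. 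A lesser issue: in the $\MB^{\calL,q}$ construction the chain has geometric holding times at each state (when $q_n(n)>0$), so $T_n$ is not exactly the genealogy of a lazy-free fragmentation step chain; this is a bookkeeping matter but would need to be handled in Step~1.
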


The following useful result on the heights of Markov branching
also holds.

\begin{lemma}\label{lem:mb-trees-height}
	Suppose that $(q_n)_{n\in\calN}$ satisfies the assumptions of Theorem~\ref{thm:haas-scaling-mb}
	with respect to a given fragmentation pair $(\gamma,\nu)$ and a slowly varying function $\ell$.
	Then for any $p>0$, there is a finite constant $h_p$ such~that
	\[
		\sup_{n\in\calN} \, \esp [4] {\bigg(\frac {\abs {T_n}} {n^\gamma \ell(n)}\bigg)^p} \leq h_p
		\quad\text{and}\quad
		\esp [1] {\abs\calT^p} \leq h_p
	\]
	where $\calT$ is a $(\gamma,\nu)$-fragmentation tree and,
	as in Theorem~\ref{thm:haas-scaling-mb}, $T_n$ has distribution either $\MB^q_n$ or $\MB^{\smash\calL,q}_n$.
\end{lemma}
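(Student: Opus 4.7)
Plan. The bound $\esp{\abs\calT^p} \leq h_p$ on the continuous side is immediate: as the excerpt recalls, the height $\abs\calT$ equals the extinction time $\tau_{\mathbf 0}$ of the underlying $(\gamma,\nu)$-fragmentation, which has exponential moments (\cite{haas2003lossofmass}). So $\esp{\abs\calT^p}$ is finite for every $p > 0$, and $h_p$ can be chosen to dominate it. Equivalently, one can obtain this bound a posteriori: once the uniform discrete bound is established, Theorem~\ref{thm:haas-scaling-mb} combined with the continuity of $\bfX \mapsto \abs\bfX$ for the GHP topology (Remark~\ref{nb:height-mass-ghp-continuity}) and Fatou's lemma transfer it to $\calT$ at no extra cost.

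For the discrete side, set $a_n := n^\gamma \ell(n)$ and $m_p(n) := \esp{\abs{T_n}^p}/a_n^p$. The Markov-branching construction yields the recursive identity
$$
\abs{T_n} = \epsilon_n + \max_{1\leq i\leq p(\Lambda)} \abs[1]{T_{\Lambda_i}^{(i)}},
$$
where $\Lambda$ has the first-split distribution of $T_n$, conditionally on $\Lambda$ the $T_{\Lambda_i}^{(i)}$ are independent Markov-branching trees of respective sizes $\Lambda_i$, and $\epsilon_n$ equals $1$ in the vertex model or has a geometric distribution with parameter $1 - q_n(n)$ in the leaf model. Bounding $\max_i y_i^p$ by $\sum_i y_i^p$ and using $(x+y)^p \leq C_p(x^p + y^p)$, one obtains
$$
m_p(n) \leq C_p \, \frac{\esp{\epsilon_n^p}}{a_n^p} + C_p \, \esp{\sum_i \Bigl(\frac{a_{\Lambda_i}}{a_n}\Bigr)^p m_p(\Lambda_i)}.
$$
The plan is to close this inequality by induction on $n$, taking any initial finite segment of $\calN$ as a base case.

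The principal obstacle is that the recursion is not directly contractive: under the hypothesis $a_n(1-s_1)\bar q_n \Rightarrow (1-s_1)\nu$, the first split $\Lambda$ is typically concentrated on partitions with $\Lambda_1 \approx n$, so $\esp{(a_{\Lambda_1}/a_n)^p}$ is close to $1$. One must therefore turn the multiplicative recursion into an additive one of the form $m_p(n) \leq m_p(\Lambda_1) + (\text{summable remainder})$ and sum telescopically. Three inputs are essential here: (i) the rescaled weak convergence $a_n(1-s_1)\bar q_n \Rightarrow (1-s_1)\nu$, which controls the typical size of $n - \Lambda_1$; (ii) Potter's bounds on the slowly varying $\ell$, which uniformise the ratios $\ell(\Lambda_i)/\ell(n)$; and (iii) the conservativity of $\nu$ together with the finiteness of $\int \sum_{i\geq 2} s_i^{p\gamma}\,(1-s_1)\,\nu(\D\bfs)$, which bounds the contribution of the smaller blocks. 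The condition $\gamma < 1$ (or $\gamma = 1$ with $\ell \to 0$) in the $\MB^q$ case ensures summability of the additive increments. Assembling these ingredients---essentially an adaptation of the moment estimates from the proof of Theorem~5 in Haas-Miermont~\cite{haas2012scaling}---closes the induction and produces the uniform bound $h_p$. The case $p \in (0,1)$ then follows from the case $p \geq 1$ by Jensen's inequality.
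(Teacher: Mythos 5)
Your proposal is correct and takes essentially the same route as the paper, which gives no argument of its own but simply points to Section~5.3 of Haas~\cite{haas2003lossofmass} for the continuous moment bound and to Lemma~33 together with Section~4.5 of Haas--Miermont~\cite{haas2012scaling} for the uniform discrete bound. Your sketch of the recursive height identity, the non-contractive nature of the multiplicative recursion, and the additive/telescoping workaround via Potter's bounds and conservativity of $\nu$ is an accurate synopsis of what those cited lemmas contain.
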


\begin{proof}
	See \cite[Section~5.3]{haas2003lossofmass} for the continuous setting
	and \cite[Lemma~33]{haas2012scaling} plus \cite[Section~4.5]{haas2012scaling}
	for the discrete one.
\end{proof}

\paragraph{Concatenation of fragmentation trees}
Fix a fragmentation pair $(\gamma,\nu)$
and let $(\calT_i)_{i\geq 1}$ be a sequence of \iid $(\gamma,\nu)$-fragmentation trees.
For all $i\geq 1$, note $\mu_i$ the measure of $\calT_i$.
Fix $\bfs$ in $\Sdec$ and set $(\calT_{\langle\bfs\rangle}, \mu_{\langle\bfs\rangle})
:= \big\langle (s_i^\gamma \calT_i, s_i \mu_i) \,;\, i\geq 1\big\rangle$.

\begin{lemma}\label{lem:ssf-concatenation-compact}
	With these notations, $(\calT_{\langle\bfs\rangle}, \mu_{\langle\bfs\rangle})$ \as belongs to $\bbT_c$.
\end{lemma}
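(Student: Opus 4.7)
The plan is to check the two conditions of Lemma~\ref{lem:concatenation-compactness} applied to the rescaled spaces $\bfX_i := (s_i^\gamma \calT_i, s_i \mu_i)$, which will give $(\calT_{\langle\bfs\rangle}, \mu_{\langle\bfs\rangle}) \in \bbK = \bbT_c$ almost surely.

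The mass condition is immediate: since each $\calT_i$ is a $(\gamma,\nu)$-fragmentation tree, $\mu_i(\calT_i) = 1$ almost surely, so
\[
	\sum_{i\geq 1} s_i \, \mu_i(\calT_i) = \sum_{i\geq 1} s_i = \norm\bfs < \infty
\]
because $\bfs \in \Sdec \subset \ell_1$.

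The main (but still routine) point is the height condition $s_i^\gamma \abs {\calT_i} \to 0$ almost surely. Here I would invoke Lemma~\ref{lem:mb-trees-height}, which provides a finite constant $h_p$ with $\esp {\abs\calT^p} \leq h_p$ for every $p>0$. Choose any $p$ with $\gamma p \geq 1$. Markov's inequality gives, for each $\varepsilon>0$,
\[
	\prob [1] {s_i^\gamma \abs {\calT_i} > \varepsilon}
	\leq \varepsilon^{-p} \, s_i^{\gamma p} \, h_p.
\]
Since $\bfs \in \ell_1$, $s_i \to 0$, so for all large enough $i$ we have $s_i \leq 1$ and hence $s_i^{\gamma p} \leq s_i$. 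Consequently $\sum_i s_i^{\gamma p} < \infty$, so by Borel--Cantelli, almost surely only finitely many indices $i$ satisfy $s_i^\gamma \abs {\calT_i} > \varepsilon$. Applying this for $\varepsilon = 1/k$ and intersecting over $k\in\bbN$ yields $s_i^\gamma \abs {\calT_i} \to 0$ almost surely.

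With both conditions of Lemma~\ref{lem:concatenation-compactness} verified on a common event of full probability, the concatenation $(\calT_{\langle\bfs\rangle}, \mu_{\langle\bfs\rangle})$ lies in $\bbT_c$ almost surely. The only subtle point is choosing $p$ large enough so that the Borel--Cantelli series converges; this is where the fact that the heights of self-similar fragmentation trees have finite moments of all orders (and the exponential moments of $\tau_{\mathbf 0}$ behind it) is essential, since otherwise one could not compensate for the possibly slow decay of $s_i$ inside $\ell_1$.
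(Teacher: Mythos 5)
Your argument is correct and matches the paper's proof in essence: both reduce the claim to the two conditions of Lemma~\ref{lem:concatenation-compactness}, verify the total mass is $\norm\bfs<\infty$, and establish $s_i^\gamma\abs{\calT_i}\to 0$ by combining Markov's inequality, the moment bound from Lemma~\ref{lem:mb-trees-height}, and Borel--Cantelli. The only cosmetic difference is that the paper chooses the exponent $p=1/\gamma$ exactly, so that $s_i^{\gamma p}=s_i$ and the summability is immediate from $\bfs\in\ell_1$, whereas you allow any $p$ with $\gamma p\geq 1$ and need the extra remark that $s_i\leq 1$ eventually.
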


\begin{proof}
	Clearly $\calT_{\langle\bfs\rangle}$ is an $\bbR$-tree and its total mass is
	$\mu_{\langle\bfs\rangle}(\calT_{\langle\bfs\rangle}) = \sum_{i\geq 1} s_i \mu_i (\calT_i) = \norm\bfs$ which is finite.
	It only remains to show that it is compact
	or, in light of Lemma~\ref{lem:concatenation-compactness},
	that $s_i^\gamma \, \abs {\calT_i}$ \as converges to $0$ as $i$ grows to infinity.
	Since $\bfs$ is summable, for any positive $\varepsilon$,
	\[
		\textstyle
		\sum_{i\geq 1} \prob [1] {s_i^\gamma \abs {\calT_i} > \varepsilon}
		\leq \sum_{i\geq 1} \dfrac {s_i} {\varepsilon^{1/\gamma}} \esp [1] {\abs {\calT_1}^{1/\gamma}}
		\leq \dfrac 1 {\varepsilon^{1/\gamma}} \esp [1] {\abs {\calT_1}^{1/\gamma}} \, \norm\bfs
		< \infty
	\]
	where we have used Markov's inequality and the fact
	that $\abs {\calT_i}^{1/\gamma}\in L^1$ (see~Lemma~\ref{lem:mb-trees-height}).
	Borel-Cantelli's lemma then allows us to deduce that $s_i^\gamma \abs {\calT_i} \to 0$ \as as $i\to\infty$.
\end{proof}

\begin{lemma}\label{lem:ssf-concatenation-ghp-exp-continuity}
	For all fixed $\bfs$ in $\Sdec$,
	$\esp [1] {\upd_\GHP(\calT_{\langle\bfs\rangle},\calT_{\langle\bfr\rangle})}$
	converges to $0$ as $\bfr\to\bfs$.
\end{lemma}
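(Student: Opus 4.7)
The natural approach is to combine the two main GHP inequalities established in this section: the concatenation bound (Lemma~\ref{lem:continuous-concatenation-ghp-bounds}) and the rescaling bound (Lemma~\ref{lem:ghp-rescaling-bounds}(i)). Pairing components index by index, the concatenation bound gives, almost surely,
\[
\upd_\GHP(\calT_{\langle\bfs\rangle},\calT_{\langle\bfr\rangle})
\leq \sum_{i\geq 1}\upd_\GHP\bigl((s_i^\gamma\calT_i,s_i\mu_i),(r_i^\gamma\calT_i,r_i\mu_i)\bigr).
\]
Since the two objects in each summand share the same underlying tree $\calT_i$ and $\mu_i(\calT_i)=1$ almost surely, the rescaling bound yields the pointwise estimate
\[
\upd_\GHP\bigl((s_i^\gamma\calT_i,s_i\mu_i),(r_i^\gamma\calT_i,r_i\mu_i)\bigr)
\leq \bigl(|s_i^\gamma-r_i^\gamma|\,|\calT_i|\bigr)\vee|s_i-r_i|.
\]

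Taking expectations and using the crude majorisation $a\vee b\leq a+b$, together with $\esp{|\calT_1|}\leq h_1<\infty$ from Lemma~\ref{lem:mb-trees-height} at $p=1$ and the identical distribution of the $\calT_i$, I would then obtain
\[
\esp{\upd_\GHP(\calT_{\langle\bfs\rangle},\calT_{\langle\bfr\rangle})}
\leq h_1\sum_{i\geq 1}|s_i^\gamma-r_i^\gamma|+\|\bfs-\bfr\|_{\ell^1}.
\]
The second term vanishes by hypothesis. For the first, each summand converges to zero by coordinatewise convergence of $\bfr$ to $\bfs$, and when $\gamma\geq 1$ the mean-value theorem supplies the dominator $|s_i^\gamma-r_i^\gamma|\leq\gamma(s_1+r_1)^{\gamma-1}|s_i-r_i|$, which is summable in $i$ and controlled by $\|\bfs-\bfr\|_{\ell^1}$; dominated convergence then concludes.

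The main obstacle is the case $\gamma\in(0,1)$: the series $\sum_is_i^\gamma$ may diverge, so the summed bound above is too coarse. I would circumvent this by keeping the $\vee$-bound intact and inserting truncations $\bfs^{(N)}:=(s_1,\dots,s_N,0,\dots)$ and $\bfr^{(N)}$ via a triangle inequality. For fixed $N$, the middle term $\esp{\upd_\GHP(\calT_{\langle\bfs^{(N)}\rangle},\calT_{\langle\bfr^{(N)}\rangle})}$ is a finite sum which vanishes as $\bfr\to\bfs$ by the argument of the previous paragraph applied to $N$ terms. The tail term $\esp{\upd_\GHP(\calT_{\langle\bfs\rangle},\calT_{\langle\bfs^{(N)}\rangle})}$ can be bounded by $\esp{\sup_{i>N}s_i^\gamma|\calT_i|}+\sum_{i>N}s_i$, which tends to zero as $N\to\infty$ by dominated convergence (using $\esp{|\calT_1|^{1/\gamma}}\leq h_{1/\gamma}<\infty$ to make $\sup_is_i^\gamma|\calT_i|$ integrable, exactly as in the proof of Lemma~\ref{lem:ssf-concatenation-compact}). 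The delicate technical point is to obtain the analogous tail control for $\bfr$ uniformly in $\bfr$ within an $\ell^1$-neighbourhood of $\bfs$, which requires a careful comparison between the sequences $(r_i)$ and $(s_i)$ and is the true crux of the proof.
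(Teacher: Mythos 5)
Your decomposition — the termwise concatenation bound followed by the coordinate-wise rescaling bound — is exactly the paper's starting point, your observation that the naive $\ell^1$ majorisation $h_1\sum_i\abs{s_i^\gamma-r_i^\gamma}$ breaks down when $\gamma<1$ is correct, and your truncation strategy matches the paper's. However, you stop at the step you yourself flag as ``the true crux,'' namely the uniform control of $\esp[1]{\sup_{i>N}r_i^\gamma\abs{\calT_i}}$ over $\bfr$ in an $\ell^1$-neighbourhood of $\bfs$. Without this estimate the triangle inequality leaves a term whose size you have not controlled, so the argument does not close; this is a genuine gap.

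The paper fills it by proving $\esp[1]{\sup_{i>n}(s_i^\gamma\abs{\calT_i})}\le C\big(\sum_{i>n}s_i\big)^\gamma$ with $C$ independent of $\bfs$ and $n$: for $\gamma\le 1$ via Jensen's inequality applied to the concave function $t\mapsto t^\gamma$ (write $\sup_{i>n}s_i^\gamma\abs{\calT_i}=(\sup_{i>n}s_i\abs{\calT_i}^{1/\gamma})^\gamma$ and bound the inner supremum by the sum), and for $\gamma>1$ via $s_i^\gamma\le s_{n+1}^{\gamma-1}s_i$, using monotonicity of $(s_i)$. Since the right-hand side is a continuous function of $\sum_{i>n}s_i$ and $\abs[1]{\sum_{i>n}r_i-\sum_{i>n}s_i}\le\norm{\bfr-\bfs}$, this delivers exactly the uniformity in $\bfr$ you need, and taking $\limsup_{\bfr\to\bfs}$ followed by $\inf_{n\ge1}$ finishes. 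Note also that the same Jensen-type bound is what actually justifies the integrability of $\sup_{i}s_i^\gamma\abs{\calT_i}$ invoked in your dominated-convergence step at $\bfs$: the Borel--Cantelli argument of Lemma~\ref{lem:ssf-concatenation-compact} yields almost-sure convergence to $0$ but not, by itself, an integrable dominating variable.
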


\begin{proof}
	For all $n\geq 0$, in light of Lemmas~\ref{lem:ghp-rescaling-bounds} and~\ref{lem:continuous-concatenation-ghp-bounds},
	\[
		\upd_\GHP\big(\calT_{\langle\bfs\rangle},\calT_{\langle\bfr\rangle}\big)
		\leq \sum_{i=1}^n \Big[ \big( \abs {s_i^\gamma-r_i^\gamma} \, \abs {\calT_i} \big) \vee \abs {s_i-r_i} \Big]
			+ \sum_{i>n} \big( s_i + r_i \big)
			+ \sup_{i>n} \Big( s_i^\gamma \abs {\calT_i} \Big)
			+ \sup_{i>n} \Big( r_i^\gamma \abs {\calT_i} \Big).
	\]
	If $\gamma\leq 1$, $t\mapsto t^\gamma$ is concave, hence Jensen's inequality gives
	\[
		\esp [3] {\sup_{i>n} \Big( s_i^\gamma \abs {\calT_i} \Big)}
		= \esp [3] {\bigg(\sup_{i>n} \: s_i \abs {\calT_i}^{1/\gamma} \bigg)^\gamma}
		\leq \bigg(\esp [2] {\sup_{i>n} \: s_i \abs {\calT_i}^{1/\gamma}}\bigg)^\gamma
		\leq \esp [1] {\abs {\calT_1}^{1/\gamma}}^\gamma \Big({\textstyle\sum}_{i>n} s_i\Big)^\gamma,
	\]
	otherwise, if $\gamma>1$, since $(s_i)$ is non-increasing, for all $i>n$,
	$s_i^\gamma \leq s_{n+1}^{\gamma-1} s_i$ which implies
	\[
		\esp [3] {\sup_{i>n} \Big( s_i^\gamma \abs {\calT_i} \Big)}
		\leq s_{n+1}^{\gamma-1} \, \esp [3] {\sup_{i>n} \Big( s_i \abs {\calT_i} \Big)}
		\leq \esp [1] {\abs {\calT_1}} \, s_{n+1}^{\gamma-1} \, {\textstyle\sum\limits_{i>\mathrlap n}} \, s_i
		\leq \esp [1] {\abs {\calT_1}} \, \Big( {\textstyle\sum_{i>n}} \, s_i \Big)^\gamma.
	\]
	Consequently, there is a constant $C\geq 0$ independent of $\gamma$
	such that for all integer $n$ and $\bfs$ in $\Sdec$,
	$\esp [1] {\sup_{i>n} s_i^\gamma \abs {\calT_i}} \leq C \big[\sum_{i>n} s_i\big]^\gamma$.
	Hence, for all $\bfs$ and $\bfr$ in $\Sdec$ and any $n\geq 1$
	\[
		\esp [2] {\upd_\GHP(\calT_{\langle\bfs\rangle},\calT_{\langle\bfr\rangle})}
		\leq \norm {\bfs-\bfr}
			+ \esp [1] {\abs {\calT_1}} {\textstyle\sum\limits_{i=1}^n} \abs {s_i^\gamma-r_i^\gamma}
			+ {\textstyle\sum\limits_{i>\mathrlap n}} \big( s_i + r_i \big)
			+ C \bigg[ \Big({\textstyle\sum\limits_{i>\mathrlap n}} \, s_i\Big)^\gamma
			+ \Big({\textstyle\sum\limits_{i>\mathrlap n}} \, r_i\Big)^\gamma \bigg].
	\]
	As a result,
	\[
		\limsup_{\bfr\to\bfs} \: \esp [2] {\upd_\GHP(\calT_{\langle\bfs\rangle},\calT_{\langle\bfr\rangle})}
		\leq \inf_{n\geq 1} \:
			2 \, {\textstyle\sum_{i>n}} s_i
			+ 2 \, C \Big({\textstyle\sum_{i>n}} \, s_i\Big)^\gamma
		= 0.
	\]
\end{proof}

\subsubsection{Fragmentation trees with immigration}\label{sec:frag-trees-immigration}
We say that a non-negative Borel measure $I$ on $\Sdec$ is an \emph{immigration measure}
if it satisfies $\int_{\Sdec} (1\wedge\norm\bfs) \, I(\D\bfs) < \infty$.
We will say that two such measures $I$ and $J$ are equivalent
if $(1\wedge\norm\bfs) \, I(\D\bfs) = (1\wedge\norm\bfs) \, J(\D\bfs)$,
\ie if $\abs {I-J}$ is supported by $\{\mathbf 0\}$.

Fix an immigration measure $I$ such that $I(\Sdec) > 0$
and let $(\gamma,\nu)$ be a fragmentation pair.
Let $\Sigma = \sum_{n\geq 1} \delta_{(u_n,\bfs_n)}$ be a Poisson point process
on $\bbR_+\times\Sdec$ with intensity $\D u \otimes I(\D\bfs)$
independent of a family $( \bfX^{(n,k)}, \, n\geq 1, k\geq 1 )$
of \iid $(\gamma,\nu)$-fragmentation processes.
Define the $\Sdec$-valued process $\bfX$ as follows:
\[
	\bfX = \big(\bfX(t), \, t\geq 0\big)
	:= \Bigg( \bigg( s_{n,k} \bfX^{(n,k)}\big[ s_{n,k}^{-\gamma} (t-u_n) \big]
		\:;\: n\geq 1 : u_n\leq t, \, k\geq 1\bigg)^\downarrow \; ;\; t\geq 0\Bigg).
\]
We call $\bfX$ a fragmentation process with immigration with parameters $(\gamma,\nu, I)$.
It describes the evolution of the masses of a cluster of independently fragmenting objects,
where new objects of sizes $\bfs_n$ appear, or immigrate, at time $u_n$.
These processes were introduced in~\cite{haas2005equilibriumfragmentationimmigration}.

\smallskip

Similarly to pure fragmentation processes, the genealogy of these immigrations
and fragmentations can be encoded as an infinite weighted $\bbR$-tree (see~\cite{haas2007fragmentationinitialmass}),
say $(\calT^{(I)},\upd,\rho,\mu)$,
such that if for all $t\geq 0$, we note $\{\calT_i(t) : i\geq 1\}$
the set of the closures of the bounded connected components of $\calT^{(I)}\setminus(\calT^{(I)}\vert_t)$,
then
\[
	\Big( \big(\mu[\calT_i(t)] \,;\, i\geq 1\big)^\downarrow \; ; \; t\geq 0\Big)
\]
is a $(\gamma,\nu,I)$-fragmentation process with immigration.
Let $\scrT_{\gamma,\nu}^I$ be the distribution of $(\calT^{(I)},\upd,\rho,\mu)$.

\paragraph{Point process construction}
The construction of $(\gamma,\nu)$-fragmentation trees with immigration $I$
described in~\cite{haas2007fragmentationinitialmass}
can be expressed using Poisson point processes, concatenated $(\gamma,\nu)$-fragmentation trees
and the continuous grafting application $\bfG$ from the end of Section~\ref{sec:compact-ghp-topology}.
Let $\Sigma = \sum_{i\geq 1} \delta_{(u_i,\bfs_i)}$ be a Poisson point process
on $\bbR_+\times\Sdec$ with intensity $\D u \otimes I(\D\bfs)$
and $(\calT_{i,j},\mu_{i,j})_{i,j\geq 1}$ be \iid $(\gamma,\nu)$-fragmentation trees independent of $\Sigma$.
For all $i\geq 1$, set
\[
	\calT_i := \big\langle(s_{i,j}^\gamma\calT_{i,j}, s_{i,j} \mu_{i,j}) ; j\geq 1\big\rangle,
\]
the concatenation of $(\calT_{i,j} ; j\geq 1)$ with respective masses $s_{i,j}$.
Define $\calT^{(I)}$ as the tree obtained by grafting $\calT_i$ at height $u_i$
on an infinite branch for each $i\geq 1$, \ie\label{def:frag-trees-with-immigration}
\[
	\calT^{(I)} := \bfG\Big( \big\{ (u_i, \calT_i) \: : \: i\geq 1\big\} \Big).
\]
The random tree $\calT^{(I)}$ has distribution $\scrT_{\gamma,\nu}^I$.

\smallskip

Observe that for all $K\geq 0$,
we can write the total mass grafted on the infinite branch at height less than $K$
as an integral against the point-process $\Sigma$:
\[
	\textstyle
	\sum_{i\geq 1} \ind_{u_i\leq K} \mu_{\calT_i}(\calT_i)
	= \sum_{i\geq 1} \ind_{u_i\leq K} \norm{\bfs_i}
	= \int \, \ind_{u\leq K} \, \norm\bfs \, \Sigma(\D u,\D\bfs).
\]
Since $\int \, 1 \wedge\big(\ind_{u\leq K} \, \norm\bfs\big) \, \D u \, I(\D\bfs)
= K \int (1\wedge\norm\bfs) I(\D\bfs) < \infty$,
we may use Campbell's theorem (see~\cite[Section~3.2]{kingman1992poissonprocesses})
and claim that $\int \, \ind_{u\leq K} \, \norm\bfs \, \Sigma(\D u,\D\bfs) < \infty$ \as.
The second condition of Lemma~\ref{lem:continuous-grafting-locally-compact}
is thus met.
Moreover, for all $i\geq 1$,
\[
	\esp [2] {\abs {\calT_i}^{1/\gamma} \big\vert \Sigma}
	= \esp [2] {\sup\nolimits_{j\geq 1} s_{i,j} \abs {\calT_{i,j}}^{1/\gamma} \big\vert \Sigma}
	\leq {\textstyle\sum_{j\geq 1}} s_{i,j} \esp [2] {\abs {\calT_{1,1}}^{1/\gamma}}
	\leq \esp [2] {\abs {\calT_{1,1}}^{1/\gamma}} \, \norm{\bfs_i}
\]
where we have used the fact that $(\calT_{i,j})_{i,j}$ is an \iid family independent of $\Sigma$.
Markov's inequality therefore implies that
\[
	\sum_{i\geq 1} \ind_{u_i\leq K} \prob [1] {\abs {\calT_i} \geq \varepsilon \vert \Sigma}
	\leq \sum_{i\geq 1} \ind_{u_i\leq K} \varepsilon^{-1/\gamma} \, \esp [1] {\abs {\calT_i}^{1/\gamma} \vert \Sigma}
	\leq \frac {\esp [1] {\abs {\calT_{1,1}}^{1/\gamma}}} {\varepsilon^{1/\gamma}}
		\, \sum_{i\geq 1} \ind_{u_i\leq K} \norm {\bfs_i}
\]
which is, according to Campbell's formula, \as finite.
Consequently, using Borel-Cantelli's lemma, we deduce that conditionally on $\Sigma$, with probability one,
there are finitely many indices $i\geq 1$ such that $u_i\leq K$ and $\calT_i$ is higher than $\varepsilon$.
It follows from Lemma~\ref{lem:continuous-grafting-locally-compact}
that $\calT^{(I)}$ is \as $\bbT$-valued.

\begin{remark}\label{nb:ssfi-rescaling}
	Let $I$ be an immigration measure and suppose that there exists some positive $\gamma$
	such that for any measurable $F:\Sdec\to\bbR_+$ and $c>0$,
	$c \, \int F(\bfs) \, I(\D\bfs) = \int F(c^{1/\gamma}\bfs) \, I(\D\bfs)$.
	Then, for any dislocation measure $\nu$,
	a $(\gamma,\nu,I)$-fragmentation tree with immigration $(\calT,\mu)$
	satisfies the following self-similarity property:
	for any positive $m$, $(m^\gamma\calT, m\,\mu)$ has the same distribution as $(\calT,\mu)$.
	Furthermore, for any positive $c$,
	$(\calT, c\mu)$ is a $(\gamma,c^\gamma\nu,c^\gamma I)$-fragmentation tree with immigration
	and so is $(c^{-\gamma}\calT, \mu)$.
\end{remark}

\paragraph{Relationship to compact fragmentation trees}
Let $(\gamma,\nu)$ be a fragmentation pair and $I$ an immigration measure with $I(\Sdec)>0$.
Theorem~17 in \cite{haas2007fragmentationinitialmass} states that under suitable conditions,
if $(\calT,\mu_\calT)$ denotes a $(\gamma,\nu)$-self-similar fragmentation tree,
then $(m^\gamma\calT, m\mu_\calT)$ converges to $\scrT_{\gamma,\nu}^I$ in distribution as $m\to\infty$
with respect to the extended GHP topology.

\smallskip

For instance, Theorem~11~$(iii)$ in~\cite{aldous1991crt1},
states that if $(\calT,\mu_\calT)$ is a standard Brownian tree
then when $m\to\infty$, $(m^{1/2} \calT, m\, \mu_\calT)$ converges in distribution
to the ``self-similar CRT''.
This result was reformulated in terms of fragmentation trees in~\cite[Section~1.2]{haas2007fragmentationinitialmass}:
$(m^{1/2} \calT, m\, \mu_\calT)$ converges in distribution as $m\to\infty$
to a $(1/2, \nu_B, I_B)$-fragmentation tree with immigration,
where $\nu_B$ is the Brownian dislocation measure (see Section~\ref{sec:ssf-trees})
and the \emph{Brownian immigration measure} $I_B$ is defined
for all measurable $f:\Sdec\to\bbR_+$ by
\[
	\int F \, \D I_B :=
	\bigg(\frac 2 \pi\bigg)^{1/2} \int_{[0,\infty)} \frac {f(x,0,0,\dots)} {x^{3/2}} \, \D x.
\]
We will call a $(1/2, \nu_B, I_B)$-fragmentation tree with immigration
a \emph{immigration Brownian tree}.

Set $\alpha\in(1,2)$ and recall the notations used to define $\nu_\alpha$ in Section~\ref{sec:ssf-trees},
in particular, that $\Delta$ denotes the decreasing rearrangement of the jumps on $[0,1]$
of an $1/\alpha$-stable subordinator with Laplace exponent
$\lambda\mapsto-\log\esp{\exp(-\lambda\Sigma_t)}=\lambda^{1/\alpha}$
and that $k_\alpha = \Gamma(2-\alpha) / [\alpha\,(\alpha-1)]$.
Let $I^{(\alpha)}$ be the immigration measure defined for all measurable
$F:\Sdec\to\bbR_+$ by
\[
	\int_{\Sdec} F \, \D I^{(\alpha)}
	= \frac 1 {k_\alpha}
		\int_0^\infty \frac {\esp [1] {F(t^\alpha \, \Delta)}} {t^\alpha} \D t.
\]
In \cite[Section~5.1]{haas2007fragmentationinitialmass},
it was observed that if $(\calT,\mu_\calT)$ is an $\alpha$-stable tree,
then $(m^{1-1/\alpha} \calT, m\, \mu_\calT)$ converges in distribution
to a $(1-1/\alpha, \nu_\alpha, I^{(\alpha)})$-fragmentation tree with immigration
as $m\to\infty$.
These trees coincide with the $\alpha$-stable immigration L\'evy trees
introduced in~\cite[Section~1.2]{duquesne2009immigrationlevytrees}.

\subsection{Convergence of point processes}\label{sec:point-processes}

With the notations used in Section~\ref{sec:frag-trees-immigration},
let $\Pi := \sum_{i\geq 1} \smash{\delta_{(u_i,\bfs_i,\calT_i)}}$.
It is a Poisson point process on $\bbR_+\times\Sdec\times\bbT_c$
with intensity $\D u \otimes \scrI(\D\bfs,\D\tau)$
where the measure $\scrI$ on $\Sdec\times\bbT_c$ is defined as follows:
let $(\tau_i,\mu_i)_{i\geq 1}$ be a sequence of \iid $(\gamma,\nu)$-fragmentation trees
and for any $\bfs$ in $\Sdec$, similarly to Section~\ref{sec:ssf-trees}, set $\tau_{\langle\bfs\rangle} :=
\big\langle (s_i^\gamma \tau_i, s_i \mu_i) \,;\, i\geq 1\big\rangle$
and for all $G:\Sdec\times\bbT_c\to\bbR_+$, let $\int G \, \D\scrI :=
\int \esp {G(\bfs,\tau_{\langle\bfs\rangle})} \: I(\D\bfs)$.

Moreover, recall from the construction of Markov branching trees
with a unique infinite spine (see Remark~\ref{nb:infinite-spine-construction})
that a tree $T$ with distribution $\operatorname {MB}^{q,q_{\smash\infty}}_\infty$
is obtained by grafting at each height $n$ of an infinite branch a tree $T_n$,
where the sequence $(T_n)_{n\geq 0}$ is \iid,
is such that for all $n\geq 0$,
$\Lambda_n := \Lambda(T_n)$ has distribution $q_* = q_\infty(\infty,\,\cdot\,)$
and conditionally on $\Lambda_n = \lambda$ in $\calP_{<\infty}$,
$T_n$ has distribution $\operatorname {MB}^q_\lambda$.
As a result, $T$ is characterised by the
point process $\sum_{n\geq 0} \delta_{(n,\Lambda_n,T_n)}$
(or simply by $\sum_{n\geq 0} \delta_{(n,T_n)}$).

Therefore, when considering scaling limits of such trees,
it seems natural to take a step back and instead consider the convergence
of the underlying point processes on $\bbR_+\times\Sdec\times\bbT_c$.
We will follow the spirit of~\cite[Section~2.1.2]{haas2007fragmentationinitialmass}
and introduce a topology on the set of such point measures
adequate for our forthcoming purposes.

\medskip

Let $\scrR$ be the set of integer-valued Radon measures on $\bbR_+\times\Sdec\times\bbT_c$
which integrate the function $(u,\bfs,\tau)\longmapsto \ind_{u\leq K}\norm\bfs$ for all $K\geq 0$.
Two measures $\mu$ and $\nu$ in $\scrR$ will be called equivalent
when $\norm\bfs \, \mu (\D u, \D\bfs, \D\tau) = \norm\bfs \, \nu (\D u, \D\bfs, \D\tau)$,
meaning that $\abs{\mu-\nu}$ is supported by $\bbR_+\times\{\mathbf 0\}\times\bbT_c$.

Note $\scrF$ the set of continuous functions $F:\bbR_+\times\Sdec\times\bbT_c\longrightarrow\bbR_+$
such that there is $K\geq 0$ satisfying $F(u,\bfs,\tau) \leq \ind_{u\leq K} \norm\bfs$ for all $(u,\bfs,\tau)$.
If $\zeta$ is a random element of $\scrR$,
we define its \emph{Laplace transform} as the application $L_{\smash\zeta}:\scrF\to\bbR_+$,
defined by $ L_{\smash\zeta}(F) := \esp [1] {\exp \big( -\smash\int F\,\D\zeta \big)}$
for all $F$ in $\scrF$.

If $\mu_n$, $n\geq 1$ and $\mu$ are elements of $\scrR$, we will say that $\mu_n\to\mu$
\tiff for all $F\in\scrF$, $\int F \,\D\mu_n \to \int F \,\D\mu$.
Appendix~A7 of~\cite{kallenberg1983randommeasures} ensures that when endowed with
the topology induced by this convergence, $\scrR$ is a Polish space.
Moreover, Theorems~4.2 and~4.9 of~\cite{kallenberg1983randommeasures}
give the following criterion for convergence in distribution of elements of $\scrR$.

\begin{proposition}[\cite{kallenberg1983randommeasures}]\label{prop:point-processes-laplace-criterion}
	Let $\xi_n$, $n\geq 1$ and $\xi$ be $\scrR$-valued random variables.
	Then $\xi_n$ converges to $\xi$ in distribution with respect to the topology on $\scrR$
	\tiff for all $F\in\scrF$, $L_{\xi_{\smash n}}(F) \to L_\xi(F)$.
\end{proposition}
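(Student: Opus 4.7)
The plan is to reduce the statement to the general results of Kallenberg on random measures (Theorems~4.2 and~4.9 of~\cite{kallenberg1983randommeasures}) after verifying that the ambient space $\scrR$ and the test class $\scrF$ defined just above satisfy the required hypotheses.

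For the direct implication, the argument is immediate. As recalled in the paragraph preceding the statement, the topology on $\scrR$ is (by Kallenberg's Appendix~A7) the coarsest one making all evaluation maps $\mu \mapsto \int F \, \D\mu$, $F \in \scrF$, continuous. Hence $\mu \mapsto \exp\bigl(-\int F \, \D\mu\bigr)$ is a bounded (by $1$) continuous functional on the Polish space $\scrR$, and convergence in distribution $\xi_n \Rightarrow \xi$ yields $L_{\xi_n}(F) \to L_\xi(F)$ by the portmanteau theorem.

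For the converse, I would follow the two standard steps of the method of Laplace functionals: first prove tightness of $(\xi_n)$ in $\scrR$, then identify its limit. For tightness, note that $\lambda F \in \scrF$ whenever $F \in \scrF$ and $\lambda > 0$, so the assumption gives $L_{\xi_n}(\lambda F) \to L_\xi(\lambda F)$ for every such $\lambda$. Combined with $1 - \E^{-x} \leq x \wedge 1$, this produces a uniform-in-$n$ control of $\esp{1\wedge\int F \, \D\xi_n}$; specialised to $F_K(u,\bfs,\tau) := \ind_{u\leq K}\,\norm\bfs$ for each $K\geq 0$, this is precisely the boundedness-on-bounded-sets condition required by Kallenberg's tightness criterion for Radon measures (Theorem~4.9 of~\cite{kallenberg1983randommeasures}). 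For the identification step, Kallenberg's Theorem~4.2 asserts that the collection of Laplace functionals $\{L_\eta(F) : F \in \scrF\}$ determines the law of a random element of $\scrR$; any subsequential distributional limit $\eta$ of $(\xi_n)$ thus satisfies $L_\eta = L_\xi$ and must therefore agree with $\xi$ in distribution, so the whole sequence converges in distribution to $\xi$.

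The main subtlety, which I would consider to be the real content of the argument, is verifying that $\scrF$ is a genuinely separating class on $\scrR$ that is compatible with the equivalence relation identifying measures differing only on $\bbR_+\times\{\mathbf 0\}\times\bbT_c$. This holds because every $F \in \scrF$ vanishes on that set (since $F(u,\bfs,\tau) \leq \ind_{u\leq K}\,\norm\bfs = 0$ when $\bfs = \mathbf 0$), while the bounds $F \leq \ind_{u\leq K}\norm\bfs$ are exactly the weights making elements of $\scrR$ integrate $F$. Once this bookkeeping is done, Kallenberg's machinery applies directly and no further ad~hoc work is required.
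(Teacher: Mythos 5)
The paper offers no proof of this proposition: it is stated purely as a citation, with the sentence immediately preceding it saying ``Theorems~4.2 and~4.9 of~\cite{kallenberg1983randommeasures} give the following criterion.'' Your proposal takes the same route — reduce to Kallenberg's general theorems — but adds a sketch of how they apply. The structure (direct implication via continuity of $\mu\mapsto\exp(-\int F\,\D\mu)$, converse via tightness plus uniqueness of the limit identified through Laplace functionals) is sound and is indeed the content of the results being cited.

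Two technical inaccuracies in your elaboration deserve flagging, though neither is fatal. First, the assertion that $\lambda F\in\scrF$ for every $\lambda>0$ is false for $\lambda>1$: if $F\leq\ind_{u\leq K}\norm\bfs$, then $\lambda F\leq\lambda\,\ind_{u\leq K}\norm\bfs$, and the factor $\lambda$ cannot be re-absorbed by enlarging $K$, since the dominating functions $\ind_{u\leq K'}\norm\bfs$ are all bounded by the same $\norm\bfs$ for $u$ small. Fortunately you do not need the family $\{\lambda F\}_{\lambda>0}$: the single estimate $1-\E^{-x}\geq (1-\E^{-1})(x\wedge 1)$ already converts the convergence $L_{\xi_n}(F)\to L_\xi(F)$ into the uniform control $\sup_n\esp{1\wedge\int F\,\D\xi_n}<\infty$. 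Second, the function $F_K(u,\bfs,\tau)=\ind_{u\leq K}\norm\bfs$ is not continuous in $u$ and hence does not lie in $\scrF$; you should replace it by a continuous approximation such as $[1-n(u-K)_+]_+\norm\bfs$ and pass to the limit, exactly as is done later in the paper's own Lemma~\ref{lem:point-processes-large-immigration-weak-cv}. With those adjustments, your outline is a correct expansion of what the paper leaves implicit.
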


\medskip

The following extension of the Portmanteau theorem to finite measures
with any mass will be useful.

\begin{lemma}\label{lem:portmanteau-extension}
	Set $(M,\upd)$ a metric space
	and let $\mu_n$, $n\geq 1$ and $\mu$ be finite Borel measures on $M$.
	Then $\mu_n$ converges weakly to $\mu$
	\tiff for any bounded Lipschitz-continuous function $f:M\to\bbR$,
	$\int f \,\D\mu_n$ converges to $\int f\,\D\mu$
	as $n$ goes to infinity.
\end{lemma}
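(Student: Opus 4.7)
The forward implication is immediate since any Lipschitz function is continuous and the standard definition of weak convergence of finite measures requires $\int f \, \D\mu_n \to \int f \, \D\mu$ for \emph{all} bounded continuous $f$, hence in particular for bounded Lipschitz $f$. So the only real content is the converse.

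For the converse, my plan is to reduce to the classical Portmanteau theorem for probability measures. First, I would apply the hypothesis to the constant function $f \equiv 1$, which is trivially bounded and Lipschitz, to deduce that $\mu_n(M)\to\mu(M)$. Two cases then arise. If $\mu(M) = 0$, then for any bounded continuous $g: M\to\bbR$,
\[
	\abs [1] {{\textstyle\int} g \, \D\mu_n}
	\leq \norm g_\infty \, \mu_n(M)
	\xrightarrow [n\to\infty] {} 0
	= {\textstyle\int} g\,\D\mu,
\]
which gives the weak convergence directly.

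If $\mu(M) > 0$, then $\mu_n(M) > 0$ for all sufficiently large $n$, and I can form the normalised probability measures $\tilde\mu_n := \mu_n / \mu_n(M)$ and $\tilde\mu := \mu/\mu(M)$. For any bounded Lipschitz $f:M\to\bbR$, the hypothesis combined with $\mu_n(M)\to\mu(M)>0$ yields
\[
	{\textstyle\int} f \, \D\tilde\mu_n
	= \frac {1} {\mu_n(M)} \, {\textstyle\int} f \, \D\mu_n
	\xrightarrow [n\to\infty] {} \frac {1} {\mu(M)} \, {\textstyle\int} f \, \D\mu
	= {\textstyle\int} f \, \D\tilde\mu.
\]
Hence $\tilde\mu_n$ satisfies the ``bounded Lipschitz'' condition from the classical Portmanteau theorem for probability measures, so $\tilde\mu_n \Rightarrow \tilde\mu$ weakly in the probabilistic sense. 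For any bounded continuous $g:M\to\bbR$, this yields $\int g\,\D\tilde\mu_n \to \int g\,\D\tilde\mu$, and multiplying by $\mu_n(M)\to\mu(M)$ gives $\int g\,\D\mu_n \to \int g\,\D\mu$, which is precisely the required weak convergence of the finite measures.

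The only possible obstacle is really a bookkeeping one: making sure the case $\mu(M)=0$ is handled separately so that the normalisation is well-defined. No deep estimate is required because the classical Portmanteau theorem for probability measures (which already asserts the equivalence between testing against bounded continuous and bounded Lipschitz functions) does all the heavy lifting.
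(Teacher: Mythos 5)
Your proof is correct and follows essentially the same route as the paper: testing against the constant function to get $\mu_n(M)\to\mu(M)$, handling $\mu(M)=0$ separately, and otherwise normalising to probability measures and invoking the classical Portmanteau theorem. The only cosmetic difference is that you spell out the estimate in the $\mu(M)=0$ case, which the paper leaves implicit.
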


\begin{proof}
	Suppose $\int f \,\D\mu_n \to \int f\,\D\mu$
	for all Lipshitz-continuous functions $f:M\to\bbR$.
	Observe that since constant applications are Lipschitz-continuous,
	our assumption implies that $\mu_n(M) \to \mu(M)$.
	Therefore, if $\mu(M) = 0$, we directly get $\mu_n\Rightarrow\mu$.
	
	Otherwise, there exists $n_0$ such that $\mu_n(M) > 0$ for all $n\geq n_0$.
	For all such $n$, let $\tilde\mu_n := [\mu_n(M)]^{-1} \mu_n$
	and $\tilde\mu := [\mu(M)]^{-1} \mu$ which are probability measures.
	It ensues from the usual Portmanteau theorem and our assumption
	that $\tilde\mu_n\Rightarrow\tilde\mu$.
	As a result, for any bounded continuous function $f$, as $n$ goes to $\infty$,
	$\int f \, \D\mu_n = \mu_n(M) \, \int f \,\D\tilde\mu_n
	\to \mu(M) \, \int f \,\D\tilde\mu = \int f \,\D\mu$
	which is to say that $\mu_n\Rightarrow\mu$.
\end{proof}

\section{Scaling limits of infinite Markov-branching trees}\label{sec:infinite-mb-scaling-limits}

In this section, we will state and prove our main result
on scaling limits of infinite Markov branching trees
as well as its corollary on their volume growth.

\smallskip

Let $\calN$ be an infinite subset of $\bbN$ containing $1$
and let $q = (q_{n-1})_{n\in\calN}$ be a sequence of first-split distributions where for each $n$,
$q_{n-1}$ is supported by $\big\{\lambda\in\calP_{n-1} : \lambda_i\in\calN, i=1,\dots, p(\lambda)\big\}$.
Recall from Section~\ref{sec:finite-mb-trees} that the associated Markov branching family $\MB^q$ is well defined.
Furthermore, let $q_\infty$ be a probability measure on $\calP_\infty$ supported by the set
$\big\{(\infty,\lambda) : \lambda\in\calP_{<\infty}, \lambda_i\in\calN, i=1,\dots,p(\lambda) \big\}$.
In this way, the probability measure $\MB^{q,q_{\smash\infty}}_\infty$ on $\ttT_\infty$ is also well defined
and \as yields trees with a unique infinite spine.
To lighten notations, let $q_* := q_\infty(\infty, \,\cdot\,)$ which is a probability measure on $\calP_{<\infty}$.

\smallskip

In the remainder of this section, we will assume that:
\begin{itemize}
	\item[$(\mathtt {S})$]\phantomsection\label{assumption:scaling}
	There exist some $\gamma>0$ and a dislocation measure $\nu$ on $\Sdec$,
	such that $n^\gamma (1-s) \bar q_n(\D s) \Rightarrow (1-s) \nu(\D s)$.
	In particular, Theorem~\ref{thm:haas-scaling-mb} and Lemma~\ref{lem:mb-trees-height} hold.
	
	\item[$(\mathtt {I})$]\phantomsection\label{assumption:immigration}
	There exists an immigration measure $I$ on $\Sdec$
	such that if $\Lambda$ has distribution $q_*$,
	for any continuous $F:\Sdec\to\bbR_+$ with $F(\bfs) \leq 1\wedge\norm\bfs$,
	$R \, \esp [1] {F(\Lambda/R^{1/\gamma})} \to \int F\, \D I$ as $R\to\infty$.
\end{itemize}

\begin{remark}
	Under Assumption~\hyperref[assumption:immigration]{$(\mathtt I)$},
	the immigration measure $I$ satisfies the self-similarity condition
	exposed in Remark~\ref{nb:ssfi-rescaling}.
\end{remark}

\begin{theorem}\label{thm:scaling-limits-infinite-mb-trees}
	Let $T$ be an infinite Markov branching tree with distribution $\MB^{q,q_{\smash\infty}}_\infty$
	endowed with its counting measure $\mu_T$.
	Under Assumptions~\hyperref[assumption:scaling]{$(\mathtt S)$}
	and~\hyperref[assumption:immigration]{$(\mathtt I)$}, if $\gamma<1$,
	with respect to the extended GHP topology,
	\[
		\bigg(\frac T R, \frac {\mu_T} {R^{1/\gamma}}\bigg)
		\xrightarrow [R\to\infty] {}
		\scrT_{\gamma,\nu}^I
	\]
	in distribution,
	where $\scrT_{\gamma,\nu}^I$ denotes the distribution
	of a $(\gamma,\nu,I)$-fragmentation tree with immigration.
\end{theorem}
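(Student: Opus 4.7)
The plan is to exploit the spine decomposition of $T$ together with the point-process construction of $\scrT_{\gamma,\nu}^I$ recalled in Section~\ref{sec:frag-trees-immigration}. Since $q_\infty(m_\infty = 1) = 1$, Remark~\ref{nb:infinite-spine-construction} gives that $T$ is almost surely obtained by grafting, at each vertex $\ttv_n$ of the infinite spine, a finite tree $T_n$, where $(T_n)_{n \geq 0}$ is \iid, $\Lambda_n := \Lambda(T_n)$ has law $q_* = q_\infty(\infty,\,\cdot\,)$, and conditionally on $\Lambda_n = \lambda$, $T_n$ follows $\MB^q_\lambda$. After rescaling distances by $1/R$, $\ttv_n$ sits at height $n/R$ on the (now continuous) branch, so it is natural to encode $(T/R,\mu_T/R^{1/\gamma})$ by the random element of $\scrR$
\[
\Pi_R := \sum_{n \geq 0} \delta_{(n/R,\, \Lambda_n/R^{1/\gamma},\, \bfT_n^{(R)})}, \qquad \bfT_n^{(R)} := \bigl(T_n/R,\, \mu_{T_n}/R^{1/\gamma}\bigr),
\]
and to note that on the limiting side $\scrT_{\gamma,\nu}^I = \bfG\bigl(\{(u_i,\calT_i) : i \geq 1\}\bigr)$, where $\Pi := \sum_{i\geq 1} \delta_{(u_i,\bfs_i,\calT_i)}$ is the Poisson point process on $\bbR_+ \times \Sdec \times \bbT_c$ with intensity $\D u \otimes \scrI(\D\bfs,\D\tau)$ from Section~\ref{sec:point-processes}.

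The central step is to show $\Pi_R \Rightarrow \Pi$ in distribution in $\scrR$, for which I would apply the Laplace-transform criterion of Proposition~\ref{prop:point-processes-laplace-criterion}. Fix $F \in \scrF$ with $F(u,\bfs,\tau) \leq \ind_{u \leq K}\norm{\bfs}$. Using the \iid structure,
\[
L_{\Pi_R}(F) = \prod_{n=0}^{\lfloor KR \rfloor} \bigl(1 - \psi_R(n)\bigr), \qquad \psi_R(n) := \esp{1 - \exp\bigl(-F(n/R, \Lambda_n/R^{1/\gamma}, \bfT_n^{(R)})\bigr)}.
\]
Since $1 - e^{-x} \leq 1 \wedge x$, Assumption~\hyperref[assumption:immigration]{$(\mathtt I)$} bounds $\psi_R(n)$ by $O(1/R)$ uniformly in $n \leq KR$, and the log-expansion reduces the problem to
\[
\sum_{n=0}^{\lfloor KR \rfloor} \psi_R(n) \xrightarrow[R \to \infty]{} \int \bigl(1 - e^{-F(u,\bfs,\tau)}\bigr)\, \D u\, \scrI(\D\bfs,\D\tau).
\]
Conditioning on $\Lambda_n = \lambda$ and applying Theorem~\ref{thm:haas-scaling-mb} together with Lemma~\ref{lem:ssf-concatenation-ghp-exp-continuity} shows that whenever $\lambda/R^{1/\gamma} \to \bfs$ in $\Sdec$, $\bfT_n^{(R)}$ converges in distribution to $\calT_{\langle\bfs\rangle}$ in $\bbT_c$; a uniform-integrability argument built on Lemma~\ref{lem:mb-trees-height} then makes the conditional expectation of the integrand an asymptotically continuous function of $(n/R, \lambda/R^{1/\gamma})$. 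Assumption~\hyperref[assumption:immigration]{$(\mathtt I)$} together with a Riemann-sum approximation in the spine direction (justified by the uniform continuity of $F$ in $u$ on $[0,K]$) delivers the limit, with the intensity $\scrI$ arising exactly as described in Section~\ref{sec:point-processes}.

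It remains to transfer this point-process convergence to the claimed GHP convergence of the trees. By Proposition~\ref{prop:extended-ghp-properties}(ii), it suffices to establish $(T/R)\vert_K \Rightarrow \scrT^I_{\gamma,\nu}\vert_K$ in compact GHP for almost every $K > 0$. On both sides, only graftings at height below $K$ contribute to the truncation, and, for every $\varepsilon > 0$, only finitely many of them yield a subtree taller than $\varepsilon$. The finitely many ``large'' contributions are handled by composing the convergence of $\Pi_R$ restricted to an appropriate open set with the (continuous) truncated grafting map, while the aggregate contribution of the infinitely many ``small'' graftings is bounded via Lemma~\ref{lem:continuous-concatenation-ghp-bounds} combined with a first-moment estimate stemming from Lemma~\ref{lem:mb-trees-height} and Assumption~\hyperref[assumption:immigration]{$(\mathtt I)$}. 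The hypothesis $\gamma < 1$ enters here precisely to guarantee that a grafting of partition-size $o(R^{1/\gamma})$ produces a subtree of height $o(1)$ after rescaling, so that the small graftings contribute negligibly. I expect the main technical obstacle to be this simultaneous control of the mixed scaling: the two coordinates $\Lambda_n/R^{1/\gamma}$ and $\abs{T_n}/R$ scale at different rates, and Theorem~\ref{thm:haas-scaling-mb} has to be marshalled along a triangular array of tree sizes covering every regime from $O(1)$ up to $\Theta(R^{1/\gamma})$.
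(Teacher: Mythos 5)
Your proposal follows essentially the same two-stage architecture as the paper's proof: first establish convergence of the spine point processes $\Pi_R \Rightarrow \Pi$ via the Laplace-transform criterion of Proposition~\ref{prop:point-processes-laplace-criterion}, with the core technical work being precisely the uniform control of $\smash{\bfT_n^{(R)}}\to\calT_{\langle\bfs\rangle}$ over the triangular array of partition sizes (this is the content of Lemma~\ref{lem:tree-pp-intensity-lipschitz-cv}); then transfer to GHP convergence of the trees by a Skorokhod coupling, $\varepsilon$-pruning of small grafted subtrees, continuity of the finite grafting map, and a Billingsley-type ``diagonal'' argument (Theorem~\ref{thm:billingsley-weak-cv-criterion} in the paper, with the three ingredients supplied by Lemmas~\ref{lem:ssfi-pruning-cv}, \ref{lem:infinite-mb-pruning-unif-cv}, \ref{lem:infinite-mb-to-ssfi-pruning-cv}). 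You correctly identify the main technical obstacle as the mixed $R$ / $R^{1/\gamma}$ scaling, and the estimates you gesture at (Lemma~\ref{lem:mb-trees-height} for moment bounds, Lemma~\ref{lem:continuous-concatenation-ghp-bounds} for concatenations, Assumption~\hyperref[assumption:immigration]{$(\mathtt I)$} for tail control of the immigrating masses) are exactly the tools the paper deploys. One small inaccuracy: the constraint $\gamma<1$ is not needed to ensure that partitions of size $o(R^{1/\gamma})$ give subtrees of rescaled height $o(1)$ --- that holds for any $\gamma>0$ since $m^\gamma/R\to 0$ whenever $m=o(R^{1/\gamma})$. Rather, $\gamma<1$ is the standing hypothesis under which Theorem~\ref{thm:haas-scaling-mb} applies to $\MB^q$ trees (its second bullet requires $\gamma<1$, or $\gamma=1$ with $\ell\to 0$), which is what Assumption~\hyperref[assumption:scaling]{$(\mathtt S)$} invokes.
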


\smallskip

Let $\bfT$ be a fixed element of $\bbT$.
We define its \emph{volume growth function} as the application
$V_\bfT:\bbR_+\to\bbR_+$, $R\mapsto\mu_T(T\vert_R)$.
In other words, $V_\bfT(R)$ is the mass or volume
of the closed ball $T\vert_R$.
Once Theorem~\ref{thm:scaling-limits-infinite-mb-trees} is proved,
we will be interested in the volume growth processes associated to these trees.

\begin{proposition}\label{prop:volume-growth-cv}
	Suppose the assumptions of Theorem~\ref{thm:scaling-limits-infinite-mb-trees} are met.
	Let $T$ be an infinite Markov branching tree with distribution $\MB^{q,q_{\smash\infty}}_\infty$
	and $(\calT,\mu_\calT)$ be a $(\gamma,\nu,I)$-fragmentation tree with immigration.
	Then, the volume growth function of $(T/R, \mu_T/R^{1/\gamma})$ converges in distribution
	to that of $(\calT,\mu_\calT)$ with respect to the topology of uniform convergence on compacts of $\bbR_+$.
	In particular
	\[
		\frac {\mu_T(T\vert_R)} {R^{1/\gamma}}
		\xrightarrow [R\to\infty] {(\upd)}
		\mu_\calT (\calT\vert_1).
	\]
\end{proposition}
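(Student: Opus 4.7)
The plan is to deduce the result from Theorem~\ref{thm:scaling-limits-infinite-mb-trees} via Skorokhod's representation theorem, exploiting the fact that the volume growth functional is continuous at trees whose measure places no mass on spheres, and that the limit $V_\calT$ will turn out to be almost surely of this form. Since $V_{(T/R,\mu_T/R^{1/\gamma})}$ and $V_\calT$ are non-decreasing càdlàg functions on $\bbR_+$ and $V_\calT$ is (to be proved) continuous, almost sure pointwise convergence automatically upgrades to uniform convergence on compacts via a standard monotonicity argument. The main obstacle I anticipate is proving this almost sure continuity of $V_\calT$.

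First I would show that if $\bfT_n \to \bfT$ in $(\bbT,\upD_\GHP)$, then $V_{\bfT_n}(r) \to V_\bfT(r)$ for every $r \geq 0$ with $\mu_T(\partial_r T) = 0$. Proposition~\ref{prop:extended-ghp-properties}~$(ii)$ gives $\upd_\GHP(\bfT_n\vert_r,\bfT\vert_r) \to 0$ at such $r$, and Remark~\ref{nb:height-mass-ghp-continuity} ensures that the total mass is continuous with respect to $\upd_\GHP$; since $V_\bfT(r)$ is by definition the total mass of $\bfT\vert_r$, the pointwise convergence follows. Moreover, the jumps of the non-decreasing càdlàg function $V_\bfT$ occur exactly at the radii where $\mu_T$ charges a sphere, so the continuity points of $V_\bfT$ coincide with the $r$ satisfying $\mu_T(\partial_r T) = 0$.

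Second, I would prove that $V_\calT$ is almost surely continuous when $\calT$ has distribution $\scrT^I_{\gamma,\nu}$. Using the point-process construction of Section~\ref{sec:frag-trees-immigration},
\[
V_\calT(r) = \sum_{i\geq 1} \ind_{u_i \leq r}\, V_{\calT_i}(r-u_i),
\]
where the sum is locally finite (by the Campbell-formula argument already used to show $\calT^{(I)} \in \bbT$). Because the Poisson immigration times $\{u_i\}$ form a locally finite subset of $\bbR_+$ and $V_{\calT_i}(0) = 0$ (the root of a fragmentation tree carries no mass), the grafting heights contribute no jumps. It therefore suffices to show that each $V_{\calT_i}$ is almost surely continuous, which reduces, via the block-decomposition of $\calT_i$, to the continuity of $t \mapsto \norm{\bfX(t)}$ for a conservative $(\gamma,\nu)$-fragmentation process $\bfX$; this follows from the analysis of mass loss in~\cite{haas2003lossofmass}, the conservativeness of $\nu$ ruling out instantaneous drops in the total mass.

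Combining these ingredients, I would apply Skorokhod's representation theorem to Theorem~\ref{thm:scaling-limits-infinite-mb-trees} (valid since $(\bbT,\upD_\GHP)$ is Polish by Proposition~\ref{prop:extended-ghp-properties}~$(iii)$) and work on a probability space on which $(T/R,\mu_T/R^{1/\gamma}) \to (\calT,\mu_\calT)$ almost surely in $\upD_\GHP$. The two steps above then yield $V_{(T/R,\mu_T/R^{1/\gamma})}(r) \to V_\calT(r)$ almost surely for every fixed $r \geq 0$, and the monotonicity argument mentioned at the outset upgrades this to almost sure uniform convergence on every compact subset of $\bbR_+$. The corresponding convergence in distribution is the first assertion, and specialising to $r=1$ yields the displayed limit.
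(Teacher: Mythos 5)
Your overall strategy — Skorokhod representation, continuity of the ball-mass functional at radii where the sphere has no mass, continuity of the limiting volume growth, and the monotone-plus-continuous-limit argument to upgrade pointwise to locally uniform convergence — is exactly the paper's. The gap is in your justification that $V_\calT$ is almost surely continuous. You assert that ``the Poisson immigration times $\{u_i\}$ form a locally finite subset of $\bbR_+$''. This is false in general: the intensity of the time marginal of $\Sigma$ is $I(\Sdec)\,\D u$, and $I(\Sdec)$ is typically infinite (this is already the case for the Brownian immigration measure $I_B$, and indeed for any non-trivial immigration measure arising from Assumption~$(\mathtt I)$). So on every nonempty open interval there are almost surely infinitely many immigration times, and the representation
\[
V_\calT(r) = \sum_{i\geq 1} \ind_{u_i\leq r}\, V_{\calT_i}(r-u_i)
\]
is genuinely an infinite series on every compact $[0,K]$.

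Because of this, the implication ``each $V_{\calT_i}$ is continuous and $V_{\calT_i}(0)=0$, hence the sum is continuous'' does not follow directly: an infinite sum of continuous functions need not be continuous. What saves the argument — and what the paper supplies explicitly — is the Weierstrass $M$-test: each summand is bounded by $\norm{\bfs_i}$ (since $V_{\calT_i}\leq\mu_{\calT_i}(\calT_i)=\norm{\bfs_i}$), and $\sum_{i:\,u_i\leq K}\norm{\bfs_i}<\infty$ almost surely by the Campbell-formula computation you already invoked to show $\calT^{(I)}\in\bbT$. This gives almost sure uniform convergence of the series on $[0,K]$, and only then does continuity of each term transfer to the sum. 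You should replace the ``locally finite'' claim by this uniform-convergence argument. The remaining ingredient, continuity of the volume growth function of a $(\gamma,\nu)$-fragmentation tree, you reduce to continuity of $t\mapsto\norm{\bfX(t)}$ and invoke~\cite{haas2003lossofmass}, whereas the paper cites Proposition~1.9 of~\cite{bertoin2006fragmentationcoagulation}; either reference is acceptable for this step.
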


\smallskip

We may adapt the proofs of Theorem~\ref{thm:scaling-limits-infinite-mb-trees} and Proposition~\ref{prop:volume-growth-cv}
to get the following theorem.

\begin{theorem}
	Let $T$ be an infinite Markov branching tree with distribution
	$\MB^{\smash\calL,q,q_{\smash\infty}}_\infty$
	and endow it with the counting measure $\mu_T$ on the set of its leaves.
	If Assumptions~\hyperref[assumption:scaling]{$(\mathtt S)$}
	and~\hyperref[assumption:immigration]{$(\mathtt I)$} hold for $(q_n)_n$ and $q_\infty$ respectively,
	then the conclusions of both Theorem~\ref{thm:scaling-limits-infinite-mb-trees}
	and Proposition~\ref{prop:volume-growth-cv} hold.
\end{theorem}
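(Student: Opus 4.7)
The plan is to run the proof of Theorem~\ref{thm:scaling-limits-infinite-mb-trees} essentially verbatim, checking that each ingredient used for the vertex-counted family $\MB^q$ has a faithful counterpart for the leaf-counted family $\MB^{\smash\calL,q}$. The three key inputs are: (i)~the point-process description of the infinite tree recalled at the start of Section~\ref{sec:point-processes}; (ii)~the Haas--Miermont scaling result (Theorem~\ref{thm:haas-scaling-mb}) applied to the finite sub-trees grafted on the spine; and (iii)~the moment bound on heights of Lemma~\ref{lem:mb-trees-height}. All three items are stated uniformly for both versions of Markov-branching trees, so no reformulation is needed at the level of the underlying limit theorems.

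Concretely, I would proceed as follows. First, use Remark~\ref{nb:infinite-spine-construction}: since Assumption~\hyperref[assumption:immigration]{$(\mathtt I)$} presupposes $q_\infty(m_\infty=1)=1$, a tree $T \sim \MB^{\smash\calL,q,q_{\smash\infty}}_\infty$ decomposes as the infinite branch $\ttb_\infty$ with, at each vertex $\ttv_n$, an independent sub-tree $T_n \sim \MB^{\smash\calL,q}_{\Lambda_n}$, where $(\Lambda_n)_{n\geq 0}$ is i.i.d.\ with law $q_*$. Encode the rescaled data as the random measure
\[
\Pi_R := \sum_{n\geq 0} \delta_{(n/R,\,\Lambda_n/R^{1/\gamma},\, (T_n/R,\,\mu_{T_n}/R^{1/\gamma}))}
\in \scrR,
\]
where $\mu_{T_n}$ is now the counting measure on the \emph{leaves} of $T_n$. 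Next, verify convergence $\Pi_R \Rightarrow \Pi$ to the Poisson point process of Section~\ref{sec:point-processes} by computing the Laplace transforms as in Proposition~\ref{prop:point-processes-laplace-criterion}: the thinning of the spine at scale $1/R$ combined with Assumption~\hyperref[assumption:immigration]{$(\mathtt I)$} yields the intensity $\D u\otimes I(\D\bfs)$ in the first two coordinates, while Theorem~\ref{thm:haas-scaling-mb} applied to the leaf-counted family produces $\scrT_{\gamma,\nu}$ as the conditional limit of $(T_n/R,\mu_{T_n}/R^{1/\gamma})$ given $\Lambda_n/R^{1/\gamma}\to\bfs$, hence the full intensity $\scrI$. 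Finally, pass from the point process to the tree by applying the continuous grafting map $\bfG$ from Section~\ref{sec:ghp-extension}: this identifies the limit of $(T/R,\mu_T/R^{1/\gamma})$ with $\scrT^I_{\gamma,\nu}$ in the extended GHP topology. The corresponding statement in Proposition~\ref{prop:volume-growth-cv} then follows from the same continuity argument applied to the volume growth functional, which is continuous at the limit tree $\calT^{(I)}$ at every level $r$ with $\mu_\calT(\partial_r \calT)=0$ almost surely.

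The point that requires genuine checking, rather than transcription, is the handling of the geometric sub-branches built into the $\MB^{\smash\calL,q}_n$ construction: each $T_n$ starts with a $\mathrm{Geom}(1-q_{\Lambda_n}(\Lambda_n))$-length segment before the first split, and this has no analogue in the $\MB^q$ setting. For the point-process step this is harmless, since the law of $(T_n,\mu_{T_n})$ entering the Laplace computation is whatever the leaf-MB construction produces, and only its rescaled limit matters; that limit is controlled by Theorem~\ref{thm:haas-scaling-mb}. The delicate part is the application of the grafting map: one needs the hypotheses of Lemma~\ref{lem:continuous-grafting-locally-compact} to hold in the limit, that is the heights $\abs{T_n/R}$ and total masses $\mu_{T_n}(T_n)/R^{1/\gamma}$ of the grafted sub-trees must be uniformly summable on compact height windows. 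This is where $\gamma<1$ is used: combined with the uniform moment control of Lemma~\ref{lem:mb-trees-height} applied to $\MB^{\smash\calL,q}$ (which does bound the expected height including the initial geometric segment) and with Borel--Cantelli along the Poissonian spine, it ensures local compactness of the limit and validates the continuous-mapping argument, exactly as in the proof of Theorem~\ref{thm:scaling-limits-infinite-mb-trees}.
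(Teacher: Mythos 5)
Your proposal is correct and follows the paper's (unwritten) proof, which simply remarks that the arguments for Theorem~\ref{thm:scaling-limits-infinite-mb-trees} and Proposition~\ref{prop:volume-growth-cv} adapt verbatim; the key point, which you identify, is that Theorem~\ref{thm:haas-scaling-mb} and Lemma~\ref{lem:mb-trees-height} are stated for both vertex- and leaf-counted Markov branching families, so all the inputs to Lemma~\ref{lem:tree-pp-intensity-lipschitz-cv}, Lemma~\ref{lem:point-processes-cv}, and the pruning lemmas of Section~\ref{sec:scaling-limits-trees} carry over unchanged. Your observation about the geometric initial segments in the $\MB^{\smash\calL,q}_n$ construction correctly pinpoints the only structural difference with $\MB^q_n$ and correctly notes that it is absorbed by those two results.

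One small imprecision worth flagging: the condition $\gamma<1$ does not enter the grafting or local-compactness step. The argument establishing $\calT^{(I)}\in\bbT$ (Campbell's theorem plus Borel--Cantelli along the spine, feeding into Lemma~\ref{lem:continuous-grafting-locally-compact}) is valid for any $\gamma>0$, as is the uniform moment bound $h_{1/\gamma}<\infty$ from Lemma~\ref{lem:mb-trees-height}. The hypothesis $\gamma<1$ appears in Theorem~\ref{thm:scaling-limits-infinite-mb-trees} only because the vertex version of Theorem~\ref{thm:haas-scaling-mb} requires $\gamma<1$ (or $\gamma=1$ with $\ell(n)\to 0$); the leaf version carries no such restriction, which is precisely why the present theorem statement omits it.
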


\begin{remark}
	Instead of Assumption~\hyperref[assumption:immigration]{$(\mathtt I)$},
	we may assume that
	\begin{itemize}
		\item[$(\mathtt {I'})$]\phantomsection\label{assumption:immigration-prime}
		There exists $\alpha<1/\gamma$ and an immigration measure $I$ on $\Sdec$
		such that if $\Lambda$ is distributed according to~$q_*$,
		$R \, \esp [1] {F(\Lambda/R^\alpha)} \to \int F\, \D I$
		for any continuous $F:\Sdec\to\bbR_+$ with $F(\bfs) \leq 1\wedge\norm\bfs$.
	\end{itemize}
	If $T$ has distribution $\MB^{q,q_{\smash\infty}}_\infty$
	and is endowed with its counting measure $\mu_T$
	under \hyperref[assumption:scaling]{$(\mathtt S)$}
	and \hyperref[assumption:immigration-prime]{$(\mathtt I')$},
	we get that $(T/R, \mu_T/R^\alpha)$ converges in distribution
	to the infinite branch $\bbR_+$ endowed with the random measure
	$\mu = \sum_{i\geq 1} \norm {\bfs_i} \, \delta_{u_{\smash i}}$,
	where $\{(u_i,\bfs_i) ; i\geq 1\}$ are the atoms
	of a Poisson point process $\Sigma$ on $\bbR_+\times\Sdec$ with intensity $\D u \otimes I(\D\bfs)$.
	The tree $(\bbR_+,\mu)$ encodes the genealogy of a pure immigration process.
	Furthermore, $\mu_T(T\vert_R)/R^\alpha$ converges in distribution
	to $\mu([0,1]) = \int_{[0,1]\times\Sdec} \norm\bfs \, \Sigma(\D\bfs)$.
	
	Similarly, if $T$ is distributed according to $\MB^{\smash\calL,q,q_{\smash\infty}}_\infty$
	and is endowed with the counting measure on its leaves,
	the same results hold under \hyperref[assumption:scaling]{$(\mathtt S)$}
	and \hyperref[assumption:immigration-prime]{$(\mathtt I')$}.
\end{remark}

\smallskip

To prove Theorem~\ref{thm:scaling-limits-infinite-mb-trees},
we will first study the convergence of the underlying point processes in Section~\ref{sec:scaling-limits-pp}
which will give us more leeway to manipulate the corresponding trees and end the proof in Section~\ref{sec:scaling-limits-trees}.
Section~\ref{sec:volume-growth} will then focus on proving Proposition~\ref{prop:volume-growth-cv}.

\subsection{Convergence of the associated point processes}\label{sec:scaling-limits-pp}

Since $(\bbT_c, \upd_\GHP)$ is Polish,
in light of Assumption~\hyperref[assumption:scaling]{$(\mathtt S)$},
Theorem~\ref{thm:haas-scaling-mb} and Skorokhod's representation theorem,
we can find an \iid sequence $[(T_{i,n})_{n\in\calN}, \calT_i]_{i\geq 1}$,
where for each $i\geq 1$, the family $(T_{i,n})_{n\in\calN}, \calT_i$ of random trees
is such that:
\begin{itemize}
	\item $T_{i,n}$ has distribution $\MB^q_n$,
	\item $\calT_i$ is a $(\gamma,\nu)$ self-similar fragmentation tree,
	\item $(T_{i,n}/n^\gamma, \mu_{T_{\smash{i,n}}}/n) =: \overline T_{i,n}$
	\as converges to $\calT_i$ as $n\to\infty$.
\end{itemize}
For $\lambda = (\lambda_1,\dots,\lambda_p)\in\calP_{<\infty}$,
let $T_{[\lambda]} := \lBrack T_{i,\lambda_{\smash i}} \,;\, 1\leq i\leq p\rBrack$.
For any $\bfs\in\Sdec$, let $\calT_{\langle\bfs\rangle} :=
\langle (s_i^\gamma \calT_i, s_i \mu_{\calT_{\smash i}}) ; i\geq 1\rangle$
which is a compact $\bbR$-tree (see Lemma~\ref{lem:ssf-concatenation-compact}).

Finally, let $\Lambda$ be a random finite partition with distribution $q_*$
independent of $[(T_{i,n})_{n\in\calN}, \calT_i]_{i\geq 1}$,
and for any $R\geq 1$, set $q^{(R)}$ as the distribution of $\Lambda/R^{1/\gamma}$.
With these notations, Assumption~\hyperref[assumption:immigration]{$(\mathtt I)$}
becomes: $R \, (1\wedge\norm\bfs) \, q^{(R)}(\D\bfs) \Rightarrow (1\wedge\norm\bfs) \, I(\D\bfs)$
as finite measures on $\Sdec$.

\begin{lemma}\label{lem:Sdec-compact-bounded-tail}
	Let $K\subset\Sdec$ be compact.
	Then $\sup_{\bfs\in K} \sum_{i>n} s_i \to 0$
	as $n$ goes to infinity.
\end{lemma}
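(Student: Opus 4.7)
The plan is to exploit two facts about the tail-sum functional $T_n(\bfs) := \sum_{i>n} s_i$ on $\Sdec$: its continuity with respect to the $\ell_1$ metric, and its pointwise monotone decay to zero. Combined with compactness of $K$, these give uniform convergence via Dini's theorem.

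First I would check continuity: since $T_n(\bfs) = \norm\bfs - \sum_{i=1}^n s_i$ and each coordinate projection $\bfs \mapsto s_i$ is $1$-Lipschitz, we have $\abs{T_n(\bfs) - T_n(\bfs')} \leq \norm{\bfs-\bfs'} + \sum_{i=1}^n \abs{s_i - s_i'} \leq 2\norm{\bfs-\bfs'}$, so $T_n:\Sdec\to\bbR_+$ is continuous. Next, for any fixed $\bfs\in\Sdec$, the summability $\sum_i s_i < \infty$ gives $T_n(\bfs)\to 0$, and since the coordinates are non-negative, the sequence $(T_n(\bfs))_n$ is actually non-increasing in~$n$.

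Now apply Dini's theorem: the sequence $(T_n)_{n\geq 1}$ is a monotone non-increasing sequence of continuous real-valued functions on the compact set $K$ converging pointwise to the continuous limit~$0$, hence the convergence is uniform. This yields exactly $\sup_{\bfs\in K}\sum_{i>n} s_i \to 0$.

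There is no real obstacle; the only point deserving a line of care is the continuity bound (one must verify it in the $\ell_1$ metric used to topologise $\Sdec$), and then noting that monotonicity in $n$ is automatic from $s_i\geq 0$ before invoking Dini. If one prefers to avoid Dini, the same conclusion follows by a direct $\varepsilon$-net argument: cover $K$ by finitely many balls of radius $\varepsilon$ around centres $\bfs^{(1)},\dots,\bfs^{(N)}$, pick $n$ so large that $T_n(\bfs^{(k)})<\varepsilon$ for every $k\leq N$, and use the Lipschitz bound to conclude $T_n(\bfs)<3\varepsilon$ uniformly on~$K$.
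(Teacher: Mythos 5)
Your argument is correct and slickly packaged. The paper instead argues by contradiction: it assumes $\sup_{\bfs\in K}\sum_{i>n}s_i > c > 0$ along all $n$, picks witnesses $\bfs^{(n)}\in K$, extracts a subsequence $\bfs^{(n_k)}\to\bfs$ by compactness of $K$, and then bounds $c\leq\sum_{i>n_k}s^{(n_k)}_i\leq\sum_{i>n_k}s_i+\norm{\bfs^{(n_k)}-\bfs}\to 0$, a contradiction. That route is self-contained and needs only the Lipschitz tail estimate you also use. Your primary route packages the same ingredients — continuity of $T_n$, monotone pointwise decay, compactness — into a single invocation of Dini's theorem, which is cleaner conceptually and makes the uniformity automatic rather than extracted through a contradiction; the cost is appealing to Dini as a black box. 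Your fallback $\varepsilon$-net argument is essentially the paper's argument rephrased constructively, and either of your two versions would serve equally well as the proof in the text.
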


\begin{proof}
	Assume the contrary,
	\ie that there exists a sequence $(\bfs^{(n)})_{n\geq 1}$ in $K$
	and a positive constant $c$ such that $\sum_{i>n} s^{(n)}_i > c$ for all $n\geq 1$.
	Since $K$ is compact, we can find a subsequence $(\bfs^{(n_k)})_k$ and $\bfs\in K$
	such that $\norm {\bfs^{(n_k)}-\bfs} \to 0$ as $k\to\infty$.
	Consequently, $0 < c \leq \sum_{i>n_k} s^{\smash {(n_k)}}_i
	\leq \sum_{i>n_k} s_i + \norm {\bfs^{(n_k)}-\bfs} \to 0$
	as $k\to\infty$, which is a contradiction.
\end{proof}

Fix $G:\Sdec\times\bbT_c \to \bbR_+$ a $1$-Lipschitz function
satisfying $G(\bfs,\,\cdot\,) \leq 1 \wedge \norm\bfs$ for any $\bfs\in\Sdec$.
Moreover, set $g:\Sdec\to\bbR_+$ the function defined by
$g(\bfs) := \esp {G(\bfs,\calT_{\langle\bfs\rangle})}$.

\begin{lemma}\label{lem:tree-pp-intensity-lipschitz-cv}
	We have
	\[
		R \, \esp [3] { G\Big( R^{-1/\gamma} \Lambda,
			(R^{-1} T_{[\Lambda]}, R^{-1/\gamma} \mu_{T_{[\Lambda]}}) \Big)}
		\xrightarrow [R\to\infty] {}
		\int_{\Sdec} \esp [1] {G(\bfs,\calT_{\langle\bfs\rangle})} \: I(\D\bfs).
	\]
\end{lemma}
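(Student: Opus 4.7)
The strategy is to condition on $\Lambda$ to rewrite the left-hand side as an integral against $q^{(R)}$, and then to deduce the limit from the weak convergence provided by Assumption~\hyperref[assumption:immigration]{$(\mathtt I)$} together with a pointwise approximation of the integrand by $g(\bfs) := \esp [1] {G(\bfs,\calT_{\langle\bfs\rangle})}$. Using the independence of the family $[(T_{i,n})_{n\in\calN},\calT_i]_{i\geq 1}$ from $\Lambda$, the left-hand side equals $R\int h_R(\bfs)\,q^{(R)}(\D\bfs)$ where, for $\bfs = \lambda/R^{1/\gamma}$ in the support of $q^{(R)}$,
\[
	h_R(\bfs) := \esp [1] {G\big(\bfs, (R^{-1} T_{[\lambda]}, R^{-1/\gamma}\mu_{T_{[\lambda]}})\big)}.
\]
It is therefore enough to prove that $R\int g\,\D q^{(R)} \to \int g\,\D I$ and that $R\int |h_R - g|\,\D q^{(R)} \to 0$.

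For the first limit, the Lipschitz property of $G$ together with Lemma~\ref{lem:ssf-concatenation-ghp-exp-continuity} ensures that $g$ is continuous on $\Sdec$ with $g(\bfs)\leq 1\wedge\norm\bfs$, so that $\bfs\mapsto g(\bfs)/(1\wedge\norm\bfs)$ is bounded and continuous on $\Sdec\setminus\{\mathbf 0\}$. Assumption~\hyperref[assumption:immigration]{$(\mathtt I)$} combined with Lemma~\ref{lem:portmanteau-extension} yields the weak convergence of finite measures $R(1\wedge\norm\bfs)\,q^{(R)}(\D\bfs) \Rightarrow (1\wedge\norm\bfs)\,I(\D\bfs)$ on $\Sdec$, and since the limit charges no mass at $\{\mathbf 0\}$, the continuous mapping theorem applied to the bounded function $g/(1\wedge\norm\bfs)$ (which is continuous off a null set of the limit) gives $R\int g\,\D q^{(R)}\to\int g\,\D I$.

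For the error term, the Lipschitz property of $G$ together with Lemmas~\ref{lem:ghp-rescaling-bounds} and~\ref{lem:continuous-concatenation-ghp-bounds} gives, up to a negligible $O(R^{-1})$ discrepancy between discrete and $\bbR$-tree concatenations, the coupling estimate
\[
	|h_R(\bfs) - g(\bfs)| \leq \sum_{i=1}^{p(\bfs)} (s_i^\gamma\vee s_i)\,\esp [1] {\upd_\GHP(\overline T_{i,\lambda_i},\calT_i)}.
\]
I would then write $R\int|h_R-g|\,\D q^{(R)} = \int (|h_R-g|/(1\wedge\norm\bfs))\cdot R(1\wedge\norm\bfs)\,\D q^{(R)}$ and exploit tightness of the weakly convergent measures $R(1\wedge\norm)q^{(R)}$: for any $\eta>0$, pick a compact $K\subset\{\norm\bfs\geq\varepsilon\}$ outside of which the mass is uniformly small. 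On $K$, the uniform tail bound of Lemma~\ref{lem:Sdec-compact-bounded-tail} allows truncation of the sum at some level $N$; for the indices $i\leq N$ along which $s_i$ stays bounded below one has $\lambda_i = R^{1/\gamma}s_i\to\infty$, so the a.s.\ convergence $\overline T_{i,n}\to\calT_i$ combined with the uniform moment bounds of Lemma~\ref{lem:mb-trees-height} (which supply uniform integrability) force $\esp {\upd_\GHP(\overline T_{i,\lambda_i},\calT_i)}\to 0$; the remaining indices contribute little since $(s_i^\gamma\vee s_i)$ is then small whereas $\esp {\upd_\GHP(\overline T_{i,\lambda_i},\calT_i)}$ stays uniformly bounded.

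The main obstacle is handling the concentration of the non-probability measure $R\,q^{(R)}$ near $\mathbf 0\in\Sdec$, where its mass $R$ blows up while the integrands vanish. Normalizing by $1\wedge\norm\bfs$ to convert the problem into one of integrating a bounded quantity against the weakly convergent finite measures $R(1\wedge\norm\bfs)q^{(R)}(\D\bfs)$, using that the limit places no mass at $\mathbf 0$, and relying on Lemma~\ref{lem:Sdec-compact-bounded-tail} to control compacts of $\Sdec$ bounded away from $\mathbf 0$, together resolve this delicate point.
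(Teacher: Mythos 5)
Your proposal follows essentially the same route as the paper's proof: establish continuity of $g(\bfs)=\esp{G(\bfs,\calT_{\langle\bfs\rangle})}$ via Lemma~\ref{lem:ssf-concatenation-ghp-exp-continuity}, pass to the limit for the main term, and kill the error term with the GHP bounds of Lemmas~\ref{lem:ghp-rescaling-bounds}--\ref{lem:continuous-concatenation-ghp-bounds}, tightness of $R(1\wedge\norm\bfs)\,q^{(R)}$, compact truncation via Lemma~\ref{lem:Sdec-compact-bounded-tail}, and uniform integrability from Lemma~\ref{lem:mb-trees-height}. Two small remarks: the detour through the continuous-mapping theorem applied to $g/(1\wedge\norm\bfs)$ is unnecessary, since $g$ is itself an admissible test function and Assumption~\hyperref[assumption:immigration]{$(\mathtt I)$} applies to it directly; and your note about the $O(R^{-1})$ discrepancy between the discrete concatenation $\lBrack\cdot\rBrack$ (extra root and unit edges) and the metric concatenation $\langle\cdot\rangle$ is a real, if negligible, subtlety that the paper passes over silently, so you are in fact being slightly more careful there.
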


\begin{proof}
	Clearly, $g(\bfs) \leq 1\wedge\norm\bfs$.
	Moreover, for any $\bfs$ and $\bfr$ in $\Sdec$,
	\[
		\abs [1] {g(\bfs) - g(\bfr)}
		\leq \esp [2] {\abs [1] {G(\bfs,\calT_{\langle\bfs\rangle}) - G(\bfr,\calT_{\langle\bfr\rangle})}}
		\leq \norm {\bfs-\bfr}
			+ \esp [1] {\upd_\GHP\big(\calT_{\langle\bfs\rangle},\calT_{\langle\bfr\rangle}\big)}
		\xrightarrow [\bfr\to\bfs] {} 0
	\]
	where we have used Lemma~\ref{lem:ssf-concatenation-ghp-exp-continuity}.
	Therefore, $g$ is continuous and Assumption~\hyperref[assumption:immigration]{$(\mathtt I)$} ensures that
	\[
		R \, \esp [3] {G\Big( R^{-1/\gamma} \Lambda, \big(R^{-1} \calT_{\langle\Lambda\rangle},
			R^{-1/\gamma} \mu_{\calT_{\langle\Lambda\rangle}}\big) \Big)}
		= R \, \esp [1] {g ( R^{-1/\gamma} \Lambda)}
		\xrightarrow [R\to\infty] {}
		\int_{\Sdec} g(\bfs) \, I(\D\bfs).
	\]
	Consequently, it will be sufficient to prove that as $R\to\infty$,
	\begin{align*}
		& R \, \esp [3] { \abs [2] {
			G\Big( R^{-1/\gamma} \Lambda, (R^{-1} T_{[\Lambda]},
				R^{-1/\gamma} \mu_{T_{[\Lambda]}}) \Big)
			- G\Big( R^{-1/\gamma} \Lambda, (R^{-1} \calT_{\langle\Lambda\rangle},
				R^{-1/\gamma} \mu_{\calT_{\langle\Lambda\rangle}}) \Big)}}\\
		& \qquad \leq R \, \esp [2] { \big(1\wedge R^{-1/\gamma}\Lambda\big)
			\:\wedge\: \upd_\GHP\Big( (R^{-1} T_{[\Lambda]}, R^{-1/\gamma} \mu_{T_{[\Lambda]}}),
				(R^{-1} \calT_{\langle\Lambda\rangle}, R^{-1/\gamma} \mu_{\calT_{\langle\Lambda\rangle}})\Big) }
		=: \Delta_R \longrightarrow 0.
	\end{align*}
	
	For all $n\geq 1$, thanks to Lemma~\ref{lem:continuous-concatenation-ghp-bounds}
	we get
	\begin{align*}
		& \upd_\GHP\Big( (R^{-1} T_{[\Lambda]}, R^{-1/\gamma} \mu_{T_{[\Lambda]}}),
				(R^{-1} \calT_{\langle\Lambda\rangle}, R^{-1/\gamma} \mu_{\calT_{\langle\Lambda\rangle}})\Big)\\
		& \qquad\qquad \leq \vphantom\int
		\smash {\sum_{i=1}^n} \upd_\GHP\Big( (R^{-1} T_{i,\Lambda_i}, R^{-1/\gamma} \mu_{T_{i,\Lambda_i}}),
				(R^{-1}\Lambda_i^\gamma \calT_i, R^{-1/\gamma}\Lambda_i \mu_{\calT_i})\Big)\\
		& \qquad\qquad\qquad\qquad \vphantom\int
			\smash {+ \sup_{i>n} \bigg( \frac {\Lambda_i^\gamma} R \abs {\overline T_{i,\Lambda_i}} \bigg)
			+ \sup_{i>n} \bigg( \frac {\Lambda_i^\gamma} R \abs {\calT_i} \bigg)
			+ 2 \sum_{i>n} \frac {\Lambda_i} {R^{1/\gamma}}},
	\end{align*}
	and for each $i\geq 1$, Lemma~\ref{lem:ghp-rescaling-bounds} gives
	\begin{align*}
		& \upd_\GHP\Big( (R^{-1} T_{i,\Lambda_i}, R^{-1/\gamma} \mu_{T_{i,\Lambda_i}}),
				(R^{-1}\Lambda_i^\gamma \calT_i, R^{-1/\gamma}\Lambda_i \mu_{\calT_i})\Big)
		\leq \bigg( \frac {\Lambda_i^\gamma} R \,\vee\, \frac {\Lambda_i} {R^{1/\gamma}} \bigg) \,
			\upd_\GHP \big( \overline T_{i,\Lambda_i}, \calT_i\big).
	\end{align*}
	
	Let $\varepsilon>0$ be fixed.
	As a result of Assumption~\hyperref[assumption:immigration]{$(\mathtt I)$},
	the sequence $R \, (1\wedge\norm\bfs) \, q^{(R)}(\D\bfs)$, $R\geq 1$ is tight
	and so there exists a compact subset $K$ of $\Sdec$ such that
	$\sup_{R\geq 1} R \int (1\wedge\norm\bfs) \,\big(1-\ind_K (\bfs)\big)\, q^{(R)}(\D\bfs) < \varepsilon$.
	Moreover, as a compact subset, $K$ is bounded,
	\ie $\sup_{\bfs\in K} \norm\bfs = C < \infty$.
	
	For all $n\geq 1$, recall that
	$\upd_\GHP ( \overline T_{1,n}, \calT_1 )
	\leq 2 \vee \abs {\overline T_{1,n}} \vee \abs {\calT_1}$.
	As a result, thanks to Lemma~\ref{lem:mb-trees-height},
	\begin{align*}
		\sup\nolimits_n \esp [2] {\big( \upd_\GHP ( \overline T_{1,n}, \calT_1 ) \big)^2}
		\leq 3 \Big( 2^2 + \sup\nolimits_n \esp [1] {\abs {\overline T_{1,n}}^2} + \esp [1] {\abs {\calT_1}^2}\Big)
		\leq 12 + 6 \, h_2
		< \infty,
	\end{align*}
	so the sequence $\big[\upd_\GHP( \overline T_{1,n}, \calT_1)\big]_n$
	is bounded in $L^2$.
	Since by assumption, it converges to $0$ \as, it also does in $L^1$.
	Furthermore, $\sup_n \esp {\upd_\GHP( \overline T_{1,n}, \calT_1)} =: D$ is finite.
	Consequently, and because the sequence of families $\big\{(T_{i,n})_n, \calT_i\big\}_{\smash {i\geq 1}}$ is \iid,
	for any $\eta>0$, there exists $N$ such that for all $i\geq 1$ and $n\geq N$,
	$\esp [1] {\upd_\GHP \big( \overline T_{i,n}, \calT_i \big)} < \eta$.
	This gives the rather crude following bound
	\[
		\esp [1] {\upd_\GHP( \overline T_{i,n}, \calT_i)}
		\leq D\,\ind_{n<N} + \eta.
	\]
	
	For all $\delta>0$, in light of~Lemma~\ref{lem:Sdec-compact-bounded-tail},
	there exists an integer $m_{K,\delta}$ which depends only on $K$ and $\delta$
	such that $\sup_{\bfs\in K} \sum_{i>m_{\smash {K,\delta}}} s_i < \delta$.
	Then for all $R\geq 1$ and $\lambda\in\calP_{<\infty}$ with $\lambda/R^{1/\gamma}\in K$,
	if $\gamma\leq 1$, Jensen's inequality gives
	\[
		\esp [4] {\sup_{i>m_{K,\delta}} \bigg( \frac {\lambda_i^\gamma} R \abs {\overline T_{i,\lambda_i}} \bigg)}
		\leq \Bigg( \esp [3] {\sup_{i>m_{K,\delta}} \frac {\lambda_i} {R^{1/\gamma}}
			\abs {\overline T_{i,\lambda_i}}^{1/\gamma}} \Bigg)^\gamma
		\leq \Bigg(\sum_{i>m_{K,\delta}} \frac {\lambda_i} {R^{1/\gamma}}
			\esp [1] {\abs {\overline T_{i,\lambda_i}}^{1/\gamma}} \Bigg)^\gamma
		\leq (h_{1/\gamma})^\gamma \, \delta^\gamma
	\]
	where $h_{1/\gamma}$ is the constant from Lemma~\ref{lem:mb-trees-height}.
	Otherwise, if $\gamma>1$, since $(\lambda_i)_{i\geq 1}$ is a non-increasing sequence,
	\[
		\esp [4] {\sup_{i>m_{K,\delta}} \bigg( \frac {\lambda_i^\gamma} R \abs {\overline T_{i,\lambda_i}} \bigg)}
		\leq \bigg(\frac {\lambda_{m_{K,\delta}+1}} {R^{1/\gamma}} \bigg)^{\gamma-1}
			\esp [4] {\sup_{i>m_{K,\delta}} \frac {\lambda_i} {R^{1/\gamma}} \abs {\overline T_{i,\lambda_i}}}
		\leq \delta^{\gamma-1} \sum_{i>m_{K,\delta}} \frac {\lambda_i} {R^{1/\gamma}}
			\esp [1] {\abs {\overline T_{i,\lambda_i}}}
		\leq h_1 \, \delta^\gamma
	\]
	where $h_1$ is defined as in Lemma~\ref{lem:mb-trees-height}.
	Similarly,
	\[
		\esp [4] {\sup_{i>m_{K,\delta}} \bigg( \frac {\lambda_i^\gamma} R \abs {\calT_i} \bigg)}
		\leq \begin{cases}
			(h_{1/\gamma})^\gamma \, \delta^\gamma
			& \text {if $\gamma\leq 1$,}\\
			h_1 \, \delta^\gamma
			& \text{if $\gamma>1$.}
		\end{cases}
	\]
	In summary, for all $\lambda$ in $\calP_{<\infty}$
	such that $\lambda/R^{1/\gamma}$ belongs to $K$,
	we get that
	\[
		\esp [4] {\sum_{i>m_{K,\delta}} \frac {\lambda_i} {R^{1/\gamma}}} \leq \delta
		\qquad\text{and}\qquad
		\esp [4] {\sup_{i>m_{K,\delta}} \bigg( \frac {\lambda_i^\gamma} R \abs {\overline T_{i,\lambda_i}} \bigg)
			+ \sup_{i>m_{K,\delta}} \bigg( \frac {\lambda_i^\gamma} R \abs {\calT_i} \bigg)}
		\leq B \, \delta^\gamma
	\]
	for some finite constant $B$
	independent of $\varepsilon$, $\eta$, $\delta$ and $K$.
	
	Therefore, for all positive $\varepsilon$, $\delta$ and $\eta$,
	\begin{align*}
		\Delta_R
		& \leq \varepsilon + R \, \esp [4] {\ind_K\bigg(\frac\Lambda{R^{1/\gamma}}\bigg) \:
			\bigg(1\wedge \frac{\norm\Lambda}{R^{1/\gamma}}\bigg)
		\,\wedge\, \Bigg(
		\sum_{i=1}^{m_{K,\delta}} \bigg( \frac {\Lambda_i^\gamma} R \,\vee\, \frac {\Lambda_i} {R^{1/\gamma}} \bigg)
			\esp [3] {\upd_\GHP \big( \overline T_{i,\Lambda_i}, \calT_i\big)\,\Big\vert\,\Lambda\,}\\
		& \qquad\qquad\qquad\qquad\qquad\qquad\qquad
		+ \esp [3] {\sup_{i>m_{K,\delta}} \frac {\Lambda_i^\gamma} R \, \abs {\overline T_{i,\Lambda_i}}
				+ \sup_{i>m_{K,\delta}} \frac {\Lambda_i^\gamma} R \, \abs {\calT_i}
				+ 2 \sum_{i>m_{K,\delta}} \frac {\Lambda_i} {R^{1/\gamma}} \,\Big\vert\,\Lambda\,} 
			\,\Bigg)}\\
		& \leq \varepsilon + R \, \esp [4] {
			\bigg(1\wedge\frac{\norm\Lambda}{R^{1/\gamma}}\bigg)
		\, \wedge\, \bigg(
			(C + C^\gamma) \,m_{K,\delta} \eta
			+ \Big( \frac {N^\gamma} R + \frac N {R^{1/\gamma}} \Big) \, m_{K,\delta} D
			+ 2\delta + B \delta^\gamma \bigg)}.
	\end{align*}
	Let $\delta$ be such that $(\delta + \delta^\gamma) B < \varepsilon$
	and set $\eta<\varepsilon/[(C+C^\gamma) m_{K,\delta}]$.
	Because of Assumption~\hyperref[assumption:immigration]{$(\mathtt I)$},
	we therefore get that $\limsup_{R\to\infty} \, \Delta_R \leq O(\varepsilon)$
	from which it follows that $\Delta_R \to 0$.
\end{proof}

Since the conclusion of Lemma~\ref{lem:tree-pp-intensity-lipschitz-cv} is met
for any Lipschitz continuous function $G:\Sdec\times\bbT_c\to\bbR_+$
with $G(\bfs,\,\cdot\,)\leq 1\wedge\norm\bfs$,
Lemma~\ref{lem:tree-pp-intensity-lipschitz-cv} gives the following corollary:

\begin{corollary}\label{cor:point-process-cv-prelim}
	The convergence of Lemma~\ref{lem:tree-pp-intensity-lipschitz-cv} holds
	for any continuous $G$ with $G(\bfs,\,\cdot\,) \leq 1 \wedge \norm\bfs$.
\end{corollary}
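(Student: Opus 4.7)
The plan is to reduce the general continuous case to the Lipschitz case established in Lemma~\ref{lem:tree-pp-intensity-lipschitz-cv} via a sandwich argument: I approximate the given continuous $G$ from above and from below by Lipschitz functions that still respect the bound $\leq 1\wedge\norm\bfs$, apply the lemma to each approximation, and let the parameter of the approximation go to zero. I first observe that the conclusion of Lemma~\ref{lem:tree-pp-intensity-lipschitz-cv} remains valid for any $L$-Lipschitz function (not just $L=1$) satisfying $G(\bfs,\,\cdot\,)\leq 1\wedge\norm\bfs$: tracing the Lipschitz constant through its proof shows that $L$ enters only as a multiplicative factor in the continuity bound for $g$ and in the estimate of $\Delta_R$, without affecting the conclusion $\Delta_R\to 0$.

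For the lower approximation, I use the Moreau--Yosida regularisation
\[
	G_\epsilon(\bfs,\tau) := \inf_{(\bfs',\tau')\in\Sdec\times\bbT_c}
	\Big\{ G(\bfs',\tau') + \epsilon^{-1}\big[\norm{\bfs-\bfs'} + \upd_\GHP(\tau,\tau')\big]\Big\}.
\]
It is $(1/\epsilon)$-Lipschitz, satisfies $0\leq G_\epsilon\leq G\leq 1\wedge\norm\bfs$, and since $G$ is continuous, $G_\epsilon\uparrow G$ pointwise as $\epsilon\downarrow 0$. For the upper approximation, I combine the sup-convolution with a truncation against the envelope:
\[
	\tilde G^\epsilon(\bfs,\tau) := \bigg(\sup_{(\bfs',\tau')}
	\Big\{ G(\bfs',\tau') - \epsilon^{-1}\big[\norm{\bfs-\bfs'} + \upd_\GHP(\tau,\tau')\big]\Big\}\bigg) \wedge (1\wedge\norm\bfs).
\]
The sup-convolution is $(1/\epsilon)$-Lipschitz and $\bfs\mapsto 1\wedge\norm\bfs$ is $1$-Lipschitz on $\Sdec$, so $\tilde G^\epsilon$ is Lipschitz as the minimum of two Lipschitz functions; it satisfies $G\leq\tilde G^\epsilon\leq 1\wedge\norm\bfs$ (the first inequality using $G\leq 1\wedge\norm\bfs$), and since $G$ is bounded and continuous, $\tilde G^\epsilon\downarrow G$ pointwise as $\epsilon\downarrow 0$.

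Writing $\overline Y_R := \big(R^{-1/\gamma}\Lambda,(R^{-1}T_{[\Lambda]},R^{-1/\gamma}\mu_{T_{[\Lambda]}})\big)$ for short, applying the extended Lemma~\ref{lem:tree-pp-intensity-lipschitz-cv} to $G_\epsilon$ and to $\tilde G^\epsilon$, and using the pointwise inequalities $G_\epsilon\leq G\leq\tilde G^\epsilon$ yields, for each fixed $\epsilon>0$,
\[
	\int\esp [1] {G_\epsilon(\bfs,\calT_{\langle\bfs\rangle})}\,I(\D\bfs)
	\leq \liminf_{R\to\infty} R\,\esp [1] {G(\overline Y_R)}
	\leq \limsup_{R\to\infty} R\,\esp [1] {G(\overline Y_R)}
	\leq \int\esp [1] {\tilde G^\epsilon(\bfs,\calT_{\langle\bfs\rangle})}\,I(\D\bfs).
\]
Letting $\epsilon\downarrow 0$ and invoking monotone convergence on both extremes (justified by the dominating envelope $\tilde G^\epsilon\leq 1\wedge\norm\bfs$ and the integrability $\int(1\wedge\norm\bfs)\,I(\D\bfs)<\infty$ built into the definition of an immigration measure), both bounds converge to $\int\esp [1] {G(\bfs,\calT_{\langle\bfs\rangle})}\,I(\D\bfs)$, which establishes the corollary. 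The only subtlety is producing a Lipschitz majorant of $G$ while preserving the pointwise bound $\leq 1\wedge\norm\bfs$; intersecting the sup-convolution with $1\wedge\norm\bfs$ resolves this precisely because this envelope is itself Lipschitz on $\Sdec$.
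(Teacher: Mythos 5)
Your proof is correct and carefully executed, but it takes a different route from what the paper intends (and uses as its one-line justification). The paper's implied argument is to view the quantities $R\,\esp [1] {G(\overline Y_R)}$ as integrals against the finite measures $\mu_R(\D\bfs,\D\tau) := R\,(1\wedge\norm\bfs)\,\text{law}_R(\D\bfs,\D\tau)$ and then invoke its Portmanteau-type Lemma~\ref{lem:portmanteau-extension}: the Lipschitz case gives convergence of $\int f\,\D\mu_R$ for all bounded Lipschitz $f$ (by factoring $G=(1\wedge\norm\bfs)\,f$, which remains Lipschitz since $\bfs\mapsto 1\wedge\norm\bfs$ is bounded and Lipschitz), hence $\mu_R\Rightarrow (1\wedge\norm\bfs)\,\scrI$, hence convergence against all bounded continuous test functions, which recovers the statement for general continuous $G$. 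Your alternative is a direct sandwich via Moreau--Yosida inf- and sup-convolutions, applied after noting (as the paper also notes) that the Lipschitz lemma extends to arbitrary Lipschitz constants. Both arguments hinge on the same observation and require the same care about preserving the envelope bound $\leq 1\wedge\norm\bfs$; your treatment of the sup-convolution (truncating against the Lipschitz envelope $1\wedge\norm\bfs$) handles the one genuinely delicate point cleanly. The trade-off is that the paper's route is essentially a one-liner once Lemma~\ref{lem:portmanteau-extension} is in hand, whereas yours is longer but entirely self-contained and avoids the mild subtlety of applying Portmanteau with a test function $G/(1\wedge\norm\bfs)$ that may fail to be continuous at $\bfs=\mathbf 0$ (a set of measure zero for the limit, but worth acknowledging).
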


We will now prove that the point processes associated to
adequately rescaled Markov branching trees with a unique infinite spine
converge in distribution to the point process associated to fragmentation trees with immigration.
Let $\Pi$ be a Poisson point process on $\bbR_+\times\Sdec\times\bbT_c$
with intensity $\D u \otimes \scrI (\D\bfs, \D\tau)$,
where $\scrI$ is the measure defined at the beginning of Section~\ref{sec:point-processes}.
Observe that for all $K\geq 0$,
\[
	\textstyle
	\int \ind_{u\leq K} \big(1\wedge\norm\bfs\big) \, \D u \otimes \scrI(\D\bfs,\D\tau)
	= K \int_{\Sdec} \big(1\wedge\norm\bfs\big) \, I(\D\bfs) < \infty.
\]
Campbell's theorem (see~\cite[Section~3.2]{kingman1992poissonprocesses})
therefore ensures that $\Pi$ \as satisfies the integrability conditions
necessary to belong to the set $\scrR$ of point measures on $\bbR_+\times\Sdec\times\bbT_c$
defined in Section~\ref{sec:point-processes}.

Let $T$ have distribution $\MB^{q,q_{\smash\infty}}_\infty$.
By construction of Markov branching trees with a unique infinite spine (see Remark~\ref{nb:infinite-spine-construction}),
there exists a sequence $(\Lambda_n,T_n)_{n\geq 0}$ of \iid random variables
such that $T = \ttb_\infty \bigotimes_{n\geq 0} (\ttv_n, T_n)$,
where $\Lambda_n$ is distributed according to $q_*$ and conditionally on $\Lambda_n = \lambda$,
$T_n$ has distribution $\MB^q_\lambda$.
For all $R\geq 1$, let $\Pi_R$ be the point process associated to $(T/R,\mu_T/R^{1/\gamma})$,
\ie the $\scrR$-valued random variable defined for all measurable
$f:\bbR_+\times\Sdec\times\bbT_c\longrightarrow\bbR_+$ by
\[
	\textstyle
	\int f \, \D \Pi_R :=
	\sum_{n\geq 0} f\big[n/R, \,\Lambda_n/R^{1/\gamma}, \, (T_n/R, \mu_{T_n}/R^{1/\gamma})\big].
\]

\begin{lemma}\label{lem:point-processes-cv}
	With respect to the topology on $\scrR$ introduced in Section~\ref{sec:point-processes},
	$\Pi_R$ converges to $\Pi$ in distribution as $R$ goes to infinity.
\end{lemma}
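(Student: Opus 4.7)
The plan is to use the Laplace transform criterion of Proposition~\ref{prop:point-processes-laplace-criterion}: it suffices to show $L_{\Pi_R}(F)\to L_\Pi(F)$ for every $F\in\scrF$. Since $\Pi$ is a Poisson point process with intensity $\D u\otimes\scrI(\D\bfs,\D\tau)$, Campbell's formula gives
$$L_\Pi(F)=\exp\Bigl(-\int(1-\E^{-F(u,\bfs,\tau)})\,\D u\otimes\scrI(\D\bfs,\D\tau)\Bigr).$$
On the other hand, the i.i.d. structure of $(\Lambda_n,T_n)_{n\geq 0}$ used to build $T$ makes the atoms of $\Pi_R$ independent, so that $L_{\Pi_R}(F)=\prod_{n\geq 0}(1-\phi_R(n/R))$, where $\phi_R(u):=\esp{1-\E^{-F(u,\Lambda/R^{1/\gamma},(T_{[\Lambda]}/R,\mu_{T_{[\Lambda]}}/R^{1/\gamma}))}}$.

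Fixing $K\geq 0$ with $F\leq\ind_{u\leq K}\norm\bfs$, one has $\phi_R(u)\leq\ind_{u\leq K}\esp{1\wedge\norm\Lambda/R^{1/\gamma}}=O(1/R)$ uniformly in $u\in[0,K]$ by Assumption~\hyperref[assumption:immigration]{$(\mathtt I)$}. Linearising via $-\log(1-x)=x+O(x^2)$ and summing the at most $\lfloor RK\rfloor+1$ nonzero terms gives $-\log L_{\Pi_R}(F)=\sum_{n\geq 0}\phi_R(n/R)+O(1/R)$, so the whole problem reduces to
$$\sum_{n\geq 0}\phi_R(n/R)\;\xrightarrow[R\to\infty]{}\;\int(1-\E^{-F(u,\bfs,\tau)})\,\D u\otimes\scrI(\D\bfs,\D\tau).$$

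Set $G(u,\bfs,\tau):=1-\E^{-F(u,\bfs,\tau)}$, continuous with $G\leq\ind_{u\leq K}(1\wedge\norm\bfs)$. For each fixed $u$, Corollary~\ref{cor:point-process-cv-prelim} applied to $G(u,\cdot,\cdot)$ yields $R\phi_R(u)\to\psi(u):=\int G(u,\bfs,\tau)\,\scrI(\D\bfs,\D\tau)$, a continuous function supported in $[0,K]$. Viewing $\sum_n\phi_R(n/R)=\frac 1R\sum_n R\phi_R(n/R)$ as a Riemann sum, its convergence to $\int_0^K\psi(u)\,\D u$ would follow from upgrading the pointwise convergence $R\phi_R(u)\to\psi(u)$ to a uniform one on $[0,K]$.

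This uniform upgrade is the main obstacle. I expect to obtain it by revisiting the proof of Lemma~\ref{lem:tree-pp-intensity-lipschitz-cv} with the parameter $u$ threaded through its estimates: the tightness of $R(1\wedge\norm\bfs)q^{(R)}(\D\bfs)$ inherited from~\hyperref[assumption:immigration]{$(\mathtt I)$}, the $L^p$-height bounds of Lemma~\ref{lem:mb-trees-height}, the Skorokhod coupling giving the almost sure convergence $\overline T_{i,n}\to\calT_i$, and the continuity statement of Lemma~\ref{lem:ssf-concatenation-ghp-exp-continuity} are all independent of $u$, and $G$ is uniformly continuous on $[0,K]\times K_1$ for any compact $K_1\subset\Sdec\times\bbT_c$; chaining these should yield the required uniformity. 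An alternative, perhaps cleaner, route would be to avoid the Riemann-sum split altogether and run the scheme of Lemma~\ref{lem:tree-pp-intensity-lipschitz-cv} directly on $\sum_n\phi_R(n/R)$, viewed as integration against a joint empirical-type measure on $[0,\infty)\times\Sdec\times\bbT_c$ whose time marginal $\frac 1R\sum_n\delta_{n/R}$ converges vaguely to Lebesgue measure.
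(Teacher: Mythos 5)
Your strategy matches the paper's: reduce to convergence of Laplace transforms via Proposition~\ref{prop:point-processes-laplace-criterion}, expand $-\log L_{\Pi_R}(F)$ as a Riemann-type sum of $\phi_R(n/R)$ (with a harmless $O(1/R)$ linearisation error), get pointwise convergence $R\phi_R(u)\to\psi(u)$ from Corollary~\ref{cor:point-process-cv-prelim}, and then try to pass the sum to the integral $\int_0^K\psi(u)\,\D u$. The algebra is right, and you have correctly identified where the real work sits: upgrading that pointwise convergence to uniform convergence on $[0,K]$.

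That step, however, is left as an expectation rather than an argument, and it is precisely where the paper invests the remaining effort. The paper does not rerun Lemma~\ref{lem:tree-pp-intensity-lipschitz-cv} with $u$ threaded through. Instead it proves that the family $R\mapsto R\phi_R$ is \emph{equicontinuous} on $[0,K]$: tightness of $R\,(1\wedge\norm\bfs)\,q^{(R)}(\D\bfs)$ (a consequence of Assumption~$(\mathtt I)$ together with Corollary~\ref{cor:point-process-cv-prelim}) gives a compact $A\subset\Sdec\times\bbT_c$ whose complement contributes less than $\varepsilon$ uniformly in $R$; on the compact $[0,K]\times A$, $F$ is uniformly continuous, which controls the variation in $u$ of the on-$A$ contribution; combining the two gives $\abs{R\phi_R(u)-R\phi_R(v)}=O(\varepsilon)$ whenever $\abs{u-v}<\delta$, uniformly in $R$. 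With equicontinuity, pointwise convergence, and your uniform bound in hand, Arzel\`a--Ascoli delivers the uniform convergence needed for the Riemann sums. You list the right ingredients (tightness, uniform continuity of $G$ on $[0,K]\times K_1$) but stop short of assembling them into a completed argument and of naming the compactness principle that converts equicontinuity plus pointwise convergence into uniform convergence; as written the proof has a genuine gap at its central step. Your alternative route via an empirical measure on $[0,\infty)\times\Sdec\times\bbT_c$ would very likely need the same tightness-plus-uniform-continuity control, so it does not obviously avoid the work.
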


\begin{proof}
	In light of Proposition~\ref{prop:point-processes-laplace-criterion}, it will be enough
	to prove that for any function $F$ in the set $\scrF$,
	the Laplace transform of $\Pi_R$ evaluated in $F$ converges to that of $\Pi$.
	Fix such $F$ in $\scrF$ and recall that it is continuous
	and that there exists $K\geq 0$ such that $0\leq F(u,\bfs,\tau)\leq \norm\bfs \, \ind_{u\leq K}$
	for all $(u,\bfs,\tau)$.
	Campbell's theorem for Poisson point processes gives
	\[
		\textstyle
		L_\Pi(F)
		= \exp \Big( - \int \big[1 - \E^{-F(u,\bfs,\tau)}\big]
			\, \D u \otimes \scrI (\D\bfs, \D\tau) \Big).
	\]
	For all $R\geq 1$ and $u\geq 0$, set
	\begin{align*}
		\varphi_R(u) & :=
		R \, \esp [2] {1 - \exp \Big(- F\big[u,\Lambda_0/R^{1/\gamma},
			(T_0/R, \mu_{T_0}/R^{1/\gamma}) \big]\Big)},\\
		\mathllap{\text{and} \qquad}
		\varphi(u) & :=
		{\textstyle\int} \esp [1] {1 - \exp \big(- F
			[u, \bfs, \calT_{\langle\bfs\rangle} ] \big)} \, I(\D\bfs).
	\end{align*}
	Using these notations,
	we may write $\log \, L_\Pi(F) = - \int_0^K \varphi(u) \, \D u$
	and thanks to the \iid nature of the sequence $(\Lambda_n,T_n)_{n\geq 0}$,
	for all $R\geq 1$,
	\begin{align*}
		\log \, L_{\Pi_R}(F)
		& = - {\textstyle\sum_{n=0}^{\lfloor KR\rfloor}} \log \, \esp [2] {\exp\Big(
			- F\big[ n/R, \Lambda_0/R^{1/\gamma}, (T_0/R, \mu_{T_0}/R^{1/\gamma})\big] \Big)}\\
		& = - {\textstyle\sum_{n=0}^{\lfloor KR\rfloor}} \log \, \Big( 1 - 1/R \cdot \varphi_R(n/r) \Big).
	\end{align*}
	
	The functions $\varphi_R$, $R\geq 1$ and $\varphi$ all have support in $[0,K]$
	and are continuous (in light of the dominated convergence theorem).
	Observe that $0\leq 1-\E^{-F(u,\bfs,\tau)} \leq 1\wedge\norm\bfs$.
	From Corollary~\ref{cor:point-process-cv-prelim}, we know that for all fixed $u\geq 0$,
	$\varphi_R(u) \to \varphi(u)$ as $R\to\infty$
	and that furthermore
	\[
		\textstyle
		\sup_{R\geq 1} \, \sup_{u\geq 0} \: \varphi_R(u)
		\leq \sup_{R\geq 1} \: R \, \esp [1] {1\wedge (\norm{\Lambda_0}/R^{1/\gamma})}
		< \infty,
	\]
	\ie that the sequence $(\varphi_R)_{R\geq 1}$ is uniformly bounded by a finite constant, say $C$.
	Let $\varepsilon$ be positive.
	It also follows from Corollary~\ref{cor:point-process-cv-prelim} that
	there exists a compact subset $A$ of $\Sdec\times\bbT_c$ with
	\[
		\textstyle
		\sup_{R\geq 1} \: R \, \esp [2] {\big(1\wedge (\norm{\Lambda_0}/R^{1/\gamma})\big)
			\cdot \ind_{A^c} \big(\Lambda_0/R^{1/\gamma}, (T_0/R,\mu_{T_0}/R^{1/\gamma})\big)}
		< \varepsilon.
	\]
	Recall that $F$ is continuous, hence there exists $\delta>0$
	such that for any $(u,\bfs,\tau)$ and $(u',\bfs',\tau')$ in the compact set $[0,K] \times A$,
	if $\abs {u-u'} + \norm {\bfs-\bfs'} + \upd_\GHP(\tau,\tau') < \delta$,
	then $\abs {F(u,\bfs,\tau) - F(u',\bfs',\tau')} < \varepsilon$.
	As a result, and because $x\mapsto\E^{-x}$ is $1$-Lipschitz continuous on $\bbR_+$,
	for all $R\geq 1$ and $u,v$ in $[0,K]$ with $\abs {u-v} < \delta$,
	\begin{align*}
		\abs [1] {\varphi_R(u) - \varphi_R(v)}
		& \leq R \, \esp [3] {1\wedge \abs [1] {
			F\big[u, \Lambda_0/R^{1/\gamma}, (T_0/R, \mu_{T_0}/R^{1/\gamma}) \big]
			- F\big[v, \Lambda_0/R^{1/\gamma}, (T_0/R, \mu_{T_0}/R^{1/\gamma}) \big]}}\\
		& \leq \varepsilon + R \, \esp [3] {\big(\varepsilon \wedge (\norm{\Lambda_0}/R^{1/\gamma})\big)
			\cdot \ind_A \big(\Lambda_0/R^{1/\gamma}, (T_0/R,\mu_{T_0}/R^{1/\gamma})\big)}
		= O(\varepsilon).
	\end{align*}
	This ensures that the sequence $(\varphi_R)_{R\geq 1}$ is equicontinuous on $[0,K]$.
	It follows from the Arzel\`a-Ascoli theorem that $\varphi_R$ converges uniformly to $\varphi$.
	In turn, we deduce that
	\[
		\abs [2] {\frac 1 R {\textstyle\sum_{n=0}^{\lfloor KR\rfloor}} \varphi_R(n/R)
			- \frac 1 R {\textstyle\sum_{n=0}^{\lfloor KR\rfloor}} \varphi(n/R)}
		\leq \frac {KR + 1} R \, \sup_{0\leq u\leq K} \abs [1] {\varphi_R(u)-\varphi(u)}
		\xrightarrow [R\to\infty] {} 0.
	\]
	Moreover, because $\sup_{R\geq 1, u\geq 0} \varphi_R(u) \leq C$,
	\begin{align*}
		\abs [2] {\log L_{\Pi_R}(F)
			- 1/R \cdot {\textstyle\sum_{n=0}^{\lfloor KR\rfloor}} \varphi_R(n/R)}
		& = {\textstyle\sum_{n=0}^{\lfloor KR\rfloor}}
		\abs [2] {1/R \cdot \varphi_R(n/R)
			- \log \big[1-1/R \cdot \varphi_R(n/R)\big]}\\
		& \leq (KR + 1) \, \abs [1] {C/R - \log(1-C/R)}
		= O(1/R)
		\xrightarrow [R\to\infty] {} 0
	\end{align*}
	where we have used the fact that the application $[0,1)\to\bbR_+$, $x\mapsto x-\log(1-x)$
	increases with $x$.
	Finally, as Riemann sums of the continuous function $\varphi$,
	\[
		\frac 1 R {\textstyle\sum_{n=0}^{\lfloor KR\rfloor}} \varphi(n/R)
		\xrightarrow [R\to\infty] {}
		{\textstyle\int_0^K} \varphi(u) \, \D u
		= \log \, L_\Pi (F).
	\]
	In summary, $\log \, L_{\Pi_{\smash R}} (F) \to \log \, L_\Pi (F)$ when $R\to\infty$.
\end{proof}

\subsection{Proof of \texorpdfstring{Theorem~\ref{thm:scaling-limits-infinite-mb-trees}}%
	{Theorem~\ref*{thm:scaling-limits-infinite-mb-trees}}}\label{sec:scaling-limits-trees}

Now that we know that the underlying point processes converge,
we can prove convergence of the trees themselves.

\smallskip

Recall that the topology we defined on $\scrR$ in Section~\ref{sec:point-processes}
makes it a Polish topological space.
As such, Skorokhod's representation theorem holds for $\scrR$-valued random variables.
In particular, because of Lemma~\ref{lem:point-processes-cv},
there exist:
\begin{itemize}
	\item A Poisson point process $\Pi$ with intensity $\D u \otimes \scrI(\D\bfs,\D\tau)$,
	\item A family $\smash {\big\{ (\Lambda^{(R)}_n, \tau^{(R)}_n)_{n\geq 0} ; R\in\bbN \big\}}$
	such that for all fixed $R\geq 1$, $(\Lambda^{(R)}_n,\tau^{(R)}_n)_{n\geq 0}$ is an \iid sequence,
	$\smash {\Lambda^{(R)}_n}$ follows $q_*$ and conditionally on $\smash {\Lambda^{(R)}_n} = \lambda$,
	$\tau^{(R)}_n$ has distribution $\MB^q_\lambda$
	and is endowed with the measure $\mu_{\smash {\tau^{(R)}_n}} :=
	\sum_{u\in\smash {\tau^{(R)}_n}} \delta_u$,
\end{itemize}
such that if for any $R$ we let $\Pi_R$ be the random element of $\scrR$ defined
for all measurable $f:\bbR_+\times\Sdec\times\bbT_c\longrightarrow\bbR_+$
by $\int f\,\D\Pi_R := \sum_{n\geq 0} f\big[n/R, \Lambda^{(R)}_n/R^{1/\gamma},
(\tau^{(R)}_n/R,\mu_{\smash {\tau^{(R)}_n}}/R^{1/\gamma})\big]$,
then $\Pi_R$ \as converges to $\Pi$ when $R\to\infty$.

Let $\{(u_i,\bfs_i,\calT_i) ; i\geq 1\}$ be the atoms of $\Pi$
and set $\Sigma := \sum_{i\geq 1} \delta_{(u_{\smash i}, \bfs_{\smash i})}$. 
By definition of the intensity measure of $\Pi$,
there exists a family $\{ \calT_{i,\smash j} \, ; \, i,j\geq 1 \}$
of \iid $(\gamma,\nu)$-fragmentation trees independent of $\Sigma$
such that for all $i\geq 1$, $\calT_i := \langle (s_{i,j}^\gamma\calT_{i,j},s_{i,j}\mu_{\smash {\calT_{i,j}}}) ; j\geq 1\rangle$.
Set $\calT^{(I)} := \bfG(\{(u_i,\calT_i) ; i\geq 1\})$
where $\bfG$ is the continuous grafting application defined in Section~\ref{sec:ghp-extension}
and recall that it is a $(\gamma,\nu)$-fragmentation tree with immigration $I$
(see Section~\ref{sec:frag-trees-immigration}).
For all $\varepsilon>0$, let
\[
	\calT^{(I)}_\varepsilon
	:= \bfG\big(\{(u_i, \calT_i) ; i\geq 1,\norm{\bfs_i}\geq \varepsilon\}\big).
\]
This tree can be thought of as $\calT^{(I)}$ on which
all sub-trees grafted on the spine with mass less than $\varepsilon$
have been cut away.
Observe that because of the definition of the application $\bfG$,
the measure on $\calT^{(I)}_\varepsilon$ is simply the restriction
of $\mu_{\calT^{\smash {(I)}}}$ to $\calT^{(I)}_\varepsilon$.

For all $R$, set $\tau^{(R)} := \ttb_\infty \bigotimes_{n\geq 0} (\ttv_n, \tau^{(R)}_n)$
and note $\mu_{\tau^{\smash{(R)}}}$ its counting measure.
Observe that $\tau^{(R)}$ is distributed according to $\MB^{q,q_{\smash\infty}}_\infty$.
Let $T^{(R)} := (R^{-1}\tau^{(R)}, R^{-1/\gamma}\mu_{\smash{\tau^{(R)}}})$
be the rescaled infinite Markov branching tree associated to $\Pi_R$.
Moreover, for all positive $\varepsilon$, let $\smash {T^{(R)}_\varepsilon}$ be
the tree obtained by removing from $T^{(R)}$ all the sub-trees grafted on its spine
with mass less than $\varepsilon$, \ie set
\[
	T^{\smash {(R)}}_\varepsilon :=
	\bfG\Big(\Big\{\big[n/R, (R^{-1}\tau^{(R)}_n, R^{-1/\gamma}\mu_{\smash{\tau^{(R)}_n}})\big]
		\;\big\vert\;
		n\geq 0: \norm{\Lambda^{\smash {(R)}}_n}\geq R^{\smash{1/\gamma}} \varepsilon\Big\}\Big).
\]
The tree $T^{\smash {(R)}}_\varepsilon$ is clearly a subset of $T^{(R)}$
and it is endowed with the restriction of $\mu_{T^{\smash {(R)}}}$.

\medskip

In this section we will endeavour to prove Theorem~\ref{thm:scaling-limits-infinite-mb-trees}.
In order to do so, we will use the following criterion for convergence in distribution.

\begin{theorem}[\cite{billingsley2013convergence}, Theorem~3.2]\label{thm:billingsley-weak-cv-criterion}
	Let $(M,\upd)$ be a metric space.
	If $X_n^{\vphantom{(k)}}$, $X_n^{(k)}$, $X^{(k)}_{\vphantom n}$, $n\geq 1$, $k\geq 1$
	and $X$ are $M$-valued random variables satisfying:
	\begin{enumerate}
		\renewcommand{\labelenumi}{$(\roman{enumi})$}
		\item For all $k\geq 1$, $X_n^{(k)} \Rightarrow X^{(k)}_{\vphantom n}$ as $n\to\infty$,
		\item $X^{(k)}_{\vphantom n} \Rightarrow X$ as $k\to\infty$,
		\item For any positive $\eta$,
		$\lim_{k\to\infty} \limsup_{n\to\infty}
		\prob [1] {\upd(X_n^{(k)},X_n^{\vphantom{(k)}}) > \eta} = 0$,
	\end{enumerate}
	Then $X_n^{\vphantom {(k)}}$ converges to $X$ in distribution.
\end{theorem}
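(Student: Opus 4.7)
The plan is to verify the Portmanteau criterion: for every closed set $F \subset M$, I will show that $\limsup_{n\to\infty} \prob {X_n \in F} \leq \prob {X \in F}$. The strategy is that assumption $(iii)$ allows me to replace $X_n$ by its approximation $X_n^{(k)}$ at the cost of an arbitrarily small error, after which $(i)$ and $(ii)$ let me pass to the limit successively in $n$ and then in $k$. A bounded-Lipschitz-function variant of this argument would also work, but the closed-set approach is the most direct.

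Concretely, I will fix a closed $F \subset M$ and a positive $\eta$, and introduce the closed $\eta$-thickening $F^\eta := \{x \in M : \upd(x,F) \leq \eta\}$. For every $n, k\geq 1$, splitting on the event $\{\upd(X_n,X_n^{(k)}) > \eta\}$ yields
\[
	\prob {X_n \in F}
	\leq \prob {X_n^{(k)} \in F^\eta}
		+ \prob {\upd(X_n,X_n^{(k)}) > \eta}.
\]
Taking $\limsup_{n\to\infty}$ and applying the standard Portmanteau inequality to the closed set $F^\eta$ via $(i)$ bounds the right-hand side by $\prob {X^{(k)} \in F^\eta} + \limsup_{n\to\infty} \prob {\upd(X_n,X_n^{(k)}) > \eta}$. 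Letting $k\to\infty$, the first term is at most $\prob {X \in F^\eta}$ thanks to $(ii)$ applied once again to the closed set $F^\eta$, while the second vanishes by assumption $(iii)$. In summary, $\limsup_{n\to\infty} \prob {X_n \in F} \leq \prob {X \in F^\eta}$ for every $\eta > 0$.

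To conclude, I will let $\eta \to 0$: since $F$ is closed, the thickenings $F^\eta$ decrease to $F$, so continuity from above of the probability measure gives $\prob {X \in F^\eta} \to \prob {X \in F}$. This yields $\limsup_{n\to\infty} \prob {X_n \in F} \leq \prob {X \in F}$ for every closed $F$, which by Portmanteau is equivalent to $X_n \Rightarrow X$. The only mildly delicate point is verifying that $F^\eta$ is itself closed (which follows from the continuity of $x\mapsto\upd(x,F)$) so that the Portmanteau inequality is legitimately applicable at each intermediate step; beyond that, the whole argument is a formal manipulation of the three hypotheses and I expect no substantial obstacle.
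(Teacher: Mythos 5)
The paper does not give a proof of this statement; it cites it verbatim from Billingsley's book (Theorem~3.2 there) and uses it as a black box in the proof of Theorem~\ref{thm:scaling-limits-infinite-mb-trees}. Your argument is correct and is essentially the textbook proof: replace $F$ by its closed $\eta$-thickening $F^\eta$ to absorb the error from swapping $X_n$ for $X_n^{(k)}$, apply the Portmanteau inequality twice (once in $n$ via $(i)$, once in $k$ via $(ii)$), use $(iii)$ to kill the remainder, and finally send $\eta\to 0$ using $\bigcap_{\eta>0}F^\eta=F$ for closed $F$ together with continuity from above. One small point worth making explicit when you pass to the limit in $k$: you obtain for each fixed $k$ the bound $\limsup_n\prob{X_n\in F}\leq a_k+b_k$ with $a_k=\prob{X^{(k)}\in F^\eta}$ and $b_k=\limsup_n\prob{\upd(X_n,X_n^{(k)})>\eta}$; since $\lim_k b_k=0$ by $(iii)$ and $\limsup_k a_k\leq\prob{X\in F^\eta}$ by $(ii)$, the combination gives $\limsup_n\prob{X_n\in F}\leq\limsup_k(a_k+b_k)\leq\prob{X\in F^\eta}$, which is what you claim. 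With that made precise, there is no gap.
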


\begin{remark}
	Condition $(i)$ is akin to finite-dimensional convergence of $X_n$ to $X$
	and Conditions~$(ii)$ and~$(iii)$ to tightness of $(X_n)_n$.
\end{remark}

In our setting, the sequence $(T^{(R)} ; R\in\bbN)$ of rescaled $\MB^{q,q_{\smash\infty}}_\infty$ trees will play the role of $(X_n)_n$
and the limit variable $X$ will be $\calT^{(I)}$, a $(\gamma,\nu)$-fragmentation tree with immigration $I$.
The intermediate family \smash {$(X_n^{(k)})_{n,k}$} will be replaced by $(\smash {T^{(R)}_\varepsilon} ; R\geq 1)$
with $\varepsilon\to 0$ along some countable subset of $(0,\infty)$.
Similarly, we'll consider $\calT^{(I)}_\varepsilon$ trees instead of \smash {$(X^{(k)})_k$}.

\medskip

\begin{lemma}\label{lem:ssfi-pruning-cv}
	With these notations, $\calT^{(I)}_\varepsilon$ \as converges to $\calT^{(I)}$
	as $\varepsilon\to 0$ with respect to $\upD_\GHP$.
\end{lemma}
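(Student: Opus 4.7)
The plan is to fix a truncation level $R > 0$, obtain an explicit bound on $\upd_\GHP\bigl(\calT^{(I)}|_R, \calT^{(I)}_\varepsilon|_R\bigr)$ in terms of the heights and masses of the removed sub-trees, show this bound tends to $0$ almost surely as $\varepsilon \to 0$, and then transfer this pointwise (in $R$) convergence to the integrated quantity $\upD_\GHP$.

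To produce the bound, set $A^R_\varepsilon := \{i \geq 1 : u_i \leq R,\, \norm{\bfs_i} < \varepsilon\}$. The tree $\calT^{(I)}|_R$ is obtained from $\calT^{(I)}_\varepsilon|_R$ by additionally grafting the compact sub-trees $\calT_i|_{R - u_i}$ at height $u_i$ for each $i \in A^R_\varepsilon$. Take the correspondence $C \in \upC(\calT^{(I)}|_R, \calT^{(I)}_\varepsilon|_R)$ which is the diagonal on the common parts (the spine $[0,R]$ and the kept sub-trees), and which sends every point of each extra $\calT_i|_{R-u_i}$, $i \in A^R_\varepsilon$, to its attachment point $u_i$ on the spine of $\calT^{(I)}_\varepsilon|_R$. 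A direct check of the four possible pair-configurations yields $\dis C \leq 2 \sup_{i \in A^R_\varepsilon} \abs{\calT_i}$. Take $\pi$ to be the push-forward of $\mu_{\calT^{(I)}_\varepsilon|_R}$ by the diagonal embedding into the product space: then $\pi(C^c) = 0$, one marginal matches $\mu_{\calT^{(I)}_\varepsilon|_R}$ exactly, and the other marginal differs from $\mu_{\calT^{(I)}|_R}$ only by the total mass removed, so
\[
\upD(\pi;\mu_{\calT^{(I)}|_R},\mu_{\calT^{(I)}_\varepsilon|_R})
\;=\; \sum_{i \in A^R_\varepsilon} \mu_{\calT_i|_{R-u_i}}(\calT_i|_{R-u_i})
\;\leq\; \sum_{i \in A^R_\varepsilon} \norm{\bfs_i},
\]
using $\mu_{\calT_i}(\calT_i) = \norm{\bfs_i}$ (since each $(\gamma,\nu)$-fragmentation tree has total mass $1$). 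Altogether, $\upd_\GHP(\calT^{(I)}|_R, \calT^{(I)}_\varepsilon|_R) \leq \sup_{i \in A^R_\varepsilon} \abs{\calT_i} \vee \sum_{i \in A^R_\varepsilon} \norm{\bfs_i}$.

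We then drive both terms to zero. The sum $\sum_{i:\,u_i \leq R} \norm{\bfs_i}$ is almost surely finite by Campbell's theorem (as recalled in Section~\ref{sec:frag-trees-immigration}), so dominated convergence gives $\sum_{i \in A^R_\varepsilon} \norm{\bfs_i} \to 0$ almost surely. For the supremum, fix $\delta > 0$: the argument given at the end of Section~\ref{sec:frag-trees-immigration} (Campbell's formula, Markov's inequality conditional on $\Sigma$, and Borel--Cantelli) establishes that $F_\delta := \{i \geq 1 : u_i \leq R,\, \abs{\calT_i} \geq \delta\}$ is almost surely finite. Any $i \in F_\delta$ satisfies $\abs{\calT_i} \geq \delta > 0$, which forces $\bfs_i \neq \mathbf 0$ and hence $\norm{\bfs_i} > 0$; therefore $\varepsilon_0 := \min_{i \in F_\delta} \norm{\bfs_i} > 0$, and for every $\varepsilon < \varepsilon_0$ one has $A^R_\varepsilon \cap F_\delta = \varnothing$, so $\sup_{i \in A^R_\varepsilon} \abs{\calT_i} < \delta$. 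Letting $\delta$ decrease along a countable sequence yields $\sup_{i \in A^R_\varepsilon} \abs{\calT_i} \to 0$ almost surely, which combined with the previous paragraph gives $\upd_\GHP(\calT^{(I)}|_R, \calT^{(I)}_\varepsilon|_R) \to 0$ almost surely for each fixed $R$.

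To conclude, the measurable event "$\upd_\GHP(\calT^{(I)}|_r, \calT^{(I)}_\varepsilon|_r) \to 0$" has full probability for every fixed $r$, so by Fubini the random set of "bad" radii has Lebesgue measure zero almost surely; bounded convergence applied $\omega$-wise to the integrand $e^{-r}\bigl[1 \wedge \upd_\GHP(\calT^{(I)}|_r, \calT^{(I)}_\varepsilon|_r)\bigr]$ of $\upD_\GHP$ then yields $\upD_\GHP(\calT^{(I)}, \calT^{(I)}_\varepsilon) \to 0$ almost surely. The main subtlety is the control of $\sup_{i \in A^R_\varepsilon} \abs{\calT_i}$: a naive $L^p$ bound via $\esp{\abs{\calT_i}^{1/\gamma} \mid \bfs_i} \leq \esp{\abs{\calT_{1,1}}^{1/\gamma}} \, \norm{\bfs_i}$ does not suffice since $\varepsilon$ ranges over a continuum, and the argument crucially exploits the \emph{discrete} structure afforded by the already-established local compactness of $\calT^{(I)}$, which reduces the relevant quantity to the minimum of the finitely many positive masses associated to macroscopic sub-trees attached below height $R$.
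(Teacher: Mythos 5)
Your proof is correct and follows essentially the same approach as the paper's: the same correspondence (diagonal on the kept part, collapsing each removed sub-tree to its attachment point), the same pushforward measure $\pi$, the same bounds on distortion and discrepancy, and the same mechanism of first truncating at height $R$ and then integrating. The paper simply asserts that $\sup_{i\in A^R_\varepsilon}\abs{\calT_i}\to 0$ almost surely; your elaboration — via a.s.\ finiteness of $F_\delta$ (inherited from the Campbell/Borel--Cantelli argument that established local compactness of $\calT^{(I)}$) and the observation that $\abs{\calT_i}\geq\delta$ forces $\norm{\bfs_i}>0$ — fills in that step cleanly, and your Fubini/bounded-convergence step at the end is a valid alternative to citing Proposition~\ref{prop:extended-ghp-properties}~$(ii)$ as the paper does.
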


\begin{proof}
	For all $\varepsilon>0$, let $C_\varepsilon$ be the correspondence between $\calT^{(I)}$ and $\calT^{(I)}_\varepsilon$
	defined by $C_\varepsilon := \big\{(x,x) : x\in\calT^{(I)}_\varepsilon\big\}
	\cup \bigcup_{i\geq 1 : \norm{\bfs_i}<\varepsilon} \calT_i \times\{u_i\}$
	and set $\pi_\varepsilon$, the boundedly finite Borel measure on $\calT^{(I)}\times \calT^{(I)}_\varepsilon$,
	such that for all Borel $A$, $\pi_\varepsilon(A) :=
	\int_{\calT^{(I)}_\varepsilon} \ind_A (x,x) \, \mu_{\calT^{(I)}_\varepsilon}(\D x)$.
	Let $K\geq 0$ be fixed.
	Note $\pi_\varepsilon\vert_K$ the restriction of $\pi_\varepsilon$ to $\calT^{(I)}\vert_K\times\calT^{(I)}_\varepsilon\vert_K$.
	The monotone convergence theorem yields
	\[
		\upD\big( \pi_\varepsilon\vert_K ; \mu_{\calT^{(I)}}\vert_K, \mu_{\calT^{(I)}_\varepsilon}\vert_K\big)
		= \pi_\varepsilon\vert_K(C_\varepsilon^c)
		\leq \int \norm\bfs \ind_{\norm\bfs<\varepsilon} \ind_{u\leq K} \Sigma (\D u, \D\bfs)
		\xrightarrow [\varepsilon\to 0] {\text{\as}} 0.
	\]
	Let $C_\varepsilon\vert_K := C_\varepsilon\cap\big(\calT^{(I)}\vert_K\times\calT^{(I)}_\varepsilon\vert_K\big)$
	and observe that it is a correspondence between $\calT^{(I)}\vert_K$ and $\calT^{(I)}_\varepsilon\vert_K$.
	Its distortion satisfies
	\[
		\dis C_\varepsilon\vert_K
		\leq 2 \, \sup \, \Big\{ \abs{\calT_i} \,:\, i\geq 1, u_i\leq K,\norm{\bfs_i}<\varepsilon \Big\}
		\xrightarrow [\varepsilon\to 0] {\text {\as}} 0.
	\]
	As a result, $\upd_\GHP\big(\calT^{(I)}\vert_K, \calT^{(I)}_\varepsilon\vert_K\big)
	\to 0$ \as as $\varepsilon\to 0$.
	Since this holds for all $K\geq 0$, Proposition~\ref{prop:extended-ghp-properties}~$(ii)$
	ensures that $\upD_\GHP\big(\calT^{(I)}, \calT^{(I)}_\varepsilon\big)$
	\as converges to $0$ when $\varepsilon\to 0$.
\end{proof}

\begin{lemma}\label{lem:infinite-mb-pruning-unif-cv}
	For all positive $\eta$,
	\[
		\adjustlimits\lim_{\varepsilon\to 0} \limsup_{R\to\infty} \:
			\prob [2] {\upD_\GHP \big(T^{(R)}_{}, T^{(R)}_\varepsilon\big) > \eta}
		= 0.
	\]
\end{lemma}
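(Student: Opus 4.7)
The plan is to build, for each fixed $K \geq 0$, an explicit correspondence and coupling measure between $T^{(R)}\vert_K$ and $T^{(R)}_\varepsilon\vert_K$ and use their restrictions to simultaneously control $\upd_\GHP(T^{(R)}\vert_r, T^{(R)}_\varepsilon\vert_r)$ for every $r \in [0,K]$. Concretely, let $C^K_\varepsilon$ consist of the diagonal on $T^{(R)}_\varepsilon\vert_K$ together with all pairs $(x, u_n)$ where $x$ lies in a pruned subtree $\tau^{(R)}_n/R$ grafted on the spine at height $u_n = n/R$, and let $\pi^K_\varepsilon$ be the image of $\mu_{T^{(R)}_\varepsilon\vert_K}$ under the diagonal map, which is supported by $C^K_\varepsilon$. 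Since a pruned point $x \in T^{(R)}\vert_r$ necessarily has $u_n \leq r$, the restrictions to $T^{(R)}\vert_r \times T^{(R)}_\varepsilon\vert_r$ remain a valid correspondence and a coupling supported on it. A direct distance computation in the $\bbR$-tree $T^{(R)}$ then gives the uniform bounds $\frac 1 2 \dis C^K_\varepsilon \leq A_R$ and $\upD(\pi^K_\varepsilon; \mu_{T^{(R)}\vert_K}, \mu_{T^{(R)}_\varepsilon\vert_K}) \leq B_R$, where
\[
	A_R := \sup \bigl\{ \abs {\tau^{(R)}_n}/R \,:\, n \leq KR,\, \norm [1] {\Lambda^{(R)}_n} < R^{1/\gamma}\varepsilon \bigr\}
	\quad\text{and}\quad
	B_R := \frac 1 {R^{1/\gamma}} \sum_{n=0}^{\lfloor KR \rfloor} \bigl(1+\norm {\Lambda^{(R)}_n}\bigr) \, \ind_{\norm{\Lambda^{(R)}_n}<R^{1/\gamma}\varepsilon}.
\]

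Plugging these bounds into the definition of $\upD_\GHP$ and integrating over $r \geq 0$ gives $\upD_\GHP(T^{(R)}, T^{(R)}_\varepsilon) \leq K(A_R \vee B_R) + \E^{-K}$. Given $\eta > 0$, choose $K$ such that $\E^{-K} < \eta/2$ and set $\delta := \eta/(2K)$; it then suffices to show that $\prob{A_R > \delta}$ and $\prob{B_R > \delta}$ both tend to $0$ when one first sends $R \to \infty$ and then $\varepsilon \to 0$. For $B_R$, Markov's inequality and the \iid nature of the sequence $(\Lambda^{(R)}_n)_n$ give $\esp{B_R} \leq (KR+1) R^{-1/\gamma} \esp[1]{(1+\norm{\Lambda^{(R)}_0}) \, \ind_{\norm{\Lambda^{(R)}_0}<R^{1/\gamma}\varepsilon}}$. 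The additive $1$ yields a contribution of order $R^{1-1/\gamma}$ which vanishes as $R \to \infty$ since $\gamma < 1$; for the main term, sandwiching $\bfs \mapsto \norm\bfs \, \ind_{\norm\bfs < \varepsilon}$ between continuous integrands bounded by $1\wedge\norm\bfs$ and applying Assumption~\hyperref[assumption:immigration]{$(\mathtt I)$} yields $\limsup_R \esp{B_R} \leq K \int \norm\bfs \, \ind_{\norm\bfs \leq 2\varepsilon} \, I(\D\bfs)$, which vanishes as $\varepsilon \to 0$ because $I$ is an immigration measure.

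The main obstacle lies in controlling $A_R$, as this requires a good tail estimate on the height of a tree whose law $\MB^q_\lambda$ is parametrised by a small partition $\lambda$. Conditional on $\Lambda^{(R)}_n = \lambda \in \calP_{<\infty}$, the tree $\tau^{(R)}_n$ has distribution $\MB^q_\lambda$, so $\abs{\tau^{(R)}_n} \leq 1 + \max_i \abs{T_{i,\lambda_i}}$ with the $T_{i,\lambda_i}$ independent of respective laws $\MB^q_{\lambda_i}$. Applying Lemma~\ref{lem:mb-trees-height} at exponent $p = 1/\gamma$, which is strictly greater than $1$ thanks to $\gamma < 1$, gives $\esp[1]{\abs{T_{i,\lambda_i}}^{1/\gamma}} \leq h_{1/\gamma} \lambda_i$, and a union bound combined with Markov's inequality yields $\prob{\abs{\tau^{(R)}_n}/R > \delta \mid \Lambda^{(R)}_n = \lambda} \leq 2^{1/\gamma} h_{1/\gamma} \, \norm\lambda / (R\delta)^{1/\gamma}$ once $R\delta \geq 2$. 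Summing this estimate over $n \leq KR$ and invoking the \iid structure and Assumption~\hyperref[assumption:immigration]{$(\mathtt I)$} exactly as for $B_R$ gives
\[
	\limsup_{R \to \infty} \prob{A_R > \delta}
	\leq \frac{2^{1/\gamma} K h_{1/\gamma}}{\delta^{1/\gamma}}
		\int \norm\bfs \, \ind_{\norm\bfs \leq 2\varepsilon} \, I(\D\bfs)
	\xrightarrow[\varepsilon \to 0]{} 0.
\]
Throughout, the assumption $\gamma < 1$ plays a double role: it renders the additive $1$ in the vertex count of $\MB^q_\lambda$ asymptotically negligible for $B_R$, and it ensures that $1/\gamma$ is an admissible moment exponent strictly above $1$ in Lemma~\ref{lem:mb-trees-height}, which is what allows the union bound for $A_R$ to close.
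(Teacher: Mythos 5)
Your proposal is correct and takes essentially the same route as the paper: prune the small subtrees, use the diagonal correspondence and coupling measure, bound the (half) distortion by the sup of the rescaled heights of the pruned subtrees and the discrepancy by the pruned mass, control the height sup via the $1/\gamma$-th moment bound of Lemma~\ref{lem:mb-trees-height} (your union bound is equivalent to the paper's Markov bound on $\esp{(\dis)^{1/\gamma}}$), and then use the \iid structure and Assumption~\hyperref[assumption:immigration]{$(\mathtt I)$} to pass to the limit in $R$ and then~$\varepsilon$. The only blemish is the factor $K$ in the claimed bound $\upD_\GHP\leq K(A_R\vee B_R)+\E^{-K}$, which should be $1$ (since $\int_0^K\E^{-r}\D r<1$), but this is a harmless overestimate and does not affect the argument.
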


\begin{proof}
	We will proceed in a way similar to the proof of~Lemma~\ref{lem:ssfi-pruning-cv}.
	For all $R\geq 1$ and $\varepsilon>0$,
	define the correspondence $C^{\smash {(R)}}_\varepsilon$ between $T^{(R)}$ and $T^{\smash {(R)}}_\varepsilon$
	as $C^{\smash {(R)}}_\varepsilon := \big\{(u,u) : u\in T^{\smash {(R)}}_\varepsilon\big\}
	\cup \big\{(u,n/R) : n\geq 1, \norm{\Lambda_n}<R^{1/\gamma}\varepsilon, u\in\tau^{\smash {(R)}}_n\big\}$
	and let $\pi^{\smash {(R)}}_\varepsilon$ be the boundedly finite measure
	$T^{(R)}\times T^{\smash {(R)}}_\varepsilon$ defined for all Borel set $A$
	by $\pi^{\smash {(R)}}_\varepsilon (A) := \int_{T^{(R)}_\varepsilon} \ind_A (x,x) \, \mu_{T^{(R)}_\varepsilon} (\D x)$.
	
	For all $K\geq 0$, set $C^{\smash {(R)}}_\varepsilon\big\vert_K := 
	C^{\smash {(R)}}_\varepsilon \cap \big(T^{(R)}\vert_K\times T^{\smash {(R)}}_\varepsilon\vert_K\big)$,
	which is a correspondence between $T^{(R)}\vert_K$ and $T^{\smash {(R)}}_\varepsilon\vert_K$,
	and let $\pi^{\smash {(R)}}_\varepsilon\big\vert_K$ be the restriction of $\pi^{\smash {(R)}}_\varepsilon$
	to $T^{(R)}\vert_K \times T^{\smash {(R)}}_\varepsilon\vert_K$.
	Then, for any non-negative~$K$,
	\[
		\dis_{T^{(R)}\vert_K,T^{(R)}_\varepsilon\vert_K}
			C^{(R)}_\varepsilon\big\vert_K
		\leq \frac 2 R \, \sup \, \Big\{
			\abs{\tau^{(R)}_n} \,:\, 0\leq n\leq RK,
			\, \norm{\Lambda^{(R)}_n} < R^{1/\gamma}\varepsilon\Big\}.
	\]
	For all $n\geq 0$ and $R\geq 1$,
	$\abs {\tau^{(R)}_n} = 1 + \sup \{ \abs {\tau^{(R)}_{n,i}} : 1\leq i\leq p(\Lambda^{(R)}_n)\}$.
	Further observe that thanks to Lemma~\ref{lem:mb-trees-height}, we can find a finite constant $h$
	such that for all $n\geq 0$, $R\geq 1$ and $i=1,\dots,p(\Lambda_n^{(R)})$,
	$\esp {(1+\abs{\tau^{(R)}_{\smash {n,i}}})^{1/\gamma} \vert \Lambda^{(R)}_n}
	\leq h \, \Lambda^{(R)}_n(i)$.
	Therefore, since the sequence $(\Lambda^{(R)}_n, \tau^{(R)}_n)_{n\geq 1}$ is \iid,
	\begin{align*}
		\esp [3] {\Big(\dis_{T^{(R)}\vert_K,T^{(R)}_\varepsilon\vert_K}
			C^{(R)}_\varepsilon\big\vert_K \Big)^{1/\gamma}}
		& \leq (KR+1) \, \frac {2^{1/\gamma}} {R^{1/\gamma}} \, \esp [2] {
			{\textstyle\sum_{i=1}^{p(\Lambda^{(R)}_0)}} (1+\abs{\tau^{(R)}_{0,i}})^{1/\gamma}
			\ind_{\norm{\Lambda_0^{(R)}}<R^{1/\gamma}}}\\
		& \leq (KR+1) \, \frac {2^{1/\gamma} h} {R^{1/\gamma}} \, \esp [2] {\norm{\Lambda^{(R)}_0}
			\, \ind_{\norm {\Lambda^{(R)}_0}< R^{1/\gamma} \varepsilon}}.
	\end{align*}
	Similarly,
	\begin{align*}
		\esp [2] {\upD\Big(\pi^{(R)}_\varepsilon\big\vert_K ;
			\mu_{T^{(R)}}\big\vert_K, \mu_{T^{(R)}_\varepsilon}\big\vert_K\Big)}
		& = \esp [2] {\pi^{(R)}_\varepsilon\big\vert_K \big[ (C^{(R)}_\varepsilon)^c \big]}
		= (KR+1) \, \frac 1 {R^{1/\gamma}} \, \esp [2] {\norm{\Lambda^{(R)}_0}
			\, \ind_{\norm {\Lambda^{(R)}_n}< R^{1/\gamma} \varepsilon}}.
	\end{align*}
	In light of Assumption~\hyperref[assumption:immigration]{$(\mathtt I)$},
	\[
		(KR+1) \, \frac 1 {R^{1/\gamma}} \, \esp [2] {\norm{\Lambda^{(R)}_0}
			\, \ind_{\norm {\Lambda^{(R)}_0}< R^{1/\gamma} \varepsilon}}
		\leq (KR+1) \, \esp [3] {\varepsilon\wedge\frac {\norm{\Lambda^{(R)}_0}} {R^{1/\gamma}}}
		\xrightarrow [R\to\infty] {} K \int (\varepsilon\wedge\norm\bfs) \, I(\D\bfs)
	\]
	
	Finally, for any positive $\eta$, if $K > -2\,\log\eta$,
	using Markov's inequality and the monotone convergence theorem,
	\begin{align*}
		& \limsup_{R\to\infty} \:
			\prob [2] {\upD_\GHP \big(T^{(R)}_{}, T^{(R)}_\varepsilon\big) > \eta}
		\leq \limsup_{R\to\infty}
			\prob [2] {\upD_\GHP \big(T^{(R)}_{}\vert_K, T^{(R)}_\varepsilon\vert_K\big)
			> \eta - 2 \E^{-K}}\\
		& \qquad\qquad\qquad
		\leq \limsup_{R\to\infty} \: \left(
			\frac {\esp [2] {\big(\dis_{T^{(R)}\vert_K,T^{(R)}_\varepsilon\vert_K}
				C^{(R)}_\varepsilon\big\vert_K\big)^{1/\gamma}}} {(\eta-2\E^{-K})^{1/\gamma}}
			+ \frac {\esp [2] {\upD\big(\pi^{(R)}_\varepsilon\big\vert_K ;
				\mu_{T^{(R)}}\big\vert_K, \mu_{T^{(R)}_\varepsilon}\big\vert_K\big)}} {\eta-2\E^{-K}}
			\right)\\
		& \qquad\qquad\qquad
		\leq \bigg( \frac {2^{1/\gamma} Kh} {(\eta-\E^{-K})^{1/\gamma}} + \frac K {\eta-\E^{-K}} \bigg)
			\int (\varepsilon\wedge\norm\bfs) \, I(\D\bfs)
		\xrightarrow [\varepsilon\to 0] {} 0.
	\end{align*}
\end{proof}

The next result is both intuitive and easy to prove.
Its proof will therefore be left to the reader.

\begin{lemma}\label{lem:finite-grafting-continuity}
	Fix $n$ a positive integer and let $\bfG_n$ be the restriction
	of $\bfG$ to $(\bbR_+\times\bbT_c)^n$.
	The application $\bfG_n$ is continuous for the product topology.
\end{lemma}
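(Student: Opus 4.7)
The plan is to apply the characterisation given by Proposition~\ref{prop:extended-ghp-properties}~$(ii)$: it suffices to show that whenever a sequence $((u_i^{(k)}, \tau_i^{(k)}))_{1\leq i\leq n}$ converges to $((u_i, \tau_i))_{1\leq i\leq n}$ in the product topology, the truncated trees $T^{(k)}\vert_r$ converge to $T\vert_r$ in $\upd_\GHP$ for every $r\geq 0$ satisfying $\mu_T(\partial_r T) = 0$, where $T^{(k)} := \bfG_n((u_i^{(k)}, \tau_i^{(k)})_i)$ and $T := \bfG_n((u_i, \tau_i)_i)$. Since $\partial_r T \cap \tau_i = \partial_{r-u_i}\tau_i$ for each $i$ with $u_i < r$, we have $\mu_T(\partial_r T) = \sum_{i : u_i \leq r} \mu_{\tau_i}(\partial_{r-u_i}\tau_i)$, so the regularity hypothesis on $r$ transfers to each individual subtree $\tau_i$. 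Finiteness of $n$ lets me further assume $r \neq u_i$ for every $i$, excluding only finitely many exceptional radii.

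Fix such a good $r$. For $k$ large enough, $u_i^{(k)} < r$ if and only if $u_i < r$; set $I_r := \{i : u_i < r\}$. For each $i \in I_r$, pick an $\varepsilon$-near-optimal correspondence $C_i^{(k)} \in \upC(\tau_i^{(k)}, \tau_i)$ containing the root pair $(\rho_{\tau_i^{(k)}}, \rho_{\tau_i})$, together with a corresponding transport measure $\pi_i^{(k)}$. I would then construct a correspondence $C^{(k)}$ between $T^{(k)}\vert_r$ and $T\vert_r$ by gluing the diagonal $\{(s,s) : 0 \leq s \leq r\}$ on the common spine with the restriction of each $C_i^{(k)}$ to the corresponding truncated balls, and take $\pi^{(k)} := \sum_{i \in I_r} \pi_i^{(k)}\vert_{\tau_i^{(k)}\vert_{r-u_i^{(k)}} \times \tau_i\vert_{r-u_i}}$. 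From the definition of the metric on $\bfG_n$-trees, the distortion between two matched points decomposes additively into individual distortions $\dis C_i^{(k)}$ plus contributions of the form $\abs{u_i^{(k)} - u_i}$, all of which vanish as $k\to\infty$; parallel bounds control the discrepancy and the $\pi^{(k)}(C^{(k),c})$ term.

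The delicate step is the restriction itself: a pair $(x, y) \in C_i^{(k)}$ may satisfy $\abs x \leq r - u_i^{(k)}$ yet $\abs y > r - u_i$, so the naive restriction need not form a genuine correspondence, and mass may leak out of the target truncation. However, the defective pairs are concentrated in an annular region of width $O(\abs{u_i^{(k)} - u_i} + \dis C_i^{(k)})$ around $\partial_{r-u_i}\tau_i$. To control their contribution I would use the hypothesis $\mu_{\tau_i}(\partial_{r-u_i}\tau_i) = 0$ together with the local finiteness of $\mu_{\tau_i}$: for any $\eta > 0$ there is $\delta > 0$ such that the $\delta$-annulus around $\partial_{r-u_i}\tau_i$ carries mass at most $\eta$, and a parallel bound for $\tau_i^{(k)}$ is inherited from another application of Proposition~\ref{prop:extended-ghp-properties}~$(ii)$ to the convergence $\tau_i^{(k)} \to \tau_i$. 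Choosing $k$ large enough so that $\abs{u_i^{(k)} - u_i} + \dis C_i^{(k)} < \delta$ for every $i \in I_r$, the annular cost is at most $O(n\eta)$; letting first $\varepsilon$, then $\eta$ tend to $0$ yields $\upd_\GHP(T^{(k)}\vert_r, T\vert_r) \to 0$ as required. This boundary management is the main obstacle; everything else is routine bookkeeping based on the additive structure of distances in $\bfG_n$-trees.
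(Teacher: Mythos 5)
The paper offers no proof of this lemma --- it says the result is ``both intuitive and easy to prove'' and leaves it to the reader --- so I can only judge your argument on its own terms.

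Your reduction via Proposition~\ref{prop:extended-ghp-properties}~$(ii)$ is the right one, and discarding the finitely many radii $r=u_i$ is legitimate: the sufficiency direction of that proposition only needs $\upd_\GHP(T^{(k)}\vert_r,T\vert_r)\to 0$ for almost every $r$, as the dominated-convergence step in its proof makes clear. Gluing the spine diagonal with near-optimal correspondences and transport plans for each grafted subtree is also the right construction. But the step you flag as ``delicate'' is not actually resolved by what you write. You correctly observe that the naive restriction of $C_i^{(k)}$ to $\tau_i^{(k)}\vert_{r-u_i^{(k)}}\times\tau_i\vert_{r-u_i}$ need not be a correspondence, but then say you would ``control their contribution'' using $\mu_{\tau_i}(\partial_{r-u_i}\tau_i)=0$. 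That hypothesis governs the measure discrepancy, not the coverage requirement: if some $x\in\tau_i^{(k)}\vert_{r-u_i^{(k)}}$ has no partner inside $\tau_i\vert_{r-u_i}$, the restricted set simply is not a correspondence, and one cannot speak of its distortion. The missing step is an explicit repair: for $(x,y)\in C_i^{(k)}$ with $\abs x\leq r-u_i^{(k)}$ but $\abs y>r-u_i$, replace $y$ by the unique point $y'\in\lBrack\rho_{\tau_i},y\rBrack$ with $\abs{y'}=r-u_i$; since $\abs y\leq\abs x+\dis C_i^{(k)}\leq r-u_i^{(k)}+\dis C_i^{(k)}$, this moves $y$ by at most $\abs{u_i^{(k)}-u_i}+\dis C_i^{(k)}$, and one does the same in the other coordinate. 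The result is a genuine correspondence between the truncated balls whose distortion is at most $3\dis C_i^{(k)}+2\abs{u_i^{(k)}-u_i}$; your annulus bound on the leaked mass then controls the discrepancy and the argument closes.

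You could also sidestep the boundary surgery entirely. Choose $R_0$ larger than every $u_i+\abs{\tau_i}$, and, for $k$ large (using Remark~\ref{nb:height-mass-ghp-continuity}), larger than every $u_i^{(k)}+\abs{\tau_i^{(k)}}$. For $r\geq R_0$ the grafted trees sit entirely inside the truncations, the glued correspondence and plan have no defective pairs, and $\upd_\GHP(T^{(k)}\vert_r,T\vert_r)\to 0$ follows from the plain estimate $\dis C^{(k)}\leq 2\max_i\big(\abs{u_i^{(k)}-u_i}+\dis C_i^{(k)}\big)$. For $r<R_0$ with $\mu_T(\partial_r T)=0$, the compact-tree fact recalled inside the proof of Proposition~\ref{prop:extended-ghp-properties}~$(ii)$ (from~\cite[Proposition~2.10]{abraham2013ghp}), applied to $T^{(k)}\vert_{R_0}\to T\vert_{R_0}$, gives $\upd_\GHP(T^{(k)}\vert_r,T\vert_r)\to 0$ with no annulus argument at all, and one concludes as before.
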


\begin{lemma}\label{lem:point-processes-large-immigration-weak-cv}
	Let $K\geq 0$ and $\varepsilon>0$ be fixed.
	Almost surely, for any continuous $F:\bbR_+\times\Sdec\times\bbT_c\to\bbR_+$
	bounded by $1$,
	\begin{align*}
		\limsup_{R\to\infty} \: {\textstyle\int} F(u,\bfs,\tau) \,
			\ind_{u\leq K, \, \norm\bfs\geq\varepsilon}\, \D\Pi_R(u,\bfs,\tau)
		& \leq {\textstyle\int} F(u,\bfs,\tau) \,
			\ind_{u\leq K, \, \norm\bfs\geq\varepsilon}\, \D\Pi(u,\bfs,\tau),\\
		\mathllap{\text{and}\qquad}
		\liminf_{R\to\infty} \: {\textstyle\int} F(u,\bfs,\tau) \,
			\ind_{u<K, \, \norm\bfs>\varepsilon}\, \D\Pi_R(u,\bfs,\tau)
		& \geq {\textstyle\int} F(u,\bfs,\tau) \,
			\ind_{u<K, \, \norm\bfs>\varepsilon}\, \D\Pi(u,\bfs,\tau).
	\end{align*}
\end{lemma}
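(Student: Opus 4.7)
The approach is a Portmanteau-style argument. By construction, on the almost sure event $\Omega_0$ where $\Pi_R \to \Pi$ in the topology of $\scrR$ (given by Skorokhod's representation applied to Lemma~\ref{lem:point-processes-cv}), convergence $\int H \, \D\Pi_R \to \int H \, \D\Pi$ holds for every $H\in\scrF$, i.e., for every continuous $H\geq 0$ dominated by some $\ind_{u\leq K'}\norm\bfs$. The idea is to squeeze the discontinuous integrand $F\cdot\ind_{u\leq K,\norm\bfs\geq\varepsilon}$ between continuous functions in (a scalar multiple of) $\scrF$ and pass to the limit.

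For the limsup inequality, I would fix $\delta\in(0,\varepsilon/2)$ and introduce the continuous function
\[
\chi_\delta(u,\bfs)
:= \max\!\big(0, 1-(u-K)_+/\delta\big) \cdot \min\!\big(1,(\norm\bfs-(\varepsilon-\delta))_+/\delta\big),
\]
so that $\chi_\delta \geq \ind_{u\leq K,\,\norm\bfs\geq\varepsilon}$, $\chi_\delta$ is supported on $\{u\leq K+\delta,\,\norm\bfs\geq \varepsilon-\delta\}$, and $\chi_\delta(u,\bfs)\to\ind_{u\leq K,\,\norm\bfs\geq\varepsilon}$ pointwise as $\delta\to 0$. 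Set $G_\delta^+(u,\bfs,\tau):=F(u,\bfs,\tau)\,\chi_\delta(u,\bfs)$. Since $F\leq 1$ and $\norm\bfs\geq\varepsilon-\delta$ on the support of $\chi_\delta$, one has $G_\delta^+ \leq (\varepsilon-\delta)^{-1}\ind_{u\leq K+\delta}\norm\bfs$, so $(\varepsilon-\delta)G_\delta^+\in\scrF$. Hence on $\Omega_0$,
\[
\limsup_{R\to\infty}\int F\,\ind_{u\leq K,\,\norm\bfs\geq\varepsilon}\,\D\Pi_R
\leq \lim_{R\to\infty}\int G_\delta^+\,\D\Pi_R
= \int G_\delta^+\,\D\Pi.
\]
I would then let $\delta\downarrow 0$. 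Pointwise $G_\delta^+\to F\,\ind_{u\leq K,\norm\bfs\geq\varepsilon}$, and for $\delta\leq\varepsilon/2$ we have the uniform bound $G_\delta^+\leq 2\varepsilon^{-1}\norm\bfs\,\ind_{u\leq K+\varepsilon/2}$, which is $\Pi$-integrable almost surely by definition of $\scrR$. Dominated convergence therefore yields the limsup inequality.

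For the liminf inequality I would mirror the construction with functions approximating from below: define
\[
\chi_\delta'(u,\bfs)
:= \max\!\big(0,1-(u-(K-\delta))_+/\delta\big)\cdot\min\!\big(1,(\norm\bfs-\varepsilon)_+/\delta\big),
\]
so $\chi_\delta'\leq\ind_{u<K,\norm\bfs>\varepsilon}$, $\chi_\delta'$ is supported on $\{u\leq K,\norm\bfs\geq\varepsilon\}$, and $\chi_\delta'\nearrow\ind_{u<K,\norm\bfs>\varepsilon}$ pointwise as $\delta\downarrow 0$. Setting $G_\delta^-:=F\chi_\delta'$, the same reasoning as above shows $\varepsilon G_\delta^-\in\scrF$, so $\int G_\delta^-\,\D\Pi_R\to\int G_\delta^-\,\D\Pi$ on $\Omega_0$; monotone convergence then gives $\int G_\delta^-\,\D\Pi\nearrow\int F\,\ind_{u<K,\norm\bfs>\varepsilon}\,\D\Pi$, whence the liminf bound.

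The only subtle point, and what would otherwise be the main obstacle, is ensuring that the approximating functions actually belong (up to a harmless rescaling) to the test class $\scrF$: the definition of $\scrF$ imposes a bound proportional to $\norm\bfs$, not merely boundedness. This is precisely why we keep the support of $\chi_\delta$ bounded away from $\{\norm\bfs=0\}$ by choosing the cutoff level $\varepsilon-\delta>0$; on that support the bound $F\leq 1\leq\norm\bfs/(\varepsilon-\delta)$ is automatic, which is what allows the convergence along $\scrF$ to be invoked.
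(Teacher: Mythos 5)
Your proof is correct and follows essentially the same approach as the paper's: sandwich the discontinuous indicator between continuous tent-function approximations, note that multiplying by a constant proportional to $\varepsilon$ puts them into $\scrF$ (because the support is bounded away from $\{\norm\bfs=0\}$), invoke the $\scrR$-convergence on the event $\{\Pi_R\to\Pi\}$, and then send the approximation parameter to its limit. The paper indexes its approximants by $n\to\infty$ and uses monotone convergence in both directions, whereas you use $\delta\downarrow 0$ and dominated convergence for the limsup bound; this is a cosmetic difference.
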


\begin{proof}
	Let $\varphi$ and $\varphi_n$, $n\geq 1$
	be the applications from $\bbR_+\times\Sdec\times\bbT_c$ to $\bbR_+$
	defined for all $(u,\bfs,\tau)$ by $\varphi(u,\bfs,\tau) := \ind_{u\leq K} \, \ind_{\norm\bfs\geq\varepsilon}$
	and $\varphi_n(u,\bfs,\tau) := [1-n(u-K)_+]_+ \times [1-n(\varepsilon-\norm\bfs)_+]_+$ respectively
	(where $x_+ = x\vee 0$ for any real number $x$).
	Observe that for all $n\geq 1$, $\varphi_n$ is continuous and that for $n$ large enough,
	$\varepsilon\varphi_n \, F$ is an element of $\scrF$.
	Therefore, everywhere on the event $\{\Pi_R\to\Pi\}$,
	$\int \varphi_n \, F \,\D\Pi_R \to \int \varphi_n \, F \,\D\Pi$
	for any fixed $n\geq 1$.
	Furthermore, $\varphi_n\downarrow_n \varphi$ so the monotone convergence theorem yields
	$\inf_{n\geq 1} \int \varphi_n \, F \,\D\Pi = \int \varphi \, F \, \D\Pi$
	and for all $R\geq 1$, $\inf_{n\geq 1} \int \varphi_n \, F \,\D\Pi_R = \int \varphi \, F \, \D\Pi_R$.
	As a result, on $\{\Pi_R\to\Pi\}$,
	\[
		\limsup_{R\to\infty} \: {\textstyle\int} \varphi \, F \, \D\Pi_R
		\leq \inf_{n\geq 1} \bigg[ \limsup_{R\to\infty} \: {\textstyle\int} \varphi_n \, F \, \D\Pi_R\bigg]
		= {\textstyle\int} \varphi \, F \, \D\Pi.
	\]
	Similarly, if we let $\psi(u,\bfs,\tau) := \ind_{u<K} \, \ind_{\norm\bfs>\varepsilon}$,
	there exists a sequence $(\psi_n)_n$ of continuous applications
	such that $\psi_n\uparrow_n\psi$ and for $n$ large enough, $\varepsilon\psi_n\, F$ is in $\scrF$.
	The same kind of arguments lead~to
	\[
		\liminf_{R\to\infty} \: {\textstyle\int} \psi \, F \, \D\Pi_R
		\geq \sup_{n\geq 1} \bigg[ \liminf_{R\to\infty} \: {\textstyle\int} \psi_n \, F \, \D\Pi_R\bigg]
		= {\textstyle\int} \psi \, F \, \D\Pi
	\]
	everywhere on $\{\Pi_R\to\Pi\}$.
\end{proof}

\begin{lemma}\label{lem:infinite-mb-to-ssfi-pruning-cv}
	Let $\varepsilon$ be positive and such that $\Pi\big((u,\bfs,\tau):\norm\bfs=\varepsilon\big) = 0$ \as.
	Then $T^{\smash {(R)}}_\varepsilon$ \as converges to $\calT^{\smash {(I)}}_\varepsilon$ as $R\to\infty$.
\end{lemma}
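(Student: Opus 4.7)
The plan is to lift the almost-sure convergence $\Pi_R \to \Pi$ (obtained from the Skorokhod coupling at the start of this section) to the trees $T^{(R)}_\varepsilon$ and $\calT^{(I)}_\varepsilon$ that these point processes encode. Since below any fixed height $K$ only finitely many atoms with $\norm\bfs \geq \varepsilon$ contribute, the problem reduces to a continuity statement for the finite grafting operation, which is supplied by Lemma~\ref{lem:finite-grafting-continuity}; Proposition~\ref{prop:extended-ghp-properties}(ii) then translates the level-by-level $\upd_\GHP$ convergence into the desired $\upD_\GHP$ convergence.

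More precisely, I would fix $K > 0$ chosen outside an at most countable random set so that $\Pi$ almost surely has no atom with $u = K$ and so that $\mu_{\calT^{(I)}_\varepsilon}(\partial_K \calT^{(I)}_\varepsilon) = 0$. Combined with the hypothesis $\Pi(\norm\bfs = \varepsilon) = 0$, this makes the region $A_K := \{u \leq K,\, \norm\bfs \geq \varepsilon\}$ contain finitely many atoms $(u_i, \bfs_i, \calT_i)$ of $\Pi$, $1 \leq i \leq N$, all lying in its topological interior. Applying Lemma~\ref{lem:point-processes-large-immigration-weak-cv}, first with $F \equiv 1$ (approximated by continuous functions from above and below) and then with disjoint continuous bump functions localised near each of these atoms, I would deduce that for $R$ large enough $\Pi_R$ has exactly $N$ atoms in $A_K$, and that (up to relabelling) the corresponding atoms $(u_i^{(R)}, \bfs_i^{(R)}, \tau_i^{(R)})$ converge componentwise to $(u_i, \bfs_i, \calT_i)$ in $\bbR_+ \times \Sdec \times \bbT_c$.

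Next I would observe that by the definition of $\bfG$ an atom contributes to points at height $> K$ if and only if $u > K$, so for $R$ large one has $T^{(R)}_\varepsilon\vert_K = \bfG_N\bigl((u_i^{(R)}, \tau_i^{(R)})_{1 \leq i \leq N}\bigr)\vert_K$ and similarly for $\calT^{(I)}_\varepsilon\vert_K$ with the limiting atoms. The continuity of $\bfG_N$ provided by Lemma~\ref{lem:finite-grafting-continuity}, together with the continuity of truncation at heights where the limit measure puts no mass on the boundary (as in the proof of Proposition~\ref{prop:extended-ghp-properties}(ii)), then yields $\upd_\GHP\bigl(T^{(R)}_\varepsilon\vert_K, \calT^{(I)}_\varepsilon\vert_K\bigr) \to 0$ almost surely. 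Since this holds for $K$ outside a countable exceptional set, Proposition~\ref{prop:extended-ghp-properties}(ii) delivers $\upD_\GHP\bigl(T^{(R)}_\varepsilon, \calT^{(I)}_\varepsilon\bigr) \to 0$ almost surely.

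The delicate step is the matching-of-atoms argument: Lemma~\ref{lem:point-processes-large-immigration-weak-cv} supplies only semicontinuity inequalities, and the strict/non-strict asymmetry between the two bounds is precisely what is bridged by the two no-atom hypotheses, namely $\Pi(\norm\bfs = \varepsilon) = 0$ (given) and $\Pi(u = K) = 0$ (enforced by the choice of $K$). Slightly perturbing $K$ and $\varepsilon$ inside and outside $A_K$ and using monotonicity in these parameters turns the one-sided bounds into the equality $N_R(A_K) = N$ for $R$ large, after which the componentwise convergence of atoms follows from localised bump functions. Once this matching is in hand, the remainder of the argument is routine continuity of finite grafting and truncation.
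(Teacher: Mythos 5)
Your proposal follows essentially the same route as the paper: use the Skorokhod coupling to upgrade $\Pi_R\to\Pi$ to almost-sure convergence, combine the two no-atom conditions with Lemma~\ref{lem:point-processes-large-immigration-weak-cv} to obtain weak convergence of the restricted point measures (hence convergence of the finitely many atoms), apply the continuity of finite grafting from Lemma~\ref{lem:finite-grafting-continuity}, and conclude via the truncation criterion of Proposition~\ref{prop:extended-ghp-properties}(ii). You spell out slightly more explicitly the bump-function argument for matching atoms and the choice of $K$ outside the countable set where $\mu_{\calT^{(I)}_\varepsilon}$ charges $\partial_K$; the paper leaves these as standard, but the underlying reasoning is identical.
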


\begin{proof}
	Observe that for any $K\geq 0$, $\Pi\big((u,\bfs,\tau):u=K\big) = 0$ \as
	which implies that with probability $1$, for any continuous bounded $F:\bbR_+\times\Sdec\times\bbT_c\to\bbR_+$,
	\[
		\textstyle
		\int F(u,\bfs,\tau) \,
			\ind_{u\leq K, \, \norm\bfs\geq\varepsilon}\, \D\Pi(u,\bfs,\tau)
		= \int F(u,\bfs,\tau) \,
			\ind_{u<K, \, \norm\bfs>\varepsilon}\, \D\Pi(u,\bfs,\tau).
	\]
	Consequently, in light of Lemma~\ref{lem:point-processes-large-immigration-weak-cv},
	\[
		\ind_{u\leq K, \, \norm\bfs\geq\varepsilon}\, \Pi_R(\D u,\D\bfs,\D\tau)
		\xRightarrow [R\to\infty] {\text {\as}}
		\ind_{u\leq K, \, \norm\bfs\geq\varepsilon}\, \Pi(\D u,\D\bfs,\D\tau).
	\]
	Furthermore, the measures $\ind_{u\leq K, \, \norm\bfs\geq\varepsilon}\, \Pi_R(\D u,\D\bfs,\D\tau)$,
	$R\geq 1$ and $\ind_{u\leq K, \, \norm\bfs\geq\varepsilon}\, \Pi(\D u,\D\bfs,\D\tau)$
	may be written as finite sums of Dirac measures.
	As a result, almost surely, the atoms of $\ind_{u\leq K, \, \norm\bfs\geq\varepsilon}\, \Pi_R(\D u,\D\bfs,\D\tau)$
	converge to those of $\ind_{u\leq K, \, \norm\bfs\geq\varepsilon}\, \Pi(\D u,\D\bfs,\D\tau)$
	when $R\to\infty$.
	Lemma~\ref{lem:finite-grafting-continuity} then ensures that $T^{\smash {(R)}}_\varepsilon\vert_K$
	\as converges to $\calT^{\smash {(I)}}_\varepsilon\vert_K$.
	Since this holds for any $K\geq 0$, Proposition~\ref{prop:extended-ghp-properties}
	allows us to conclude.
\end{proof}

\begin{proof}[of~Theorem~\ref{thm:scaling-limits-infinite-mb-trees}]
	Observe that the set of positive $\varepsilon$ such that
	$\smash {\prob [1] {\Pi\big((u,\bfs,\tau):\norm\bfs=\varepsilon\big)=0}} < 1$
	is at most countable.
	As a result, we may consider a sequence $(\varepsilon_k)_{k\geq 1}$
	of positive real numbers which converges to $0$ and such that for all $k$,
	$\Pi\big((u,\bfs,\tau):\norm\bfs=\varepsilon_k\big)=0$ \as.
	Lemmas~\ref{lem:ssfi-pruning-cv}, \ref{lem:infinite-mb-pruning-unif-cv} and~\ref{lem:infinite-mb-to-ssfi-pruning-cv}
	then respectively prove that conditions $(ii)$, $(iii)$ and $(i)$ of Theorem~\ref{thm:billingsley-weak-cv-criterion}
	are met for $T^{(R)}$, $T^{\smash {(R)}}_{\varepsilon_{\smash k}}$,
	$\calT^{\smash {(I)}}_{\varepsilon_{\smash k}}$, $R\geq 1$, $k\geq 1$ and $\calT^{(I)}$.
	Therefore, $T^{\smash {(R)}} \Rightarrow \calT^{\smash {(I)}}$ with respect to $\upD_\GHP$.
\end{proof}

\subsection{Volume growth of infinite Markov branching trees}\label{sec:volume-growth}

We now turn to the proof of Proposition~\ref{prop:volume-growth-cv}.
Recall that if $\bfT\in\bbT$ is fixed,
then $V_\bfT$, the volume growth function of $\bfT$,
is given by
\[
	V_\bfT:\bbR_+\longrightarrow\bbR_+, \;
	R\longmapsto\mu_T(T\vert_R).
\]
Notice that $V_\bfT$ is a non-negative, non-decreasing c\`adl\`ag function.

\begin{proof}[of~Proposition~\ref{prop:volume-growth-cv}]
	Proposition~\ref{prop:extended-ghp-properties} ensures that $(\bbT,\upD_\GHP)$ is a Polish metric space.
	In light of Skorokhod's representation theorem and
	since the assumptions of Theorem~\ref{thm:scaling-limits-infinite-mb-trees} are met,
	there exist a sequence $(\tau_R)_{R\geq 1}$ of $\MB^{q,q_{\smash\infty}}_\infty$ trees
	as well as a $(\gamma,\nu,I)$-fragmentation tree with immigration $\calT^{(I)}$
	such that $(R^{-1}\tau_R, R^{-1/\gamma}\mu_{\tau_{\smash R}}) =: T^{(R)}$ \as converges to $\calT^{(I)}$.
	
	Proposition~\ref{prop:extended-ghp-properties} and Remark~\ref{nb:height-mass-ghp-continuity} ensure
	that \as, for all $t\geq 0$ such that $\mu_{\calT^{\smash {(I)}}}[\partial_t\calT^{(I)}] = 0$,
	$V_{T^{\smash {(R)}}}(t)$ converges to $V_{\calT^{\smash {(I)}}}(t)$.
	Now observe that $\mu_{\calT^{\smash {(I)}}} [\partial_t\calT^{(I)}] = 0$
	\tiff $V_{\calT^{\smash {(I)}}}$ is continuous at $t$.
	Therefore, if we prove that $V_{\calT^{\smash {(I)}}}$ is \as continuous on $\bbR_+$,
	since volume growth functions are monotone, we may use the following classical result to conclude this proof:
	\begin{adjustwidth}{\parindent}{\parindent}
		\emph{
		If $(f_n)_n$ is a sequence of monotone functions from a compact interval $I$ to $\bbR$
		such that $f_n\to f$ point-wise for some continuous function $f$,
		then $f_n\to f$ uniformly on $I$.}
	\end{adjustwidth}
	
	\smallskip
	
	Following the construction of fragmentation trees with immigration detailed in Section~\ref{sec:frag-trees-immigration},
	there exist a Poisson point process $\Sigma = \sum_{i\geq 1} \delta_{(u_i,\bfs_i)}$
	on $\bbR_+\times\Sdec$ with intensity $\D u \otimes I(\D\bfs)$
	and a family $\big[\calT_{i,j} ; i,j\geq 1\big]$ of \iid $(\gamma,\nu)$-fragmentation trees independent of $\Sigma$
	such that
	\[
		\calT^{(I)} = \bfG \, \bigg( \Big\{ \Big(u_i,
			\big\langle \big(s_{i,j}^\gamma \calT_{i,j}, s_{i,j} \mu_{\calT_{i,j}}\big) ; j\geq 1\big\rangle
		\Big) \: : \: i\geq 1\Big\} \bigg).
	\]
	With these notations, we may write $V_{\calT^{\smash {(I)}}} = \sum_{i\geq 1} \sum_{j\geq 1}
	s_{i,j} V_{\calT_{\smash {i,j}}} \big[(\,\cdot\,-u_i)_+/s_{i,j}^{\smash\gamma}\big]$.
	Furthermore, for any non-negative $K$,
	since $V_{\calT_{\smash {i,j}}} \leq 1$ for all $i,j\geq 1$,
	\[
		\textstyle
		\sum_{i\geq 1} \sum_{j\geq 1} s_{i,j} \ind_{u_i\leq K}
		= \int \ind_{u\leq K} \, \norm\bfs \, \Sigma(\D u, \D\bfs)
	\]
	which is \as finite, as already noticed.
	As a result and in light of the Weierstrass $M$-test,
	the restriction of $V_{\calT^{\smash {(I)}}}$ to the compact interval $[0,K]$
	is a series which \as converges uniformly on $[0,K]$.
	
	Proposition~1.9 in~\cite{bertoin2006fragmentationcoagulation} implies that
	the volume growth function of $(\gamma,\nu)$-fragmentation trees is \as continuous.
	In particular, with probability one, $V_{\calT_{\smash {i,j}}}$ is continuous for all $i$ and $j$.
	As a uniformly converging series of continuous functions,
	$V_{\calT^{\smash {(I)}}}\vert_{[0,K]}$ is \as continuous on $[0,K]$.
	Since this holds for any $K\geq 0$, $V_{\calT^{\smash {(I)}}}$ is \as continuous on $\bbR_+$,
	which concludes this proof.
\end{proof}

\subsection{Unary immigration measures}\label{sec:unary-immigration}

Before concluding this section, we will state a useful criterion
to prove Assumption~\hyperref[assumption:immigration]{$(\mathtt I)$}
when the limit immigration measure is unary,
\ie supported by the set $\{(s,0,0,\dots): s\geq 0\}$.

\begin{lemma}\label{lem:unary-immigration-prelim}
	Let $X$ be an integer valued random variable
	such that there exist $\gamma\in(0,1)$ and a positive constant $c$
	satisfying $n^{1+\gamma} \prob {X=n} \to c$.
	In this case, for all continuous $f:\bbR_+\to\bbR_+$ with $f(x)\leq 1\wedge x$,
	$R\, \esp {f(X/R^{1/\gamma})} \to \int_0^\infty c \, f(x) \, x^{-1-\gamma} \, \D x$
	as $R$ goes to infinity.
\end{lemma}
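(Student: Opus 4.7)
The plan is to interpret the statement as a vague convergence result for the $\sigma$-finite measures $\mu_R(\D x) := R \, \prob{X/R^{1/\gamma} \in \D x}$ on $(0,\infty)$ toward $\mu_\infty(\D x) := c \, x^{-1-\gamma} \, \D x$, tested against functions $f$ with $f(x)\leq 1\wedge x$. Since $f(0)=0$ for every such $f$, the atom at $0$ contributes nothing; and the limit $\int f \, \D\mu_\infty$ is finite because $\int_0^1 x\cdot x^{-1-\gamma}\D x<\infty$ (this uses $\gamma<1$) and $\int_1^\infty x^{-1-\gamma}\D x<\infty$ (this uses $\gamma>0$). These two integrability constraints are exactly the two places where the two-sided condition $\gamma\in(0,1)$ will be invoked.

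First I would establish vague convergence on $(0,\infty)$. Fix $0<a<b$. Writing $\mu_R([a,b]) = R\sum_{n:aR^{1/\gamma}\leq n\leq bR^{1/\gamma}} \prob{X=n}$ and using the hypothesis $n^{1+\gamma}\prob{X=n}\to c$, I substitute $u_n = n/R^{1/\gamma}$, so that $\Delta u = R^{-1/\gamma}$ and $R\cdot\prob{X=n} = R\cdot n^{-1-\gamma}(c+o(1)) = R^{1-(1+\gamma)/\gamma}u_n^{-1-\gamma}(c+o(1))$. Multiplying by $\Delta u$ gives a Riemann sum for $c\int_a^b u^{-1-\gamma}\D u$, since the exponent of $R$ collapses to $0$. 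Extending this from intervals to arbitrary continuous $f$ with compact support in $(0,\infty)$ is a standard application of uniform continuity on compact sets.

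Second, I would control the tail at infinity: for any $M>0$, since $f\leq 1$, $R\,\esp{f(X/R^{1/\gamma})\ind_{X\geq MR^{1/\gamma}}} \leq R\,\prob{X\geq MR^{1/\gamma}}$, and the hypothesis bounds the right-hand side by $2c\,M^{-\gamma}/\gamma$ for $R$ large, which tends to $0$ as $M\to\infty$. Third, I would control the tail at zero: since $f(x)\leq x$, $R\,\esp{f(X/R^{1/\gamma})\ind_{0<X\leq\varepsilon R^{1/\gamma}}} \leq R^{1-1/\gamma}\,\esp{X\ind_{X\leq\varepsilon R^{1/\gamma}}} = R^{1-1/\gamma}\sum_{n=1}^{\lfloor\varepsilon R^{1/\gamma}\rfloor} n\,\prob{X=n}$. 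Bounding $\prob{X=n}\leq (c+1)n^{-1-\gamma}$ for $n$ large and comparing $\sum n^{-\gamma}$ to an integral gives a contribution of order $\varepsilon^{1-\gamma}/(1-\gamma)$, uniformly in $R$, which tends to $0$ as $\varepsilon\to 0$.

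Combining the three steps via the usual $\varepsilon$-$M$ truncation argument—bound both $\int f\,\D\mu_R$ and $\int f\,\D\mu_\infty$ outside $[\varepsilon,M]$ by a small quantity, then use step~1 on $[\varepsilon,M]$—yields the claim. There is no serious obstacle: the Riemann-sum bookkeeping in step~1 is routine, and the crux of the argument is really just the observation that the two-sided tail integrability of $x^{-1-\gamma}$ against $1\wedge x$ is guaranteed precisely by $\gamma\in(0,1)$.
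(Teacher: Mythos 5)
Your proposal is correct and is essentially the paper's argument, presented in more detail. The paper's proof also reduces to the substitution $u_n = n/R^{1/\gamma}$, $\Delta u = R^{-1/\gamma}$, sandwiches $\prob{X=n}$ between $(c\pm\varepsilon)n^{-1-\gamma}$ for $n$ beyond a fixed $N$, handles the $n\leq N$ contribution by $R\sum_{n\leq N} n/R^{1/\gamma} = R^{1-1/\gamma}\sum_{n\leq N} n \to 0$ (exactly your $f\leq x$ control near zero, keyed to $\gamma<1$), and then invokes convergence of the Riemann sum $R\sum_{n>N} n^{-1-\gamma}f(n/R^{1/\gamma})$ to the improper integral $\int_0^\infty f(x)x^{-1-\gamma}\D x$. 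The only difference is that the paper collapses all of your steps 1--3 into the single phrase ``as a Riemann sum, \ldots converges,'' whereas you explicitly separate out the tail at infinity (via $f\leq 1$ and $R\prob{X\geq MR^{1/\gamma}} = O(M^{-\gamma})$) and the tail at zero (via $f\leq x$ and $R^{1-1/\gamma}\sum_{n\leq \varepsilon R^{1/\gamma}} n\prob{X=n} = O(\varepsilon^{1-\gamma})$), which is the honest justification that the improper Riemann sum does converge. So your write-up fills in a gap that the paper leaves to the reader, but the route is the same.
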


\begin{proof}
	By assumption, for all $\varepsilon>0$,
	there exists an integer $N$ such that for all $n\geq N$,
	$\abs [1] {n^{1+\gamma} \prob {X=n} - c} < \varepsilon$.
	As a result
	\[
		R \sum_{n>N} (c-\varepsilon) \frac 1 {n^{1+\gamma}} f\bigg(\frac n {R^{1/\gamma}}\bigg)
		\leq R \, \esp [3] {f\bigg(\frac X {R^{1/\gamma}}\bigg)}
		\leq R \smash {\sum_{n=1}^N} \frac n {R^{1/\gamma}}
			+ R \sum_{n>N} (c+\varepsilon) \frac 1 {n^{1+\gamma}} f\bigg(\frac n {R^{1/\gamma}}\bigg).
	\]
	As a Riemann sum, $R \, \sum_{n>N} n^{-1-\gamma} f(n/R^{1/\gamma})$
	converges toward $\int_0^\infty f(x) \, x^{-1-\gamma} \, \D x$
	as $R$ goes to infinity.
	The desired result then follows.
\end{proof}

For each $\gamma\in(0,1)$, note $I^{\operatorname {un}}_{\smash\gamma}$
the measure defined by
\[
	\textstyle
	\int_{\Sdec} f \, \D I^{\operatorname {un}}_\gamma
	= \int_0^\infty f(x,0,0,\dots) \, x^{-1-\gamma} \, \D x
\]
for any measurable $f:\Sdec\to\bbR_+$.
We have $\smash\int 1\wedge\norm\bfs \, I^{\operatorname {un}}_{\smash\gamma}(\D\bfs) < \infty$
therefore $I^{\operatorname {un}}_{\smash\gamma}$ is an immigration measure.
Observe that $I_B = (2/\pi)^{1/2} I^{\operatorname {un}}_{1/2}$
where $I_B$ denotes the Brownian immigration measure from Section~\ref{sec:frag-trees-immigration}.

\begin{proposition}\label{prop:unary-immigration-criterion}
	Let $\Lambda$ be a random finite partition such that as $n\to\infty$,
	$n^{1+\gamma} \prob {\norm\Lambda=n} \to c$ for some $\gamma\in(0,1)$, $c>0$
	and $n^\gamma \prob {\Lambda_1\geq n}$ converges to $c/\gamma$.
	For all $R\geq 1$, let $q^{(R)}$ be the distribution of $\Lambda/R^{1/\gamma}$.
	Then, $R \, (1\wedge\norm\bfs) \, q^{(R)}(\D\bfs)$ converges weakly
	to $(1\wedge\norm\bfs) \, c \, I^{\operatorname {un}}_\gamma(\D\bfs)$ as $R\to\infty$
	in the sense of finite measures on $\Sdec$.
\end{proposition}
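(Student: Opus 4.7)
My plan is to verify the characterisation of weak convergence of finite measures given by Lemma~\ref{lem:portmanteau-extension}: it suffices to prove that, for every bounded Lipschitz function $F : \Sdec \to \bbR$ with Lipschitz constant $L_F$ and sup-norm $C$, writing $h(\bfs) := F(\bfs)\,(1\wedge\norm\bfs)$,
\[
	R\,\esp [1] {h(\Lambda/R^{1/\gamma})}
	\xrightarrow [R\to\infty] {}
	c\int_0^\infty F(x,0,0,\dots)\,(1\wedge x)\,x^{-1-\gamma}\,\D x.
\]
The heuristic is that the rescaled mass asymptotically concentrates on \emph{essentially unary} partitions, namely those for which $s_1$ is close to $\norm\bfs$, and on such partitions $h$ is well approximated by $\tilde h(\bfs) := F(\norm\bfs,0,0,\dots)\,(1\wedge\norm\bfs)$, which depends only on $\norm\bfs$ and is thus amenable to Lemma~\ref{lem:unary-immigration-prelim}.

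The key technical step, which I expect to be the main obstacle, is the tail estimate
\[
	\prob [1] {\norm\Lambda \geq n,\, \Lambda_1 \leq (1-\eta)\,\norm\Lambda} = o(n^{-\gamma})
	\quad\text{for every fixed }\eta\in(0,1).
\]
I would first observe that, for every $\beta\in(0,1)$ and $m\geq 1$, the inclusion $\{\Lambda_1\geq m\}\subset\{\norm\Lambda\geq m,\,\Lambda_1 > \beta m\}$ (valid because $\beta<1$) yields $\prob{\norm\Lambda\geq m,\,\Lambda_1\leq\beta m}\leq\prob{\norm\Lambda\geq m}-\prob{\Lambda_1\geq m}$, which the two hypotheses (both giving tails asymptotic to $(c/\gamma)\,m^{-\gamma}$) identify as $o(m^{-\gamma})$ with a rate independent of $\beta$. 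A dyadic decomposition over the level sets $\{n\rho^k\leq\norm\Lambda<n\rho^{k+1}\}_{k\geq 0}$ with ratio $\rho\in(1,\,1/(1-\eta))$, chosen so that $\rho(1-\eta)<1$, converts the condition $\Lambda_1\leq(1-\eta)\norm\Lambda$ into $\Lambda_1\leq\rho(1-\eta)\cdot n\rho^k$ on the $k^{\text{th}}$ level; summing the resulting geometric series with ratio $\rho^{-\gamma}<1$ yields the announced $o(n^{-\gamma})$ bound.

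With this estimate in hand, I would split $R\,\esp{h(\Lambda/R^{1/\gamma})}$ into three parts indexed by parameters $\eta\in(0,1)$ and $M>1$: the contribution from $A_\eta^c := \{\Lambda_1<(1-\eta)\norm\Lambda\}$, that from $A_\eta\cap\{\norm\Lambda > MR^{1/\gamma}\}$, and that from $A_\eta\cap\{\norm\Lambda\leq MR^{1/\gamma}\}$. On $A_\eta^c$, the crude bound $|h|\leq C(1\wedge\norm\bfs)$ combined with the previous tail estimate kills the $\{\norm\Lambda\geq R^{1/\gamma}\}$ part (as $C\,R\cdot o(R^{-1})=o(1)$), while the remaining piece $C\,R^{1-1/\gamma}\esp{\norm\Lambda\,\ind_{\norm\Lambda<R^{1/\gamma},\,\Lambda_1<(1-\eta)\norm\Lambda}}$ is controlled via Fubini ($\esp{X\ind_{X<N,\,B}}=\int_0^N\prob{X>t,\,B}\,\D t$), splitting the integration range $[0,R^{1/\gamma}]$ into $[0,T]$ and $[T,R^{1/\gamma}]$, and using $\gamma<1$ to ensure $R^{1-1/\gamma}\to 0$ for fixed $T$. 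The contribution from $A_\eta\cap\{\norm\Lambda > MR^{1/\gamma}\}$ is bounded by $C\,R\,\prob{\norm\Lambda > MR^{1/\gamma}}=O(M^{-\gamma})$ via the asymptotics of $\prob{\norm\Lambda\geq n}$.

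On the main piece $A_\eta\cap\{\norm\Lambda\leq MR^{1/\gamma}\}$, the Lipschitz property of $F$ in the $\ell_1$ metric combined with the identity $\norm{\bfs-(\norm\bfs,0,0,\dots)}=2(\norm\bfs-s_1)$ yields $|h(\bfs)-\tilde h(\bfs)|\leq 2L_F(\norm\bfs-s_1)(1\wedge\norm\bfs)\leq 2L_F\,\eta M\,(1\wedge\norm\bfs)$ on this event, so the approximation error contributes at most $2L_F\,\eta M\,R\esp{1\wedge L}=O(\eta M)$, using that $\sup_R R\esp{1\wedge L}<\infty$ (itself a direct consequence of Lemma~\ref{lem:unary-immigration-prelim} applied to $f(x)=1\wedge x$). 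Finally, $R\esp{\tilde h(\Lambda/R^{1/\gamma})}$ depends only on $\norm\Lambda$, and Lemma~\ref{lem:unary-immigration-prelim} (applied separately to the positive and negative parts of $x\mapsto F(x,0,0,\dots)(1\wedge x)/C$, which are bounded by $1\wedge x$) gives convergence to $c\int_0^\infty F(x,0,0,\dots)(1\wedge x)\,x^{-1-\gamma}\,\D x$. Collecting the three contributions, passing to the limit $R\to\infty$ first, then sending $M\to\infty$ and $\eta\to 0$ jointly so that $\eta M\to 0$ (e.g.\ with $M=\eta^{-1/2}$), concludes the proof.
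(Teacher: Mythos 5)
Your proof is correct, but it takes a genuinely different route from the paper. The paper avoids all truncations: after showing that both $R\,\esp[1]{1\wedge(\norm\Lambda/R^{1/\gamma})}$ and $R\,\esp[1]{1\wedge(\Lambda_1/R^{1/\gamma})}$ converge to the same limit $C_\gamma:=c/[\gamma(1-\gamma)]$ (the first from Lemma~\ref{lem:unary-immigration-prelim}, the second via Fatou's lemma and $\Lambda_1\leq\norm\Lambda$), it invokes the subadditivity $a\wedge x+b\wedge y\leq(a+b)\wedge(x+y)$ in the form $1\wedge(\norm\Lambda/R^{1/\gamma})\geq(1-\varepsilon)\wedge(\Lambda_1/R^{1/\gamma})+\varepsilon\wedge\big((\norm\Lambda-\Lambda_1)/R^{1/\gamma}\big)$, rearranges, and optimises over $\varepsilon$ to obtain $R\,\esp[1]{1\wedge\big((\norm\Lambda-\Lambda_1)/R^{1/\gamma}\big)}\to 0$ in one stroke (the infimum of $C_\gamma[1-(1-\varepsilon)^{1-\gamma}]/\varepsilon^{1-\gamma}$ over $\varepsilon\in(0,1)$ vanishes because $\gamma>0$). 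The Lipschitz bound $|f(\bfx)-f(\bfy)|\leq 1\wedge(K\norm{\bfx-\bfy})$ then transfers the computation from $f(\Lambda/R^{1/\gamma})$ to $g(\norm\Lambda/R^{1/\gamma})$ without localising on $\{\norm\Lambda\leq MR^{1/\gamma}\}$ or $\{\Lambda_1\geq(1-\eta)\norm\Lambda\}$. Your argument, by contrast, makes the concentration mechanism explicit: the tail bound $\prob[1]{\norm\Lambda\geq n,\ \Lambda_1\leq(1-\eta)\norm\Lambda}=o(n^{-\gamma})$, obtained by subtracting the two tail asymptotics and summing a geometric series over dyadic annuli, is a quantitative statement of the heuristic that the mass lives on essentially unary partitions — something the paper never isolates. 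The cost is a two-parameter truncation $(\eta,M)$ with a nested limit, plus a Fubini estimate for the regime $\norm\Lambda<R^{1/\gamma}$ (which, as you note, uses $\gamma<1$). Both arguments are sound; the paper's ``$1\wedge$'' subadditivity trick absorbs in a few lines the work that your dyadic tail bound and truncations do by hand, while yours gives a more transparent probabilistic picture of why the limit is unary.
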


\begin{proof}
	The main idea for this proof is to show that the tail of $\Lambda$
	is asymptotically negligible when its first component is large, or more precisely,
	that $R \, \esp [1] {1\wedge \big([\norm\Lambda-\Lambda_1]\big/R^{1/\gamma}\big)}$
	converges to $0$ when $R$ goes to infinity.
	Since $\norm\Lambda$ fulfils the assumptions of Lemma~\ref{lem:unary-immigration-prelim},
	\[
		\textstyle
		R \, \esp [1] {1\wedge (\norm\Lambda/R^{1/\gamma})}
		\xrightarrow [R\to\infty] {}
		c\,\int 1\wedge\norm\bfs \, I^{\operatorname {un}}_{\smash\gamma}(\D\bfs)
		= c / [\gamma \,(1-\gamma)]
		=: C_\gamma
	\]
	Furthermore, $\Lambda_1\leq\norm\Lambda$, so we get that
	$\limsup_{R\to\infty} R \, \esp [1] {1\wedge (\Lambda_1/R^{1/\gamma})} \leq C_\gamma$.
	In light of Fatou's lemma and the assumption on the probability tail of $\Lambda_1$,
	\begin{align*}
		\liminf_{R\to\infty} \:
			R \, \esp [3] {1\wedge \frac {\Lambda_1} {R^{1/\gamma}}}
		& = \liminf_{R\to\infty} \:
			{\textstyle\int_0^1} R \, \prob [1] {\Lambda_1 \geq R^{1/\gamma} t} \, \D t
		\geq {\textstyle\int_0^1} c\gamma^{-1} t^{-\gamma} \, \D t
		= C_\gamma.
	\end{align*}
	In summary, when $R\to\infty$,
	$R \, \esp [1] {1\wedge (\Lambda_1/R^{1/\gamma})} \to C_\gamma$.
	
	\smallskip
	
	Now observe that if $a$, $b$, $x$ and $y$ are four real numbers,
	then $a\wedge x + b\wedge y \leq (a+b)\wedge(x+y)$.
	In particular, for all $\varepsilon\in(0,1)$,
	$1\wedge (\norm\Lambda/R^{1/\gamma}) \geq (1-\varepsilon)\wedge (\Lambda_1/R^{1/\gamma})
	+ \varepsilon\wedge\big([\norm\Lambda-\Lambda_1]\big/R^{1/\gamma}\big)$.
	Moreover,
	\begin{align*}
		\lim_{R\to\infty} \: R \, \esp [3] {(1-\varepsilon)\wedge
			\frac {\Lambda_1} {R^{1/\gamma}}}
		& = \lim_{R\to\infty} \: (1-\varepsilon) \, R \, \esp [3] {1\wedge
			\frac {\Lambda_1} {[(1-\varepsilon)^\gamma R]^{1/\gamma}}}\\
		& = (1-\varepsilon)^{1-\gamma} \bigg( \lim_{S\to\infty} \: S \, \esp [3] {1\wedge
			\frac {\Lambda_1} {S^{1/\gamma}}}\bigg)
		= (1-\varepsilon)^{1-\gamma} \, C_\gamma
	\end{align*}
	where we have taken $S = (1-\varepsilon)^\gamma \, R$.
	Similarly,
	\begin{align*}
		& \limsup_{R\to\infty} \: R \, \esp [3] {\varepsilon\wedge
			\frac {\norm\Lambda - \Lambda_1} {R^{1/\gamma}}}
		= \varepsilon^{1-\gamma} \: \bigg(\limsup_{S\to\infty} \: S \,
			\esp [3] {1 \wedge \frac {\norm\Lambda - \Lambda_1} {S^{1/\gamma}}}\bigg).
	\end{align*}
	Therefore,
	\begin{align*}
		\limsup_{R\to\infty} \: R \, \esp [3] {1\wedge
			\frac {\norm\Lambda - \Lambda_1} {R^{1/\gamma}}}
		& \leq \inf_{\varepsilon\in(0,1)}
			\frac {C_\gamma - (1-\varepsilon)^{1-\gamma} \, C_\gamma} {\varepsilon^{1-\gamma}}
		= 0.
	\end{align*}
	
	\smallskip
	
	Let $f:\Sdec\to\bbR_+$ be a Lipschitz-continuous function bounded by $1$
	and set $g(x) := f(x,0,0,\dots)$ for all $x\geq 0$.
	There exists a constant $K\geq 0$ such that for all $\bfx$ and $\bfy$ in $\Sdec$,
	$\abs {f(\bfx) - f(\bfy)} \leq 1 \wedge ( K\, \norm {\bfx-\bfy} )$.
	Therefore
	\[
		\abs [4] {
		R \, \esp [3] {\bigg(1\wedge \frac {\norm\Lambda} {R^{1/\gamma}}\bigg)
			\, f\bigg(\frac \Lambda {R^{1/\gamma}}\bigg)
		- \bigg(1\wedge \frac {\norm\Lambda} {R^{1/\gamma}}\bigg)
			\, g\bigg(\frac {\norm\Lambda} {R^{1/\gamma}}\bigg)} }
		\leq R \, \esp [3] { 1 \wedge \frac {2K\,(\norm\Lambda - \Lambda_1)} {R^{1/\gamma}}}
		\xrightarrow [R\to\infty] {} 0.
	\]
	Used conjointly with our assumption on $\norm\Lambda$ and Lemma~\ref{lem:unary-immigration-prelim},
	this ensures that $R\, \esp [1] {(1\wedge \norm\Lambda/R^{1/\gamma}) \, f(\Lambda)}$
	converges to $\int f(\bfs) \, I^{\operatorname {un}}_{\smash\gamma}(\D\bfs)$ as $R\to\infty$.
	Lemma~\ref{lem:portmanteau-extension} concludes this proof.
\end{proof}

\section{Applications}\label{sec:applications}

In this section, we will develop applications of our three main results
(Theorems \ref{thm:local-limits-markov-branching}, \ref{thm:scaling-limits-infinite-mb-trees}
and Proposition~\ref{prop:volume-growth-cv})
to various models of random trees which satisfy the Markov branching property.
With our unified approach, we will recover known results and get new ones.

\subsection{Galton-Watson trees}

Let $\xi$ be a probability measure on $\bbZ_+$ with mean $1$ and $\xi(1) < 1$ (\emph{critical regime}).
We will be interested in \emph{unordered} Galton-Watson trees with offspring ditribution $\xi$,
the law of which we will note $\GW_{\smash\xi}$.
For any finite tree $\ttt$,
\[
	\GW_{\smash\xi}(\ttt)
	:= \sum_{\ttt'\in\ttT^\ord \, : \, \ttt'\sim\ttt} \: \prod_{u\in\ttt'} \xi\big[c_u(\ttt')\big].
\]

\smallskip

For each positive integer $n$ such that $\GW_{\smash\xi}(\ttT_n) > 0$,
let $\GW_{\smash\xi}^n$ be the measure $\GW_{\smash\xi}$ conditioned on the set of trees with $n$ vertices.
Similarly, if $n$ satisfies $\GW_{\smash\xi}(\ttT_{\smash\calL,n}) > 0$,
define $\GW_{\smash\xi}^{\smash\calL,n}$ as $\GW_{\smash\xi}$ conditioned on the
set of trees with $n$ leaves.
Moreover, let $d := \gcd\,\{n-1;\GW_{\smash\xi}(\ttT_n) > 0\}$
and $d_\calL := \gcd\,\{n-1;\GW_{\smash\xi}(\ttT_{\calL,n}) > 0\}$.

\paragraph{Kesten's tree}
Let $\smash{\hat\xi}$ be the \emph{size-biased distribution} of $\xi$,
that is $\smash{\hat\xi}(k) = k\xi(k)$ for all $k\geq 0$.
By assumption, the mean of $\xi$ is $1$, so $\smash{\hat\xi}$ is a probability measure.
We define $\GW_{\smash\xi}^\infty$ as the distribution of \emph{Kesten's tree}
which is obtained as follows:
\begin{itemize}
	\item Let $(X_n)_{n\geq 0}$ be a sequence of \iid random variables
	such that  $X_n+1$ follows $\hat\xi$,
	\item Independently of this sequence, let
	$(T_{n,k} ; n\geq 0, k\geq 1)$ be \iid $\GW_{\smash\xi}$ trees,
	\item For each $n\geq 0$, let $T_n := \lBrack T_{n,1}, \dots, T_{n,\smash {X_n}} \rBrack$,
	\item For all $n\geq 0$, graft $T_n$ on an infinite branch at height $n$ respectively,
	\ie set $T := \ttb_\infty \bigotimes_{n\geq 0} (\ttv_n, T_n)$
	and denote its distribution by $\GW_{\smash\xi}^\infty$.
\end{itemize}

\begin{remark}
	These infinite trees were first indirectly introduced in~\cite{kesten1986subdiffusive} by Kesten
	who studied the genealogy of Galton-Watson processes conditioned to hit $0$ after a large time.
	This result entails that if $T$ is a $\GW_{\smash\xi}$ tree, conditionally on $\abs T \geq n$,
	$T$ converges in distribution to $\GW_{\smash\xi}^\infty$ as $n\to\infty$.
	Kesten's tree can thus be, in a way, considered as a $\GW_{\smash\xi}$
	tree conditioned to have infinite height.
	
	This tree also appears as the local limit of conditioned critical Galton-Watson trees
	under various types of conditionings, see~\cite{abraham2014gwkesten}.
	In particular, it was first proved in~\cite{kennedy1975gwprocess} (in terms of Galton-Watson processes)
	and in~\cite{aldous1998treevaluedmarkovchains} (in terms of trees)
	that if $\xi$ is critical and has finite variance, then $\GW_{\smash\xi}^n \Rightarrow \GW_{\smash\xi}^\infty$.
	In~\cite{curien2014noncrossingplane},
	it was shown that under the same assumptions, $\GW_{\smash\xi}^{\smash\calL,n} \Rightarrow \GW_{\smash\xi}^\infty$.
	In both cases, the finite variance assumption may be dropped,
	see~\cite{janson2012conditionedgw}
	and~\cite{abraham2014gwkesten}.
	
	The local limits of Galton-Watson trees conditioned on their size
	with offspring distribution with means less than $1$
	were studied in~\cite{jonsson2011condensation},~\cite{janson2012conditionedgw} and~\cite{abraham2014condensation}.
	See also~\cite{stephenson2014localmultitypegw} for the study of the local limits of multi-type
	critical Galton-Watson trees.
\end{remark}

\smallskip

Using Theorem~\ref{thm:local-limits-markov-branching},
we will recover the following proposition in Section~\ref{sec:gw-local-limits}.

\begin{proposition}\label{prop:gw-local-limit}
	In the sense of the $\upd_\loc$ topology,
	$\GW_{\smash\xi}^n$ and $\GW_{\smash\xi}^{\smash\calL,n}$
	both converge weakly towards~$\GW_{\smash\xi}^\infty$.
\end{proposition}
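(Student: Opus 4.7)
The plan is to invoke Corollary~\ref{cor:local-limits-mb-one-spine} by identifying $\GW_{\smash\xi}^n$ and $\GW_{\smash\xi}^\infty$ as Markov branching trees with compatible first-split distributions, and then checking the required convergence. The classical branching property of Galton-Watson trees ensures that conditionally on the first split $\Lambda(T)=(\lambda_1,\dots,\lambda_p)$, the subtrees above the root of $T\sim\GW_{\smash\xi}$ are independent, with the $i^{\text{th}}$ largest distributed as $\GW_{\smash\xi}$ conditioned on having $\lambda_i$ vertices. This property is preserved by further conditioning on $\{\#T=n\}$, so $\GW_{\smash\xi}^n=\MB^q_n$ for the Markov branching family indexed by $\calN:=\{k\geq 1:Z_k>0\}$, $Z_k:=\GW_{\smash\xi}(\ttT_k)$, with $q=(q_{n-1})_{n\in\calN}$ being the law of the first split of $\GW_{\smash\xi}^n$. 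Summing over orderings of the children of the root yields the explicit formula
\[
	q_{n-1}(\lambda)
	= \frac{\xi(p(\lambda))}{Z_n}\binom{p(\lambda)}{m_1(\lambda),m_2(\lambda),\dots}\prod_{i=1}^{p(\lambda)}Z_{\lambda_i},
	\qquad\lambda\in\calP_{n-1}.
\]

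Kesten's tree $\GW_{\smash\xi}^\infty$ naturally fits into the framework of Section~\ref{sec:infinite-mb-trees} as an infinite Markov branching tree with a unique infinite spine: at each spine vertex $\ttv_k$, the total number of children $X_k+1$ follows $\hat\xi$, one child carries the spine forward, and the remaining $X_k$ are roots of independent $\GW_{\smash\xi}$ trees. This identifies $\GW_{\smash\xi}^\infty=\MB^{q,q_{\smash\infty}}_\infty$ with the same $q$ as above and with $q_\infty$ concentrated on partitions $(\infty,\lambda)$, $\lambda\in\calP_{<\infty}$, satisfying
\[
	q_\infty(\infty,\lambda)
	= \hat\xi(p(\lambda)+1)\binom{p(\lambda)}{m_1(\lambda),m_2(\lambda),\dots}\prod_{i=1}^{p(\lambda)}Z_{\lambda_i}.
\]
In particular $q_\infty(m_\infty=1)=1$, so Corollary~\ref{cor:local-limits-mb-one-spine} applies as soon as we verify the pointwise convergence $q_{n-1}(n-1-\norm\lambda,\lambda)\to q_\infty(\infty,\lambda)$ for every $\lambda\in\calP_{<\infty}$.

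For $n$ large enough in $\calN$, the integer $n-1-\norm\lambda$ strictly exceeds $\lambda_1$ and thus appears as a single new part in $(n-1-\norm\lambda,\lambda)$. Combining the two formulas above with the identity $(p+1)\xi(p+1)=\hat\xi(p+1)$ yields
\[
	q_{n-1}(n-1-\norm\lambda,\lambda)
	= q_\infty(\infty,\lambda)\cdot\frac{Z_{n-1-\norm\lambda}}{Z_n}.
\]
Thus the hypothesis of Corollary~\ref{cor:local-limits-mb-one-spine} reduces to the asymptotic $Z_{n-1-\norm\lambda}/Z_n\to 1$ along $\calN$. This is the classical subexponential behaviour of sizes of critical Galton-Watson trees, which follows from the Otter--Dwass formula $Z_n=n^{-1}\,\mathbb P(Y_1+\cdots+Y_n=-1)$, with $(Y_i)$ iid distributed as $X-1$ for $X\sim\xi$, combined with a local limit theorem for the resulting mean-zero random walk. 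The main obstacle is precisely this asymptotic, since its proof requires care when $\xi$ has infinite variance or heavy tails; however, it is by now well-documented in the literature on conditioned Galton-Watson trees (see for instance~\cite{janson2012conditionedgw,abraham2014gwkesten}).

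The argument for $\GW_{\smash\xi}^{\smash\calL,n}$ is entirely analogous, the only notable difference being that the construction $\MB^{\smash\calL,q}$ naturally accommodates a ``geometric branch'' at the root encoding the chains of single-child vertices which occur in $\GW_{\smash\xi}$ trees with probability $\xi(1)$. Everything goes through after replacing $Z_k$ by $Z_k^{\smash\calL}:=\GW_{\smash\xi}(\ttT_{\smash\calL,k})$, and the conclusion follows from the analogous asymptotic $Z_{n-\norm\lambda}^{\smash\calL}/Z_n^{\smash\calL}\to 1$ along $\{k\geq 1:Z_k^{\smash\calL}>0\}$.
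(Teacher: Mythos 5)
Your proof is correct and takes essentially the same route as the paper: identify the Markov branching data $(q_{n-1})$ and $q_\infty$ for conditioned GW trees and for Kesten's tree, verify the pointwise convergence $q_{n-1}(n-1-\norm\lambda,\lambda)\to q_\infty(\infty,\lambda)$ by reducing to the ratio limit $\prob{\#T = n-c}/\prob{\#T = n}\to 1$ (which the paper imports as Lemma~\ref{lem:gw-prob-size-ratio} from Abraham--Delmas), and invoke Corollary~\ref{cor:local-limits-mb-one-spine}. Your organization via the exact identity $q_{n-1}(n-1-\norm\lambda,\lambda)=q_\infty(\infty,\lambda)\cdot Z_{n-1-\norm\lambda}/Z_n$ is a marginally cleaner way to package the same limit computation the paper carries out term by term; the periodicity issue (parts $\lambda_i\notin\calN$ force both sides to vanish) is handled implicitly by your formulas since $Z_{\lambda_i}=0$ then, matching the paper's explicit case split on $d\nmid\lambda_i-1$.
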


Afterwards, we will study scaling limits of Kesten's tree
in the spirit of Theorem~\ref{thm:scaling-limits-infinite-mb-trees}.
Recall the descriptions of the immigration Brownian tree
and $\alpha$-stable immigration L\'evy trees
from Section~\ref{sec:frag-trees-immigration}.

\begin{proposition}\label{prop:kesten-scaling-limit}
	Let $T$ be a tree with distribution $\GW_{\smash\xi}^\infty$
	and define $\mu_T := \sum_{u\in T} \delta_u$
	and $\mu_T^{\smash\calL} := \sum_{u\in \smash\calL(T)} \delta_u$
	the counting measures on the set of its vertices and leaves respectively.
	
	\smallskip
	
	\noindent$(i)$\;\;\emph{Finite variance:}\;\;
	Suppose $\xi$ has finite variance $\sigma^2$ and that $d=1$.
	Then, with respect to the $\upD_\GHP$ topology,
	\[
		\bigg( \frac T R, \frac {\mu_T} {R^2}\bigg)
		\xrightarrow [R\to\infty] {(\upd)}
		\bigg( \calT_B, \frac {\sigma^2} 4 \mu_B \bigg)
	\]
	where $(\calT_B,\mu_B)$ is the immigration Brownian tree.
	
	\smallskip
	
	\noindent$(i')$\;\;
	If $\xi$ has finite variance $\sigma^2$ and if $d_\calL = 1$,
	then
	\[
		\bigg( \frac T R, \frac {\mu_T^\calL} {R^2}\bigg)
		\xrightarrow [R\to\infty] {(\upd)}
		\bigg( \calT_B, \frac {\sigma^2 \, \xi(0)} 4 \mu_B \bigg).
	\]
	
	\smallskip
	
	\noindent$(ii)$\;\;\emph{Stable case:}\;\;
	Suppose that $\xi(n)\sim c \, n^{-1-\alpha}$ as $n\to\infty$
	for some positive constant $c$ and $\alpha\in(1,2)$.
	Then,
	\[
		\bigg( \frac T R, \frac {\mu_T} {R^{\alpha/(\alpha-1)}}\bigg)
		\xrightarrow [R\to\infty] {(\upd)}
		\big( \calT_\alpha, (ck_\alpha)^{1/(\alpha-1)} \mu_\alpha \big)
	\]
	where $(\calT_\alpha,\mu_\alpha)$ is the $\alpha$-stable immigration L\'evy tree
	and $k_\alpha = \Gamma(2-\alpha)/[\alpha\,(\alpha-1)]$.
\end{proposition}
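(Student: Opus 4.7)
The plan is to realise Kesten's tree $\GW^\infty_\xi$ as an infinite Markov branching tree with a unique infinite spine in the sense of Section~\ref{sec:infinite-mb-trees}, and then apply Theorem~\ref{thm:scaling-limits-infinite-mb-trees}. From the construction, at any vertex $\ttv_n$ of the spine, the finite sub-trees grafted there are precisely the $X_n$ i.i.d.\ $\GW_\xi$ trees $(T_{n,k})_{1\leq k\leq X_{\smash n}}$, where $X_n+1$ has the size-biased law $\hat\xi$. Consequently, the first-split law $q_*$ on $\calP_{<\infty}$ is the distribution of the decreasing rearrangement of the sizes $(\abs {T_{0,k}})_{1\leq k\leq X_{\smash 0}}$, and the ``finite'' first-split distributions $(q_{n-1})_n$ are those of the Markov branching family $\MB^q$ corresponding to the laws $\GW_\xi^n$ (which are Markov branching by the branching property of Galton--Watson trees, conditioned on their total progeny).

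Next I would verify Assumption~\hyperref[assumption:scaling]{$(\mathtt S)$} via Theorem~\ref{thm:haas-scaling-mb}. For case~(i), Aldous' invariance principle states that $T_n$ under $\GW_\xi^n$, rescaled by $\sigma/(2\sqrt n)$, converges to $\calT_B$; this forces $\gamma=1/2$ and $\nu = (\sigma/2)\,\nu_B$ in~\hyperref[assumption:scaling]{$(\mathtt S)$}. For case~(ii), Duquesne's scaling limit theorem for stable GW trees yields $\gamma=1-1/\alpha$ and a measure proportional to $\nu_\alpha$, the constant being fixed by the tail $\xi(n)\sim c\,n^{-1-\alpha}$. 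Case~(i') is identical, but applied to $\MB^{\smash\calL,q}$ trees using Theorem~\ref{thm:haas-scaling-mb} in its leaf-counting form and the identity $\#_\calL T_n \sim \xi(0)\,n$ (in law) by the law of large numbers for the number of leaves.

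The verification of Assumption~\hyperref[assumption:immigration]{$(\mathtt I)$} is the main technical step, which I would handle through Proposition~\ref{prop:unary-immigration-criterion}. Here the key input is the asymptotic behaviour of the size of a critical $\GW_\xi$ tree: by the Otter--Dwass formula $\prob{\abs {T_\GW}=n} = n^{-1}\xi^{*n}(n-1)$, combined with the local central limit theorem in case~(i) or the $\alpha$-stable local limit theorem in case~(ii), one obtains $\prob{\abs {T_\GW}=n} \sim \kappa\,n^{-1-\gamma}$ for an explicit $\kappa$ (with $\gamma=1/2$, $\kappa = (\sigma\sqrt{2\pi})^{-1}$ in case~(i); with $\gamma=1-1/\alpha$ and $\kappa$ a function of $c$, $\alpha$ in case~(ii)). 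Because $\esp{X_0}=\esp{\hat\xi}-1$ is finite (equal to $\sigma^2$ in the finite variance case), the tail of $\norm\Lambda$ and of $\Lambda_1$ are both asymptotically $\esp{X_0}\kappa$ times the tail of a single $\GW_\xi$ tree, which gives the two asymptotic conditions of Proposition~\ref{prop:unary-immigration-criterion}. This identifies the limit immigration measure as a multiple of $I^{\operatorname{un}}_\gamma$, which in case~(i) is $(\sigma/2) I_B$ and in case~(ii) a multiple of $I^{(\alpha)}$.

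Finally I would apply Theorem~\ref{thm:scaling-limits-infinite-mb-trees} and identify the limit object via the rescaling rule of Remark~\ref{nb:ssfi-rescaling}: if $(\calT,\mu)$ is a $(\gamma,\nu,I)$-fragmentation tree with immigration and $c>0$, then $(\calT,c\mu) \stackrel d= \scrT^{c^\gamma I}_{\gamma,c^\gamma\nu}$. In case~(i), setting $c^\gamma=\sigma/2$ (i.e.\ $c=\sigma^2/4$) transforms $(\calT_B,I_B)$ into precisely the limit pair $((\sigma/2)\nu_B, (\sigma/2) I_B)$ produced by $(\mathtt S)$ and $(\mathtt I)$, giving $(\calT_B, (\sigma^2/4)\mu_B)$ as the scaling limit. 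Case~(i') proceeds identically after replacing $\mu_T$ by $\mu_T^\calL$ and using the leaf version of Theorem~\ref{thm:scaling-limits-infinite-mb-trees}, where the $\xi(0)$ factor comes from the leaf density. Case~(ii) is analogous, with the constants dictated by $c$, $\alpha$, $k_\alpha$. The main obstacle will be bookkeeping these multiplicative constants, in particular cross-checking that the factor $(c k_\alpha)^{1/(\alpha-1)}$ in~(ii) is correctly produced by combining the Otter--Dwass computation with the $c^{1/\gamma}$ rescaling of the measure.
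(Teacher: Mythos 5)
Your plan for parts~(i) and~(i$'$) is essentially the paper's: identify Kesten's tree as $\MB^{q,q_{\smash\infty}}_\infty$, check~\hyperref[assumption:scaling]{$(\mathtt S)$} via the known Haas--Miermont/Rizzolo scaling-limit results, check~\hyperref[assumption:immigration]{$(\mathtt I)$} via Proposition~\ref{prop:unary-immigration-criterion} by computing the tails of $\norm\Lambda$ and $\Lambda_1$ with Otter--Dwass and a local CLT, and finally unravel the constants with Remark~\ref{nb:ssfi-rescaling}. The bookkeeping in~(i) ($c^{1/2}=\sigma/2$, hence $(\calT_B,(\sigma^2/4)\mu_B)$) is correct, and the idea for (i$'$) of transferring the computation to $\#_\calL T$ and a $\xi(0)$-dependent variance matches Section~\ref*{sec:kesten-scaling-variance}.

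Part~(ii) of your proposal contains a genuine gap: you propose to route the stable case through Proposition~\ref{prop:unary-immigration-criterion} as well, claiming the resulting limit is ``a multiple of $I^{\operatorname{un}}_\gamma$'' and then ``a multiple of $I^{(\alpha)}$.'' This cannot work. The measure $I^{(\alpha)}$ is supported by sequences $t^\alpha\Delta$ where $\Delta$ is the full jump sequence of a $1/\alpha$-stable subordinator; in particular it charges sequences with infinitely many strictly positive parts, whereas $I^{\operatorname{un}}_\gamma$ is by definition supported by $\{(s,0,0,\dots):s\geq 0\}$. So $I^{(\alpha)}$ is not unary and the unary criterion is structurally unavailable here. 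A second, related symptom: your tail argument rests on $\esp{X_0}=\esp{\hat\xi}-1<\infty$, but under $\xi(n)\sim c\,n^{-1-\alpha}$ with $\alpha\in(1,2)$ the offspring law has infinite second moment, so $\esp{\hat\xi}=\esp{\xi^2}=\infty$ and $\esp{X_0}=\infty$. Both failures reflect the same feature of the stable regime: the partition $\Lambda=(\#T_1,\dots,\#T_N)^{\smash\downarrow}$ does not concentrate on a single dominant block after rescaling; instead its full rescaled profile converges jointly to the jump sequence $\Delta$. This is why the paper proves Assumption~\hyperref[assumption:immigration]{$(\mathtt I)$} in case~(ii) through Proposition~\ref{prop:kesten-stable-immigration}, which uses the functional stable limit theorem for the random walk $\#T_1+\dots+\#T_{\lfloor nt\rfloor}$, Skorokhod's representation theorem, and careful $L^1$/$L^\beta$ estimates on $\norm{X_n-\Delta}$ split into ``small'' and ``large'' parts, rather than the unary shortcut. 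To repair your proof you would need to replace the unary-criterion step for case~(ii) with an argument of this type (or invoke Proposition~\ref{prop:kesten-stable-immigration} directly).
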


\begin{remark}\label{nb:kesten-scaling-limits-leaves}
	Both $(i)$ and $(ii)$ were proved in~\cite{duquesne2009immigrationlevytrees}
	and $(i')$ seems to be a new, if predictable, result.
	
	We also mention that under the assumptions of $(ii)$,
	$(T/R, \mu_T^\calL/R^{\alpha/(\alpha-1)})$ should converge
	in distribution to $\big(\calT_\alpha, (ck_\alpha)^{1/(\alpha-1)}\,\xi(0) \mu_\alpha\big)$.
	We won't prove this statement as Assumption~\hyperref[assumption:scaling]{$(\mathtt S)$}
	hasn't been proved in this case and to do so would require quite a bit of computation.
	The scaling limits of Galton-Watson trees with such an offspring distribution
	conditioned on their number of leaves were however studied in~\cite{kortchemski2012invariance}.
\end{remark}

Section~\ref{sec:kesten-scaling-variance} will focus on the finite variance case,
first on $(i)$ and then on $(i')$.
We will prove Proposition~\ref{prop:kesten-scaling-limit}
in the stable case $(ii)$ in Section~\ref{sec:kesten-scaling-stable}.

\subsubsection{Markov branching property and local limits}\label{sec:gw-local-limits}
Let $\calN := \{n\geq 1 \,:\, \GW_{\smash\xi}(\ttT_n) > 0\}$.
Proposition~37 in~\cite{haas2012scaling} states that
the sequence of probability measures $(\GW_{\smash\xi}^n)_{n\in\calN}^{}$
satisfies the Markov branching property, \ie we have $\GW_{\smash\xi}^n = \MB^q_n$
for all adequate $n$ with $q_{n-1}$ defined for all $\lambda = (\lambda_1,\dots,\lambda_p)$
in $\calP_{n-1}$ by
\[
	q_{n-1}(\lambda)
	= \frac {p! \xi(p)} {\prod_{j\geq 1} m_j(\lambda)!}
	\frac {\prod_{i=1}^p \prob {\# T = \lambda_i}} {\prob {\# T = n}}
\]
where $T$ is a $\GW_{\smash\xi}$ tree.

Similarly, if we let $\calN_\calL := \{n\geq 1 \,:\, \GW_{\smash\xi}(\ttT_{\calL,n}) > 0\}$,
then in light of \cite[Lemma~8]{rizzolo2015scaling}, the family
$(\GW_{\smash\xi}^{\smash\calL,n})_{n\in\calN_{\smash\calL}}^{}$ of probability measures
satisfies the Markov branching property and the associated sequence~$q^{\smash\calL}$
of first-split distributions such that $\GW_{\smash\xi}^{\smash\calL,n} = \MB^{\smash{\calL,q^\calL}}_n$
is given for all $n$ in $\calN_\calL$ and $\lambda = (\lambda_1,\dots,\lambda_p)$ in $\calP_n$~by
\[
	q^\calL_n(\lambda)
	= \frac {p! \xi(p)} {\prod_{j\geq 1} m_j(\lambda)!}
	\frac {\prod_{i=1}^p \prob {\#_\calL T = \lambda_i}} {\prob {\# T_\calL = n}}
\]
where $T$ still denotes a $\GW_{\smash\xi}$ tree.

\smallskip

A Kesten tree with distribution $\GW_{\smash\xi}^\infty$ can be seen
as an infinite Markov branching tree with distribution $\MB^{q,q_{\smash\infty}}_\infty$
where $q_\infty$ is defined for any $\lambda = (\lambda_2,\dots,\lambda_p)$ in $\calP_{<\infty}$ by
\[
	q_\infty(\infty, \lambda)
	:= \hat\xi(p) \, \frac {(p-1)!} {\prod_{j\geq 1} m_j(\lambda)!} \,
	{\textstyle\prod_{i=2}^p} \prob {\# T = \lambda_i}.
\]
The distribution of Kesten's tree may also
be rewritten as $\GW_{\smash\xi}^\infty = \MB^{\smash{\calL,q^\calL,q^\calL_\infty}}_\infty$
where $q^{\smash\calL}_\infty$ is given for all $\lambda\in\calP_{<\infty}$ by
\[
	q^\calL_\infty (\infty, \lambda)
	= \hat\xi(p) \, \frac {(p-1)!} {\prod_{j\geq 1} m_j(\lambda)!} \,
	{\textstyle\prod_{i=2}^p} \prob {\#_\calL T = \lambda_i}.
\]

\smallskip

Proposition~\ref{prop:gw-local-limit} is a direct consequence
of the following results from Sections~4.3 and~4.4 in~\cite{abraham2014gwkesten}
used alongside Theorem~\ref{thm:local-limits-markov-branching}.

\begin{lemma}\label{lem:gw-prob-size-ratio}
	If $T$ is a $\GW_{\smash\xi}$ tree, then
	\[
		\frac {\prob {\# T = (n+1)d +1}} {\prob {\# T = nd +1}}
		\xrightarrow [n\to\infty] {} 1
		\quad\text{and}\quad
		\frac {\prob {\#_\calL T = (n+1)d_\calL +1}} {\prob {\#_\calL T = nd_\calL +1}}
		\xrightarrow [n\to\infty] {} 1.
	\]
\end{lemma}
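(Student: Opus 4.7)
The plan is to reduce both assertions to local limit theorems for the random walk associated to $\xi$ via the Dwass (cyclic lemma) identity. Recall that if $(\xi_i)_{i\geq 1}$ are i.i.d.\ with law $\xi$ and $S_n := \xi_1 + \cdots + \xi_n$, then
\[
	\prob [1] {\#T = n} = \frac 1 n \, \prob [1] {S_n = n-1},
\]
and the support lattice of $S_n - n$ on the right-hand side is exactly $d \, \bbZ$. Since $\xi$ is critical, $S_n - n$ is a centred random walk on $d\,\bbZ$, so writing $n = kd+1$ and applying Gnedenko's lattice local limit theorem (in the finite variance case, this gives $\prob [1] {S_n = n-1} \sim d \, (2\pi\sigma^2 n)^{-1/2}$; in general, one uses that $\xi$ is in the domain of attraction of a stable law of index $\alpha\in(1,2]$, yielding $\prob [1] {S_n = n-1} \sim d \, n^{-1/\alpha}\,L(n)^{-1}$ for a slowly varying $L$), the ratio $\prob [1] {S_{(k+1)d+1} = (k+1)d}\big/\prob [1] {S_{kd+1} = kd}$ converges to $1$. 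Combined with $k/(k+1)\to 1$ from the $1/n$ prefactor, this delivers the first claim.

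For the second claim, I would use an analogous identity for the number of leaves. Writing $\#_\calL T = \sum_{u\in T} \ind_{c_u(T)=0}$ and combining the Dwass lemma with the observation that, conditionally on the \L{}ukasiewicz walk, the number of leaves equals the number of zero increments among $\xi_1, \ldots, \xi_{\#T}$, one derives
\[
	\prob [1] {\#_\calL T = n}
	= \sum_{m\geq n} \frac 1 m \binom{m}{n} \xi(0)^n \, \prob [1] {S'_{m-n} = m-1},
\]
where $S'_{m-n}$ is the sum of $m-n$ i.i.d.\ copies of $\xi$ conditioned to be nonzero. The lattice supporting this probability, after discarding terms of measure zero, is indexed by $d_\calL$. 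The same local-limit-theorem strategy as above (applied to the conditional law, which inherits finite variance or stable-domain membership from $\xi$) shows that $\prob [1] {\#_\calL T = n}$ is regularly varying along $1 + d_\calL \bbN$, and hence that consecutive ratios along this lattice tend to $1$.

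The main technical obstacle is the general critical setting without any moment assumption on $\xi$: in full generality one needs the local limit theorem for random walks in the domain of attraction of a stable law together with Potter-type bounds to handle the slowly varying correction $L(n)$ uniformly in the ratio. A secondary, bookkeeping obstacle is identifying the correct support lattice (the role of $d$ and $d_\calL$) so that consecutive integers in the statement actually correspond to consecutive atoms of the distributions $\prob [1] {\#T \in \cdot}$ and $\prob [1] {\#_\calL T \in \cdot}$; this is why the statement is formulated in terms of $nd+1$ and $nd_\calL+1$ rather than $n$ and $n+1$. A cleaner alternative, which I would likely adopt to avoid the general local limit theorem apparatus, is to invoke directly the sharp asymptotics $\prob [1] {\#T = nd+1} \asymp n^{-1-1/\alpha} / L(n)$ available in \cite{janson2012conditionedgw}, which immediately yields the ratio convergence, and an analogous statement for leaves via the representation above.
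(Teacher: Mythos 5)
The paper does not actually prove this lemma: it cites Sections~4.3 and 4.4 of Abraham--Delmas \cite{abraham2014gwkesten}, whose arguments hold for \emph{any} critical offspring law with $\xi(1)<1$, with no moment or regular variation hypotheses. Your proposal, by contrast, rests on local limit theorems and is therefore valid only when $\xi$ lies in the domain of attraction of an $\alpha$-stable law. You flag this yourself as the ``main technical obstacle,'' but the ``cleaner alternative'' you suggest (Janson's asymptotics $\prob{\#T = nd+1}\asymp n^{-1-1/\alpha}/L(n)$) has exactly the same restriction, so it does not close the gap. The lemma is stated, and needed, in full generality --- for instance, the local-limit statement Proposition~\ref{prop:gw-local-limit} makes no moment assumption --- so an argument that assumes stable-domain membership proves strictly less than what is claimed. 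The route that works in general goes through Otter--Dwass plus a \emph{strong ratio limit theorem} for the centred integer random walk $W_n = S_n - n$ (recurrence is automatic from zero mean; $\prob{W_{n+d}=-1}/\prob{W_n=-1}\to 1$ along the appropriate arithmetic progression needs no further assumptions). That, not Gnedenko's LLT, is the correct tool.

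Two secondary points. First, your displayed identity for $\prob{\#_\calL T = n}$ is missing a factor $(1-\xi(0))^{m-n}$ (the normalisation of the conditional law of $\xi$ given $\xi\neq 0$), and even once corrected, extracting ratio asymptotics from that double sum is delicate and left as a handwave. Second, a much cleaner reduction is available and is in fact invoked elsewhere in this paper (Section~\ref{sec:kesten-scaling-variance}): by Theorem~6 of Rizzolo \cite{rizzolo2015scaling}, $\#_\calL T$ has the same law as $\#\tau$ for a $\GW_\zeta$ tree $\tau$ with an explicit critical offspring law $\zeta$. The leaf version of the lemma then follows by applying the vertex version to $\zeta$, with no new computation.
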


\begin{proof}[of~Proposition~\ref{prop:gw-local-limit}]
	Let $\lambda = (\lambda_2,\dots,\lambda_p)$ be an element of $\calP_{<\infty}$.
	If there exists $2\leq i\leq p$ such that $\lambda_i-1$ isn't divisible by $d$,
	then for all $n\in\calN$, $q_{n-1}(n-1-\norm\lambda,\lambda) = 0 = q_\infty(\infty,\lambda)$.
	Otherwise, for $n\in\calN$ large enough, in light of Lemma~\ref{lem:gw-prob-size-ratio}
	\begin{align*}
		q_{n-1}\big(n-1-\norm\lambda,\lambda\big)
		& = \frac {p! \xi(p)} {\prod_{j\geq 1} m_j(\lambda)!}
			\frac {\prob {\# T = n-\norm\lambda}} {\prob {\# T = n}}
			\prod_{i=1}^p \prob {\# T = \lambda_i}\\
		& \xrightarrow [n\to\infty] {}
			\hat\xi(p) \, \frac {(p-1)!} {\prod_{j\geq 1} m_j(\lambda)!} \,
			\prod_{i=2}^p \prob {\# T = \lambda_i}
		= q_\infty(\infty,\lambda).
	\end{align*}
	Similarly, as $n$ goes to infinity,
	$q^\calL_n(n-\norm\lambda,\lambda) \to q^\calL_\infty(\infty,\lambda)$.
	Since these hold for any $\lambda$ in $\calP_{<\infty}$,
	we end this proof by using Corollary~\ref{cor:local-limits-mb-one-spine}.
\end{proof}

\subsubsection{Scaling limits, finite variance}\label{sec:kesten-scaling-variance}
In the remainder of this section,
$(T_i)_{i\geq 1}$ will denote \iid Galton-Watson trees with offspring distribution $\xi$,
$(Y_n)_{n\geq 1}$, \iid $\xi$ distributed random variables
and for all $n\geq 1$, $S_n := Y_1 + \dots + Y_n - n$.
We will also consider $N$, a random variable independent of both $(T_i)_i$ and $(Y_n)_n$
and such that $N+1$ follows $\hat\xi$.

The following so called \emph{Otter-Dwass' formula}
or \emph{cyclic lemma} (see \cite[Chapter~6]{pitman2006combinatorial} for instance)
will be the cornerstone of many forthcoming computations.

\begin{lemma}[Otter-Dwass' formula]\label{lem:otter-dwass-formula}
	With these notations, for all $k\geq 1$ and $n\geq 1$,
	\[
		\prob [1] {\#T_1 + \dots + \#T_k = n}
		= \frac k n \prob [1] {S_n = -k}.
	\]
\end{lemma}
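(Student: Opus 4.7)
The plan is to reduce to the classical cycle lemma argument. First, I will observe that since the distribution $\GW_\xi$ on unordered trees is the pushforward of the usual ordered Galton--Watson distribution under the natural projection $\ttT^\ord\to\ttT$, the joint law of $(\#T_1,\dots,\#T_k)$ is unchanged if we replace each $T_i$ by an ordered Galton--Watson tree with offspring law $\xi$. So it suffices to prove the identity when $T_1,\dots,T_k$ are ordered Galton--Watson trees.

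Next, I will introduce the \emph{Lukasiewicz walk} of the forest $(T_1,\dots,T_k)$. Concretely, exploring the $k$ trees in depth-first order, one reads off the sequence of offspring numbers $\tilde Y_1,\tilde Y_2,\dots$, and sets $W_0=0$, $W_j=\tilde Y_1+\dots+\tilde Y_j-j$. A standard and elementary induction on the depth-first exploration shows that $\#T_1+\dots+\#T_k=n$ if and only if $n$ is the first hitting time $\tau_{-k}:=\inf\{j\geq 0:W_j=-k\}$, and that in this case $(\tilde Y_1,\dots,\tilde Y_n)$ is distributed as $(Y_1,\dots,Y_n)$. Hence
\[
	\prob[1] {\#T_1+\dots+\#T_k=n}
	= \prob[1] {\tau_{-k}=n},
\]
where now $\tau_{-k}$ is built from the i.i.d.\ sequence $(Y_i)_{i\geq 1}$.

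The core step is the \emph{cycle lemma} (sometimes called the Dvoretzky--Motzkin or Kemperman lemma): for any deterministic sequence $(y_1,\dots,y_n)$ of integers with $y_i\leq 1$ and $y_1+\dots+y_n=-k$ with $1\leq k\leq n$, exactly $k$ of the $n$ cyclic shifts $(y_{i+1},\dots,y_n,y_1,\dots,y_i)$ produce a walk whose first passage time to $-k$ is $n$. I will state this and sketch its (very short) proof: pick the set of indices $i$ at which the walk attains its strict minimum among $\{W_n,W_n-\max W,\dots\}$ in a suitable sense, or equivalently show by induction on $k$ that the $k$ \emph{ladder indices} of the time-reversed walk give the $k$ good rotations.

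Finally, because $(Y_1,\dots,Y_n)$ is exchangeable (being i.i.d.), each of the $n$ cyclic shifts has the same distribution as $(Y_1,\dots,Y_n)$, and the events ``first passage to $-k$ at time $n$'' for the different shifts partition (on $\{S_n=-k\}$) into exactly $k$ indices. Summing over conditional probabilities gives
\[
	n\,\prob [1] {\tau_{-k}=n}
	= k\,\prob [1] {S_n=-k},
\]
which is precisely the desired identity. The only nontrivial step is the cycle lemma itself; everything else is bookkeeping about the Lukasiewicz encoding and exchangeability.
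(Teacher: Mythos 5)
The paper states Otter--Dwass without proof, simply citing Pitman's book, so there is no "paper's own proof" to compare against; what you have written is the standard textbook derivation and it is correct. The outline is solid: passing from unordered to ordered Galton--Watson trees (the size is invariant under forgetting the plane structure), encoding the forest by its depth-first Łukasiewicz walk so that $\{\#T_1+\dots+\#T_k=n\}$ becomes $\{\tau_{-k}=n\}$, invoking the cycle lemma to count good rotations, and finishing with exchangeability of $(Y_1,\dots,Y_n)$.

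One small slip in your statement of the cycle lemma: you require $y_i\leq 1$ with $y_1+\dots+y_n=-k$, but for the Łukasiewicz increments $y_i=\tilde Y_i-1$ the relevant bound is $y_i\geq -1$ (the walk can only step down by one, which is exactly what guarantees it passes through $-k$ rather than jumping over it, so that "first hitting" is unambiguous). The dual form with $y_i\leq 1$ goes with a walk drifting up to $+k$. As written, the hypotheses "$y_i\leq 1$ and sum $=-k$" do not match the standard cycle lemma and would actually allow the walk to overshoot $-k$; a two-step counterexample is $(1,-2)$ with $k=1$, where both rotations hit $-1$ at time $2$ if one insists on exact first passage. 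Once you replace $y_i\leq 1$ by $y_i\geq -1$, the rest of the argument (in particular the conclusion $n\,\prob{\tau_{-k}=n}=k\,\prob{S_n=-k}$ by averaging the rotation count) is exactly right.
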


Let $q_*$ be the probability distribution on $\calP_{<\infty}$
defined by $q_* = q_\infty(\infty,\,\cdot\,)$.
Let $\Lambda$ follow $q_*$ and recall that it has
the same distribution as $(\#T_1, \dots, \#T_N)^{\smash\downarrow}$.

\smallskip

In this paragraph, we'll assume that the variance $\sigma^2$ of $\xi$ is finite
and that $d=1$.
Recall that the immigration Brownian tree is a $(1/2,\nu_B,I_B)$-fragmentation tree with immigration.
It was proved in \cite[Section~5.1]{haas2012scaling}
that Assumption~\hyperref[assumption:scaling]{$(\mathtt S)$} of Theorem~\ref{thm:scaling-limits-infinite-mb-trees}
is fulfilled for $\gamma=1/2$ and $\nu = \sigma/2 \cdot \nu_B$.
To prove Proposition~\ref{prop:kesten-scaling-limit}, it will therefore be sufficient to
show that Assumption~\hyperref[assumption:immigration]{$(\mathtt I)$} is satisfied
for $\gamma = 1/2$ and $I = \sigma/2 \cdot I_B$.
For all $R\geq 1$, note $q^{(R)}$ the distribution of $\Lambda/R^2$.

\begin{proposition}\label{prop:kesten-finite-variance-immmigration}
	In the sense of weak convergence of finite measures on $\Sdec$,
	$R \, ( 1\wedge\norm\bfs ) \, q^{(R)}(\D\bfs)$ converges as $R$ goes to infinity
	toward $( 1\wedge\norm\bfs ) \, \sigma/2 \cdot I_B(\D\bfs)$.
\end{proposition}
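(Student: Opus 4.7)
The plan is to apply the general criterion of Proposition~\ref{prop:unary-immigration-criterion} with $\gamma = 1/2$ and $c = \sigma/\sqrt{2\pi}$. Note that $I_B = (2/\pi)^{1/2} I^{\operatorname{un}}_{1/2}$, so that
\[
	(\sigma/2) \, I_B = (\sigma/\sqrt{2\pi}) \, I^{\operatorname{un}}_{1/2},
\]
which is exactly the limit measure produced by the criterion when $c = \sigma/\sqrt{2\pi}$. Hence the proof reduces to verifying:
\begin{itemize}
	\item[(a)] $n^{3/2} \, \prob{\norm\Lambda = n} \to \sigma/\sqrt{2\pi}$,
	\item[(b)] $n^{1/2} \, \prob{\Lambda_1 \geq n} \to 2\sigma/\sqrt{2\pi}$,
\end{itemize}
as $n\to\infty$.

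For (a), I would start by conditioning on $N$, using $\prob{N=k} = (k+1)\xi(k+1)$ and Otter-Dwass (Lemma~\ref{lem:otter-dwass-formula}) to write
\[
	n \, \prob{\norm\Lambda = n}
	= \sum_{k\geq 1} k(k+1)\xi(k+1) \, \prob{S_n = -k}.
\]
Gnedenko's local limit theorem (which applies because $\xi$ is critical, has finite variance $\sigma^2$ and $d=1$) gives $\sqrt n \, \prob{S_n = -k} \to 1/(\sigma\sqrt{2\pi})$ for every fixed $k$, with a uniform bound $\sqrt n \, \prob{S_n = -k} \leq C$ in $n,k$. Since $\sum_{k\geq 1} k(k+1)\xi(k+1) = \esp{X(X-1)} = \sigma^2$ for $X\sim\xi$, dominated convergence yields
\[
	n^{3/2} \, \prob{\norm\Lambda = n}
	\xrightarrow[n\to\infty]{} \frac{\sigma^2}{\sigma\sqrt{2\pi}} = \frac{\sigma}{\sqrt{2\pi}}.
\]

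For (b), the key observation is that $\Lambda_1 = \max_{i\leq N} \#T_i$, so that
\[
	\prob{\Lambda_1 \geq n}
	= \esp[1]{1 - (1 - p_n)^N}
	= p_n \, \esp[3]{\sum_{k=0}^{N-1} (1-p_n)^k}
\]
with $p_n := \prob{\#T_1 \geq n}$. Since $\esp{N} = \esp{\hat\xi} - 1 = \sigma^2 < \infty$ and the integrand is dominated by $N$, dominated convergence gives $\prob{\Lambda_1 \geq n}/p_n \to \esp N = \sigma^2$. To conclude, it remains to estimate $p_n$: by Otter-Dwass, $\prob{\#T_1 = k} = k^{-1}\prob{S_k = -1}$, and the local limit theorem gives $\prob{\#T_1 = k} \sim (\sigma\sqrt{2\pi})^{-1} k^{-3/2}$. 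Summing in $k\geq n$ yields $p_n \sim 2/(\sigma\sqrt{2\pi}) \cdot n^{-1/2}$ and therefore $n^{1/2} \, \prob{\Lambda_1 \geq n} \to 2\sigma/\sqrt{2\pi}$, as required.

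The main obstacle is (a): one must have a version of the local limit theorem giving a uniform bound in $k$ in order to apply dominated convergence. This is the classical Gnedenko local limit theorem in the lattice case, valid under our standing assumption $d=1$. All other estimates reduce to computing moments of $\xi$ and $\hat\xi$ and a routine application of Otter-Dwass.
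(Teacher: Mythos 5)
Your proposal is correct and follows the same structure as the paper: reduce to Proposition~\ref{prop:unary-immigration-criterion} with $\gamma=1/2$ and $c=\sigma/\sqrt{2\pi}$, then verify the two hypotheses via Otter--Dwass and the Gnedenko local limit theorem, exactly as in Lemmas~\ref{lem:kesten-finite-variance-norm} and~\ref{lem:kesten-finite-variance-1st-component}. The only variation is in part (b): you prove $\prob{\Lambda_1\geq n}\sim \sigma^2\,p_n$ by writing $1-(1-p_n)^N = p_n\sum_{k=0}^{N-1}(1-p_n)^k$ and applying dominated convergence with dominating random variable $N$ (integrable since $\esp N = \sigma^2$), whereas the paper packages the same computation through the generating function identity $\prob{\Lambda_1\geq n}=G'(1)-G'(1-p_n)$ and the value $G''(1)=\sigma^2$; these are two presentations of the same estimate, each relying on the finite-variance hypothesis in the same place.
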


Since $I_B$ is unary, in order to prove Proposition~\ref{prop:kesten-finite-variance-immmigration},
it will be enough to show that $\Lambda$ satisfies
the assumptions of Proposition~\ref{prop:unary-immigration-criterion}.
The next two lemmas will prove that both are met.

\begin{lemma}\label{lem:kesten-finite-variance-norm}
	When $n$ goes to infinity,
	$n^{3/2} \prob {\norm\Lambda = n} \to (\sigma^2/2\pi)^{1/2}$.
\end{lemma}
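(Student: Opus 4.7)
The plan is to express $\norm\Lambda$ as a random sum of total progenies of independent Galton-Watson trees, apply Otter-Dwass to turn this into a sum involving random-walk probabilities, and then invoke the local central limit theorem.

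First, I would decompose using the description at the start of Section~\ref{sec:kesten-scaling-variance}: $\Lambda$ is distributed as $(\#T_1,\dots,\#T_N)^{\smash\downarrow}$, where $N+1\sim\hat\xi$ is independent of the iid $\GW_\xi$-trees $(T_i)_{i\geq 1}$. Conditioning on $N=k$ and applying Otter-Dwass (Lemma~\ref{lem:otter-dwass-formula}), for every $n\geq 1$:
\[
	\prob{\norm\Lambda = n}
	= \sum_{k\geq 1} (k+1)\xi(k+1) \cdot \frac{k}{n} \prob{S_n = -k}
	= \frac{1}{n} \sum_{k\geq 1} k(k+1)\xi(k+1) \, \prob{S_n=-k}.
\]
The $k=0$ term vanishes since $n\geq 1$. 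Multiplying by $n^{3/2}$ leaves $n^{1/2}\prob{S_n=-k}$ inside the sum.

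Next, since $\xi$ has mean $1$, finite variance $\sigma^2$, and (because $d=\gcd\{n-1:\GW_\xi(\ttT_n)>0\}=1$) the step distribution $\xi(\cdot+1)-\delta_0$ of $S_n$ is aperiodic on $\bbZ$, the classical local central limit theorem (see e.g.\ Gnedenko's theorem) gives the pointwise convergence $\sqrt{n}\,\prob{S_n=-k} \to 1/\sqrt{2\pi\sigma^2}$ for each fixed $k\geq 1$, together with the uniform bound $\sup_{n\geq 1,\,m\in\bbZ} \sqrt{n}\,\prob{S_n=m} \leq C$ for some finite $C$.

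Finally I would justify interchanging sum and limit by dominated convergence: the dominating series is
\[
	\sum_{k\geq 1} k(k+1)\xi(k+1)
	= \sum_{j\geq 2} j(j-1)\xi(j)
	= \esp{Y(Y-1)}
	= \sigma^2,
\]
where $Y\sim\xi$ and we used $\esp Y = 1$, $\operatorname{Var}(Y)=\sigma^2$. Summability is thus exactly the finite-variance hypothesis. Passing to the limit yields
\[
	n^{3/2}\prob{\norm\Lambda=n}
	\xrightarrow[n\to\infty]{}
	\frac{1}{\sqrt{2\pi\sigma^2}} \sum_{k\geq 1} k(k+1)\xi(k+1)
	= \frac{\sigma^2}{\sqrt{2\pi\sigma^2}}
	= \bigg(\frac{\sigma^2}{2\pi}\bigg)^{1/2},
\]
as required. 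The only mildly delicate step is the invocation of the local CLT with its uniform-in-$k$ bound; the rest is essentially an Otter-Dwass identity plus dominated convergence driven by the finite second moment of $\xi$.
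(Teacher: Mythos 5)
Your proof is correct and follows essentially the same route as the paper: condition on $N$, apply Otter-Dwass, invoke the finite-variance local limit theorem with its uniform-in-$k$ bound, and pass to the limit by dominated convergence using $\sum_{k\geq 1}k(k+1)\xi(k+1)=\sigma^2$. If anything, your bookkeeping is a touch cleaner than the paper's (writing $\hat\xi(k+1)=(k+1)\xi(k+1)$ explicitly and flagging that $d=1$ supplies the aperiodicity the local CLT requires), but the argument is the same.
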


\begin{proof}
	In light of Otter-Dwass' formula, for all $n\geq 1$,
	\begin{align*}
		n^{3/2} \prob [1] {\norm\Lambda=n}
		& = n^{3/2} {\textstyle\sum_{k\geq 1}} \prob [1] {\# T_1 + \dots + \# T_k = n \,\vert\, N=k}
			\, \prob [1] {N=k}\\
		& = {\textstyle\sum_{k\geq 1}} k \hat\xi(k) \, n^{1/2} \, \prob [1] {S_n=-k}.
	\end{align*}
	Recall the local limit theorem in the finite variance case:
	\[
		\sup\nolimits _{k\in\bbZ} \abs [1] {n^{1/2} \, \prob {S_n=k} - (2\pi\sigma^2)^{-1/2} \, \E^{-k^2/2n\sigma^2}}
		\xrightarrow [n\to\infty] {} 0.
	\]
	As a result, there exists a finite constant $C$ such that $n^{1/2} \prob {S_n=-k} \leq C$
	for all $n\geq 1$ and $k\geq 1$ and if $k\geq 1$ is fixed, $n^{1/2} \prob {S_n=-k} \to (2\pi\sigma^2)^{-1/2}$.
	Furthermore, $\sum_{k\geq 1} k \smash{\hat\xi(k)} = \sigma^2$
	so Lebesgue's dominated convergence theorem yields
	\[
		\lim_{n\to\infty} n^{3/2} \prob {\norm\Lambda=n}
		= \sum_{k\geq 1} k \hat\xi(k) \Big( \lim\nolimits_{n\to\infty} n^{1/2} \prob{S_n=-k} \Big)
		= \big(\sigma^2/2\pi\big)^{1/2}.
	\]
\end{proof}

\begin{lemma}\label{lem:kesten-finite-variance-1st-component}
	When $n\to\infty$, $n^{1/2} \prob {\Lambda_1\geq n}$
	converges to $(2\sigma^2/\pi)^{1/2}$.
\end{lemma}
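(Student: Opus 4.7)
The plan is to reduce the statement to a tail estimate for $\#T_1$ (a single Galton-Watson tree) and then apply a dominated convergence argument, exploiting the independence of the $T_i$'s from $N$. Since $\Lambda = (\#T_1,\dots,\#T_N)^\downarrow$, we have the identity
\[
	\prob{\Lambda_1 \geq n}
	= \esp[1]{1 - (1-p_n)^N}
	\quad\text{where}\quad p_n := \prob{\#T_1 \geq n},
\]
obtained by conditioning on $N$ and using that the events $\{\#T_i\geq n\}$ are independent and identically distributed. We will show that $n^{1/2} p_n \to (2/\pi\sigma^2)^{1/2}$ and that $\esp N = \sigma^2$, and conclude with Lebesgue's dominated convergence.

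\smallskip

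First, I would compute the tail of a single tree. Combining Otter-Dwass' formula (Lemma~\ref{lem:otter-dwass-formula}, with $k=1$) and the classical local limit theorem for $(S_n)_n$ in the finite variance regime, we get
\[
	\prob[1]{\#T_1 = k}
	= \frac 1 k \prob[1]{S_k=-1}
	\underset {k\to\infty} \sim
	\frac 1 {(2\pi\sigma^2)^{1/2}} \, k^{-3/2}.
\]
(Here we use $d=1$ to ensure this asymptotic holds for \emph{every} large~$k$.) Summing over $k\geq n$ and comparing with the integral $\int_n^\infty x^{-3/2}\,\D x = 2 n^{-1/2}$ then gives
\[
	p_n \underset{n\to\infty}\sim \frac 2 {(2\pi\sigma^2)^{1/2}} n^{-1/2}
	= \bigg(\frac 2 {\pi\sigma^2}\bigg)^{1/2} n^{-1/2}.
\]
In particular, $n^{1/2} p_n$ converges to $(2/\pi\sigma^2)^{1/2}$ and is bounded uniformly in $n$, say by a finite constant $C$.

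\smallskip

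Next, since $N+1$ is $\hat\xi$-distributed and $\xi$ is critical with variance $\sigma^2$, we get $\esp{N+1} = \sum_k k^2\,\xi(k) = \sigma^2+1$, hence $\esp N = \sigma^2<\infty$. For the convergence itself, set $X_n := n^{1/2}\,\big(1 - (1-p_n)^N\big)$. Pointwise in $N$, as $p_n\to 0$,
\[
	n^{1/2} \big(1 - (1-p_n)^N\big)
	= n^{1/2} \big(N p_n + O(p_n^2)\big)
	\xrightarrow[n\to\infty]{} N \cdot \bigg(\frac 2 {\pi\sigma^2}\bigg)^{1/2} \,,
\]
while Bernoulli's inequality $1 - (1-p)^N \leq N p$ yields the domination $X_n \leq n^{1/2} p_n \cdot N \leq C \, N$, which is integrable since $\esp N = \sigma^2 < \infty$ and is independent of $n$. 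Dominated convergence therefore gives
\[
	n^{1/2} \prob[1]{\Lambda_1 \geq n}
	= \esp{X_n}
	\xrightarrow[n\to\infty]{}
	\esp N \cdot \bigg(\frac 2 {\pi\sigma^2}\bigg)^{1/2}
	= \sigma^2 \cdot \bigg(\frac 2 {\pi\sigma^2}\bigg)^{1/2}
	= \bigg(\frac {2\sigma^2} \pi\bigg)^{1/2}.
\]

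\smallskip

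The only delicate point is the tail asymptotic $p_n \sim (2/\pi\sigma^2)^{1/2} n^{-1/2}$: turning the pointwise local-limit asymptotic for $\prob{\#T_1=k}$ into an asymptotic for the sum $\sum_{k\geq n} \prob{\#T_1=k}$ requires a standard but careful uniform bound coming from the local limit theorem (namely $\sup_k k^{1/2}\prob{S_k=-1}<\infty$), which justifies the comparison with $\sum_{k\geq n} k^{-3/2}$. Everything else is routine.
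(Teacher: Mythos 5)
Your proof is correct and essentially the same as the paper's: both start from $\prob{\Lambda_1\geq n} = \esp{1-(1-p_n)^N}$ with $p_n = \prob{\#T_1\geq n}$, derive $p_n \sim (2/\pi\sigma^2)^{1/2}\,n^{-1/2}$ via Otter--Dwass and the local limit theorem, and conclude using the finite second moment of $\xi$. The only difference is cosmetic in the final step: the paper rewrites the expectation as $G'(1)-G'(1-p_n)$ and uses twice-differentiability of the generating function $G$ at $1$ (with $G''(1)=\sigma^2$), while you apply dominated convergence in $N$ using $\esp{N}=\sigma^2$ --- two interchangeable ways of extracting the linear term in $p_n$.
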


\begin{proof}
	Observe that for all $n\geq 0$, the event $\{\Lambda_1\geq n\}$ has the same probability
	as $\{N\geq 1, \exists i\leq N : \#T_i \geq n\}$.
	Therefore $\prob {\Lambda_1 \geq n} = \sum_{k\geq 1} \hat\xi(k+1) \,
	\big( 1 - \prob {\#T_1 < n}^k \big)$.
	Let $G$ be the moment generating function of $\xi$,
	\ie $G(s) = \sum_{k\geq 0} \xi(k) \, s^k$ for all $s\in [0,1]$.
	This function is twice-differentiable on $[0,1]$
	and we may write $\prob {\Lambda_1 \geq n}
	= G'(1) - G'\big(1 - \prob {\# T_1 \geq n}\big)$.
	
	For all $n\geq 1$, Otter-Dwass' formula gives
	$n^{1/2} \prob {\# T_1\geq n} = n^{1/2} \sum_{m\geq n} m^{-1} \, \prob {S_m=-1}$.
	The local limit theorem ensures that $m^{1/2} \prob {S_m=-1} \to (2\pi\sigma^2)^{-1/2}$ as $m\to\infty$.
	Therefore, for all positive $\varepsilon$ and $n$ large enough,
	\[
		n^{1/2} \, \abs [2] {\prob {\#T\geq n}
			-{\textstyle\sum}_{m\geq n} m^{-3/2} (2\pi\sigma^2)^{-1/2}}
		\leq n^{1/2} \, \sum_{m\geq n} m^{-3/2} \, \varepsilon
			\xrightarrow [n\to\infty] {} 2\varepsilon.
	\]
	Incidentally, $n^{1/2} \prob {\# T_1\geq n}$ and $n^{1/2} \sum_{m\geq n} m^{-3/2} (2\pi\sigma^2)^{-1/2}$
	have the same limit when $n\to\infty$ which is to say that
	$n^{1/2} \prob {\# T_1\geq n} \to (2/\pi\sigma^2)^{1/2}$ as $n\to\infty$.
	As a result,
	\[
		n^{1/2} \prob {\Lambda_1 \geq n}
		= n^{1/2} \Big[ G'(1) - G'\big(1 - \prob {\# T_1 \geq n}\big) \Big]
		\xrightarrow [n\to\infty] {}
		\bigg(\frac 2 {\pi\sigma^2}\bigg)^{1/2} \, G''(1)
		= \bigg(\frac {2 \sigma^2} \pi\bigg)^{1/2}.
	\]
\end{proof}

Lemmas~\ref{lem:kesten-finite-variance-norm} and~\ref{lem:kesten-finite-variance-1st-component}
and Proposition~\ref{prop:unary-immigration-criterion} prove Proposition~\ref{prop:kesten-finite-variance-immmigration}.
Theorem~\ref{thm:scaling-limits-infinite-mb-trees} therefore implies that $(T/R,\mu_T/R^2)$
converges in distribution to a $(1/2, \sigma/2\cdot\nu_B, \sigma/2\cdot I_B)$ fragmentation tree with immigration.
Using Remark~\ref{nb:ssfi-rescaling}, we may restate this last result as Proposition~\ref{prop:kesten-scaling-limit}~$(i)$.
Furthermore, as a result of Proposition~\ref{prop:volume-growth-cv}, we get that in particular,
$\mu_T(T\vert_R)/R^2$ converges in distribution to $(\sigma^2/4) \, \mu_{\calT_B}(\calT_B\vert_1)$
or equivalently to $\mu_{\calT_B}(\calT_B\vert_{\sigma/2})$.

\medskip

We will now prove Proposition~\ref{prop:kesten-scaling-limit} $(i')$.
Assume that $d_\calL=1$.
Theorem~7 in \cite{rizzolo2015scaling} proves that the family $(q^{\smash\calL}_n)_n$
of first split distributions associated to Galton-Watson trees conditioned
on their number of leaves satisfies Assumption~\hyperref[assumption:scaling]{$(\mathtt S)$}:
$n^{1/2} \, (1-s_1) \, \bar q^{\smash\calL}_n \Rightarrow
\sigma \, \xi(0)^{1/2} / 2  \cdot (1-s_1) \, \nu_B(\D\bfs)$.
As a result, we only need to prove Assumption~\hyperref[assumption:immigration]{$(\mathtt I)$}
for $\gamma=1/2$ and $I = \sigma \, \xi(0)^{1/2} / 2  \cdot I_B$.

\begin{proof}[of Proposition~\ref{prop:kesten-scaling-limit} $(i')$]
	Theorem~6 in~\cite{rizzolo2015scaling} states that there exists
	a critical probability distribution $\zeta$ on $\bbZ_+$
	such that $\#_\calL T_1$, the number of leaves of $T_1$, has the same distribution as $\#\tau$,
	where $\tau$ follows $\GW_{\smash\zeta}$.
	Lemma~6 further states that if $\xi$ has finite variance $\sigma^2$,
	then $\zeta$ has variance $\sigma^2/\xi(0)$.

	Let $\Lambda^{\smash\calL}$ be such that $(\infty,\Lambda)$
	is distributed according to $q_\infty^{\smash\calL}$.
	The random partition $\Lambda^{\smash\calL}$ is distributed
	like $(\#_\calL T_1, \dots, \#_\calL T_N)^{\smash\downarrow}$,
	or equivalently, like $(\#\tau_1, \dots, \#\tau_N)^{\smash\downarrow}$,
	where $(\tau_n)_{n\geq 1}$ are \iid $\GW_{\smash\zeta}$ trees independent of $N$.
	Therefore, if $(V_n)_{n\geq 1}$ is a sequence of \iid $\zeta$-distributed random variables
	and if $Z_n := V_1 + \dots + V_n - n$,
	proceeding as in the proof of Lemma~\ref{lem:kesten-finite-variance-norm} gives:
	\[
		n^{3/2} \prob {\norm {\Lambda^\calL} = n}
		= {\textstyle\sum_{k\geq 0}} k \, \hat\xi(k+1)
			n^{1/2} \prob {Z_n = -k}
		\xrightarrow [n\to\infty] {}
		\big[\sigma^2 \xi(0) / (2\pi) \big]^{1/2}.
	\]
	Similarly, the same kind of computations as in Lemma~\ref{lem:kesten-finite-variance-1st-component} yields
	\[
		n^{1/2} \prob {\Lambda^\calL_1 \geq n}
		= n^{1/2} \Big[ G'(1) - G'\big(1 - \prob {\#\tau_1 \geq n}\big) \Big]
		\xrightarrow [n\to\infty] {}
		\big[ 2\sigma^2 \xi(0) / \pi \big]^{1/2}
	\]
	where $G$ still denotes the moment generating function of $\xi$.
	As a result, because of Theorem~\ref{thm:scaling-limits-infinite-mb-trees}
	and Proposition~\ref{prop:unary-immigration-criterion},
	when $R\to\infty$, $(T/R, \mu^{\smash\calL}_T/R^2)$ converges in distribution
	to a $(1/2, \sigma \xi(0)^{1/2} / 2 \cdot \nu_B, \sigma \xi(0)^{1/2} / 2 \cdot I_B)$ fragmentaion
	tree with immigration.
	Remark~\ref{nb:ssfi-rescaling} then allows us to conlude.
\end{proof}

\subsubsection{Scaling limits, stable case}\label{sec:kesten-scaling-stable}
In this paragraph, we'll suppose that there exist $\alpha\in(1,2)$ and a positive constant $c$
such that $n^{1+\alpha} \xi(n) \to c$ when $n\to\infty$.

Recall that $\Lambda$ denotes a $q_*$-distributed variable
and has the same distribution as $(\#T_1,\dots,\#T_N)^{\smash\downarrow}$
where $N+1$ is distributed according to $\smash{\hat\xi}$ and
is independent of the sequence $(T_n)_{n\geq 1}$ of \iid $\GW_{\smash\xi}$ trees.
Moreover, we will use the notations introduced to define $\nu_\alpha$ and $I^{(\alpha)}$
in Sections~\ref{sec:ssf-trees} and~\ref{sec:frag-trees-immigration}:
$(\Sigma_t;t\geq 0)$ will denote a $1/\alpha$-stable subordinator with Laplace exponent
$\lambda\mapsto-\log\esp{\exp(-\lambda\Sigma_t)}=\lambda^{1/\alpha}$
and $\Delta$ will be the decreasing rearrangement of its jumps on $[0,1]$.

\smallskip

It was proved in \cite[Section~5.2]{haas2012scaling}
that the family $q = (q_n)_{n\in\calN}$
of first-split distributions associated to $(\GW_{\smash\xi}^n)_{n\in\calN}^{}$ satisfies
Assumption~\hyperref[assumption:scaling]{$(\mathtt S)$} of Theorem~\ref{thm:scaling-limits-infinite-mb-trees}
for $\gamma=1-1/\alpha$ and $\nu = (c \, k_\alpha)^{1/\alpha} \cdot \nu_\alpha$.
Proposition~\ref{prop:kesten-scaling-limit}~$(ii)$ will therefore be a consequence of the next proposition.
For all $R\geq 1$, note $q^{(R)}$ the distribution of $R^{-\alpha/(\alpha-1)} \Lambda$.

\begin{proposition}\label{prop:kesten-stable-immigration}
	When $R\to\infty$, $R \, (1\wedge\norm\bfs) \, q^{(R)}(\D\bfs)$
	converges weakly to $(c\,k_\alpha)^{1/\alpha} (1\wedge\norm\bfs) \, I^{(\alpha)}(\D\bfs)$.
\end{proposition}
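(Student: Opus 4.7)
The plan is to verify the convergence directly by testing against continuous functions. By Lemma~\ref{lem:portmanteau-extension}, it suffices to prove that
\[
	R\,\esp {G(R^{-\alpha/(\alpha-1)}\Lambda)} \xrightarrow[R\to\infty]{} (ck_\alpha)^{1/\alpha}\int_\Sdec G\,\D I^{(\alpha)}
\]
for every bounded continuous $G:\Sdec\to\bbR_+$ with $G\leq M(1\wedge\norm\bfs)$ for some finite $M$. Recalling that $\Lambda$ is distributed like $(\#T_1,\ldots,\#T_N)^\downarrow$ with $(T_i)_{i\geq 1}$ i.i.d.\ $\GW_\xi$-trees independent of $N$ and $N+1\sim\hat\xi$, conditioning on $N$ rewrites the target as
\[
	R\sum_{k\geq 0}\hat\xi(k+1)\,\esp {G\big(R^{-\alpha/(\alpha-1)}(\#T_1,\ldots,\#T_k)^\downarrow\big)},
\]
which I will treat as a Riemann sum in the variable $t=k/R^{1/(\alpha-1)}$.

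Two asymptotic inputs feed the Riemann sum. First, $\xi(n)\sim cn^{-1-\alpha}$ gives $\hat\xi(k+1)\sim ck^{-\alpha}$, hence $R\cdot R^{1/(\alpha-1)}\,\hat\xi(\lfloor tR^{1/(\alpha-1)}\rfloor+1)\to ct^{-\alpha}$. Second, Otter--Dwass' formula $\prob {\#T_1=n}=n^{-1}\prob {S_n=-1}$ combined with the stable local limit theorem for $(S_n)$ --- normalized by $(ck_\alpha n)^{1/\alpha}$, it converges to the spectrally positive $\alpha$-stable law whose density at $0$ equals $1/[\alpha\Gamma(1-1/\alpha)]$ --- gives the tail $\prob {\#T_1\geq n}\sim c_2 n^{-1/\alpha}$ with $c_2=1/[(ck_\alpha)^{1/\alpha}\Gamma(1-1/\alpha)]$. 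The standard theory of i.i.d.\ sums in the domain of attraction of a one-sided stable law then yields $(ck_\alpha/k^\alpha)(\#T_1,\ldots,\#T_k)^\downarrow\Rightarrow\Delta$ in $\Sdec$ with the $\ell^1$-topology: coordinatewise convergence comes from Poisson process convergence of the rescaled jumps, and the $\ell^1$-part from the stable central limit theorem for the total sum. Specialising to $k=\lfloor tR^{1/(\alpha-1)}\rfloor$ and matching $k^\alpha/R^{\alpha/(\alpha-1)}\to t^\alpha$ shows that $(\#T_1,\ldots,\#T_k)^\downarrow/R^{\alpha/(\alpha-1)}\Rightarrow(t^\alpha/(ck_\alpha))\Delta$.

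Combining these inputs formally yields the limit $\int_0^\infty ct^{-\alpha}\,\esp {G((t^\alpha/(ck_\alpha))\Delta)}\,\D t$; the substitution $u=t/(ck_\alpha)^{1/\alpha}$ identifies this with $(ck_\alpha)^{1/\alpha}\int G\,\D I^{(\alpha)}$, the desired value. The main obstacle is making this Riemann-sum approximation rigorous, which is delicate because $\esp {\#T_1}=\infty$ rules out naive moment bounds. I would handle this by splitting $k$ into three ranges. For $k\leq\varepsilon R^{1/(\alpha-1)}$, the inequality $(\sum a_i)\wedge 1\leq\sum(a_i\wedge 1)$ combined with $\esp {\#T_1\wedge y}\leq C' y^{1-1/\alpha}$ (Karamata's theorem applied to the tail $c_2 n^{-1/\alpha}$) yields $\esp {1\wedge(\#T_1+\cdots+\#T_k)/R^{\alpha/(\alpha-1)}}\leq C\,kR^{-1/(\alpha-1)}$; summing against $R\hat\xi(k+1)$ produces an $O(\varepsilon^{2-\alpha})$ contribution that vanishes as $\varepsilon\downarrow 0$. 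For $k\geq TR^{1/(\alpha-1)}$, the crude bound $G\leq M$ and $\sum_{k>TR^{1/(\alpha-1)}}\hat\xi(k+1)=O(T^{1-\alpha}/R)$ give an $O(T^{1-\alpha})$ contribution that vanishes as $T\uparrow\infty$. On the compact middle range $t\in[\varepsilon,T]$, functional convergence of the point process $\sum_i\delta_{(i/R^{1/(\alpha-1)},\#T_i/R^{\alpha/(\alpha-1)})}$ to a Poisson point process on $[0,\infty)\times(0,\infty)$ with intensity $\D t\otimes(ck_\alpha)^{-1}\Pi_{1/\alpha}(\D s)$ provides uniform convergence of the summand in $t$, so the partial Riemann sum on $[\varepsilon,T]$ converges to the corresponding integral. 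Letting $\varepsilon\downarrow 0$ and $T\uparrow\infty$ then concludes the proof.
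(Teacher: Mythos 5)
Your argument is correct and reaches the right limit, but it takes a genuinely different route from the paper's. The paper first uses Skorokhod's representation theorem to fix an almost surely convergent coupling $X_n := (ck_\alpha/n^\alpha)(\#T_1,\dots,\#T_n)^{\downarrow}\to\Delta$, defines $f(t):=\esp{F(t^\alpha\Delta/(ck_\alpha))}$, gets $R\,\esp{f(N/R^{1/(\alpha-1)})}\to(ck_\alpha)^{1/\alpha}\int F\,\D I^{(\alpha)}$ from Lemma~\ref{lem:unary-immigration-prelim}, and then bounds the error $R\,\esp[1]{\,\abs{F(\Lambda/R^{\alpha/(\alpha-1)})-f(N/R^{1/(\alpha-1)})}\,}$ by a rather delicate moment estimate: $\norm{X_N-\Delta}$ is split into $\norm{X_N\wedge 1-\Delta\wedge 1}$ (controlled in $L^1$ via an $L^2$ bound) and the heavy-tailed remainder (controlled in $L^\beta$ for $\beta<1/\alpha$), with these estimates then fed into the $N$-average. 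You instead condition on $N$ to get a Riemann sum in $t=k/R^{1/(\alpha-1)}$, cut off the ranges $k\le\varepsilon R^{1/(\alpha-1)}$ and $k\ge TR^{1/(\alpha-1)}$ (your estimates for these, using $(\sum a_i)\wedge 1\le\sum(a_i\wedge 1)$ together with Karamata and a tail sum respectively, check out), and let weak convergence of the rescaled order statistics handle the compact middle range; this avoids both the Skorokhod coupling and the $L^\beta$ gymnastics, at the price of a three-way cutoff argument. Two small caveats: the intensity of the limiting Poisson process should be $\D t\otimes(ck_\alpha)^{-1/\alpha}\Pi_{1/\alpha}(\D s)$ rather than $(ck_\alpha)^{-1}\Pi_{1/\alpha}(\D s)$ (this does not affect the rescaling limit $(\#T_1,\dots,\#T_k)^{\downarrow}/R^{\alpha/(\alpha-1)}\Rightarrow(t^\alpha/(ck_\alpha))\Delta$, which you state correctly); and the claimed ``uniform convergence of the summand'' on $[\varepsilon,T]$ is better justified not by point-process convergence per se but by a compactness/Slutsky argument: if $t_R=k_R/R^{1/(\alpha-1)}\to t_*\in[\varepsilon,T]$ then $k_R\to\infty$, hence $t_R^\alpha Y_{k_R}\Rightarrow t_*^\alpha\Delta/(ck_\alpha)$ with $Y_k:=(\#T_1,\dots,\#T_k)^{\downarrow}/k^\alpha$, so the bounded continuous $G$ gives $\esp{G(t_R^\alpha Y_{k_R})}\to\esp{G(t_*^\alpha\Delta/(ck_\alpha))}$, and the subsubsequence criterion then yields uniformity. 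Both points are easily repaired and do not undermine the proof.
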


\begin{proof}
	As shown in \cite[Section~5.2]{haas2012scaling},
	$n^{1+1/\alpha} \, \prob {\#T_1 = n}$ converges to $[(c\, k_\alpha)^{1/\alpha}
	\alpha \, \Gamma(1-1/\alpha)]^{-1}$.
	Therefore, $(\#T_n)_{n\geq 1}$ lies in the domain of attraction of a $1/\alpha$-stable distribution.
	More accurately, in the Skorokhod topology,
	\[
		\Bigg( \frac {\#T_1+\dots+\#T_{\lfloor nt\rfloor}} {n^\alpha}
			\, ; \, t\geq 0 \Bigg)
		\xrightarrow [n\to\infty] {(\upd)}
		\frac 1 {c \, k_\alpha}
			\Big(\Sigma_t \, ; \, t\geq 0\Big).
	\]
	This, in conjunction with Skorokhod's representation theorem,
	implies that there exists a sequence $(X_n)_{n\geq 0}$, where  for all $n\geq 1$,
	\[
		X_n \overset {(\upd)} =
		\frac {c \, k_\alpha} {n^\alpha}
			\big(\#T_1,\dots,\#T_n,0,0,\dots\big)^\downarrow
	\]
	which \as converges to (a version of) $\Delta$.

	\smallskip

	Let $F:\Sdec\to\bbR_+$ be a Lipschitz continuous function such that $F(\bfs) \leq 1\wedge\norm\bfs$
	and set $f:\bbR_+\to\bbR_+$, $t\mapsto \esp {F(t^\alpha/(c\,k_\alpha)\cdot\Delta)}$.
	The dominated convergence theorem ensures that the function $f$ is continuous.
	It is clearly bounded by $1$ and
	\[
		f(t)
		\leq \esp [2] {1 \wedge \big(t^\alpha/ (c\,k_\alpha) \cdot \norm\Delta\big)}
		= \esp [2] {1 \wedge \Sigma_{(c\,k_\alpha)^{-1/\alpha} t}}
		\leq \frac t {(c\,k_\alpha)^{1/\alpha}} \,
			\smash {\int_{\mathrlap {\bbR_+}}} (1\wedge x) \, \Pi_{1/\alpha}(\D x).
	\]
	Since $n^\alpha \prob {N=n} \to c$, Lemma~\ref{lem:unary-immigration-prelim}
	ensures that when $R$ goes to infinity, $R\,\esp [1] {f(N/R^{1/(\alpha-1)})}$
	converges to $c \, \int_0^\infty t^{-\alpha} \, f(t) \, \D t
	= (c\,k_\alpha)^{1/\alpha} \, \int F\,\D I^{(\alpha)}$.
	Furthermore, because $\Lambda$ is distributed like $(c\,k_\alpha)^{-1} \, N^\alpha \, X_N$,
	\[
		\abs [4] {R \, \esp [3] {
			F\bigg(\frac \Lambda {R^{\alpha/(\alpha-1)}}\bigg)
			- f\bigg(\frac N {R^{1/(\alpha-1)}}\bigg)}}
		\leq R \, \esp [4] {1 \wedge \Bigg(K\, \bigg(\frac N {R^{1/(\alpha-1)}}\bigg)^\alpha \, 
			\norm [1] {X_N - \Delta}\Bigg)}
	\]
	where $K\cdot(c\,k_\alpha)$ is bigger than the Lipschitz constant of $F$.
	We will now endeavour to prove that this last quantity goes to $0$ when $R\to\infty$.
	For all $\bfs$ in $\Sdec$, let $\bfs\wedge 1$ be the sequence $(s_i\wedge 1)_{i\geq 1}$.
	Then for any $\bfx$ and $\bfy$ in $\Sdec$, we may write $\norm {\bfx-\bfy}
	= \norm {\bfx\wedge 1 - \bfy\wedge 1} + \norm {(\bfx-\bfx\wedge 1) - (\bfy-\bfy\wedge 1)}$.

	\medskip

	In light of Lemma~\ref{lem:unary-immigration-prelim},
	$n \, \esp [1] {1\wedge (\#T_1/n^\alpha)}$
	converges to $[(c\,k_\alpha)^{1/\alpha}\,\Gamma(2-1/\alpha)]^{-1}$.
	It ensues from the \iid nature of the sequence $(\#T_i)_{i\geq 1}$ that
	\[
		\sup_{n\geq 1} \, \esp [1] {\norm {X_n\wedge 1}^2}
		= \sup_{n\geq 1} \, \Bigg( n \, \esp [4] {\bigg(\frac {\#T_1} {n^\alpha}\wedge 1\bigg)^2}
		+ n(n-1) \, \esp [3] {\frac {\#T_1} {n^\alpha}\wedge 1}^2\Bigg)
		< \infty.
	\]
	Fatou's lemma (or classical results on Poisson Point Process,
	see \cite[Section~3.2]{kingman1992poissonprocesses})
	ensures that $\esp {\norm {\Delta\wedge 1}^2}$ is also finite.
	As a result, the sequence $(\norm {X_n\wedge 1 - \Delta\wedge 1})_{n\geq 1}$ is bounded in $L^2$.
	Since $\norm {X_n\wedge 1 - \Delta\wedge 1} \to 0$ \as,
	we also have $\esp [1] {\norm {X_n\wedge 1 - \Delta\wedge 1}} \to 0$.
	
	\smallskip
	
	If $\beta<1/\alpha$, then $\esp {\norm {\Delta - \Delta\wedge 1}^\beta}
	\leq \esp {\norm\Delta^\beta} = \esp {\Sigma_1^\beta} < \infty$.
	Moreover, since it converges, the sequence $\big(m^{1+1/\alpha} \prob {\#T_1 = m} \big)_m$
	is bounded by a finite constant, say $Q$.
	Consequently,
	\[
		\esp [1] {\norm {X_n - X_n\wedge 1}^\beta}
		= n \, \esp [4] {\bigg(\frac {\#T_1} {n^\alpha} - 1\bigg)_+^\beta}
		\leq Q \, n \!\! \sum_{k>n^\alpha}
			\frac {k^\beta} {n^{\alpha\beta}} \, \frac 1 {k^{1+1/\alpha}}
		\xrightarrow [n\to\infty] {}
		Q \int_1^{\mathrlap\infty} \, \frac {\D t} {t^{1+1/\alpha-\beta}}
		= \frac {\alpha \, Q} {1-\alpha\beta}
	\]
	which proves that the sequence $\big( \esp {\norm {X_n - X_n\wedge 1}^\beta} \big)_{n\geq 1}$ is bounded.
	Since this holds for all $\beta<1/\alpha$, if $\varepsilon$ is positive
	and such that $(1+\varepsilon)\beta =: \beta' < 1/\alpha$, then
	\[
		\sup_{n\geq 1} \, \esp [2] {\big(\norm {(X_n - X_n\wedge 1)
			- (\Delta - \Delta\wedge 1)}^\beta\big)^{1+\varepsilon}}
		\leq \sup_{n\geq 1} \, \esp [2] {\norm {X_n - X_n\wedge 1}^{\beta'}
			+ \norm {\Delta - \Delta\wedge 1}^{\beta'}}
		< \infty.
	\]
	Hence, the sequence
	$\big( \norm {(X_n - X_n\wedge 1) - (\Delta - \Delta\wedge 1)}^\beta \big)_{n\geq 1}$
	is bounded in $L^{1+\varepsilon}$.
	Because it converges to $0$ almost surely,
	its expectancy also goes to $0$ as $n$ tends to infinity.

	\smallskip

	For all $\beta<1/\alpha$ and $\varepsilon>0$,
	there exist a finite constant $C$ and a finite integer $n_\varepsilon$
	such that for all~$n\geq 1$
	\[
		\esp [2] {\norm {X_n\wedge1 - \Delta\wedge 1}}
		\, \vee \,
		\esp [2] {\norm {(X_n-X_n\wedge 1) - (\Delta-\Delta\wedge 1)}^\beta}
			\leq \varepsilon + C \ind_{n< n_\varepsilon}.
	\]
	Using the same arguments as in the proof of Lemma~\ref{lem:unary-immigration-prelim}
	it is easy to prove that for any $\kappa>\alpha-1$,
	\[
		R\, \esp [1] {1\wedge(N/R^{1/(\alpha-1)})^\kappa}
		\xrightarrow [R\to\infty] {}
		c \int_0^\infty \frac {1\wedge t^\kappa} {t^\alpha} \, \D t
		= \frac c {\kappa - (\alpha-1)} + \frac c {\alpha-1}.
	\]
	Consequently, if $\beta\in(1-1/\alpha,1/\alpha)$, we get
	{\allowdisplaybreaks
	\begin{align*}
		& \limsup_{R\to\infty} \:
		R \, \esp [4] {1 \wedge \Bigg( K
			\bigg(\frac N {R^{1/(\alpha-1)}}\bigg)^\alpha \, 
			\norm [1] {X_N - \Delta}\Bigg)}\\
		& \qquad \leq
		\begin{aligned}[t]
			\limsup_{R\to\infty} \;\;
			& R \, \esp [4] {1 \wedge \Bigg( K
				\bigg(\frac N {R^{1/(\alpha-1)}}\bigg)^\alpha \, 
				\esp [2] {\norm [1] {X_N\wedge 1 - \Delta\wedge 1} \bigm\vert \, N}\Bigg)}\\
			{}+{} & R \, \esp [4] {1 \wedge \Bigg( K^\beta
				\bigg(\frac N {R^{1/(\alpha-1)}}\bigg)^{\alpha\beta} \, 
				\esp [2] {\norm [1] {(X_N-X_N\wedge 1) - (\Delta-\Delta\wedge 1)}^\beta \bigm\vert \,N}\Bigg)}
		\end{aligned}\\
		& \qquad \leq
		\limsup_{R\to\infty} \;\;
		R \, \esp [4] {1 \wedge \Bigg( K \,
			\frac {N^\alpha} {R^{\alpha/(\alpha-1)}}\, 
			\Big(\varepsilon + C \ind_{N<n_\varepsilon}\Big)\Bigg)}
		+ R \, \esp [4] {1 \wedge \Bigg( K^\beta
			\frac {N^{\alpha\beta}} {R^{\alpha\beta/(\alpha-1)}}\, 
			\Big(\varepsilon + C \ind_{N<n_\varepsilon}\Big)\Bigg)}\\
		& \qquad \leq
		\limsup_{R\to\infty} \;\;
			\frac {K\,C\,n_\varepsilon^\alpha} {R^{\alpha/(\alpha-1)-1}}
			+ \frac {K^\beta C \, n_\varepsilon^{\alpha\beta}} {R^{\alpha\beta/(\alpha-1)-1}}
		\begin{aligned}[t]
			{}+{} & K^{\alpha/(\alpha-1)} \varepsilon^{\alpha/(\alpha-1)} \, R \,
				\esp [4] {1 \wedge \frac {N^\alpha} {R^{\alpha/(\alpha-1)}}}\\
			{}+{} & K^{\alpha/(\alpha-1)} \varepsilon^{[\alpha/(\alpha-1)]/\beta} \, R \,
				\esp [4] {1 \wedge \frac {N^{\alpha\beta}} {R^{\alpha\beta/(\alpha-1)}}}
		\end{aligned}\\
		& \qquad = O(\varepsilon^{\alpha/(\alpha-1)}).
	\end{align*}}
	Since this holds for any positive $\varepsilon$,
	it follows that
	\[
		R \, \esp [4] {1 \wedge \Bigg( K\, \bigg(\frac N {R^{1/(\alpha-1)}}\bigg)^\alpha \, 
			\norm [1] {X_N - \Delta}\Bigg)}
		\xrightarrow [R\to\infty] {} 0,
	\]
	which in turn proves that $R \, \esp [1] {F(\Lambda / R^{\alpha/(\alpha-1)})}$
	indeed converges to $(c\,k_\alpha)^{1/\alpha} \int_{\Sdec} F \, \D I^{(\alpha)}$.
	We conclude with Lemma~\ref{lem:portmanteau-extension}.
\end{proof}

\subsection{Cut-trees}

Let $\tau$ be a finite labelled tree.
If $\tau$ is made out of a single vertex, let its cut-tree $\operatorname {Cut}\,(\tau)$
be the tree with a single vertex.
Otherwise, define  the cut-tree of $\tau$
as the (unordered) binary tree $\operatorname {Cut}\,(\tau)$
obtained by the following recursive process:
\begin{itemize}
	\item Pick $a\to b$ uniformly at random among the edges of $\tau$
	and remove that edge,
	\item Let $\tau_1$ and $\tau_2$ be the two sub-trees of $\tau$ formerly connected by $a\to b$,
	\item Define the cut-tree of $\tau$ as the concatenation
	of the cut-trees of $\tau_1$ and $\tau_2$,
	\ie set $\operatorname {Cut}\,(\tau) :=
	\lBrack \operatorname {Cut}\,(\tau_1), \operatorname {Cut}\,(\tau_2) \rBrack$.
\end{itemize}
With this definition, if $\tau$ has $n$ vertices,
then $\operatorname {Cut}\,(\tau)$ has $n$ leaves.
The cut-tree of $\tau$ represents the genealogy of its dismantling
when we remove edge after edge, until all have been deleted.
\begin{figure}[ht]
	\captionsetup{justification=centering}
	\centering
	\begin{tikzpicture}[scale=.85, every node/.style={font=\small}, thick, baseline={(-90:1)}]
		\tikzstyle{phantomvertex} = [inner sep=0pt, outer sep=0pt, minimum width=0pt]
		\tikzstyle{edge} = [draw, -, odarkblue, shorten >=-0.5pt, shorten <=-0.5pt]
		\tikzstyle{roundlabel} = [circle, inner sep=2pt, minimum width=15pt, fill=white, draw=odarkblue]
		\tikzstyle{labelled} = [rounded corners=4pt, inner sep=2pt, minimum height=12pt, minimum width=12pt, fill=white, draw=odarkblue]
		
		\def\vertices{
				{(0,0)/a/a},
					{($(a)+(25:1)$)/g/g},
						{($(g)+(5:1)$)/d/d},
						{($(g)+(50:1)$)/e/e},
						{($(g)+(95:1)$)/b/b},
					{($(a)+(85:1)$)/c/c},
						{($(c)+(70:1)$)/h/h},
						{($(c)+(125:1)$)/i/i},
					{($(a)+(145:1)$)/f/f}}	
		\def\edges{
				{a/g/3},
					{g/d/4},
					{g/e/7},
					{b/g/2},
				{a/c/1},
					{c/h/6},
					{i/c/8},
				{f/a/5}}
		
		\begin{scope}[scale=4/3]
			\foreach \pos/\name/\label in \vertices
				\node[phantomvertex] (\name) at \pos {};
		\end{scope}
		\foreach \source/\dest/\label in \edges
			\path[edge] (\source) -- (\dest);
		\foreach \source/\dest/\label in \edges
			\path[bend right] (\source) to node[midway, black] {\footnotesize\label} (\dest);
		\foreach \pos/\name/\label in \vertices
			\node[roundlabel] (\name) at (\name) {\footnotesize\label};
		
		\def\cutvertices{
				{(6,-.5)/0/\,abcdefghi\,},
					{($(0)+(40:1)$)/1/\,chi\,},
						{($(1)+(20:1)$)/11/\,ci\,},
							{($(11)+(10:1)$)/111/i},
							{($(11)+(80:1)$)/112/c},
						{($(1)+(80:1)$)/12/h},
					{($(0)+(140:1)$)/2/\,abdefg\,},
						{($(2)+(80:1)$)/21/\,adefg\,},
							{($(21)+(50:1)$)/211/\,af\,},
								{($(211)+(20:1)$)/2111/f},
								{($(211)+(90:1)$)/2112/a},
							{($(21)+(130:1)$)/212/\,edg\,},
								{($(212)+(70:1)$)/2121/\,eg\,},
									{($(2121)+(50:1)$)/21211/g},
									{($(2121)+(140:1)$)/21212/e},
								{($(212)+(150:1)$)/2122/d},
						{($(2)+(160:1)$)/22/b}}	
		\def\cutedges{0/1, 1/11, 11/111, 11/112, 1/12,
				0/2, 2/21, 21/211, 211/2111, 211/2112,
				21/212, 212/2121, 2121/21211, 2121/21212,
				212/2122, 2/22}
		
		\begin{scope}[scale=1]
			\foreach \pos/\name/\label in \cutvertices
				\node[phantomvertex] (\name) at \pos {};
		\end{scope}
		\foreach \source/\dest in \cutedges
			\path[edge] (\source) -- (\dest);
		\foreach \pos/\name/\label in \cutvertices
			\node[labelled] (\name) at (\name) {\scriptsize\label\vphantom{fg}};
		
		\path[->, >=stealth', shorten >=4pt, shorten <=4pt, thin, gray]
			(a.south east) edge[bend right] 
			(0.west);
		\node at ($(a)+(0,-.75)$) {\small $\tau$};
		\node at ($(0)+(0,-.75)$) {\small $\operatorname{Cut}\,(\tau)$};
	\end{tikzpicture}
	\caption{A labelled tree $\tau$ and its cut-tree\protect\\
	(the edges of $\tau$ are labelled in the order they are removed)}
\end{figure}
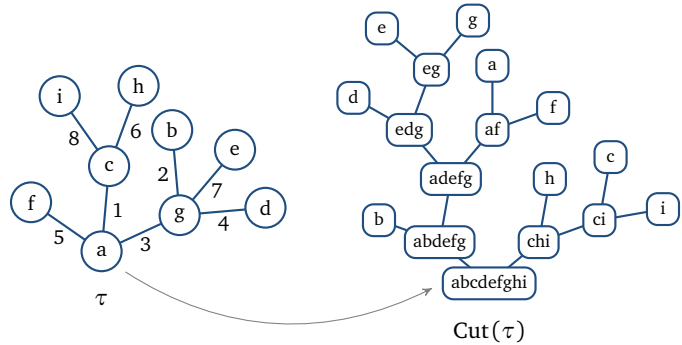

Cut-trees were introduced in~\cite{bertoin2012fires} as a means of generalising
the study of the number of cuts necessary to isolate a marked vertex or a finite number of marked vertices.
In this section, we will study the local and scaling limits
of two models of cut-trees,
studied in~\cite{bertoin2012fires} and~\cite{bertoin2015cutrecursivetrees},
which both satisfy the Markov branching property.
Also see~\cite{bertoin2013cutgw} and~\cite{dieuleveut2015stablegwcuttree}
for the study of the cut-trees of conditioned Galton-Watson trees

\subsubsection{Cut-trees of Cayley trees}

A \emph{Cayley tree} of size $n\geq 1$ is a labelled tree $\tau_n$ chosen uniformly at random
in the set of trees with $n$ labelled vertices (for convenience, with labels $1$ through $n$).
It is well-known that, viewed as an unlabelled tree,
$\tau_n$ has the same distribution as an unordered Galton-Watson tree with offspring law $\operatorname {Poisson}\,(1)$
conditioned to have $n$ vertices.
For all $n\geq 1$, let $T_n := \operatorname {Cut}\,(\tau_n)$
be the cut-tree of a Cayley tree with size $n$.

\smallskip

Let $(\vartheta_n)_{n\geq 0}$ be a sequence of \iid
unconditioned $\GW_{\operatorname {Poisson}\,(1)}$ trees.
Let $T_\infty$ be the tree obtained by attaching for each $n\geq0$ the cut-tree of $\vartheta_n$
to the vertex of an infinite branch at height $n$ by an edge.
In other words, set $T_\infty := \ttb_\infty \bigotimes_{n\geq 0} \big(\ttv_n,
\lBrack\operatorname {Cut} \, (\vartheta_n)\rBrack\big)$.

\smallskip

The aim of this section will be to prove the next two results.

\begin{proposition}\label{prop:cut-caley-local-limits}
	When $n\to\infty$,
	$T_n$ converges to $T_\infty$ in distribution
	with respect to the local limit topology.
\end{proposition}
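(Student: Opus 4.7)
The plan is to apply Corollary~\ref{cor:local-limits-mb-one-spine} in the $\MB^\calL$ setting. First, I will show that $(T_n)_{n\geq 1}$ is a Markov branching family indexed by the number of leaves. This follows from two observations: $\operatorname{Cut}(\tau_n)$ has $n$ leaves by construction, and removing the first (uniform) edge of the Cayley tree $\tau_n$ produces two induced trees which, conditionally on their vertex sets $(A,B)$ with $|A|=k,\,|B|=n-k$, are independent uniform Cayley trees on $A$ and $B$. Consequently $T_n = \lBrack T_k, T_{n-k}\rBrack^\downarrow$ given the first-split sizes, which is precisely the $\MB^{\calL,q^\calL}_n$ recursion with a first-split law $q^\calL_n$ supported on two-part partitions of $n$ and with $q^\calL_n(n)=0$. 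A direct count via Cayley's formula (number of Cayley trees on $[n]$ with a distinguished edge separating a size-$k$ from a size-$(n-k)$ component) yields, for $1\leq k\leq n-k$,
\[
    q^\calL_n(n-k,k)
    = \frac{1}{n-1}\binom{n}{k}\,\frac{k^{k-1}(n-k)^{n-k-1}}{n^{n-2}}
\]
(with the obvious halving when $k=n-k$).

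Next, I will identify $T_\infty$ as an infinite Markov branching tree with a unique spine. By construction, $T_\infty$ is an infinite branch carrying at each spine vertex $\ttv_n$ a single grafted finite sub-tree, namely $\operatorname{Cut}(\vartheta_n)$, which has $|\vartheta_n|$ leaves. Since a $\operatorname{Poisson}(1)$ Galton-Watson tree conditioned to have size $m$ is (as an unordered tree) a uniform Cayley tree of size $m$, it follows that conditionally on $|\vartheta_n|=m$ the sub-tree $\operatorname{Cut}(\vartheta_n)$ is distributed as $T_m$. Hence $T_\infty$ has law $\MB^{\calL,q^\calL,q^\calL_\infty}_\infty$ where $q^\calL_\infty$ is concentrated on partitions of the form $(\infty,m)$, $m\geq 1$, with
\[
    q^\calL_\infty(\infty,m) = \prob{|\vartheta|=m} = \frac{m^{m-1}\,\E^{-m}}{m!},
\]
the Borel distribution (which is a probability measure on $\bbN$, so $q^\calL_\infty(m_\infty=1)=1$).

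Finally, I will verify the pointwise convergence needed to invoke Corollary~\ref{cor:local-limits-mb-one-spine}. Since $q^\calL_n$ is supported on partitions of length at most two and $q^\calL_\infty$ on partitions of the form $(\infty,m)$, only the case $\lambda=(m)$ for some $m\geq 1$ (and $\lambda=\varnothing$, which is trivial since both sides vanish) needs to be checked. For fixed $m\geq 1$, Stirling-type asymptotics give $\binom{n}{m}\sim n^m/m!$ and $(n-m)^{n-m-1}/n^{n-2} = n^{1-m}(1-m/n)^{n-m-1}\sim n^{1-m}\E^{-m}$, whence
\[
    q^\calL_n(n-m,m)
    \sim \frac{n^m}{m!}\cdot\frac{m^{m-1}\,n^{1-m}\,\E^{-m}}{n-1}
    \xrightarrow[n\to\infty]{} \frac{m^{m-1}\,\E^{-m}}{m!}
    = q^\calL_\infty(\infty,m).
\]
Corollary~\ref{cor:local-limits-mb-one-spine} then gives $T_n\Rightarrow T_\infty$ in the local limit topology. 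The main substance of the argument is really the Markov branching identification and the identification of $q^\calL_\infty$ as the Borel distribution; the asymptotics is a routine computation once Cayley's formula is written down.
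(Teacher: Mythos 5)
Your proof is correct and follows essentially the same route as the paper: identify the first-split distribution $q^\calL_n$ and the limiting measure $q^\calL_\infty$, verify $q^\calL_n(n-m,m)\to q^\calL_\infty(\infty,m)$ via Stirling-type asymptotics, and invoke Corollary~\ref{cor:local-limits-mb-one-spine}. The only notable difference is that you re-derive the Markov branching property and the formula for $q^\calL_n$ from the structure of a uniformly cut Cayley tree (via Cayley's formula), whereas the paper simply quotes these facts from Bertoin's paper \cite{bertoin2012fires}; your version is thus slightly more self-contained, but the argument is the same in substance.
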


\begin{proposition}\label{prop:cut-caley-scaling-limits}
	Endow $T_\infty$ with counting measure on its leaves $\mu_\infty$.
	Then $(T_\infty/R, \mu_\infty/R^2)$ converges as $R$ goes to infinity
	to $(\calT_B, 1/2 \cdot \mu_B)$ in distribution with respect to the $\upD_\GHP$ topology,
	where $(\calT_B, \mu_B)$ denotes the immigration Brownian tree.
\end{proposition}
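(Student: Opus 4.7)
The plan is to deduce the result from the leaves version of Theorem~\ref{thm:scaling-limits-infinite-mb-trees}. Its hypotheses are (a) that $T_\infty$ is distributed as $\MB^{\calL,q,q_\infty}_\infty$ for some Markov branching family $(q_n)_n$ and some $q_\infty$ concentrated on partitions with a single infinite part; (b) that $(q_n)_n$ satisfies Assumption~\hyperref[assumption:scaling]{$(\mathtt S)$} with index $\gamma = 1/2$; and (c) that $q_* = q_\infty(\infty,\,\cdot\,)$ satisfies Assumption~\hyperref[assumption:immigration]{$(\mathtt I)$}. Point (a) is taken from the part of Section~5.2 devoted to local limits of Cayley cut-trees: the family $(T_n) = (\operatorname{Cut}(\tau_n))$ is Markov branching indexed by the number of leaves, because removing a uniform edge splits $\tau_n$ into two components which, conditionally on their sizes $k$ and $n-k$, are independent uniform Cayley trees, so that $\operatorname{Cut}(\tau_n)$ has two independent sub-trees distributed as $T_k$ and $T_{n-k}$. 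In the construction of $T_\infty$, the GW-trees $\vartheta_n$ are a.s.\ finite (Poisson$(1)$ is critical), so $\ttb_\infty$ is the unique infinite spine; and conditionally on $\#\vartheta_n = k$, the tree $\vartheta_n$ is uniform over Cayley trees of size $k$, whence $\operatorname{Cut}(\vartheta_n)\sim \MB^{\calL,q}_k$. Hence $T_\infty\sim \MB^{\calL,q,q_\infty}_\infty$, where $q_\infty$ is supported on partitions of the form $(\infty,k)$ with $q_\infty(\infty,k) = \mathbb{P}(\#\vartheta = k) = k^{k-1}e^{-k}/k!$ by Borel's formula.

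Assumption~\hyperref[assumption:scaling]{$(\mathtt S)$} with $\gamma = 1/2$ and dislocation measure $c_1\nu_B$ for an explicit constant $c_1$ should have been proved in Section~5.2 to establish, via Theorem~\ref{thm:haas-scaling-mb}, the convergence of the finite cut-trees $(T_n/\sqrt n,\mu_n/n)$ to the Brownian CRT (Bertoin's theorem). Assumption~\hyperref[assumption:immigration]{$(\mathtt I)$} is verified via Proposition~\ref{prop:unary-immigration-criterion}, since $q_*$ is supported on one-element partitions: we need only the asymptotics of $\mathbb{P}(\#\vartheta = n)$ and of $\mathbb{P}(\#\vartheta\geq n)$. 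Borel's formula combined with Stirling gives $n^{3/2}\mathbb{P}(\#\vartheta = n) \to (2\pi)^{-1/2}$, and a tail summation in the spirit of Lemma~\ref{lem:kesten-finite-variance-1st-component} yields $n^{1/2}\mathbb{P}(\#\vartheta\geq n) \to (2/\pi)^{1/2}$, which are compatible with Proposition~\ref{prop:unary-immigration-criterion} taking $c = 1/\sqrt{2\pi}$. The resulting immigration measure is $c_2 I_B$ with $c_2$ explicit (recall $I_B = (2/\pi)^{1/2} I^{\mathrm{un}}_{1/2}$).

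Theorem~\ref{thm:scaling-limits-infinite-mb-trees} then delivers the convergence in distribution, with respect to $\upD_\GHP$, of $(T_\infty/R, \mu_\infty/R^2)$ to a $(1/2, c_1\nu_B, c_2 I_B)$-fragmentation tree with immigration. Using the self-similarity properties of $\nu_B$ and $I_B$ (Remark~\ref{nb:ssfi-rescaling}), this rewrites as $(\calT_B, c\,\mu_B)$ for an explicit constant $c$ that should equal $1/2$. The main obstacle is the constant bookkeeping: one must verify that the two explicit factors $c_1$ and $c_2$ emerging, respectively, from the cut-tree scaling theorem of Section~5.2 and from the Borel--Stirling computation above, combine with the self-similarity rescaling to produce exactly the factor $1/2$ claimed in the statement. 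This arithmetic consistency check is the genuine content of the proof; the rest is a straightforward invocation of the machinery developed in Sections~2--4.
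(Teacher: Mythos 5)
Your proposal follows essentially the same route as the paper's proof: quote Bertoin's result (Section~2.1 of the cited paper on fires) to get Assumption~$(\mathtt S)$ with $\gamma=1/2$ and $\nu = \tfrac12\nu_B$, establish Assumption~$(\mathtt I)$ via the Borel--Stirling asymptotics $n^{3/2}q_\infty(\infty,n)\to(2\pi)^{-1/2}$ fed into Proposition~\ref{prop:unary-immigration-criterion}, invoke Theorem~\ref{thm:scaling-limits-infinite-mb-trees} to identify the limit as a $(1/2,\tfrac12\nu_B,\tfrac12 I_B)$-fragmentation tree with immigration, and finish with the rescaling from Remark~\ref{nb:ssfi-rescaling}. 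You flag the constant bookkeeping as a step to complete; it is worth actually running it, since Remark~\ref{nb:ssfi-rescaling} shows $(\calT_B,c\mu_B)$ has fragmentation pair $(1/2,c^{1/2}\nu_B,c^{1/2}I_B)$, so matching $c^{1/2}=\tfrac12$ gives $c=\tfrac14$ rather than the $\tfrac12$ quoted in the statement (compare with the Kesten case, where $\nu=\tfrac\sigma2\nu_B$ leads to the factor $\sigma^2/4$); this is precisely the kind of discrepancy your ``arithmetic consistency check'' is designed to catch.
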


\paragraph{Markov branching property}
It was stated in~\cite{bertoin2012fires} that $(T_n)$ satisfies the Markov branching property
and more specifically, that the distribution of $T_n$ is $\MB^{\calL,q}_n$
where the associated first-split distributions are given by $q_1(1) = 1$,
for all $n\geq 2$, $q_n(p\neq 2) = 0$ and if $1\leq k < n/2$,
\[
	q_n (n-k, k)
	= \frac {(n-k)^{n-k-1}} {(n-k)!}
	\, \frac {k^{k-1}} {k!}
	\, \frac {(n-2)!} {n^{n-3}}.
\]

The tree $T_\infty$ can be described as an infinite Markov branching tree
with distribution $\MB^{\smash\calL,q,q_{\smash\infty}}_\infty$
where the probability measure $q_\infty$ is defined
by $q_\infty(p\neq 2) = q_\infty(m_\infty\neq 1) = 0$
and for all positive $k$, $q_\infty(\infty,k) = \prob {\#\vartheta=k}$
where $\vartheta$ is a $\GW_{\operatorname {Poisson}\,(1)}$ tree.
Recall that the size of $\vartheta$ has Borel distribution with parameter $1$,
therefore, for any positive $k$, $q_\infty(\infty,k) = k^{k-1} \, \E^{-k} / k!$.

\paragraph{Local limits}
For any $k\geq 1$, when $n\to\infty$, Stirling's approximation gives
\[
	q_n (n-k, k)
	\sim \frac {k^{k-1}\, \E^{2-k}} {k!} ( 1 -2/n )^n
	\xrightarrow [n\to\infty] {} \; \frac {k^{k-1} \, \E^{-k}} {k!}
	= q_\infty(\infty,k).
\]
We may then use Corollary~\ref{cor:local-limits-mb-one-spine}
and thus prove Proposition~\ref{prop:cut-caley-scaling-limits}.

\paragraph{Scaling limits}
Section~2.1 in~\cite{bertoin2012fires} proves that $n^{1/2} \, (1-s_1) \, \bar q_n(\D\bfs)$
converges weakly to $(1-s_1) \, 1/2 \cdot \nu_B (\D\bfs)$ in the sense of finite measures on $\Sdec_{\leq 1}$.

Moreover, $q_\infty$ is \as binary,
and Stirling's approximation ensures that $n^{3/2} q_\infty(\infty,n) \to (2\pi)^{-1/2}$.
Therefore, if $\Lambda$ is such that $(\infty,\Lambda)$ follows $q_\infty$
and if $q^{(R)}$ is the distribution of $\Lambda/R^2$,
then Proposition~\ref{prop:unary-immigration-criterion} implies that $R \, (1\wedge\norm\bfs) \, q^{(R)}(\D\bfs)$
weakly converges to $(1\wedge\norm\bfs) \, 1/2 \cdot I_B(\D\bfs)$ as $R\to\infty$.
In other words, Assumption~\hyperref[assumption:immigration]{$(\mathtt I)$}
is also satisfied.

Consequently, Theorem~\ref{thm:scaling-limits-infinite-mb-trees}
ensures that when $R\to\infty$, $(T_\infty/R, \mu_\infty/R^2)$ converges
in distribution to a $(1/2, 1/2 \cdot \nu_B, 1/2 \cdot I_B)$ fragmentation tree
with immigration with respect to the topology induced by $\upD_\GHP$.
Remark~\ref{nb:ssfi-rescaling} then concludes the proof of Proposition~\ref{prop:cut-caley-scaling-limits}.

\subsubsection{Cut-trees of uniform recursive trees}

A \emph{recursive tree} with $n$ vertices is a labelled tree (with labels $1$ through $n$)
such that the labels on the shortest path from $1$ to any given leaf are increasing.
For all $n\geq 1$, let $\tau_n$ denote a labelled tree chosen uniformly at random among
the set of recursive trees with $n$ vertices
and call $T_n$ its cut-tree.

\smallskip

Define a probability measure $\pi$ on $\bbN$ by $\pi(n) = 1/[n(n+1)]$
and let $(X_n, \vartheta_n)_{n\geq 0}$ be a sequence of \iid variables,
where for each $n$, $X_n$ follows $\pi$ and conditionally on $X_n = \ell$,
$\vartheta_n$ is a recursive tree with $\ell$ vertices.
Define $T_\infty$ as the tree obtained by attaching the cut-tree of $\vartheta_n$
by an edge to an infinite branch at height $n$,
\ie set $T_\infty := \ttb_\infty \bigotimes_{n\geq 0} \big(\ttv_n,
\lBrack\operatorname {Cut} \, (\vartheta_n)\rBrack\big)$.

\begin{proposition}\label{prop:cut-recursive-local-limits}
	In the sense of the local limit topology,
	$T_n$ converges in distribution to $T_\infty$
	when $n\to\infty$.
\end{proposition}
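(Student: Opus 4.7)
The plan is to fit $(T_n)$ into the $\operatorname{MB}^{\calL,q}$ framework and apply Corollary~\ref{cor:local-limits-mb-one-spine}, in exact parallel to the proof just given for cut-trees of Cayley trees. The key input is a result of Bertoin on cutting uniform recursive trees: if $\tau_n$ is uniform among recursive trees with $n$ vertices and a uniformly chosen edge is removed, then conditionally on the unordered pair of component sizes being $\{k,n-k\}$, the two components (after the obvious relabelling) are independent uniform recursive trees of sizes $k$ and $n-k$. Iterating this observation, $T_n=\operatorname{Cut}(\tau_n)$ is a Markov branching tree indexed by the number of leaves, so $T_n$ has distribution $\operatorname{MB}^{\calL,q}_n$ for a sequence $(q_n)_{n\geq 1}$ of first-split distributions supported by binary partitions $(n-k,k)$, $1\leq k\leq\lfloor n/2\rfloor$.

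Second, I would identify $T_\infty$ as an element of the corresponding infinite family $\operatorname{MB}^{\calL,q,q_\infty}_\infty$ with a unique infinite spine. By the very construction of $T_\infty$, at each vertex $\ttv_n$ of the spine the finite grafted sub-tree $\lBrack\operatorname{Cut}(\vartheta_n)\rBrack$ has $X_n$ leaves with $X_n\sim\pi$, and conditionally on $X_n=k$ this grafted tree is distributed as a tree with distribution $\operatorname{MB}^{\calL,q}_k$ (it is the cut-tree of a uniform recursive tree of size $k$, possibly preceded by an additional edge). Therefore $q_\infty$ is supported on partitions of the form $(\infty,k)$ with
\[
	q_\infty(\infty,k)=\pi(k)=\frac 1{k(k+1)},\qquad k\geq 1,
\]
and in particular $q_\infty(m_\infty=1)=1$.

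Third, I would verify the hypothesis of Corollary~\ref{cor:local-limits-mb-one-spine}: for every finite partition $\lambda\in\calP_{<\infty}$, $q_n(n-\norm\lambda,\lambda)\to q_\infty(\infty,\lambda)$. Since both $q_n$ and $q_\infty$ are supported by binary splits, only the case $\lambda=(k)$ is non-trivial. A direct count gives
\[
	q_n(n-k,k)=\frac n{(n-1)\,k(k+1)}
\]
for $1\leq k<n/2$ (with the customary factor $1/2$ when $2k=n$); indeed, picking a uniform edge of $\tau_n$ amounts to choosing a uniform vertex $j\in\{2,\dots,n\}$ and keeping the subtree of descendants of $j$, whose size distribution $\mathbb P(|D_j|=k)$ summed over $j$ yields the stated formula (an elementary computation using the known distribution of subtree sizes in a uniform recursive tree, and verified by $\sum_k n/[(n-1)k(k+1)]=1$). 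Letting $n\to\infty$ with $k$ fixed then gives $q_n(n-k,k)\to 1/[k(k+1)]=q_\infty(\infty,k)$, and Corollary~\ref{cor:local-limits-mb-one-spine} concludes.

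The only real obstacle is the combinatorial identification of $q_n$, i.e.\ establishing that cutting a uniform recursive tree produces conditionally independent uniform recursive sub-trees and extracting the explicit binary-split probabilities; both are known results of Bertoin (cited in the introduction), so once invoked the rest of the argument is a routine application of Corollary~\ref{cor:local-limits-mb-one-spine}.
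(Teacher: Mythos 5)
Your overall strategy is exactly the paper's: exhibit $T_n$ as a Markov branching tree $\MB^{\calL,q}_n$ with binary first-split distributions $q_n$, identify $T_\infty$ as the corresponding one-spine infinite tree $\MB^{\calL,q,q_\infty}_\infty$ with $q_\infty(\infty,k)=\pi(k)$, and conclude via Corollary~\ref{cor:local-limits-mb-one-spine}. The conclusion you reach is also correct.

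However, your explicit formula for $q_n$ is wrong. When you remove a uniform edge of $\tau_n$, the component not containing the root has a size $X$ distributed according to $\pi$ conditioned on $X<n$, i.e.\ $\prob{X=k}=\frac n{(n-1)k(k+1)}$ for $k=1,\dots,n-1$. But $q_n(n-k,k)$ for $1\le k<n/2$ is the probability that the \emph{unordered} pair of component sizes equals $\{k,n-k\}$, which can happen two ways (the non-root component has size $k$ or size $n-k$), so
\[
	q_n(n-k,k)
	= \frac n{n-1}\,\bigg(\frac 1{k(k+1)}+\frac 1{(n-k)(n-k+1)}\bigg),
	\qquad 1\le k<n/2,
\]
and $q_n(n/2,n/2)=\frac 4{(n-1)(n+2)}$. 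Your formula $\frac n{(n-1)k(k+1)}$ keeps only the first of the two terms, and your normalisation check $\sum_k n/[(n-1)k(k+1)]=1$ runs over $k=1,\dots,n-1$, which is the range of $X$ and not the support $\{1,\dots,\lfloor n/2\rfloor\}$ of $q_n$; it therefore does not verify that $q_n$ is a probability. The error is harmless for the limit statement, since the missing term $\frac1{(n-k)(n-k+1)}$ vanishes as $n\to\infty$ with $k$ fixed, so the key convergence $q_n(n-\norm\lambda,\lambda)\to q_\infty(\infty,\lambda)$ survives and the appeal to Corollary~\ref{cor:local-limits-mb-one-spine} is sound — but the displayed formula for $q_n$ should be corrected.
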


It was observed in~\cite{bertoin2014percolationrecursivetrees} and~\cite{bertoin2015cutrecursivetrees}
that the sequence $(T_n)_{n\geq 1}$ is Markov branching.
Moreover, we may deduce from \cite[Section~2]{bertoin2014percolationrecursivetrees}
the expression of the respective distributions $q_n$ of $\Lambda^{\smash\calL}(T_n)$.
Clearly, $q_1(1) = 1$, and for $n\geq 2$, if $X$ denotes a random variable with distribution $\pi$,
then for all $k\leq n/2$, $q_n(n-k,k) = \prob {X=k \vert X<n} + \prob {X=n-k \vert X<n} \, \ind_{k\neq n/2}$.
In particular,
\[
	q_n(n-k,k) =
	\begin{dcases}
		\frac n {n-1} \bigg( \frac 1 {k(k+1)} + \frac 1 {(n-k) (n-k+1)} \bigg)
		& \text{if $k<n/2$},\\
		\frac 4 {(n-1) (n+2)}
		& \text{if $k=n/2$}.
	\end{dcases}
\]

The tree $T_\infty$ may also be described as an infinite Markov branching tree
with distribution $\MB^{\smash\calL,q,q_{\smash\infty}}_\infty$
where the measure $q_\infty$ is given by $q_\infty(p\neq 2) = q_\infty(m_\infty\neq 1) = 0$
and for all $k\geq 1$, $q_\infty(\infty, k) = \pi(k)$.

\smallskip

If $k$ is a fixed integer, then $q_n(n-k,k)$ clearly converges to $q_\infty(\infty,k)$.
We conclude the proof of Proposition~\ref{prop:cut-recursive-local-limits}
with Corollary~\ref{cor:local-limits-mb-one-spine}.

\begin{remark}
	It was shown in~\cite{bertoin2015cutrecursivetrees}
	that $(n/\log n)^{-1} T_n$ converges to the real interval $[0,1]$
	rooted at $0$ and endowed with the Lebesgue measure.
	However, Assumption~\hyperref[assumption:scaling]{$(\mathtt S)$} doesn't hold.
\end{remark}

\subsection[The \texorpdfstring{$\alpha$-$\gamma$}{alpha-gamma} model]{The $\boldsymbol\alpha$-$\boldsymbol\gamma$ model}
\label{sec:alpha-gamma}

In this section, we will study trees generated according
to the algorithm of the $\alpha$-$\gamma$ model described in~\cite{ford2009alphagamma}.
This algorithm was introduced as an interpolation between various models
of sequentially growing trees such as R\'emy's algorithm~\cite{remy1985binarygw},
used to generate uniform binary trees with any number of leaves,
Marchal's~\cite{marchal2008stabletree}, which gives the $n$-dimensional marginal of Duquesne-Le~Gall's stable trees
(the discrete tree spanned by $n$ leaves chosen uniformly at random in a stable tree),
and Ford's $\alpha$-model~\cite{ford2005alpha}, used for instance in phylogeny.

\smallskip

Let $0 \leq \gamma \leq \alpha \leq 1$.
Start with $T_1 := \{\varnothing\}$, the trivial tree,
and $T_2 := \{\varnothing,(1),(2)\}$, a tree with two leaves attached to its root.
Then for $n\geq 3$, conditionally on the tree $T_{n-1}$:
\begin{itemize}
	\item Assign to each edge of $T_{n-1}$ (considered as a planted tree,
	\ie a tree in which a phantom edge has been attached under the root)
	the weight $1-\alpha$ if the edge ends with a leaf or $\gamma$ otherwise,
	\item Also assign to each non-leaf vertex $u$ the weight $[c_u(T_{n-1})-1]\alpha - \gamma$,
	\item Pick an edge or a vertex in $T_{n-1}$ with probability proportional to these weights,
	\begin{itemize}
		\item If an edge was picked, place a new vertex at its middle
		and attach a new leaf to it,
		\item If a vertex was selected, attach a new leaf to it,
	\end{itemize}
\end{itemize}
and call $T_n$ the tree thus obtained.
We will also note $\AG_{\alpha,\gamma}^n$ its distribution
for all $n\geq 1$ and $0\leq\gamma\leq\alpha\leq 1$.

\begin{remark}
	As mentioned at the beginning of this section,
	some particular choices of parameters give previously studied algorithms:
	\begin{itemize}
		\item When $\alpha = \gamma = 1/2$,
		we get R\'emy' algorithm~\cite{remy1985binarygw},
		\item If $\beta\in(1,2)$, taking $\alpha=1/\beta$ and $\gamma=1-\alpha$
		gives Marchal's algorithm~\cite{marchal2008stabletree},
		\item When $\alpha=\gamma$, this algorithm coincides
		with that of Ford's $\alpha$-model~\cite{ford2005alpha}.
	\end{itemize}
\end{remark}

\paragraph{The Beta geometric distribution}
Fix $\theta$ in $(0,1)$.
Let $\Pi$ be a Beta random variable with parameters $(1-\theta,\theta)$,
and conditionally on $\Pi$, let $X$ have geometric distribution with parameter $1-\Pi$,
meaning that $\prob {X=n \,\vert\, \Pi} = \Pi^n (1-\Pi)$ for every integer $n\geq 0$.
We say that $X$ is a \emph{beta geometric} variable of parameters $(\theta,1-\theta)$.
For all integers $n\geq 0$,
\begin{align*}
	\prob {X = n}
	& = \esp [1] {\Pi^n (1-\Pi)}
	= \frac 1 {\Beta(1-\theta, \theta)} \int_0^1 x^{n-\theta} (1-x)^\theta \D x
	= \frac {\theta\,\Gamma(n+1-\theta)} {\Gamma(1-\theta) \, (n+1)!}.
\end{align*}
We will also use the convention $X=0$ \as if $\theta=1$
and $X=\infty$ \as if $\theta=0$.

\paragraph{Infinite $\alpha$-$\gamma$ tree}
Assume that $0<\gamma\leq\alpha\leq 1$.
Let $(X_n)_{n\geq 0}$ be a sequence of \iid beta geometric random variables with parameters $(\gamma/\alpha,1-\gamma/\alpha)$.
Let $(Y_{n,k}, \tau_{n,k})$ be a sequence of \iid variables independent of $(X_n)_n$
such that $Y_{n,k}$ is a $(\alpha,1-\alpha)$ beta geometric variable
and conditionally on $Y_{n,k} = \ell$, $\tau_{n,k}$ is an $\alpha$-$\gamma$ tree with $\ell+1$ leaves,
\ie $\tau_{n,k}$ follows $\AG_{\alpha,\gamma}^{\ell+1}$.

Finally, conditionally on $(X_n,Y_{n,k},\tau_{n,k} ; n\geq 0, k\geq 0)$,
define $T_\infty$ as the tree obtained by grafting for each $n\geq 0$
the concatenation of $\tau_{n,i}$, $0\leq i\leq X_n$
at height $n$ on an infinite branch.
In other words,
\[
	\textstyle
	T_\infty := \ttb_\infty \bigotimes_{n\geq 0} \big(\ttv_n,
		\lBrack \tau_{n,0}, \dots \tau_{n,X_n} \rBrack\big)
\]
and denote by $\AG_{\alpha,\gamma}^\infty$ its distribution.

\begin{remark}
	In Ford's $\alpha$-model, \ie when $\alpha=\gamma>0$,
	$X_n=0$ \as for all $n$, so a single tree is grafted at each height.
	Similarly, when $\alpha=1$ and $0<\gamma\leq\alpha$, $Y_{n,k}=0$ \as.
\end{remark}

\smallskip

We will start our study of the $\alpha$-$\gamma$ model by proving
this next proposition with the help of Theorem~\ref{thm:local-limits-markov-branching}.
Similar results for $\alpha=\gamma$ were already proved in~\cite{stefansson2009ford}
and in~\cite[Lemma~3.8]{broutin2015andortrees} for any $0<\gamma\leq\alpha\leq 1$.

\begin{proposition}\label{prop:alpha-gamma-local-limit}
	For any $0<\gamma\leq\alpha\leq 1$,
	the probability measure $\AG_{\alpha,\gamma}^n$
	converges weakly to $\AG_{\alpha,\gamma}^\infty$
	as $n$ grows to $\infty$
	in the sense of the local limit topology.
\end{proposition}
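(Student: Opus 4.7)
The plan is to verify the hypotheses of Corollary~\ref{cor:local-limits-mb-one-spine}, applied in its form for the $\MB^{\smash\calL,q}$ family. This amounts to three tasks: (a) showing that $(\AG_{\alpha,\gamma}^n)_{n\geq 1}$ is Markov branching, so that $\AG_{\alpha,\gamma}^n = \MB^{\smash\calL,q}_n$ for some explicit sequence $q=(q_n)$ of first-split distributions on $\calP_n$; (b) identifying $T_\infty$ as $\MB^{\smash\calL, q, q_\infty}_\infty$ with a unique infinite spine, for an explicit $q_\infty$ supported on $\{(\infty,\lambda) : \lambda\in\calP_{<\infty}\}$; and (c) checking the pointwise limit $q_n(n-\norm\lambda, \lambda) \to q_\infty(\infty, \lambda)$ for each $\lambda\in\calP_{<\infty}$.

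Task (a) rests on the sampling consistency of the $\alpha$-$\gamma$ growth rule: conditionally on the sizes $(\lambda_1,\dots,\lambda_p)$ of the sub-trees above the root of $T_n$, these sub-trees are mutually independent and the $i^\text{th}$ one is itself distributed as $\AG_{\alpha,\gamma}^{\lambda_i}$. This yields the Markov branching property, and an induction on $n$ using the transition weights $1-\alpha$, $\gamma$ and $[c_u-1]\alpha-\gamma$ gives a closed form for $q_n$ as a product of Gamma-function ratios reflecting the $(\gamma/\alpha,1-\gamma/\alpha)$ and $(\alpha,1-\alpha)$ beta-geometric statistics built into the algorithm. Task (b) is then straightforward: because $\gamma\leq\alpha$ forces $X_n$ to be \as finite, $T_\infty$ indeed has a unique infinite spine $\ttb_\infty$, on whose vertices \iid finite bushes are grafted. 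Writing $X$ for a $(\gamma/\alpha,1-\gamma/\alpha)$-beta-geometric variable and $Y$ for an $(\alpha,1-\alpha)$-beta-geometric one, the first-split law of $T_\infty$ at its root is
\[
	q_\infty(\infty, \lambda)
	= \prob{X = p-1}
	\cdot \frac{p!}{\prod_{j\geq 1} m_j(\lambda)!}
	\prod_{i=1}^{p} \prob{Y = \lambda_i - 1},
\]
for every $\lambda=(\lambda_1,\dots,\lambda_p)\in\calP_{<\infty}$; the factor $\prob{X=p-1}$ accounts for the number of finite sub-trees grafted at $\ttv_0$, while the remaining factors record that these $p$ sub-trees are \iid with leaf-count $Y+1$.

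Task (c) is then a Stirling-type asymptotic exercise. Substituting the closed form of $q_n(n-\norm\lambda, \lambda)$ obtained in (a) and separating the $n$-dependence, one Gamma-ratio arising from the distinguished ``large'' block of size $n-\norm\lambda$ converges to $\prob{X=p-1}$, while the surviving factors match exactly the $\lambda$-dependent piece of $q_\infty(\infty, \lambda)$. Corollary~\ref{cor:local-limits-mb-one-spine} then yields the claimed local limit. The main obstacle is task (a): the Markov branching property itself is almost immediate from the sequential construction, but extracting a sufficiently explicit expression for $q_n$ to make the asymptotics in (c) transparent requires a careful combinatorial identification of how leaves are partitioned among the root sub-trees, essentially mirroring the Chinese-restaurant-process structure underlying the $\alpha$-$\gamma$ model; once this identification is made, the remaining steps reduce to standard Gamma-function asymptotics.
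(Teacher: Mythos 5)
Your proposal is correct and follows essentially the same route as the paper: identify $\AG_{\alpha,\gamma}^n=\MB^{\smash\calL,q}_n$ via the first-split laws $q_n$ from Chen--Ford--Winkel, write $\AG_{\alpha,\gamma}^\infty=\MB^{\smash\calL,q,q_\infty}_\infty$ with $q_\infty(\infty,\lambda)=\prob{X=p-1,\ (Y_1+1,\dots,Y_p+1)^\downarrow=\lambda}$, and check $q_n(n-\norm\lambda,\lambda)\to q_\infty(\infty,\lambda)$ by Stirling before invoking Corollary~\ref{cor:local-limits-mb-one-spine}. One prose-level slip in task~(c): the Gamma-ratio attached to the large block $n-\norm\lambda$ tends to $1$ (not to $\prob{X=p-1}$), the factor $\prob{X=p-1}$ instead coming from $\gamma\,\alpha^{p-1}\Gamma(p-\gamma/\alpha)/\Gamma(1-\gamma/\alpha)$ together with the symmetry factor $p!/\prod_j m_j(\lambda)!$; this does not affect the conclusion.
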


We will then study the scaling limits of these infinite trees:
Section~\ref{sec:alpha-gamma-scaling-limits} will focus on the case $0<\gamma<\alpha<1$
and Section~\ref{sec:ford-alpha-model}, on $\alpha=\gamma$.

\subsubsection{Markov branching property and local limits}\label{sec:alpha-gamma-local-limits}
Proposition~1 in~\cite{ford2009alphagamma} states
that the sequence $(\AG_{\alpha,\gamma}^n)_n$ satisfies the Markov branching property.
Moreover, the sequence $q = (q_n)_n$
associated to the first split distributions of $T_n$,
\ie such that $q_n$ is the law of $\Lambda^\calL(T_n)$ for all $n\geq 1$,
is given by $q_1 (\varnothing) = 1$,
and for any $n\geq 2$, for all $\lambda=(\lambda_1,\dots,\lambda_p)\in\calP_n$,
\[
	q_n (\lambda)
	= \frac 1 {\prod\limits_{j\geq 1} m_j(\lambda)} \,
	\bigg(\gamma + \frac {1-\alpha-\gamma} {n(n-1)} \sum_{i\neq j} \lambda_i \lambda_j\bigg)
	\frac {\Gamma(1-\alpha) \, n!} {\Gamma(n-\alpha)} \,
	\frac {\alpha^{p-2} \, \Gamma(p-1-\gamma/\alpha)} {\Gamma(1-\gamma/\alpha)} \,
	\prod_{i=1}^p \frac {\Gamma(\lambda_i-\alpha)} {\Gamma(1-\alpha) \, \lambda_i!},
\]
with the conventions $\Gamma(0)=\infty$ and $\Gamma(0)/\Gamma(0) = 1$
(which will be used throughout this section).

We can also write $\AG_{\alpha,\gamma}^\infty = \MB^{\smash\calL,q,q_{\smash\infty}}_\infty$
where $q_\infty$ is the measure on $\calP_\infty$ given by
\[
	q_\infty (\infty, \lambda)
	= \frac {\gamma/\alpha \, \Gamma(p-\gamma/\alpha)} {\Gamma(1-\gamma/\alpha) \, p!} \,
	\frac {p!} {\prod_{j\geq 1} m_j(\lambda)!} \,
	\prod_{i=1}^p \frac {\alpha \, \Gamma(\lambda_i-\alpha)} {\Gamma(1-\alpha) \, \lambda_i!}
\]
for all $\lambda = (\lambda_1,\dots,\lambda_p)$ in $\calP_{<\infty}$
and $q_\infty(\mu) = 0$ for all $\mu$ in $\calP_\infty$
with either $p(\mu)=1$ or $m_\infty(\mu)>1$.

If $X$ has beta geometric distribution with parameters $(\gamma/\alpha, 1-\gamma/\alpha)$
and is independent of the \iid sequence $(Y_i)_{i\geq 0}$ of beta geometric
variables with  parameters $(\alpha, 1-\alpha)$,
for any $\lambda = (\lambda_1,\dots,\lambda_p)$ in $\calP_{<\infty}$,
we get~that
\[
	q_\infty (\infty, \lambda)
	= \prob [1] {X=p-1, \, (Y_1+1,\dots,Y_{p(\lambda)}+1)^\downarrow = \lambda}
\]
which ensures that $q_\infty$ is a probability measure on $\calP_\infty$.

\begin{proof}[of~Proposition~\ref{prop:alpha-gamma-local-limit}]
	Let $\lambda=(\lambda_1,\dots,\lambda_p)$ be in $\calP_{<\infty}$.
	Then, for $n$ large enough, in light of Stirling's approximation,
	\begin{align*}
		q_n (n-\norm\lambda,\lambda)
		& = \frac 1 {\prod_{j\geq 1} m_j(\lambda)!} \,
		\bigg(\gamma + \overbrace {\frac {1-\alpha-\gamma} {n(n-1)} \sum_{i\neq j} \lambda_i \lambda_j
			}^{\substack {0\\ {\displaystyle\uparrow} \mathrlap {\; n\to\infty}}} \bigg) \,
		\overbrace {\frac {\Gamma(n-\norm\lambda-\alpha) \, n!} {\Gamma(n-\alpha) \, (n-\norm\lambda)!}
			}^{\substack {1\\ {\displaystyle\uparrow} \mathrlap {\; n\to\infty}}}\\
		& \qquad\qquad\qquad\qquad \times
			\frac {\alpha^{p-1} \, \Gamma(p-\gamma/\alpha)} {\Gamma(1-\gamma/\alpha)} \,
			\prod_{i=1}^p \frac {\Gamma(\lambda_i-\alpha)} {\Gamma(1-\alpha) \, \lambda_i!}\\
		& \qquad \xrightarrow [n\to\infty] {}
			\frac {\gamma/\alpha \, \Gamma(p-\gamma/\alpha)} {\Gamma(1-\gamma/\alpha) \, p!} \,
			\frac {p!} {\prod_{j\geq 1} m_j(\lambda)!} \,
			\prod_{i12}^p \frac {\alpha \, \Gamma(\lambda_i-\alpha)} {\Gamma(1-\alpha) \, \lambda_i!}
		= q_\infty(\infty,\lambda).
	\end{align*}
	We conclude with Corollary~\ref{cor:local-limits-mb-one-spine}.
\end{proof}

\subsubsection{Scaling limits}\label{sec:alpha-gamma-scaling-limits}
In this paragraph, we will assume that $0<\gamma<\alpha<1$.
Let $\Sigma$ be an $\alpha$-stable subordinator with Laplace exponent
$\lambda\mapsto\lambda^\alpha$ and L\'evy measure $\Pi_\alpha(\D t)
= \alpha/\Gamma(1-\alpha) \, t^{-1-\alpha} \, \ind_{t>0} \, \D t$.
Define $\Delta$ as the decreasing rearrangement of its jumps on $[0,1]$.
We define the dislocation measure $\nu_{\alpha,\gamma}$ for all measurable functions $f:\Sdec_{\leq 1}\to\bbR_+$ by
\[
	\int_{\Sdec_{\leq 1}} f \, \D\nu_{\alpha,\gamma}
	:= \frac {\Gamma(1-\alpha)} {\alpha \, \Gamma(1-\gamma/\alpha)}
	\esp [2] {\Sigma_1^{\alpha+\gamma} \, \big(\gamma + (1-\alpha-\gamma) \,
		{\textstyle\sum_{i\neq j}} \Delta_i \Delta_j \big) \,
		f\big( \Delta / \Sigma_1 \big)}.
\]
Results from \cite{ford2009alphagamma} and \cite{hmpw2008continuumtree}
ensure that the family $q$ satisfies Assumption~\hyperref[assumption:scaling]{$(\mathtt S)$}:
when $n\to\infty$, $n^\gamma (1-s_1)\, \bar q_n(\D\bfs)$
converges weakly towards $(1-s_1) \, \nu_{\alpha,\gamma}(\D\bfs)$.

We also define the immigration measure $I_{\alpha,\gamma}$
for all measurable functions $F:\Sdec\to\bbR_+$ by
\[
	\int_{\Sdec} F \, \D I_{\alpha,\gamma}
	:= \frac {\gamma/\alpha} {\Gamma(1-\gamma/\alpha)} \,
		\int_0^\infty \frac {\esp [1] {F(t^{1/\alpha}\,\Delta)}} {t^{1+\gamma/\alpha}} \, \D t.
\]

\begin{proposition}\label{prop:alpha-gamma-scaling-limits}
	Let $T$ be distributed according to $\AG_{\smash{\alpha,\gamma}}^\infty$
	and endow it with $\mu_T$, the counting measure on the set of its leaves.
	With respect to the $\upD_\GHP$ topology,
	$(T/R,\mu_T/R^{1/\gamma})$ converges in distribution to
	a $(\gamma, \nu_{\alpha,\gamma}, I_{\alpha,\gamma})$
	fragmentation tree with immigration.
\end{proposition}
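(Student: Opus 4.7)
The plan is to apply Theorem~\ref{thm:scaling-limits-infinite-mb-trees} to the first-split distributions $q$ and the probability measure $q_\infty$ given in Section~\ref{sec:alpha-gamma-local-limits}. Assumption~\hyperref[assumption:scaling]{$(\mathtt S)$} with dislocation measure $\nu_{\alpha,\gamma}$ is precisely what is established in~\cite{ford2009alphagamma,hmpw2008continuumtree}, so the whole task reduces to verifying Assumption~\hyperref[assumption:immigration]{$(\mathtt I)$} with the claimed immigration measure $I_{\alpha,\gamma}$. Letting $\Lambda$ be $q_*$-distributed, I must show that $R\,\esp [1] {F(\Lambda/R^{1/\gamma})}$ converges to $\int F\,\D I_{\alpha,\gamma}$ for every continuous $F:\Sdec\to\bbR_+$ with $F(\bfs)\leq 1\wedge\norm\bfs$; by Lemma~\ref{lem:portmanteau-extension} it suffices to treat Lipschitz~$F$.

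The key input is the identity in distribution recorded in Section~\ref{sec:alpha-gamma-local-limits}: $\Lambda \overset{(\upd)} = (Y_1+1,\dots,Y_{X+1}+1)^{\smash\downarrow}$, where $X$ is $(\gamma/\alpha,1-\gamma/\alpha)$-beta geometric, independent of an \iid sequence $(Y_i)_{i\geq 1}$ of $(\alpha,1-\alpha)$-beta geometric variables. Stirling's formula gives the two tail estimates $\prob {X=n-1}\sim [(\gamma/\alpha)/\Gamma(1-\gamma/\alpha)]\,n^{-1-\gamma/\alpha}$ and $\prob {Y_1+1\geq n}\sim \Gamma(1-\alpha)^{-1}\,n^{-\alpha}$, so $Y_i+1$ lies in the domain of attraction of the $\alpha$-stable subordinator $\Sigma$ with L\'evy measure $\Pi_\alpha$. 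Classical point-process arguments (with $\alpha<1$ ensuring convergence of the stable sum, which together with the componentwise convergence upgrades via a Scheff\'e-type argument to convergence in $\ell_1$) give $(Y_1+1,\dots,Y_n+1,0,\dots)^{\smash\downarrow}/n^{1/\alpha} \Rightarrow \Delta$ in $\Sdec$. Via Skorokhod's representation theorem applied jointly to this sequence, I produce a family $(Z_n)_{n\geq 1}$ with $Z_n \overset{(\upd)} = (Y_1+1,\dots,Y_n+1,0,\dots)^{\smash\downarrow}/n^{1/\alpha}$ and $Z_n\to\Delta$ \as, then take $X$ independent of $(Z_n,\Delta)_{n\geq 1}$, yielding
\[
	\Lambda/R^{1/\gamma}
	\overset{(\upd)} = \big[(X+1)/R^{\alpha/\gamma}\big]^{1/\alpha}\,Z_{X+1}.
\]

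Set $f(t):=\esp [1] {F(t^{1/\alpha}\Delta)}$; dominated convergence makes $f$ continuous, and the tail $\prob {\Sigma_1>x}\sim x^{-\alpha}/\Gamma(1-\alpha)$ yields $f(t)\leq C\,(1\wedge t)$ for some constant $C$. Applying Lemma~\ref{lem:unary-immigration-prelim} with exponent $\gamma/\alpha\in(0,1)$ in the role of $\gamma$ and scaling $R^{\alpha/\gamma}=R^{1/(\gamma/\alpha)}$ gives
\[
	R\,\esp [1] {f((X+1)/R^{\alpha/\gamma})}
	= R\,\esp [2] {F\big([(X+1)/R^{\alpha/\gamma}]^{1/\alpha}\,\Delta\big)}
	\xrightarrow [R\to\infty] {} \int F\,\D I_{\alpha,\gamma}.
\]
The remaining error $R\,\esp [1] {\abs [1] {F(\Lambda/R^{1/\gamma}) - F([(X+1)/R^{\alpha/\gamma}]^{1/\alpha}\Delta)}}$ is bounded, via $K$-Lipschitz-continuity of $F$, by $R\,\esp [1] {1\wedge K\,((X+1)/R^{\alpha/\gamma})^{1/\alpha}\,\norm {Z_{X+1}-\Delta}}$. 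This is the direct $\alpha$-$\gamma$ analogue of the error term handled in the proof of Proposition~\ref{prop:kesten-stable-immigration} and is controlled in the same way: split $Z_n-\Delta = [Z_n\wedge 1-\Delta\wedge 1] + [(Z_n-Z_n\wedge 1)-(\Delta-\Delta\wedge 1)]$ and bound these in $L^1$ and $L^\beta$ respectively for any $\beta\in(\gamma,1/\alpha)$ (a nonempty range because $\gamma\alpha<1$), using uniform moment bounds on $(Z_n)_n$ inherited from the tails of $Y_1+1$. Combining with Jensen's inequality and the estimate $R\,\esp [1] {1\wedge((X+1)/R^{\alpha/\gamma})^\kappa}=O(1)$ (valid for any $\kappa>\gamma/\alpha$, which covers both $\kappa=1/\alpha$ and $\kappa=\beta/\alpha$) then closes the argument. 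The main obstacle is this final error estimate, but being a faithful transcription of the argument in Proposition~\ref{prop:kesten-stable-immigration}, it presents no new difficulty.
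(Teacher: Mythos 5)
Your strategy is the same as the paper's: reduce via Theorem~\ref{thm:scaling-limits-infinite-mb-trees} and the known Assumption~$(\mathtt S)$ to verifying Assumption~$(\mathtt I)$, then imitate the proof of Proposition~\ref{prop:kesten-stable-immigration} using the identity $\Lambda \overset{(\upd)} = (Y_1+1,\dots,Y_{X+1}+1)^{\smash\downarrow}$, the convergence of the ranked rescaled sequence to $\Delta$, a Skorokhod coupling, and Lemma~\ref{lem:unary-immigration-prelim} applied to $X+1$ at scale $R^{\alpha/\gamma}$ with tail index $\gamma/\alpha$. The reduction and the treatment of the main term are correct.

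The error is in the one place where the translation from Proposition~\ref{prop:kesten-stable-immigration} is not verbatim, and it is precisely the detail the paper flags. You claim the $L^\beta$ estimate holds for any $\beta\in(\gamma,1/\alpha)$; the correct range is $\beta\in(\gamma,\alpha)$. The upper bound must be $\alpha$, not $1/\alpha$: here $\Sigma$ is an $\alpha$-stable subordinator with $\alpha\in(0,1)$, so $\esp [1] {\norm{\Delta}^\beta} = \esp [1] {\Sigma_1^\beta}$ is finite only for $\beta<\alpha$, and on the discrete side the tail of $Y_1+1$ has index $\alpha$, so the bound on $\esp [1] {\norm{Z_n-Z_n\wedge 1}^\beta}$ reduces to a sum comparable to $\sum_{k>n^{1/\alpha}} k^{\beta-1-\alpha}$, which converges only when $\beta<\alpha$. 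In Proposition~\ref{prop:kesten-stable-immigration} the relevant stable index was $1/\alpha$ with $\alpha\in(1,2)$, hence the constraint $\beta<1/\alpha$ there; you have carried over $1/\alpha$ literally rather than replacing it by the new index $\alpha$. Since $\alpha<1<1/\alpha$, your interval is strictly larger than the valid one and includes choices of $\beta$ for which both moment bounds fail; the relevant nonemptiness condition is $\gamma<\alpha$ (the section's standing hypothesis), not $\gamma\alpha<1$. Choosing any $\beta\in(\gamma,\alpha)$ does close the argument, so the slip is repairable, but as written the step would not go through.
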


\begin{proof}
	Let $\Lambda$ be such that $(\infty,\Lambda)$ follows $q_\infty$.
	For all $R\geq 1$, set $q^{(R)}$ as the distribution of $R^{-1/\gamma} \Lambda$.
	In light of Theorem~\ref{thm:scaling-limits-infinite-mb-trees},
	it is sufficient to prove that $R \, (1\wedge\norm\bfs) \, q^{(R)}(\D\bfs)
	\Rightarrow(1\wedge\norm\bfs) \, I_{\alpha,\gamma}(\D\bfs)$ when $R\to\infty$.
	
	To prove this claim, we may proceed as in the proof of Proposition~\ref{prop:kesten-stable-immigration}.
	The only significant difference is that the constant $\beta$
	used near the end of that proof must now belong to the open interval $(\gamma,\alpha)$. 
\end{proof}

\begin{remark}
	Let $\beta$ be in $(1,2)$ and set $\alpha=1/\beta$, $\gamma=1-\alpha$.
	It was proved in~\cite{marchal2008stabletree}
	that the distribution $\AG_{1/\beta,1-1/\beta}^n$
	coincides with $\GW_{\smash\xi}^{\smash\calL,n}$,
	where the generating function of $\xi$ is given by $s\mapsto s + \beta^{-1} (1-s)^\beta$.
	The results of Propositions~\ref{prop:alpha-gamma-local-limit} and~\ref{prop:alpha-gamma-scaling-limits}
	are then consistent with those of Proposition~\ref{prop:gw-local-limit} and Remark~\ref{nb:kesten-scaling-limits-leaves}.
\end{remark}

\subsubsection{Ford's \texorpdfstring{$\boldsymbol\alpha$}{alpha}-model}\label{sec:ford-alpha-model}
When $\alpha=\gamma$, no weight is ever assigned to vertices.
Consequently, the trees generated by this algorithm are \as binary (\ie each vertex has either two children or none).
Furthermore, the sequence $(q_n)_n$ of associated first split distributions is much simpler:
$q_1 (\varnothing)$ still equals $1$,
and for $n\geq 2$, if $\alpha<1$, for all $1\leq k\leq n/2$,
\[
	q_n (n-k,k) = (2-\ind_{2k=n}) \, \binom n k \,
	\frac {\Gamma(n-k-\alpha)\,\Gamma(k-\alpha)} {\Gamma(1-\alpha)\,\Gamma(n-\alpha)}
	\, \Bigg( \frac \alpha 2 + \frac {(1-2\alpha) \, (n-k) \, k} {n \, (n-1)} \Bigg),
\]
finally if $\alpha=1$, $q_n (n-1,1) = 1$.

Moreover, if $\alpha$ is positive, for all $n\geq 1$,
$q_\infty (\infty,n) = \alpha \, \Gamma(n-\alpha) / [ \Gamma(1-\alpha) \, n!]$
and $q_\infty(\lambda) = 0$ if $p(\lambda)\neq 2$ or $m_\infty(\lambda)\neq 1$.
As a result, a tree with distribution $\AG_{\alpha,\alpha}^\infty$ is obtained
by grafting at each height of an infinite spine a single tree with distribution $\AG_{\alpha,\alpha}^{N+1}$
where $N$, its number of leaves minus $1$, has beta geometric distribution of parameters $(\alpha,1-\alpha)$.

\paragraph{Scaling limits of Ford's $\alpha$ model}
Let $\alpha\in(0,1)$.
Results from~\cite[Section~5.2]{hmpw2008continuumtree} ensure that $(T_n)_n$
satisfies Assumption~\hyperref[assumption:scaling]{$(\mathtt S)$}:
when $n\to\infty$, $n^\alpha \, (1-s_1) \, \bar q_n(\D\bfs)
\Rightarrow (1-s_1) \, \smash{\nu^{(\upF)}_\alpha} (\D\bfs)$
where $\smash{\nu^{(\upF)}_\alpha}$ is the binary dislocation measure defined
for all measurable $f:\Sdec_{\leq 1}\to\bbR_+$ by
\[
	\int f \, \D\smash{\nu^{(\upF)}_\alpha} = \frac 1 {\Gamma(1-\alpha)} \int_{1/2}^1
		\bigg(\frac {\alpha} {[x(1-x)]^{1+\alpha}} + \frac {2-4\alpha} {[x(1-x)]^\alpha}\bigg)
		\, f(x,1-x,0,0,\dots) \, \D x.
\]
Furthermore, $q_\infty$ is \as binary
and Stirling's approximation ensures that $q_\infty(\infty,n)$
is equivalent to $[\alpha/\Gamma(1-\alpha)] \, n^{-1-\alpha}$ when $n\to\infty$.
Consequently, if $\Lambda$ is such that $(\infty,\Lambda)$ follows $q_\infty$
and $q^{(R)}$ denotes the distribution of $\Lambda/R^{1/\alpha}$,
Proposition~\ref{prop:unary-immigration-criterion} proves that
$R \, (1\wedge\norm\bfs) \, q^{(R)}(\D\bfs) \Rightarrow
(1\wedge\norm\bfs) \, [\alpha/\Gamma(1-\alpha)] \, I^{\operatorname {un}}_\alpha(\D\bfs)$ as $R\to\infty$.
Therefore, if we set $I^{(\upF)}_\alpha := \alpha/\Gamma(1-\alpha) \cdot I^{\operatorname {un}}_\alpha$,
we may use Theorem~\ref{thm:scaling-limits-infinite-mb-trees} and Proposition~\ref{prop:volume-growth-cv}
to get the following result:

\begin{proposition}\label{prop:ford-scaling}
	Let $T$ be a $\AG_{\smash {\alpha,\alpha}}^\infty$ tree with $\alpha$ in $(0,1)$
	and endow it with the counting measure on the set of its leaves.
	Then, $(T/R,\mu_T^\calL/R^{1/\alpha})$ converges in distribution to
	a $(\alpha, \smash{\nu^{(\upF)}_\alpha}, \smash{I^{(\upF)}_\alpha})$-fragmentation
	tree with immigration with respect to the topology induced by $\upD_\GHP$.
\end{proposition}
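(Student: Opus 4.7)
The plan is to verify that the two hypotheses of Theorem~\ref{thm:scaling-limits-infinite-mb-trees} hold for the first-split family $(q_n)_{n\geq 1}$ of Ford's $\alpha$-model with $\alpha\in(0,1)$, namely Assumption~\hyperref[assumption:scaling]{$(\mathtt S)$} for the pair $(\alpha,\nu^{(\upF)}_\alpha)$ and Assumption~\hyperref[assumption:immigration]{$(\mathtt I)$} for the immigration measure $I^{(\upF)}_\alpha := [\alpha/\Gamma(1-\alpha)] \cdot I^{\operatorname{un}}_\alpha$. Once both are in place, the scaling-limit statement follows directly from Theorem~\ref{thm:scaling-limits-infinite-mb-trees} applied to $(T_n)_n \sim \MB^{\smash\calL,q,q_{\smash\infty}}_\infty$.

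The fragmentation-pair assumption $(\mathtt S)$ is essentially off-the-shelf: the explicit formula for $q_n(n-k,k)$ in the binary Ford case ($\alpha=\gamma$) is precisely the one treated in Section~5.2 of~\cite{hmpw2008continuumtree}, and the computation there yields $n^\alpha\,(1-s_1)\,\bar q_n(\D\bfs) \Rightarrow (1-s_1)\,\nu^{(\upF)}_\alpha(\D\bfs)$. So there is nothing to do here beyond citing that result (and observing that the expression written for $\nu^{(\upF)}_\alpha$ indeed arises from the sum of the two terms $\alpha/2$ and $(1-2\alpha)(n-k)k/[n(n-1)]$ appearing in $q_n(n-k,k)$ after passage to the limit).

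For Assumption~$(\mathtt I)$, the first step is to identify the marginal of a single sub-tree grafted on the spine. Since $q_\infty$ is binary and concentrated on $(\infty,k)$, $k\geq 1$, with $q_\infty(\infty,k) = \alpha\,\Gamma(k-\alpha)/[\Gamma(1-\alpha)\,k!]$, Stirling's formula immediately gives the power-law tail $q_\infty(\infty,k) \sim [\alpha/\Gamma(1-\alpha)]\,k^{-1-\alpha}$ and in particular $k^\alpha\,q_\infty(\infty,[k,\infty)) \to [\alpha/\Gamma(1-\alpha)]/\alpha$ as $k\to\infty$. Hence, letting $\Lambda$ be a random variable with $(\infty,\Lambda)$ distributed as $q_\infty$, both tail conditions of Proposition~\ref{prop:unary-immigration-criterion} are met with $\gamma=\alpha$ and $c=\alpha/\Gamma(1-\alpha)$, yielding $R\,(1\wedge\norm\bfs)\,q^{(R)}(\D\bfs)\Rightarrow (1\wedge\norm\bfs)\,I^{(\upF)}_\alpha(\D\bfs)$ as finite measures on $\Sdec$, which is $(\mathtt I)$.

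With both assumptions verified, Theorem~\ref{thm:scaling-limits-infinite-mb-trees} gives the asserted convergence of $(T/R,\mu_T^\calL/R^{1/\alpha})$ in the $\upD_\GHP$ topology to a $(\alpha,\nu^{(\upF)}_\alpha,I^{(\upF)}_\alpha)$-fragmentation tree with immigration, and Proposition~\ref{prop:volume-growth-cv} upgrades this to the volume-growth statement. There is no serious obstacle: the whole proof amounts to a bookkeeping step (quoting~\cite{hmpw2008continuumtree} for $(\mathtt S)$) plus an elementary Stirling estimate (for $(\mathtt I)$ via Proposition~\ref{prop:unary-immigration-criterion}).
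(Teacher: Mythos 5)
Your proof is correct and follows essentially the same route as the paper: cite~\cite[Section~5.2]{hmpw2008continuumtree} for Assumption~$(\mathtt S)$, use Stirling's approximation together with Proposition~\ref{prop:unary-immigration-criterion} to obtain Assumption~$(\mathtt I)$ with $c=\alpha/\Gamma(1-\alpha)$ (observing that $\Lambda$ has a single block in the binary Ford model, so $\norm\Lambda=\Lambda_1$ and both tail conditions of that proposition follow from the same estimate), and then apply Theorem~\ref{thm:scaling-limits-infinite-mb-trees}. The only cosmetic slip is that $\MB^{\smash\calL,q,q_{\smash\infty}}_\infty$ is the law of the single infinite tree $T$ rather than of the finite sequence $(T_n)_n$, but this does not affect the argument.
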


\begin{remark}
	When $\alpha=1/2$, \ie in R\'emy's algorithm,
	these results coincide with Proposition~\ref{prop:gw-local-limit}
	and Proposition~\ref{prop:kesten-scaling-limit}~$(i')$
	for $\xi(0) = \xi(2) = 1/2$.
\end{remark}

\paragraph{When $\alpha=1$}
In this case, the algorithm's output is deterministic:
for each $n\geq 2$, a tree $T_n$ with distribution $\AG_{1,1}^n$
is simply equal to a branch of length $n-1$ upon which a single leaf
has been grafted at each non-leaf vertex (a ``comb'' of length $n$).
Similarly, an infinite tree with distribution $\AG_{1,1}^\infty$
is the ``infinite comb'', obtained by attaching a single leaf to
all the vertices of the infinite branch.

As a result, if $T$ has distribution $\AG_{1,1}^\infty$
and $\mu_T$ denotes the counting measure on the set of its leaves,
then clearly, $(T/R, \mu_T/R)$ converges as $R\to\infty$
to the metric space $\bbR_+$ rooted at $0$ and endowed with the usual Lebesgue.

\paragraph{When $\alpha=0$}
Observe that $q_n (n-k,k) = (2-\ind_{k=n/2})/(n-1)$.
Then for all $K\geq 1$ and $n$ large enough,
\[
	\prob {\Lambda^\calL(T_n)\wedge K = \infty_2\wedge K}
	= 1 - \frac {K-1} {n-1} \xrightarrow [n\to\infty] {} 1,
\]
which implies $\Lambda^\calL(T_n) \to (\infty,\infty)$ \as when $n\to\infty$.
Theorem~\ref{thm:local-limits-markov-branching} then ensures that $T_n$
converges in distribution to the complete infinite binary tree (in which every vertex has $2$ children).
Moreover, since $T_n\subset T_{n+1}$ \as,
this convergence happens almost surely.

\subsection[Aldous' \texorpdfstring{$\beta$}{beta}-splitting model]{Aldous' $\boldsymbol\beta$-splitting model}

This section will focus on the study a model of binary random trees introduced
in \cite[Section~4]{aldous1996cladograms} as a Markov branching model.
Let $\beta>-2$ be fixed.
Set $q_1(\varnothing) := 1$ and for all $n\geq 2$ and $1\leq k\leq n/2$,
\[
	q_n(n-k,k) := \frac {2-\ind_{2k=n}} {Z_n} \,
		\frac {\Gamma(n-k+1+\beta)} {(n-k)!} \,
		\frac {\Gamma(k+1+\beta)} {k!}
\]
where $Z_n$ is a normalising constant.
For all $n\geq 1$, let $T_n$ be a random tree with distribution $\MB^{\calL,q}_n$.

\begin{remark}
\begin{itemize}
	\item The constant $Z_n$ is given by
	\[
		Z_n := {\textstyle\sum_{k=1}^{n-1}}
			\frac {\Gamma(n-k+1+\beta)} {(n-k)!} \,
			\frac {\Gamma(k+1+\beta)} {k!}.
	\]
	When $\beta>-1$, it simplifies to
	$Z_n = [\Beta(1+\beta,1+\beta) - 2 \, \Beta(n+1+\beta,1+\beta)] \cdot \Gamma(n+2+2\beta)/n!$
	(where $\Beta$ denotes the usual Beta function)
	and when $\beta=-1$, it becomes $Z_n = 2/n \cdot \sum_{k=1}^{n-1} k^{-1}$.
	
	\item When $\beta=-3/2$, observe that the sequence $(q_n)_n$
	is the same as that of the $\alpha$-model with $\alpha=1/2$ (see Section~\ref{sec:ford-alpha-model}).
	Therefore, like R\'emy's algorithm,
	this model generates uniform binary trees with any given number of leaves.
\end{itemize}
\end{remark}

There are three regimes in this model,
respectively $\beta>-1$, $\beta=-1$ and $\beta\in(-2,-1)$.
The asymptotic behaviour of $q_n$ were studied in \cite[Section~5]{aldous1996cladograms}
in these three regimes.

\subsubsection{Local limits}
In this paragraph, we will focus on the study of the local limits of $T_n$.
We will once again rely on the Markov branching nature of the model 
and on Theorem~\ref{thm:local-limits-markov-branching}.

\begin{proposition}\label{prop:beta-local-limit}
	\noindent$\beta\geq -1:$\quad
	In the sense of the local limit topology,
	$T_n$ converges in distribution to the infinite binary tree.
	
	\smallskip
	
	\noindent$\beta\in(-2,-1):$\quad
	Let $X$ follow the beta geometric distribution with parameters $(2+\beta,-1-\beta)$
	(see Section~\ref{sec:alpha-gamma}).
	Define $q_\infty$, a probability measure on $\calP_\infty$,
	by $q_\infty(\infty,k) = \prob{X=k-1}$ for any $k\geq 1$
	and $q_\infty(\lambda) = 0$ if $p(\lambda) \neq 2$ or $m_\infty(\lambda) \neq 1$.
	With these notations, $T_n$ converges in distribution to $\MB^{\smash\calL,q,q_{\smash\infty}}_\infty$
	with respect to the local limit topology.
\end{proposition}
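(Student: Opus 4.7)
The plan is to apply Theorem~\ref{thm:local-limits-markov-branching} and Corollary~\ref{cor:local-limits-mb-one-spine} after establishing the asymptotic behaviour of $q_n(n-k,k)$ for fixed $k$ as $n\to\infty$. Since every $q_n$ is supported on length-two partitions, this reduces to controlling $Z_n$ and the product $f(n,k) := [\Gamma(n-k+1+\beta)/(n-k)!]\cdot[\Gamma(k+1+\beta)/k!]$, which for fixed $k$ satisfies $f(n,k)\sim n^\beta\,\Gamma(k+1+\beta)/k!$ by $\Gamma(n+c)/\Gamma(n)\sim n^c$.

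The key computational step is extracting sharp asymptotics of $Z_n$. I would start from the Chu--Vandermonde-type identity
\[
\sum_{k=0}^n \frac{\Gamma(a+k)}{k!}\,\frac{\Gamma(b+n-k)}{(n-k)!}
= \frac{\Gamma(a)\,\Gamma(b)\,\Gamma(a+b+n)}{\Gamma(a+b)\,n!},
\]
apply it with $a=b=1+\beta$, and subtract the (equal) $k=0$ and $k=n$ terms to obtain
\[
Z_n = \frac{\Gamma(1+\beta)^2\,\Gamma(n+2+2\beta)}{\Gamma(2+2\beta)\,n!} - 2\,\Gamma(1+\beta)\,\frac{\Gamma(n+1+\beta)}{n!}.
\]
Using $\Gamma(n+c)/n!\sim n^{c-1}$, three distinct regimes emerge: for $\beta>-1$ the first term dominates and $Z_n\sim [\Gamma(1+\beta)^2/\Gamma(2+2\beta)]\,n^{1+2\beta}$; at $\beta=-1$ both Gamma ratios diverge and cancel, and the author's direct formula gives $Z_n\sim 2\log(n)/n$; for $\beta\in(-2,-1)$ the second term dominates and $Z_n\sim -2\,\Gamma(1+\beta)\,n^\beta$. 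Plugging into $q_n(n-k,k)=2f(n,k)/Z_n$ yields, for fixed $k$: $q_n(n-k,k)=O(n^{-1-\beta})\to 0$ when $\beta>-1$; $q_n(n-k,k)\sim 1/(k\log n)\to 0$ when $\beta=-1$; and $q_n(n-k,k)\to -\Gamma(k+1+\beta)/[\Gamma(1+\beta)\,k!]$ when $\beta\in(-2,-1)$.

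For $\beta\geq-1$, since $q_n$ and the candidate limit $\delta_{(\infty,\infty)}$ are both concentrated on length-two partitions, the pointwise vanishing $q_n(n-k,k)\to 0$ suffices, via Lemma~\ref{lem:d-p-criterion}, to conclude $q_n\Rightarrow\delta_{(\infty,\infty)}$: the event $\{\mu\wedge K=(K,K)\}$ has complementary probability $\sum_{j=1}^{K-1}q_n(n-j,j)$, a finite sum of vanishing terms. Theorem~\ref{thm:local-limits-markov-branching} then gives $T_n\Rightarrow T_\infty$ for $T_\infty\sim\MB^{\smash\calL,q,\delta_{(\infty,\infty)}}_\infty$, and unrolling the construction in Remark~\ref{nb:infinite-spine-construction} identifies $T_\infty$ with the complete infinite binary tree, since the skeleton $T^\circ$ is deterministically binary and every grafted subtree is trivial.

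For $\beta\in(-2,-1)$, the limit values form a probability distribution on $\bbN$ thanks to the identity $\sum_{k\geq 1}\Gamma(k+a)/k!=-\Gamma(a)$ valid for $a\in(-1,0)$, which follows from the analytic continuation of the generating function $\sum_{k\geq 0}[\Gamma(k+a)/k!]x^k=\Gamma(a)(1-x)^{-a}$ to $x=1$ (where it vanishes for $a<0$). A direct manipulation using the functional equation of $\Gamma$ and the reflection formula identifies this distribution with the beta-geometric law stated in the proposition. Corollary~\ref{cor:local-limits-mb-one-spine} then yields $T_n\Rightarrow T_\infty$. The main obstacle is pinning down the correct leading-order asymptotics of $Z_n$ in each regime, especially isolating the logarithmic cancellation at $\beta=-1$ and the regime change at $\beta=-1$ where the dominant term in $Z_n$ flips between the two Gamma ratios; once this is handled, the remaining verifications are routine.
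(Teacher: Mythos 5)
Your proof is correct and follows the same overall strategy as the paper: pin down the asymptotics of $q_n(n-k,k)$ for each fixed $k$, then invoke Lemma~\ref{lem:d-p-criterion} / Corollary~\ref{cor:local-limits-mb-one-spine}. The genuine difference is in how you control $Z_n$. The paper (a) cites the closed form $Z_n = [\Beta(1+\beta,1+\beta)-2\Beta(n+1+\beta,1+\beta)]\cdot\Gamma(n+2+2\beta)/n!$ for $\beta>-1$ without derivation, (b) uses the elementary sum formula $Z_n=2/n\cdot\sum_{j<n}j^{-1}$ at $\beta=-1$, and (c) applies dominated convergence to $Z_n/n^\beta$ for $\beta\in(-2,-1)$; you instead derive a single Chu--Vandermonde closed form for $Z_n$ and read off the leading asymptotics in all regimes, substituting the paper's dominated-convergence step by an explicit computation. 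The two closed forms agree ($\Beta(1+\beta,1+\beta)\Gamma(n+2+2\beta)/n!=\Gamma(1+\beta)^2\Gamma(n+2+2\beta)/[\Gamma(2+2\beta)n!]$ and similarly for the boundary term), so the conclusions match; the unified derivation is arguably cleaner. Two points deserve care: (1) your Gamma-form Chu--Vandermonde expression is formally singular at $\beta=-3/2$ where $\Gamma(2+2\beta)$ has a pole, and again at $\beta=-1$; it should be read via the generalized binomial $\Gamma(1+\beta)^2\binom{n+1+2\beta}{n}$ (a polynomial in $\beta$), under which the $\beta=-3/2$ first term genuinely vanishes; your fallback to the elementary formula at $\beta=-1$ is appropriate. (2) Your limit $-\Gamma(k+1+\beta)/[\Gamma(1+\beta)k!]=(-1-\beta)\Gamma(k+1+\beta)/[\Gamma(2+\beta)k!]$ matches the paper's computation exactly, and is the beta-geometric law $\theta\Gamma(m+1-\theta)/[\Gamma(1-\theta)(m+1)!]$ at $m=k-1$ with $\theta=-1-\beta$; your final step identifying it ``with the beta-geometric law stated in the proposition'' should therefore note that the statement's parameter order $(2+\beta,-1-\beta)$ would give $\theta=2+\beta$, which does not match (at $k=1$ the limit is $-1-\beta$, not $2+\beta$). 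This is a slip in the paper's statement rather than a gap in your argument, but since you invoke the statement's parametrization, the identification step needs either a correction of the stated parameters or an explicit check of the probabilities.
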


\begin{remark}
	Suppose $\beta\in(-2,-1)$
	and let $(X_n,\tau_n)_{n\geq 0}$ be an \iid sequence such that
	for each $n$, $X_n$ has beta geometric distribution with parameters $(2+\beta,-1-\beta)$
	and conditionally on $X_n = k-1$, $\tau_n$ is distributed like $T_k$.
	Finally, denote by $T_\infty$ the tree obtained by attaching by a single edge
	the tree $\tau_n$ respectively at each height $n$ of an infinite branch,
	\ie $T_\infty := \ttb_\infty \bigotimes_{n\geq 0} \big(\ttv_n,
	\lBrack \tau_n \rBrack \big)$.
	The tree $T_\infty$ hence obtained has distribution $\MB^{\smash\calL,q,q_{\smash\infty}}_\infty$.
\end{remark}

\begin{proof}
	Observe that in light of Stirling's approximation, $\Gamma(n+1+\beta)/n! \sim n^\beta$
	when $n\to\infty$.
	
	\smallskip
	
	\noindent$\beta\geq -1:$\quad
	When $\beta>-1$, using Stirling's approximation once again,
	we get that $Z_n\sim \Beta(1+\beta,1+\beta) \, n^{-1-2\beta}$
	so if $k\geq 1$ is a fixed integer,
	$q_n(n-k,k) = O(n^{1+\beta})$ when $n\to\infty$.
	
	When $\beta=-1$, $Z_n \sim 2/n \cdot \log n$
	hence, for any fixed $k\geq 1$,
	$q_n(n-k,k) \sim 1/(k\, \log n)$ as~$n\to\infty$.
	
	Therefore, for any $\beta\geq -1$, if $K\geq 1$,
	\[
		q_n\big[\mu\in\calP_n : \mu\wedge K = (K,K)\big]
		= 1 - {\textstyle\sum_{k=1}^K} q_n(n-k,k)
		\xrightarrow [n\to\infty] {} 1.
	\]
	Lemma~\ref{lem:d-p-criterion} then ensures that $q_n\Rightarrow \delta_{(\infty,\infty)}$.
	It follows from Theorem~\ref{thm:local-limits-markov-branching}
	that $T_n$ converges in distribution to the (deterministic) infinite binary tree.
	
	\smallskip
	
	\noindent$\beta\in(-2,-1):$\quad
	Let $\beta\in (-2,-1)$.
	Stirling's formula ensures that the sequence
	$\smash {\big(i^{-\beta} \, \Gamma(i+1+\beta)/i!\big)_{i\geq 1}}$
	is bounded by a finite constant.
	As a result, the dominated convergence theorem ensures that
	\begin{align*}
		\frac {Z_n} {n^\beta}
		& = \sum_{k\geq 1} \frac {\Gamma(k+1+\beta)} {k!}
			\, \frac {\Gamma(n-k+1+\beta)} {(n-k)^\beta \, (n-k)!}
			\, \frac {(n-k)^\beta} {n^\beta}
			\, (2-\ind_{2k=n}) \, \ind_{2k\leq n}\\ 
		& \qquad\qquad \xrightarrow [n\to\infty] {}
		2 \sum_{k\geq 1} \frac {\Gamma(k+1+\beta)} {k!}
		= 2 \frac {\Gamma(2+\beta+)} {-1-\beta}
	\end{align*}
	where we have used the definition of the beta geometric distribution with parameters $(2+\beta,-1-\beta)$
	as introduced in Section~\ref{sec:alpha-gamma}.
	
	Consequently, for any fixed positive integer $k$,
	\[
		\lim_{n\to\infty} q_n(n-k,k)
		= \lim_{n\to\infty} 2 \, \frac {\Gamma(k+1+\beta)} {k!}
			\frac {\Gamma(n-k+1+\beta)} {Z_n \, (n-k)!}
		= \frac {(-1-\beta) \, \Gamma(k+1+\beta)} {\Gamma(2+\beta) \, k!}
		= q_\infty(\infty,k).
	\]
	We may then conclude with Corollary~\ref{cor:local-limits-mb-one-spine}.
\end{proof}

\subsubsection{Scaling limits}
We will now study the scaling limits of the $\beta$-splitting model
when $\beta\in(-2,-1)$ with the help of Theorem~\ref{thm:scaling-limits-infinite-mb-trees}.

Let $\nu^{(\upB)}_{\smash\beta}$ be the dislocation measure
such that for all measurable $f:\Sdec_{\leq 1}\to\bbR_+$,
\[
	\int f \, \D\nu^{(\upB)}_\beta
	:= \frac {-1-\beta} {\Gamma(2+\beta)}
		\int_0^{1/2} t^\beta \, (1-t)^\beta \, f(1-t,t,0,0,\dots) \, \D t.
\]
It follows from Section~5.1 in~\cite{hmpw2008continuumtree} that $(q_n)_{n\geq 1}$
satisfies Assumption~\hyperref[assumption:scaling]{$(\mathtt S)$}
for $\gamma=-1-\beta$ and $\nu = \nu^{(\upB)}_{\smash\beta}$
More precisely, $n^{-1-\beta} \, (1-s_1) \, \bar q_n(\D\bfs)$
converges weakly to $(1-s_1) \, \nu^{(\upB)}_{\smash\beta}(\D\bfs)$
as finite measures on $\Sdec_{\leq 1}$.

\smallskip

Let $\Lambda$ denote a random integer such that $(\infty,\Lambda)$ has distribution $q_\infty$
and for all $R\geq 1$, set $q^{(R)}$ as the distribution of $\Lambda/R^{1/(-1-\beta)}$.
Just like in Section~\ref{sec:ford-alpha-model}, Stirling's approximation and Proposition~\ref{prop:unary-immigration-criterion}
ensure that Assumption~\hyperref[assumption:immigration]{$(\mathtt I)$} is met for
$\gamma=-1-\beta$ and the immigration measure
$I^{(\upB)}_\beta := (-1-\beta)/\Gamma(2+\beta) \cdot I^{\operatorname {un}}_{-1-\beta}$.
As a result,

\begin{proposition}\label{prop:beta-scaling}
	Fix $\beta\in(-2,-1)$.
	Let $T$ be a $\MB^{\smash\calL,q,q_{\smash\infty}}_\infty$ tree
	and endow it with $\mu_T$, the counting measure on the set of its leaves.
	In the topology induced by $\upD_\GHP$,
	$(T/R,\mu_T^\calL/R^{1/(-1-\beta)})$ converges in distribution to
	a $(-1-\beta, \smash{\nu^{(\upB)}_\beta}, \smash{I^{(\upB)}_\beta})$-fragmentation
	tree with immigration.
\end{proposition}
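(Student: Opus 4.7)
The strategy is to invoke Theorem~\ref{thm:scaling-limits-infinite-mb-trees} directly, which requires checking Assumptions~\hyperref[assumption:scaling]{$(\mathtt S)$} and~\hyperref[assumption:immigration]{$(\mathtt I)$} for the family $q = (q_n)_n$ and the limit measure $q_\infty$ of this model, with the claimed parameters $\gamma = -1-\beta$, $\nu = \nu^{(\upB)}_\beta$ and $I = I^{(\upB)}_\beta$. Since $\beta\in(-2,-1)$, $\gamma$ lies in $(0,1)$, so the conclusion of the theorem will indeed apply. Assumption~\hyperref[assumption:scaling]{$(\mathtt S)$} is in fact not to be proved: the paragraph preceding the statement notes that it follows from Section~5.1 of~\cite{hmpw2008continuumtree}, so it suffices to quote this.

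The main work is to verify Assumption~\hyperref[assumption:immigration]{$(\mathtt I)$}. Since $q_\infty$ is binary with a single infinite part, the random partition $\Lambda$ such that $(\infty,\Lambda)$ has distribution $q_\infty$ is a single positive integer with $\prob{\Lambda=k} = q_\infty(\infty,k) = \tfrac{(-1-\beta)\Gamma(k+1+\beta)}{\Gamma(2+\beta)\,k!}$. In particular, $I^{(\upB)}_\beta$ is unary and it is natural to invoke Proposition~\ref{prop:unary-immigration-criterion}. I will check its two hypotheses by Stirling's formula applied to $\Gamma(k+1+\beta)/k! = \Gamma(k+1+\beta)/\Gamma(k+1)$: this ratio is equivalent to $k^\beta$, so
\[
    k^{1+\gamma}\,\prob{\norm\Lambda=k}
    = k^{-\beta}\,\prob{\Lambda=k}
    \xrightarrow[k\to\infty]{} \frac{-1-\beta}{\Gamma(2+\beta)} =: c.
\]
Then a tail summation (legitimate because $\beta<-1$ makes $\sum_{j\geq k} j^\beta$ finite and equivalent to $k^{1+\beta}/(-1-\beta)$) yields $\prob{\Lambda_1\geq k} = \prob{\Lambda\geq k} \sim k^{1+\beta}/\Gamma(2+\beta)$, so $k^\gamma\,\prob{\Lambda_1\geq k} \to 1/\Gamma(2+\beta) = c/\gamma$, as required.

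Applying Proposition~\ref{prop:unary-immigration-criterion} with these constants then gives, for $q^{(R)}$ the law of $\Lambda/R^{1/\gamma}$, the weak convergence
\[
    R\,(1\wedge\norm\bfs)\,q^{(R)}(\D\bfs)
    \xRightarrow[R\to\infty]{} c\,(1\wedge\norm\bfs)\,I^{\operatorname{un}}_{-1-\beta}(\D\bfs)
    = (1\wedge\norm\bfs)\,I^{(\upB)}_\beta(\D\bfs),
\]
which is exactly Assumption~\hyperref[assumption:immigration]{$(\mathtt I)$}. Theorem~\ref{thm:scaling-limits-infinite-mb-trees} then delivers the announced GHP convergence of $(T/R,\mu_T^{\smash\calL}/R^{1/(-1-\beta)})$ to the $(-1-\beta,\nu^{(\upB)}_\beta,I^{(\upB)}_\beta)$-fragmentation tree with immigration. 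There is no genuine obstacle: the only care needed is to track the constants (in particular the identification $c = (-1-\beta)/\Gamma(2+\beta)$) so that the scaling factor and the normalisation of $I^{(\upB)}_\beta$ match.
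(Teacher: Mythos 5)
Your proposal is correct and follows exactly the same route as the paper: cite the scaling hypothesis $(\mathtt S)$ from \cite{hmpw2008continuumtree}, reduce $(\mathtt I)$ to Proposition~\ref{prop:unary-immigration-criterion} since $q_\infty$ is unary, and verify its two hypotheses by Stirling's formula with $c=(-1-\beta)/\Gamma(2+\beta)$. The paper's own treatment is just a one-sentence appeal to "Stirling's approximation and Proposition~\ref{prop:unary-immigration-criterion}," so your version, which actually checks $k^{1+\gamma}\prob{\norm\Lambda=k}\to c$ and $k^{\gamma}\prob{\Lambda_1\ge k}\to c/\gamma$ and matches the constants, is a faithful and slightly more detailed rendering of the same argument.
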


\subsection[\texorpdfstring{$k$}{k}-ary growing trees]{$\boldsymbol k$-ary growing trees}

Let $k\geq 2$ be an integer.
In this section, we will study a model of $k$-ary trees,
\ie trees in which vertices have either $0$ or $k$ children,
described in~\cite{haas2014kary}.
This model is yet another generalisation of R\'emy's algorithm~\cite{remy1985binarygw}
(which corresponds to $k=2$).

The following algorithm allows us to get a sequence $(T_n)_{n\geq 0}$ of $k$-ary trees
such that for all $n$, $T_n$ has $n$ internal vertices (vertices that aren't leaves)
or, equivalently, $kn+1$ vertices or $(k-1)n + 1$ leaves.
First, let $T_0$ be the trivial tree $\{\varnothing\}$
and for $n\geq 1$, conditionally on $T_{n-1}$:
\begin{itemize}
	\item Pick an edge of $T_{n-1}$ (considered as a planted tree) uniformly at random,
	\item Place a new vertex on that edge and attach $k-1$ new leaves to it,
\end{itemize}
and call $T_n$ the resulting tree.
We will note $\GT_k^n$ the distribution of $T_n$.

\paragraph{The negative Dirichlet multinomial distribution}
Let $\Pi$ be a $(k-1)$-dimensional Dirichlet variable with $k$ parameters $(1/k,\dots,1/k)$,
\ie $\Pi$ takes its values in the $(k-1)$-dimensional simplex
$\{\boldsymbol x\in(0,\infty)^k : x_1+\dots+x_k=1\}$.
Conditionally on $\Pi$, let $X=(X_1,\dots,X_{k-1})$
have negative multinomial distribution of parameters $(1;\Pi)$,
\ie for each $i\in\{1,\dots,k-1\}$, $X_i$ counts the number of type $i$ results
before the first type $k$ result (failure)
in a sequence of \iid trials with $k$ possible results with respective probabilities $\Pi_1,\dots,\Pi_k$.
For any non-negative integers $n_1,\dots,n_{k-1}$ and with $N=n_1+\dots+n_{k-1}$,
we have
\[
	\prob [1] {X=(n_1,\dots,n_{k-1})}
	= \esp [4] {\frac {N!} {n_1! \dots n_{k-1}!} \, \prod_{i=1}^{k-1} \Pi_i^{n_i} \, \Pi_k}
	= \frac 1 k \, \frac 1 {1 + N} \, \prod_{i=2}^k \frac {\Gamma(n_i+1/k)} {\Gamma(1/k) \, n_i!}.
\]
The random variable $X$ is said to follow a $(k-1)$-dimensional
\emph{negative Dirichlet multinomial} distribution with parameters $(1;1/k,\dots,1/k)$
which is a multidimensional generalisation of the beta geometric distribution.
Further observe that the sum $\norm X = X_1+\dots+X_{k-1}$ has beta geometric distribution
with parameters $(1/k,1-1/k)$
and that conditionally on $\norm X = n$, $X$ follows
a $(k-1)$-dimensional Dirichlet multinomial distribution with parameters
$(n \,;\, 1/k, \dots, 1/k)$.

\paragraph{Corresponding infinite tree}
Let $(X_n,\tau_{n,1},\dots\tau_{n,k-1})_{n\geq 0}$ be a sequence of \iid variables
such that for all $n\geq 0$, $X_n$ is distributed according to a $(k-1)$-dimensional
$(1;1/k,\dots,1/k)$ negative Dirichlet multinomial distribution
and conditionally on $X_n = (m_1,\dots,m_{k-1})$,
$\tau_{n,1}, \dots, \tau_{n,k-1}$ are independent
and have respective distributions $\GT_k^{m_1},\dots,\GT_k^{m_{k-1}}$.

Conditionally on $(X_n,\tau_{n,1},\dots\tau_{n,k-1})_{n\geq 0}$,
let $T_\infty$ be the tree obtained after grafting at each height $n\geq 0$ of an infinite branch
the concatenation of $\tau_{n,i}$, $1\leq i\leq k-1$,
\ie set
\[
	\textstyle
	T_\infty := \ttb_\infty \bigotimes_{n\geq 0} \big(\ttv_n,
		\lBrack \tau_{n,1}, \dots \tau_{n,k-1} \rBrack \big),
\]
and let $\GT_k^\infty$ be the distribution of $T_\infty$.

\medskip

Section~\ref{sec:k-ary-local-limits} will prove the following proposition.

\begin{proposition}\label{prop:k-ary-local-limits}
	In the sense of the local limit topology,
	$\GT_k^n$ converges weakly to $\GT_k^\infty$
	when $n$ goes to $\infty$.
\end{proposition}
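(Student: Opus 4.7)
The natural route is to invoke Corollary~\ref{cor:local-limits-mb-one-spine}, which requires three ingredients: the Markov-branching structure of $(\GT_k^n)_{n \geq 0}$ together with its first-split distributions $(q_{n-1})_{n \geq 1}$; the identification $\GT_k^\infty = \MB^{q, q_\infty}_\infty$ for a suitable $q_\infty$ concentrated on $\{m_\infty = 1\}$; and the pointwise asymptotic $q_{n-1}(n-1-\norm\lambda, \lambda) \to q_\infty(\infty, \lambda)$ for every $\lambda \in \calP_{<\infty}$.

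First, the Markov-branching property of $(\GT_k^n)_n$, with size counted as the number of internal vertices, is established in~\cite{haas2014kary}. Moreover, the uniform edge-selection step of the growth algorithm admits a classical P\'olya urn interpretation: after $n-1$ steps, the ordered vector of internal-vertex sizes of the $k$ subtrees above the root of $T_n$ is distributed as $\text{Multinomial}(n-1; \Pi)$ for a symmetric Dirichlet vector $\Pi$ with parameters $(1/k, \ldots, 1/k)$. This yields an explicit expression for $q_{n-1}(\lambda)$, $\lambda = (\lambda_1, \ldots, \lambda_p) \in \calP_{n-1}$ with $p \leq k$ parts, as a product of $p$ Gamma ratios of the form $\Gamma(\lambda_i + 1/k)/[\Gamma(1/k)\, \lambda_i!]$, an $n$-dependent Gamma ratio, and a combinatorial prefactor $k!/[(k-p)!\,\prod_{j \geq 1} m_j(\lambda)!]$ encoding the passage from ordered to unordered subtrees.

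Second, by construction of $T_\infty$ its root has $k$ children: one on the infinite spine and $k-1$ roots of the independent subtrees $\tau_{0,1}, \ldots, \tau_{0,k-1}$ whose internal-vertex sizes $(X_{0,1}, \ldots, X_{0,k-1})$ form the $(k-1)$-dimensional negative Dirichlet-multinomial vector with parameters $(1; 1/k, \ldots, 1/k)$. Hence $\GT_k^\infty = \MB^{q, q_\infty}_\infty$, where $q_\infty$ is the push-forward of that law under $(x_1, \ldots, x_{k-1}) \mapsto (\infty, x_1, \ldots, x_{k-1})^\downarrow$ (after dropping zero entries). Clearly $q_\infty(m_\infty = 1) = 1$, and the negative Dirichlet-multinomial pmf recalled in the statement of the proposition gives an explicit formula for $q_\infty(\infty, \lambda)$ of the same shape as $q_{n-1}(\lambda)$ above, but with the $n$-dependent Gamma ratio replaced by $1/[k(1+\norm\lambda)]$.

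Finally, fixing $\lambda \in \calP_{<\infty}$, Stirling's approximation applied to the $n$-dependent Gamma ratio in $q_{n-1}(n-1-\norm\lambda, \lambda)$ — which takes the form $\Gamma(n - \norm\lambda + c)/\Gamma(n + c')$ — shows that this ratio converges as $n \to \infty$ precisely to $1/[k(1+\norm\lambda)]$, so that every factor of $q_{n-1}(n-1-\norm\lambda, \lambda)$ matches its counterpart in $q_\infty(\infty, \lambda)$. Corollary~\ref{cor:local-limits-mb-one-spine} then concludes. \textbf{The main obstacle} is the bookkeeping: writing down the explicit formula for $q_{n-1}$ with the correct combinatorial prefactors, carefully handling the unordered nature of the subtrees and the fact that some of the $k$ ordered sizes may vanish and are dropped from~$\lambda$. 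Once this is in place, the asymptotic analysis is a direct Stirling computation entirely parallel to the treatment of the $\alpha$-$\gamma$ model in Section~\ref{sec:alpha-gamma-local-limits}.
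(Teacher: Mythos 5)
Your overall strategy (invoke Corollary~\ref{cor:local-limits-mb-one-spine}, write down $q^\circ_{n-1}$ and $q^\circ_\infty$, prove pointwise convergence by Stirling's formula) is exactly the paper's. However, the proposal contains a substantive error in the identification of the finite-$n$ first-split law, and the error would actually lead to a qualitatively wrong local limit.

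You claim that the ordered vector of internal-vertex sizes of the $k$ subtrees above the root of $T_n$ is distributed as $\text{Multinomial}(n-1;\Pi)$ with $\Pi$ Dirichlet$(1/k,\dots,1/k)$, by a P\'olya-urn argument. This is false, because the root of $T_n$ is \emph{not} the root fixed at step $1$: each time the planted edge below the root is selected (which happens with probability $1/(k(m-1)+1)$ at step $m$, an event occurring infinitely often along the construction), a new root is created and the entire previous tree becomes a single one of the $k$ subtrees. Thus the subtree sizes are a mixture of P\'olya urns indexed by the (random) last time the root was replaced, which is precisely what produces the extra sum factor $\sum_{i}\big(m_{\lambda_i}(\lambda)\,\lambda_i!\sum_{j=0}^{\lambda_i}(j+n-\lambda_i)!/j!\big)$ in the expression for $q^\circ_{n-1}$ that the paper quotes from~\cite{haas2014kary}. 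One can check directly that the urn ansatz fails: for $k=2$, $n=3$, enumerating the five equally likely edges of $T_2$ gives $q^\circ_2(1,1)=1/5$, whereas the Dirichlet-multinomial formula would give $2\,\esp{\Pi(1-\Pi)}=1/4$.

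The error is not merely a constant: if $\Lambda^\circ(T_n)$ were Dirichlet-multinomial$(n-1;1/k,\dots,1/k)$, then conditionally on $\Pi$ every coordinate would be of order $n\Pi_i$ with $\Pi_i>0$ a.s., so every part of $\Lambda^\circ(T_n)$ would tend to infinity and $q^\circ_{n-1}\Rightarrow\delta_{(\infty,\dots,\infty)}$. Applying Theorem~\ref{thm:local-limits-markov-branching} would then yield the deterministic infinite $k$-ary tree as the local limit, not $\GT_k^\infty$ with its single infinite spine. So the claimed urn structure is inconsistent with the conclusion you are trying to prove. To repair the proof you must start from the correct expression for $q^\circ_{n-1}$ (which involves the sum factor above), and then carry out the asymptotic analysis: the Gamma ratio $\Gamma(n-L+1/k)/\Gamma(n+1+1/k)\sim n^{-L-1}$, the dominant term of the sum (the one attached to the large part $n-L$) is $\sim n^{L+1}/(L+1)$, and the remaining terms are $O(n^L)$, so the limit is $1/(L+1)$ times the Dirichlet-multinomial-type product, matching $q^\circ_\infty(\infty,\lambda)$ as in the paper.
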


Let $\Pi$ be a $(k-1)$-dimensional Dirichlet variable with parameters $(1/k,\dots,1/k)$.
Following \cite[Section~3.1]{haas2014kary}, we define the dislocation measure $\nu_k^\GT$
such that for all measurable $f:\Sdec_{\leq 1}\to\bbR_+$
\[
	\int_{\Sdec_{\leq 1}} f \, \D\nu_k^\GT
	= \frac {\Gamma(1/k)} k
	\, \esp [4] {\frac {f\big[(\Pi,0,0,\dots)^\downarrow\big]} {1-\Pi_1}}.
\]
Let $\Delta$ be a $(k-2)$-dimensional Dirichlet variable with parameters $(1/k,\dots,1/k)$.
We also define the immigration measure $I_k^\GT$ for all measurable
functions $F:\Sdec\to\bbR_+$ by
\[
	\int_{\Sdec} F(\bfs) \, I_k^\GT(\D\bfs)
	:= \frac {1/k} {\Gamma(1-1/k)} \int_0^\infty t^{-1-1/k}
		\esp [2] {F\big(t\,(\Delta,0,0,\dots)^\downarrow\big)} \,\D t.
\]
The aim of Section~\ref{sec:k-ary-scaling-limits} will be to prove the next proposition.

\begin{proposition}\label{prop:k-ary-scaling-limits}
	Let $T$ be a $\GT_k^\infty$-distributed tree and endow it with $\mu^\circ_T$,
	the counting measure on the set of its internal vertices.
	With respect to the topology induced by $\upD_\GHP$, when $R$ grows to infinity,
	$(T/R,\mu^\circ_T/R^{\smash k})$ converges in distribution
	to a $(1/k, \nu_k^\GT, I_k^\GT)$-fragmentation tree with immigration.
\end{proposition}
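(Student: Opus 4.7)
The plan is to apply Theorem~\ref{thm:scaling-limits-infinite-mb-trees} to this $k$-ary model. The construction of $\GT_k^\infty$ already displays it as an infinite Markov branching tree with a unique infinite spine: using the number of internal vertices as the notion of size, the sequence $(\GT_k^n)_n$ satisfies the Markov branching property with first-split distributions $(q_n)_n$ made explicit in~\cite{haas2014kary}, and by construction $\GT_k^\infty = \MB^{q,q_{\smash\infty}}_\infty$ where $q_* := q_\infty(\infty,\,\cdot\,)$ is the law of $X^\downarrow$ for $X$ a $(k-1)$-dimensional negative Dirichlet multinomial vector with parameters $(1; 1/k,\dots,1/k)$.

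Assumption $(\mathtt S)$ of Theorem~\ref{thm:scaling-limits-infinite-mb-trees} with $\gamma = 1/k$ and dislocation measure $\nu_k^\GT$ is exactly the content of Section~3.1 of~\cite{haas2014kary}. It remains to check the immigration assumption $(\mathtt I)$, namely that $R \, \esp[1]{F(X^\downarrow/R^k)} \to \int F \, \D I_k^\GT$ for every continuous $F:\Sdec\to\bbR_+$ bounded by $1\wedge\norm\bfs$; by Lemma~\ref{lem:portmanteau-extension} it suffices to treat Lipschitz $F$.

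The strategy mirrors the proof of Proposition~\ref{prop:kesten-stable-immigration}: decouple the size $N := \norm X$ from the shape $X/N$. Since $N$ has beta geometric distribution with parameters $(1/k, 1-1/k)$, Stirling's formula yields the tail estimate $n^{1+1/k} \, \prob{N = n} \to c := (1/k)/\Gamma(1-1/k)$. Conditionally on $N = n$, $X$ is $(k-1)$-dimensional Dirichlet multinomial with parameters $(n; 1/k,\dots,1/k)$, so $X/N$ converges in distribution as $N \to \infty$ to the Dirichlet vector $\Delta$; moreover, classical second-moment formulae for Dirichlet multinomial variables give $\esp[1]{\norm{X/N - \Delta}^2 \,\vert\, N} = O(1/N)$, so Skorokhod's representation provides, level by level in $N$, a coupling with a copy of $\Delta$ realizing this bound. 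Defining $f(t) := \esp[1]{F(t\,\Delta^\downarrow)}$, dominated convergence shows that $f$ is continuous with $f(t) \leq 1 \wedge t$ (since $\norm\Delta = 1$ \as), and Lemma~\ref{lem:unary-immigration-prelim} applied to $N$ with tail constant $c$ delivers
\[
	R \, \esp [1] {f(N/R^k)}
	\xrightarrow [R\to\infty] {}
	\int_0^\infty \frac {c\,f(t)} {t^{1+1/k}} \, \D t
	= \int_{\Sdec} F \, \D I_k^\GT.
\]

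The main obstacle is the residual error $R \, \esp[1]{\abs{F(X^\downarrow/R^k) - f(N/R^k)}}$. Following the closing argument of Proposition~\ref{prop:kesten-stable-immigration}, the Lipschitz property of $F$ and the splitting $\norm{\bfx - \bfy} = \norm{\bfx\wedge 1 - \bfy\wedge 1} + \norm{(\bfx - \bfx\wedge 1) - (\bfy - \bfy\wedge 1)}$ reduce this error to weighted quantities of the form $R \, \esp[1]{1\wedge \big[(N/R^k)^\alpha \, \norm{X/N - \Delta}^{\alpha'}\big]}$. For $\alpha \in (1/k, 1)$ and $\alpha' \in (0,1)$ chosen so that $\alpha\alpha' > 1/k$, the $O(1/N)$ control on $\esp[1]{\norm{X/N - \Delta}^2 \,\vert\, N}$ combines with the tail of $N$ (via a Lemma~\ref{lem:unary-immigration-prelim}-type computation of $R \, \esp[1]{1\wedge(N/R^k)^{\alpha\alpha'}}$) to drive the error to zero as $R \to \infty$. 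Once $(\mathtt I)$ is thus verified, Theorem~\ref{thm:scaling-limits-infinite-mb-trees} immediately yields the claimed convergence of $(T/R, \mu^\circ_T/R^k)$ to a $(1/k,\nu_k^\GT, I_k^\GT)$-fragmentation tree with immigration.
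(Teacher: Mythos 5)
Your high-level plan matches the paper's: reduce to Theorem~\ref{thm:scaling-limits-infinite-mb-trees}, invoke Proposition~3.1 of~\cite{haas2014kary} for Assumption~$(\mathtt S)$, and verify Assumption~$(\mathtt I)$ by factoring the immigration partition $\Lambda = X^\downarrow$ through its norm $N = \norm X$ (beta geometric, tail $n^{1+1/k}\prob{N=n}\to (1/k)/\Gamma(1-1/k)$) and its shape. The paper does exactly this in Lemma~\ref{lem:neg-dirichlet-mult-immigration-cv}. However, there are two places where your argument as written does not go through.

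First, the coupling. You invoke ``Skorokhod's representation'' to produce a joint law of $(X/N, \Delta)$ realizing the conditional second-moment bound $\esp{\norm{X/N-\Delta}^2 \vert N}=O(1/N)$. Skorokhod's theorem only yields almost-sure convergence from weak convergence; it does not produce a coupling attaining a prescribed conditional moment bound. What actually works --- and what the paper uses --- is the \emph{explicit} de Finetti/P\'olya coupling inherent in the Dirichlet-multinomial structure: draw $\Delta$ Dirichlet$(1/k,\dots,1/k)$, then $Y_1,Y_2,\dots$ \iid multinomial$(1;\Delta)$, and set $Z_n := Y_1+\dots+Y_n$; then $Z_N$ is a version of $X$, $Z_n/n \to \Delta$ \as, and $\norm{Z_n/n - \Delta}\leq 2$ deterministically. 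This concrete coupling is what lets you control the error term; an appeal to Skorokhod is a genuine gap.

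Second, the splitting and exponent gymnastics are unnecessary and likely incorrect here. In the stable case (Proposition~\ref{prop:kesten-stable-immigration}), the $\bfs\mapsto\bfs\wedge 1$ split and fractional exponents $\beta<1/\alpha$ were forced by the unbounded tails of the shape variable $\Delta/\Sigma_1$. In the $k$-ary case the shape variables $Z_n/n$ and $\Delta$ both live in the simplex, so $\norm{Z_n/n-\Delta}\leq 2$ always and no truncation is needed. Once you have the explicit coupling, dominated convergence gives $\esp{\norm{Z_n/n-\Delta}}\to 0$, so for any $\varepsilon>0$ there is a finite $n_\varepsilon$ beyond which this expectation is $<\varepsilon$, and the error is bounded by $R\,\esp{1\wedge(K\varepsilon N/R^k)} + 2Kn_\varepsilon/R^{k-1}$; the first term converges (via Lemma~\ref{lem:unary-immigration-prelim}) to a quantity that vanishes as $\varepsilon\to 0$ and the second vanishes since $k\geq 2$. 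This is simpler and does not require any rate of decay, whereas your $(\alpha,\alpha')$ bookkeeping is transplanted from the stable case where it was answering a different problem; as stated I cannot see that it is even well-posed, since the direct Lipschitz bound gives integer exponents, not fractional ones, once the shape variable is bounded.
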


\subsubsection{Markov branching property and local limits}\label{sec:k-ary-local-limits}
For any $\ttt$ in $\ttT$,
we define $\Lambda^\circ(\ttt)$ as the decreasing rearrangement of the number of internal vertices
of the sub-trees of $\ttt$ attached to its root,
\ie we let $\Lambda^\circ(\ttt) := \Lambda(\ttt) - \Lambda^{\smash\calL}(\ttt)$.
In the setting of $k$-ary growing trees, $\Lambda^\circ(T_0) = \varnothing$ \as
and if $n\geq 1$, $\Lambda^\circ(T_n)$ takes its values in the set of
decreasing families of $(\bbZ_+)^k$ with sum $n-1$.
Because of the deterministic relationship between $n$, $\# T_n$ and $\#_\calL T_n$,
we have $\Lambda(T_0) = \Lambda^{\smash\calL}(T_0) = \varnothing$
and for $n\geq 1$, $\Lambda(T_n) = k \Lambda^\circ(T_n) + (1,\dots,1)$ in $\calP_{kn}$
and $\Lambda^{\smash\calL}(T_n) = (k-1) \Lambda^\circ(T_n) + (1,\dots,1)$ in $\calP_{(k-1)n+1}$.
For all $n\geq 1$, note $q^\circ_{n-1}$ the distribution of $\Lambda^\circ(T_n)$,
that is the first-split distribution of $T_n$ with respect to internal vertices.

Proposition~3.3 from~\cite{haas2014kary} states that $(T_n)_{n\geq0}$
satisfies the Markov branching property and the distribution of $T_n$
may be expressed as either $\MB^q_{kn+1}$ or $\MB^{\smash{\calL,q^\calL}}_{(k-1)n+1}$
where $q$ and $q^{\smash\calL}$ are both easily obtained from $(q^\circ_n)_{n\geq 0}$.
Rewriting the formula from this last proposition for our purposes
(where partition blocs are arranged in decreasing order),
for all $n\geq 1$ and $\lambda = (\lambda_1,\dots,\lambda_k)$
decreasing with sum $n$,
we get that
\[
	q^\circ_{n-1}(\lambda)
	= \frac {(k-1)!} {\prod_{j\geq 1} m_j(\lambda)!} \frac 1 k
	\frac {\Gamma(1/k)} {\Gamma(n+1+1/k)}
	\prod_{i=1}^k \frac {\Gamma(\lambda_i+1/k)} {\Gamma(1/k)\, \lambda_i!}
	\sum_{i=1}^k \bigg( m_{\lambda_i}(\lambda) \lambda_i!
	\sum_{j=0}^{\lambda_i} \frac {(j+n-\lambda_i)!} {j!}\bigg).
\]

We can rewrite $\GT_k^\infty$ as the distribution $\MB^{q,q_{\smash\infty}}_\infty$
or $\MB^{\smash{\calL,q^\calL,q_\infty^\calL}}_\infty$
of an infinite Markov branching tree.
The corresponding measures $q_\infty$ and $q^{\smash\calL}_\infty$ on $\calP_\infty$
can also be easily deduced from the measure $q^\circ_\infty$ on the set of decreasing
$k$-tuples of $\bbZ_+\cup\{\infty\}$ with infinite sum
such that $q^\circ_\infty(\lambda) = 0$ if $\lambda_2$ is infinite
and
\[
	q^\circ_\infty(\infty,\lambda_2, \dots, \lambda_k)
	= \frac {(k-1)!} {\prod_{j\geq 1} m_j(\lambda)!} \, \frac 1 k \, \frac 1 {\norm\lambda+1}
		\, \prod_{i=2}^k \frac {\Gamma(\lambda_i+1/k)} {\Gamma(1/k)\, \lambda_i!}
\]
for any integers $\infty>\lambda_2\geq\dots\geq\lambda_p\geq 0$.
Observe that $q^\circ_\infty(\infty,\lambda_2, \dots, \lambda_k)
= \prob [1] {X^{\smash\downarrow} = (\lambda_2,\dots,\lambda_k)}$
where $X$ is a $(k-1)$-dimensional negative Dirichlet multinomial
variable with parameters $(1;1/k,\dots,1/k)$.
As a result, $q^\circ_\infty$ is a probability measure.

\smallskip

\begin{proof}[of~Proposition~\ref{prop:k-ary-local-limits}]
	Let $\lambda = (\lambda_2,\dots,\lambda_k)$ be a decreasing sequence of $(\bbZ_+)^{k-1}$
	and set $L=\lambda_2+\dots+\lambda_k$.
	For $n$ large enough, we have
	\begin{align*}
		& q^\circ_n(n-L,\lambda_2,\dots,\lambda_k)\\
		& \qquad =
		\frac {(k-1)!} {\prod_{j\geq 1} m_j(\lambda)!} \frac 1 k
		\prod_{i=2}^k \frac {\Gamma(\lambda_i+1/k)} {\Gamma(1/k)\, \lambda_i!}
		\smash {\overbrace {\vphantom {\sum_{j=0}^{n-L}}
			\frac {\Gamma(n-L+1/k)} {\Gamma(n+1+1/k)}
			}^{\substack {n^{\mathrlap {-L-1}}\\
			\rotatebox [origin=c] {-90} {$\displaystyle\sim\,$} \, \mathrlap {\;\; n\to\infty}}}}
		\Bigg[ \smash {\overbrace {\sum_{j=0}^{n-L} \frac {(j+L)!} {j!}
			}^{\substack {n^{L+1} \int_0^1 x^L \D x\\
				\rotatebox [origin=c] {-90} {$\displaystyle\,\sim\,$} \, \mathrlap {\;\; n\to\infty}}}}
		+ \smash {\overbrace {\sum_{i=2}^k \sum_{j=0}^{\lambda_i}
			\frac {\lambda_i! \, (j+n-\lambda_i)!} {(n-L)! \, j!}
			}^{\substack {O(n^L)\\
				\rotatebox [origin=c] {-90} {$\displaystyle\,=\,$} \, \mathrlap {\;\; n\to\infty}}}} \Bigg]\\
		& \qquad\qquad\qquad \xrightarrow [n\to\infty] {}
			\frac {(k-1)!} {\prod_{j\geq 1} m_j(\lambda)!} \, \frac 1 k \, \frac 1 {L+1}
				\, \prod_{i=2}^k \frac {\Gamma(\lambda_i+1/k)} {\Gamma(1/k)\, \lambda_i!}
		= q^\circ_\infty(\infty,\lambda).
	\end{align*}
	Corollary~\ref{cor:local-limits-mb-one-spine} concludes this proof.
\end{proof}

\subsubsection{Scaling limits}\label{sec:k-ary-scaling-limits}
Proposition~3.1 in~\cite{haas2014kary}
states that $n^{1/k} \, (1-s_1) \, \bar q^\circ_n(\D\bfs) \Rightarrow (1-s_1) \, \nu_k^\GT (\D\bfs)$
as $n\to\infty$ in the sense of finite measures on $\Sdec_{\leq 1}$.
Assumption~\hyperref[assumption:scaling]{$(\mathtt S)$} of Theorem~\ref{thm:scaling-limits-infinite-mb-trees}
is thus met for the sequence $q^\circ$.
To prove Proposition~\ref{prop:k-ary-scaling-limits}, we will need the following lemma.
Let $X=(X_1,\dots,X_{k-1})$ denote a negative Dirichlet multinomial
variable with parameters $(1;1/k,\dots,1/k)$.

\begin{lemma}\label{lem:neg-dirichlet-mult-immigration-cv}
	Let $\Delta$ be a $(k-2)$-dimensional Dirichlet $(1/k,\dots,1/k)$ variable.
	For all Lipschitz-continuous functions $G:[0,\infty)^{k-1}\longrightarrow\bbR_+$
	such that $G(\boldsymbol x) \leq 1\wedge\norm{\boldsymbol x}$
	for all $\boldsymbol x$ in $[0,\infty)^{k-1}$,
	\[
		R\,\esp [3] {G\bigg(\frac X {R^k}\bigg)}
		\xrightarrow [R\to\infty] {}
		\frac {1/k} {\Gamma(1-1/k)} \int_0^\infty t^{-1-1/k} \esp {G(t\,\Delta)} \,\D t.
	\]
\end{lemma}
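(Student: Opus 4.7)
The plan is to reduce the claim to the scalar Lemma~\ref{lem:unary-immigration-prelim}. I would define $h:[0,\infty)\to\bbR_+$ by $h(t) := \esp{G(t\Delta)}$: by dominated convergence $h$ will be continuous, and since $\norm\Delta = 1$ and $G(\bfx) \leq 1\wedge\norm\bfx$, one has $0 \leq h(t) \leq 1 \wedge t$. Using Stirling's approximation in the explicit expression $\prob{\norm X = n} = (1/k)\,\Gamma(n+1-1/k)/[\Gamma(1-1/k)\,(n+1)!]$ of the beta geometric law of $\norm X$ (so $\Gamma(n+1-1/k)/\Gamma(n+2)\sim n^{-1-1/k}$), one obtains $n^{1+1/k}\,\prob{\norm X=n} \to 1/k/\Gamma(1-1/k)$. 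Lemma~\ref{lem:unary-immigration-prelim} applied to $\norm X$ and $h$ with $\gamma = 1/k\in(0,1)$ will then yield
\[
	R\, \esp [1] {h(\norm X / R^k)}
	\xrightarrow[R\to\infty]{}
	\frac {1/k} {\Gamma(1-1/k)} \int_0^\infty h(t) \, t^{-1-1/k} \, \D t,
\]
which is exactly the right-hand side of the target identity.

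It will then suffice to prove that $R\,\esp{\abs[1]{G(X/R^k) - h(\norm X/R^k)}} \to 0$. Conditionally on $\norm X = n$, the normalised vector $Y_n := X/n$ is Dirichlet multinomial of parameters $(n;1/k,\dots,1/k)$, and standard moment computations give $Y_n \Rightarrow \Delta$ as $n\to\infty$. Set $h_n(t) := \esp{G(t\,Y_n)}$: since $\norm{Y_n} = \norm\Delta = 1$, the functions $h_n$ and $h$ are all Lipschitz in $t$ with the same constant as $G$, uniformly in $n$. Combined with the pointwise convergence $h_n(t)\to h(t)$ that $Y_n\Rightarrow\Delta$ entails via bounded convergence, this equi-Lipschitz property upgrades $h_n\to h$ to uniform convergence on every compact of $[0,\infty)$, while the crude bound $\abs{h_n(t)-h(t)}\leq 2(1\wedge t)$ still holds globally.

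I would then fix $\varepsilon\in(0,1)$, $M>1$, $\delta>0$ and split $R\,\esp{\abs[1]{h_{\norm X}(\norm X/R^k) - h(\norm X/R^k)}}$ into three pieces according to whether $\norm X/R^k$ lies in $[0,\varepsilon)$, $[\varepsilon, M]$, or $(M,\infty)$. On the first piece, the integrand is $\leq 2(\varepsilon\wedge \norm X/R^k)$ and Lemma~\ref{lem:unary-immigration-prelim} applied to the continuous function $t\mapsto 2(\varepsilon\wedge t)\leq 2(1\wedge t)$ shows the contribution is asymptotically bounded by $2\int_0^\infty(\varepsilon\wedge t)\,\nu(\D t) = O(\varepsilon^{1-1/k})$, where $\nu(\D t) := (1/k)/\Gamma(1-1/k)\cdot t^{-1-1/k}\D t$. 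On the third piece, the integrand is $\leq 2$ and $R\,\prob{\norm X\geq MR^k}\leq M^{-1}R\,\esp{1\wedge\norm X/R^k}$ stays bounded by Lemma~\ref{lem:unary-immigration-prelim}, giving a contribution $O(M^{-1/k})$ in the limit. On the middle piece $\norm X\geq\varepsilon R^k\to\infty$, so uniform convergence of $h_n\to h$ on $[\varepsilon, M]$ makes the integrand $\leq\delta$ for $R$ large, and the contribution is then bounded by $\delta\,R\,\prob{\norm X\geq\varepsilon R^k}=O(\delta/\varepsilon)$, once again by Lemma~\ref{lem:unary-immigration-prelim}. Sending $R\to\infty$, then $\delta\to 0$, and finally $\varepsilon\to 0$ and $M\to\infty$ will conclude.

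The main obstacle is the intertwined nature of the limit $R\to\infty$: this single limit must simultaneously force the rescaled mass $\norm X/R^k$ to spread on $(0,\infty)$ with limiting intensity $R\,\prob{\norm X=n}\,\D n\to\nu(\D t)$ (a Riemann-sum convergence) and also force $\norm X\to\infty$ so that the Dirichlet multinomial $Y_{\norm X}$ converges to $\Delta$. The equi-Lipschitz property of the family $(h_n)_n$ is the key ingredient that lets one decouple these two convergences and handle them separately on each region of the partition.
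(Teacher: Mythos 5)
Your proof is correct and reaches the right conclusion, but it follows a genuinely different route from the paper's. The paper exploits the explicit coupling $X \overset{(\upd)}{=} Z_N$, where $Z_n = Y_1 + \dots + Y_n$ is a sum of \iid multinomials conditional on $\Delta$ and $N = \norm X$ is an independent beta geometric variable; the strong law of large numbers gives the \emph{almost sure} convergence $Z_n/n \to \Delta$, dominated convergence upgrades this to $\esp{\norm{(Z_n/n)-\Delta}} < \varepsilon$ for $n$ large, and the Lipschitz property of $G$ then yields directly the bound $\abs{R\esp{G(X/R^k)} - R\esp{g(N/R^k)}} \leq R\,\esp{1\wedge (K\varepsilon N/R^k)} + 2K n_\varepsilon / R^{k-1}$, which Lemma~\ref{lem:unary-immigration-prelim} controls in one stroke. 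You instead integrate out $\Delta$, condition on $\norm X = n$ to get the Dirichlet-multinomial $Y_n$, introduce the intermediary functions $h_n(t) := \esp{G(tY_n)}$, and rely only on the \emph{weak} convergence $Y_n \Rightarrow \Delta$ plus the equi-Lipschitz property of $(h_n)$ to get locally uniform convergence $h_n \to h$; the three-region split in the rescaled variable $\norm X/R^k$ is then needed to decouple ``$\norm X/R^k$ spreads as $\nu$'' from ``$\norm X \to\infty$''. Both arguments are sound; the paper's is more economical because the a.s.\ coupling removes the need for the tightness partition, while yours is more generic and transplants to settings where such an exact coupling is unavailable (it is in fact closer in spirit to the paper's proof of Proposition~\ref{prop:kesten-stable-immigration}). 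One small slip: the inequality $R\,\prob{\norm X \geq M R^k} \leq M^{-1} R\,\esp{1\wedge\norm X/R^k}$ fails for $M>1$, but the asymptotic $R\,\prob{\norm X \geq M R^k} \to \Gamma(1-1/k)^{-1} M^{-1/k}$ is correct and can be obtained by bounding the indicator $\ind_{t\geq M}$ by a continuous function $f\leq 1\wedge t$ supported near and above $M$ and applying Lemma~\ref{lem:unary-immigration-prelim}; likewise the middle-piece bound is $O(\delta\,\varepsilon^{-1/k})$ rather than $O(\delta/\varepsilon)$. Neither affects the conclusion since the exponents are positive and the limits are taken in the stated order.
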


\begin{proof}
	Let $(Y_n)_{n\geq 1}$ be \iid and such that conditionally on $\Delta$,
	$Y_n$ is multinomial with parameters $(1;\Delta)$.
	Moreover, set $Z_n := Y_1 + \dots + Y_n$.
	The law of large numbers ensures that $Z_n/n$ converges almost surely to $\Delta$.
	Let $N$ be independent of $\Delta$ and $(Z_n)_n$
	and have beta geometric distribution with parameters $(1/k,1-1/k)$.
	Observe that $X$ has the same distribution as $Z_N$.
	
	Define $g:\bbR_+\to\bbR_+$ by $g(t) := \esp {G(t\,\Delta)}$.
	The dominated convergence theorem implies that it is continuous
	and it clearly satisfies $g(t) \leq 1\wedge t$.
	Lemma~\ref{lem:unary-immigration-prelim} then ensures that $R \, \esp {g(N/R^k)}
	\to \big[ k \, \Gamma(1-1/k)\big]^{-1} \int_0^\infty t^{-1-1/k} g(t) \, \D t$.
	
	Since $Z_n/n$ \as converges to $\Delta$ and because $\norm {(Z_n/n) - \Delta} \leq 2$,
	we can use the dominated convergence theorem to state that for all positive $\varepsilon$,
	there exists $n_\varepsilon$ such that $\esp {\norm {(Z_n/n)-\Delta}} < \varepsilon$
	as soon as $n\geq n_\varepsilon$.
	Therefore, if $K$ is the Lipschitz constant of $G$,
	\begin{align*}
		& \abs [4] {
			R\,\esp [3] {G\bigg(\frac X {R^k}\bigg)}
			- R \, \esp [3] {g\bigg(\frac N {R^k}\bigg)}}
		\leq R \, \esp [4] {
			\abs [3] {G\bigg[ \frac N {R^k} \frac {Z_N} N \bigg]
			- G\bigg[ \frac N {R^k} \, \Delta \bigg]}}\\
		& \qquad\qquad \leq R \, \esp [4] { 1 \wedge \bigg(
			K\varepsilon \frac N {R^k} \bigg)}
		+ \frac {2K n_\varepsilon} {R^{k-1}}
		\xrightarrow [R\to\infty] {}
		\frac {1/k} {\Gamma(1-1/k)} \int_0^\infty
			\frac {1\wedge (K\varepsilon \, t)} {t^{1+1/k}} \D t
	\end{align*}
	where we have used Lemma~\ref{lem:unary-immigration-prelim}.
	This last quantity in turn converges to $0$ when $\varepsilon\to 0$
	which proves the desired result.
\end{proof}

\begin{proof}[of Proposition~\ref{prop:k-ary-scaling-limits}]
	Recall that if $\Lambda$ is such that $(\infty,\Lambda)$ follows $q^\circ_\infty$,
	then $\Lambda$ is distributed like $X^{\smash\downarrow}$.
	We may then deduce from Lemma~\ref{lem:neg-dirichlet-mult-immigration-cv}
	and Lemma~\ref{lem:portmanteau-extension}
	that Assumption~\hyperref[assumption:immigration]{$(\mathtt I)$}
	holds for $q_\infty^\circ$, $I = I_k^\GT$ and $\gamma = 1/k$.
	As a result, Theorem~\ref{thm:scaling-limits-infinite-mb-trees} concludes this proof.
\end{proof}

\bigskip

\subsubsection*{Acknowledgements}
The author would like to thank his advisor, B\'en\'edicte Haas,
for many helpful discussions and suggestions as to how to improve this paper.

\begin{small}
\fancyhead[L]{\footnotesize\nouppercase{References}}
\fancyhead[R]{}
\phantomsection
\addcontentsline{toc}{section}{\refname}

\end{small}

\end{document}